\newtheorem{thm}{Theorem}[section]
\newtheorem{cor}[thm]{Corollary}
\newtheorem{lemma}[thm]{Lemma}
\newtheorem{prop}[thm]{Proposition}
\newtheorem{q}[thm]{Question}
\theoremstyle{definition}
\newtheorem{ex}[thm]{Example}
\newtheorem{const}[thm]{Construction}
\theoremstyle{remark}
\newtheorem{rem}[thm]{Remark}
\begin{document}


\title[The Serre depth and the depth of Stanley--Reisner Rings]{The Serre depth of Stanley--Reisner Rings and the depth of their symbolic powers}

\author{Yuji Muta and Naoki Terai}

\address[Y. Muta]{Department of Mathematics, Okayama University, 3-1-1 Tsushima-naka, Kita-ku, Okayama 700-8530 Japan.}

\email{p8w80ole@s.okayama-u.ac.jp}

\address[N. Terai]{Department of Mathematics, Okayama University, 3-1-1 Tsushima-naka, Kita-ku, Okayama 700-8530, Japan.}
	
\email{terai@okayama-u.ac.jp}

\begin{abstract}
We investigate an invariant, called the Serre depth, from the perspective of combinatorial commutative algebra.  In this paper, we establish several properties of an analogue of the depth of Stanley--Reisner rings. In particular, we relate the Serre depth both to the minimal free resolution of a Stanley--Reisner ring and to that of its Alexander dual. Also, we establish an analogue of a known result that describes the depth of Stanley--Reisner rings in terms of skeletons. Moreover, we study the Serre depth for $(S_{2})$ and the depth on the symbolic powers of Stanley--Reisner ideals. It had been an open question whether the depth of the symbolic powers of Stanley--Reisner ideals satisfies a non-increasing property, but Nguyen and Trung provided a negative answer. We construct an example that the Serre depth for $(S_{2})$ and the depth do not satisfy this property and its second symbolic power is Cohen--Macaulay. Moreover, we prove that the sequence of the Serre depth for $(S_{2})$ on the symbolic powers is convergent and that its limit coincides with the minimum value. Finally, we study the Serre depth on edge and cover ideals. Whether the depth on symbolic powers of edge ideals satisfies a non-increasing property has remained an open question. We address a related problem and show that the Serre depth for $(S_{2})$ on edge ideals of any well-covered graph satisfies a non-increasing property. In addition, we prove that the Serre depth for $(S_{2})$ on the cover ideals of any graph also satisfies a non-increasing property. Moreover, we determine the Serre depth on edge ideals of very well-covered graphs. 
\end{abstract}


\subjclass[2020]{Primary: 13F55, 13H10, 05C75; Secondary: 13D45, 05C90, 55U10}


\keywords{Serre's condition, Serre depth, depth, Stanley--Reisner ring, monomial ideal, symbolic power, edge ideal, cover ideal, very well-covered graph, graded Betti number, minimal free resolution}


\thanks{}
	

\maketitle


\section{Introduction}
In commutative ring theory, Serre's condition $(S_{r})$ provides a natural and essential generalization of the Cohen–Macaulay property, which has been a foundational concept for many years. Let us recall the definition of Serre's condition $(S_{r})$. For a Noetherian ring $R$, an $R$-module $M$ and a positive integer $r$, $M$ satisfies {\it Serre's condition} $(S_{r})$ if the inequality ${\rm depth}(M_{\mathfrak{p}})\geq\min\{r,\dim M_{\mathfrak{p}}\}$ holds for every $\mathfrak{p}\in{\rm Spec}(R)$. Many researchers have been studying Serre's condition $(S_{r})$ in combinatorial commutative algebra. (\cite{dlv, gpfy, hk, h2,  mt2, pfty, ppty1,ppty2, rty, sf2, ty}). 

In this paper, we focus on an invariant called the Serre depth, which was introduced in \cite{ppty1, ppty2}. Let us recall the definition of the Serre depth. Let $S=\Bbbk[x_{1},\ldots, x_{n}]$ be a polynomial ring in $n$ variables over an arbitrary field $\Bbbk$, $\mathfrak{m}=(x_{1},\ldots, x_{n})$ be the unique homogeneous maximal ideal and let $M$ be an unmixed finitely generated graded $S$-module, $r\geq 2$. Then, the {\it Serre depth} for $(S_{r})$, denoted by $S_{r}\mbox{-}{\rm depth}(M)$, is defined as 
$$S_{r}\mbox{-}{\rm depth}(M)=\min\{j\,\,: \dim K_{M}^{j}\geq j-r+1\},$$
where $K_{M}^{j}={\rm Hom}_{\Bbbk}(H_{\mathfrak{m}}^{j}(M),\Bbbk)$ and $\dim K_{M}^{j}$ denotes the Krull dimension of $K_{M}^{j}$ as an $S$-module. Here, if $K_{M}^{j}=0$, then we set $\dim K_{M}^{j}=-\infty$. By the local duality theorem (see e.g. \cite[Theorem 3.6.19]{bh2}), we have an isomorphism$$K_{M}^{j}\simeq{\rm Ext}_{S}^{n-j}(M, \omega_{S}),$$where $\omega_{S}$ denotes the canonical module of $S$. By \cite[Lemma 3.2.1]{s0}, one can see that $S_{r}\mbox{-}{\rm depth}M=\dim M$ if and only if $M$ satisfies $(S_{r})$. Also, by the definition of the Serre depth, one can see that$$\dim M\geq S_{2}\mbox{-}{\rm depth}(M)\geq\cdots\geq S_{d}\mbox{-}{\rm depth}(M)={\rm depth}(M),$$where $d=\dim M$. Therefore, the Serre depth is defined as an analogue of the classical notion called the depth: just as the depth measures how far a module is from being Cohen--Macaulay, the Serre depth does so with respect to Serre's condition $(S_{r})$. There has been research on the Serre depth in \cite{kmt, ppty1, ppty2}. 

After introduction and preliminaries, in Section \ref{sqr}, we study the Serre depth of Stanley--Reisner rings and its relation to their minimal free resolutions. First, we relate the shape of a minimal free resolution of a Stanley--Reisner ring to its Serre depth: 
\begin{thm}[\mbox{\rm see, Theorem \ref{length-mfr}}]
Let $\Delta$ be a pure simplicial complex on $[n]$. Then the maximal length of the $j$-th linear part of the minimal graded free resolution of $\Bbbk[\Delta]$ for $j<r$ is less than or equal to $n-S_{r}\mbox{-}{\rm depth}(\Bbbk[\Delta]),$ that is, 
$$\max\{i\,\,: \beta_{i,i+j}(\Bbbk[\Delta])\neq 0\mbox{ for all }j<r\}\leq n-S_{r}\mbox{-}{\rm depth}(\Bbbk[\Delta]).$$
\end{thm}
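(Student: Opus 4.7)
The plan is to combine Hochster's formula for the $\mathbb{Z}^n$-graded Betti numbers of $\Bbbk[\Delta]$ with the combinatorial description of the canonical modules $K^{k}_{\Bbbk[\Delta]}$ (coming from Hochster's formula for local cohomology together with the theory of squarefree modules), so as to convert a non-vanishing Betti number in a linear strand $j < r$ into a Krull-dimension lower bound on $K^{n-i}_{\Bbbk[\Delta]}$.

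Suppose $\beta_{i, i+j}(\Bbbk[\Delta]) \neq 0$ with $j < r$. The goal is to show $\dim K^{n-i}_{\Bbbk[\Delta]} \geq (n-i) - r + 1$, since by the definition of the Serre depth this is equivalent to $S_{r}\mbox{-}{\rm depth}(\Bbbk[\Delta]) \leq n - i$. Refining to the fine grading produces a subset $W \subseteq [n]$ with $|W| = i+j$ and $\beta_{i, \chi_W}(\Bbbk[\Delta]) \neq 0$, and Hochster's formula gives $\tilde H_{j-1}(\Delta_W; \Bbbk) \neq 0$. (The case $j = 0$ only contributes $\beta_{0,0} = 1$ at $i = 0$, for which the claim is automatic.)

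By local duality and Hochster's formula for local cohomology, $(K^{k}_{\Bbbk[\Delta]})_\sigma \neq 0$ iff $\sigma \in \Delta$ and $\tilde H^{k - |\sigma| - 1}(\mathrm{lk}_\Delta \sigma; \Bbbk) \neq 0$; moreover, by the squarefree-module structure, $\dim K^{k}_{\Bbbk[\Delta]} = \max\{|\sigma| : (K^{k}_{\Bbbk[\Delta]})_\sigma \neq 0\}$. Since $|W^c| = n - i - j \geq n - i - r + 1$ (using $j \leq r-1$), it suffices to show that $W^c \in \Delta$ and $\tilde H^{j-1}(\mathrm{lk}_\Delta W^c; \Bbbk) \neq 0$, giving $(K^{n-i}_{\Bbbk[\Delta]})_{\chi_{W^c}} \neq 0$. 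I plan to deduce this by working with the dualized minimal resolution $F^{\bullet} = \mathrm{Hom}_S(F_{\bullet}, \omega_S)$, whose $i$-th cohomology is $K^{n-i}_{\Bbbk[\Delta]}$: the summand $\omega_S(\chi_W) \subseteq F^{i}$ contributes a one-dimensional piece in multidegree $\chi_{W^c}$, and by choosing $(i, W)$ with $i$ maximal across the strands $j' < r$ (and $W$ selected appropriately within its strand), I expect this class to survive the dual differential, which forces $(K^{n-i}_{\Bbbk[\Delta]})_{\chi_{W^c}} \neq 0$ and hence delivers the required face $\sigma = W^c$.

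The main obstacle is proving that the dual differential vanishes on this class in multidegree $\chi_{W^c}$: the maximality of $i$ in the first $r$ linear strands kills contributions from summands $\omega_S(\chi_{W''}) \subseteq F^{i+1}$ with $W'' \supseteq W$ and $|W'' \setminus W| \leq r - j$, but contributions with $|W'' \setminus W| \geq r - j + 1$ are not \emph{a priori} controlled. I anticipate resolving this either by a consecutive-cancellation argument that chooses $W$ carefully to avoid the higher-strand interactions, or by combining the Alexander-duality identification $\tilde H_{j-1}(\Delta_W; \Bbbk) \cong \tilde H^{i-2}(\mathrm{lk}_{\Delta^\vee}(W^c); \Bbbk)$ with a deletion/link long exact sequence on $\Delta$ to directly produce the required link-cohomology non-vanishing on $\mathrm{lk}_\Delta W^c$. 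The purity of $\Delta$ is used both to ensure $\Bbbk[\Delta]$ is unmixed (so that $S_r\mbox{-}{\rm depth}$ is defined) and to regulate the dimension bookkeeping in the description of $\dim K^{k}_{\Bbbk[\Delta]}$.
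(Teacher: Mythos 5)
Your reduction of the theorem to the claim ``$\beta_{i,\chi_W}(\Bbbk[\Delta])\neq 0$ with $|W|=i+j$, $j<r$, forces $(K^{n-i}_{\Bbbk[\Delta]})_{\chi_{W^c}}\neq 0$'' is where the argument breaks, and you have correctly sensed this yourself: the obstacle you flag at the end is not a technicality but the whole content of the theorem. Two concrete problems. First, $W^c$ need not be a face of $\Delta$ at all (by local Alexander duality it is a face of $\Delta^{\vee}$, not of $\Delta$): for $\Delta$ the two disjoint edges $\{1,2\},\{3,4\}$ on $[4]$ one has $\beta_{1,\chi_{\{1,3\}}}\neq 0$ but $\{2,4\}\notin\Delta$, and no choice of $W$ in that strand has $W^c\in\Delta$; so the graded piece you want to be nonzero is simply zero. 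Second, even when $W^c\in\Delta$, the distinguished class in $\omega_S(\chi_W)_{\chi_{W^c}}\subseteq F^{i}$ has no reason to be a cocycle: maximality of $i$ among the strands $j'<r$ only kills the summands of $F_{i+1}$ in strands $<r$, while summands $S(-\chi_{W''})$ with $|W''\setminus W|\geq r-j+1$ receive the component $x^{W''\setminus W}$ of the (minimal, but not linear) differential, which is generically nonzero; and one would still have to rule out coboundaries from $F^{i-1}$. No amount of choosing $W$ ``appropriately within its strand'' is known to repair this, and your alternative via a deletion/link exact sequence runs into the same missing hypothesis $W^c\in\Delta$.

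The paper's proof avoids all of this by never asking for the full complement to be a face. It fixes $W$ with $\widetilde H_{j-1}(\Delta_W;\Bbbk)\neq 0$ and invokes the inequality of Bayer--Charalambous--Popescu (\cite[Corollary 2.6]{bcp}),
$$\dim_{\Bbbk}\widetilde H_{|W|-i-1}(\Delta_W;\Bbbk)\;\leq\;\sum_{F\in\Delta,\ F\cap W=\emptyset}\dim_{\Bbbk}\widetilde H_{|W|-i-1}({\rm link}_{\Delta}F;\Bbbk),$$
so that nonvanishing of the left side produces \emph{some} face $F$ disjoint from $W$ (of any size up to $n-|W|$) with $\widetilde H_{j-1}({\rm link}_\Delta F;\Bbbk)\neq 0$. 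The combinatorial characterization of the Serre depth (Lemma~\ref{depth}) forbids exactly such a face once $|F|\leq S_r\mbox{-}{\rm depth}(\Bbbk[\Delta])-j-1$ and $j-1\leq r-2$, and the size bound $|F|\leq n-|W|=n-i-j$ closes the argument. So the missing ingredient in your proposal is precisely this lemma: you should replace the single candidate face $W^c$ by the sum over all faces disjoint from $W$, at which point the dualized-resolution bookkeeping becomes unnecessary.
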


Moreover, we provide the following characterization of the Serre depth of the Stanley--Reisner ring of a simplicial complex in terms of the minimal free resolution of the Stanley--Reisner ring of its Alexander dual complex as follows:

\begin{thm}[\mbox{\rm see, Theorem \ref{eq dual}}]\label{eq dual intro}
Let $\Delta$ be a pure simplicial complex on $[n]$ and let $r\geq2$. Then we have 
$$\max\{j\mid \beta_{i,i+j}(\Bbbk[\Delta^{\vee}])\neq 0\mbox{ for some }i\leq r\}= n-S_{r}\mbox{-}{\rm depth}(\Bbbk[\Delta])-1.$$
\end{thm}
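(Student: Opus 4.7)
The plan is to translate both sides of the claimed equality into combinatorial data on $\Delta$, using Hochster's formulas and simplicial Alexander duality, and then match them. I will adopt throughout the standard conventions $\widetilde{H}^{-1}(\{\emptyset\};\Bbbk)=\Bbbk$ and $\widetilde{H}^{-1}(\emptyset;\Bbbk)=0$; these matter in the boundary case $i=1$.

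First I would compute $\dim K^{j}_{\Bbbk[\Delta]}$ combinatorially. By local duality $K^{j}_{\Bbbk[\Delta]}$ is the Matlis dual of $H^{j}_{\mathfrak{m}}(\Bbbk[\Delta])$, so as a squarefree module its associated primes $\mathfrak{p}_{[n]\setminus F}$ are controlled by Hochster's formula for local cohomology. Since $\dim S/\mathfrak{p}_{[n]\setminus F}=|F|$, this gives
\[
\dim K^{j}_{\Bbbk[\Delta]}=\max\bigl\{|F|:F\in\Delta,\ \widetilde{H}^{j-|F|-1}(\mathrm{link}_{\Delta}F;\Bbbk)\neq 0\bigr\}.
\]
Setting $s=j-|F|-1$, the inequality $\dim K^{j}\geq j-r+1$ becomes equivalent to the existence of a pair $(F,s)$ with $F\in\Delta$, $s\leq r-2$, $\widetilde{H}^{s}(\mathrm{link}_{\Delta}F;\Bbbk)\neq 0$, and $j=|F|+s+1$. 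Consequently $S_{r}\text{-depth}(\Bbbk[\Delta])$ equals the minimum of $|F|+s+1$ over exactly this set of pairs.

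Next I would compute the Betti numbers. Hochster's formula gives, for $i\geq 1$,
\[
\beta_{i,i+j}(\Bbbk[\Delta^{\vee}])=\sum_{|W|=i+j}\dim_{\Bbbk}\widetilde{H}_{j-1}((\Delta^{\vee})_{W};\Bbbk).
\]
The key input is the simplicial identity $((\Delta^{\vee})_{W})^{\vee_{W}}=\mathrm{link}_{\Delta}([n]\setminus W)$ (the Alexander dual of the restriction, taken over the vertex set $W$), which I would verify by a direct face-by-face comparison: $G\subseteq W$ lies in the left-hand side iff $([n]\setminus W)\cup G\in\Delta$, and the same characterizes the right-hand side. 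Combining with combinatorial Alexander duality on $W$ and setting $F=[n]\setminus W$ (so that $|F|=n-i-j$ and $|W|-j-2=i-2$) converts the sum into
\[
\beta_{i,i+j}(\Bbbk[\Delta^{\vee}])=\sum_{F\in\Delta,\ |F|=n-i-j}\dim_{\Bbbk}\widetilde{H}^{i-2}(\mathrm{link}_{\Delta}F;\Bbbk).
\]

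Finally, maximizing $j$ subject to $\beta_{i,i+j}(\Bbbk[\Delta^{\vee}])\neq 0$ for some $1\leq i\leq r$ becomes, after the substitution $s=i-2$, the problem of minimizing $|F|+s+1$ over exactly the set of pairs appearing in the description of $S_{r}\text{-depth}(\Bbbk[\Delta])$. Hence the maximum $j$ equals $n-1-S_{r}\text{-depth}(\Bbbk[\Delta])$, as required; the trivial $i=0$ contribution $j=0$ is dominated by this value whenever $\Bbbk[\Delta]$ is not a polynomial ring. The main obstacle I anticipate is the careful handling of boundary conventions in Alexander duality: distinguishing the void complex from the irrelevant complex $\{\emptyset\}$ is precisely what makes the $i=1$ case of the Betti formula correctly recover the minimal generators of $I_{\Delta^{\vee}}$, which by complementation correspond to the facets of $\Delta$ and fix the elementary lower bound $n-\dim\Bbbk[\Delta]-1$ for the maximum.
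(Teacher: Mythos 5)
Your proof is correct and follows essentially the same route as the paper: both sides are translated into the non-vanishing of $\widetilde{H}_{i-2}(\mathrm{link}_{\Delta}F;\Bbbk)$ via the link-homology characterization of the Serre depth (the paper's Lemma~\ref{depth}) and the Eagon--Reiner ``local Alexander duality'' form of Hochster's formula, which the paper quotes directly and you re-derive from the restriction version plus combinatorial Alexander duality. The only organizational difference is that you exhibit both quantities as solutions of one optimization problem over pairs $(F,s)$, whereas the paper proves the two inequalities separately; the substance is identical.
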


By using this characterization of the Serre depth, we generalize the results on the depth of Stanley--Reisner rings due to Terai \cite[Corollary 0.3]{t} and Yanagawa \cite[Corollary 3.7]{y} with Terai to the case that of the Serre depth. These results are themselves generalizations of the classical result by Eagon and Reiner \cite[Theorem 3]{er}. Motivated by this, we establish the following corollary, which unifies and extends the above results:
\begin{cor}[\mbox{\rm see, Corollary \ref{gene}}]
For a pure simplicial complex $\Delta$, we have  
$${\rm reg}_{\leq r-1}I_{\Delta^{\vee}}-{\rm indeg}I_{\Delta^{\vee}}=\dim \Bbbk[\Delta]-S_{r}\mbox{-}{\rm depth}(\Bbbk[\Delta]),$$where ${\rm reg}_{\leq r}I_{\Delta^{\vee}}=\max\{j\,\,:\beta_{i, i+j}(I_{\Delta^{\vee}})\neq 0\mbox{ for some }i\leq r\}.$
\end{cor}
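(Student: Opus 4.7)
The plan is to deduce the corollary as a direct consequence of Theorem \ref{eq dual} by (i) re-expressing its left-hand side in terms of Betti numbers of $I_{\Delta^{\vee}}$ rather than of $\Bbbk[\Delta^{\vee}]$, and (ii) extracting the initial degree ${\rm indeg}I_{\Delta^{\vee}}$ from the pure facet structure of $\Delta$. Both steps are of a bookkeeping character, so I do not anticipate a serious obstacle beyond carefully tracking indices.

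For (i), the short exact sequence $0 \to I_{\Delta^{\vee}} \to S \to \Bbbk[\Delta^{\vee}] \to 0$ produces the standard index shift $\beta_{i,j}(\Bbbk[\Delta^{\vee}]) = \beta_{i-1,j}(I_{\Delta^{\vee}})$ for every $i \geq 1$. Reindexing by $k = i-1$ and $\ell = j+1$, the set of pairs $(i, j)$ with $1 \leq i \leq r$ and $\beta_{i,i+j}(\Bbbk[\Delta^{\vee}]) \neq 0$ matches bijectively the set of pairs $(k, \ell)$ with $0 \leq k \leq r-1$ and $\beta_{k,k+\ell}(I_{\Delta^{\vee}}) \neq 0$, via $j = \ell - 1$. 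Hence
\[
\max\{j : \beta_{i,i+j}(\Bbbk[\Delta^{\vee}]) \neq 0 \text{ for some } 1 \leq i \leq r\} = {\rm reg}_{\leq r-1}I_{\Delta^{\vee}} - 1.
\]
The $i = 0$ contribution is only $\beta_{0,0}(\Bbbk[\Delta^{\vee}]) = 1$, giving $j = 0$, which cannot exceed the right-hand side except in the trivial case $\Delta = 2^{[n]}$; that case is handled separately. Invoking Theorem \ref{eq dual} then yields ${\rm reg}_{\leq r-1}I_{\Delta^{\vee}} = n - S_{r}\mbox{-}{\rm depth}(\Bbbk[\Delta])$.

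For (ii), the minimal generators of $I_{\Delta^{\vee}}$ are the squarefree monomials $\prod_{i \notin F} x_i$ as $F$ ranges over the facets of $\Delta$, each of degree $n - |F|$. The purity hypothesis forces $|F| = \dim \Bbbk[\Delta]$ for every facet, so ${\rm indeg}I_{\Delta^{\vee}} = n - \dim \Bbbk[\Delta]$. Subtracting this from the identity above gives
\[
{\rm reg}_{\leq r-1}I_{\Delta^{\vee}} - {\rm indeg}I_{\Delta^{\vee}} = \dim \Bbbk[\Delta] - S_{r}\mbox{-}{\rm depth}(\Bbbk[\Delta]),
\]
which is the claim. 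The only subtle point is keeping the two indexing conventions (for a quotient versus an ideal) aligned and observing that the purity hypothesis is used exactly once, precisely to identify ${\rm indeg}I_{\Delta^{\vee}}$.
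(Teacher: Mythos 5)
Your proposal is correct and follows essentially the same route as the paper: the paper likewise records the degree shift ${\rm reg}_{\leq r}\Bbbk[\Delta^{\vee}]={\rm reg}_{\leq r-1}I_{\Delta^{\vee}}-1$ coming from $0\to I_{\Delta^{\vee}}\to S\to \Bbbk[\Delta^{\vee}]\to 0$, applies Theorem \ref{eq dual} to get ${\rm reg}_{\leq r-1}I_{\Delta^{\vee}}=n-S_{r}\mbox{-}{\rm depth}(\Bbbk[\Delta])$, and uses purity to identify ${\rm indeg}\,I_{\Delta^{\vee}}={\rm ht}\,I_{\Delta}=n-\dim\Bbbk[\Delta]$. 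Your version is just a more explicit bookkeeping of the same argument (including the harmless $i=0$ term), so there is nothing to add.
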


Also, a beautiful result on the depth of Stanley--Reisner rings was proved by Smith \cite[Theorem 3.7]{s1}. We extend this result to the Serre depth by using Theorem \ref{eq dual intro}:
\begin{thm}[\mbox{\rm see, Theorem \ref{formula}}]
Let $\Delta$ be a pure simplicial complex with $\dim \Delta=d-1$ and $2\leq r\leq d$. Then, we have 
$$S_{r}\mbox{-}{\rm depth}(\Bbbk[\Delta])=1+\max\{i\mid \Bbbk[\Delta^{i}]\mbox{ satisfies Serre's condition }(S_{r})\},$$
where $\Delta^{i}=\{F\mid {\rm dim}F\leq i\}$ for $-1\leq i\leq \dim \Delta$. 
\end{thm}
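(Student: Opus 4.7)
The plan is to apply Theorem~\ref{eq dual intro} to both $\Delta$ and each of its skeletons $\Delta^i$, reducing the assertion to a comparison of the graded Betti numbers of $\Bbbk[\Delta^\vee]$ and $\Bbbk[(\Delta^i)^\vee]$ in an appropriate range of homological and internal degrees.

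The key step is to establish the following comparison lemma: for every $-1 \le i \le d-1$, every $k \ge 0$, and every $j \ge n-i-1$,
$$\beta_{k,\,k+j}(\Bbbk[(\Delta^i)^\vee]) = \beta_{k,\,k+j}(\Bbbk[\Delta^\vee]).$$
To prove this I would combine Hochster's formula with Alexander duality on each induced subcomplex, yielding
$$\beta_{k,\,k+j}(\Bbbk[\Gamma^\vee]) = \sum_{F \subseteq [n],\,|F|=n-k-j} \dim_\Bbbk \tilde{H}_{k-2}(\operatorname{link}_\Gamma F;\Bbbk)$$
for any simplicial complex $\Gamma$ on $[n]$. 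Specializing to $\Gamma = \Delta^i$ and using the identification $\operatorname{link}_{\Delta^i} F = (\operatorname{link}_\Delta F)^{\,i-|F|}$, the hypothesis $j \ge n-i-1$ simultaneously forces $|F| = n-k-j \le i+1$, so the indexing set of faces matches that for $\Delta$, and also forces $i - |F| \ge k-1$, which is precisely the range in which $\tilde{H}_{k-2}$ of a simplicial complex agrees with $\tilde{H}_{k-2}$ of its $(i-|F|)$-skeleton. Thus the two Hochster sums agree term by term.

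Once the comparison lemma is in place, set $s = S_{r}\mbox{-}\mathrm{depth}(\Bbbk[\Delta])$. Since $\Delta$ is pure, each $\Delta^i$ (for $-1 \le i \le d-1$) is pure of dimension $i$, so Theorem~\ref{eq dual intro} applies to both. It gives
$$\max\{j \mid \beta_{k,k+j}(\Bbbk[\Delta^\vee]) \ne 0 \text{ for some } k \le r\} = n-s-1,$$
and, for each $i$, since $S_{r}\mbox{-}\mathrm{depth}(\Bbbk[\Delta^i]) \le \dim\Bbbk[\Delta^i] = i+1$ with equality precisely when $\Bbbk[\Delta^i]$ satisfies $(S_r)$, the ring $\Bbbk[\Delta^i]$ satisfies $(S_r)$ if and only if $\beta_{k,k+j}(\Bbbk[(\Delta^i)^\vee]) = 0$ for all $k \le r$ and $j \ge n-i-1$. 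By the comparison lemma, this is in turn equivalent to $\beta_{k,k+j}(\Bbbk[\Delta^\vee]) = 0$ for all $k \le r$ and $j \ge n-i-1$, that is, to $n-s-1 \le n-i-2$, or simply $i \le s-1$. Hence $\max\{i \mid \Bbbk[\Delta^i]\text{ satisfies }(S_r)\} = s-1$, which gives the claimed formula.

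The main technical hurdle will be the comparison lemma. Although the argument becomes routine once one sees what to do, it requires careful bookkeeping of how the skeleton operation on links interacts with Alexander duality through Hochster's formula, and in particular a clean identification of the range of internal degrees in which passage to a skeleton does not alter the relevant reduced homology.
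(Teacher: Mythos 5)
Your argument is correct, and it is essentially the paper's own route: both proofs hinge on Theorem \ref{eq dual intro}, Hochster's formula for the Alexander dual, the identity ${\rm link}_{\Delta^{i}}F=({\rm link}_{\Delta}F)^{i-|F|}$, and the fact that passing to an $m$-skeleton with $m\geq k-1$ does not change $\widetilde{H}_{k-2}$. The only difference is organizational — you package these ingredients into a uniform Betti-number comparison lemma in the range $j\geq n-i-1$, whereas the paper chases individual nonvanishing homology classes and verifies $(S_{r})$ for $\Delta^{s-1}$ directly via Lemma \ref{depth} — but the mathematical content is the same.
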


Moreover, we compute the Serre depth of a Stanley--Reisner ring after applying an operation known as a 1-vertex inflation, based on the Serre depth prior to this operation (see, Theorem \ref{1-vertex}).

In Section \ref{mono}, we study the Serre depth on monomial ideals, including symbolic powers of Stanley--Reisner ideals. To investigate arbitrary monomial ideals via squarefree monomial ideals, the technique known as the polarization. We prove that, as with depth, the Serre depth is preserved under the polarization (see, Proposition \ref{polarization}).
Rinaldo, Terai, and Yoshida described when the quotient ring modulo the second symbolic power of a Stanley--Reisner ideal satisfies Serre's condition $(S_{2})$ in terms of simplicial complexes \cite[Corollary 3.3]{rty}. We generalize this result by using the Serre depth for $(S_{2})$ as follows: 
\begin{thm}[\mbox{\rm see, Theorem \ref{S2depth}}]
Let $\Delta$ be a pure simplicial complex with $ \dim \Delta=d-1$ and $2 \le s\le d$.
Then the following conditions are equivalent$:$
\begin{enumerate}
\item $\mbox{\rm $S_2$-depth}(S/I_{\Delta}^{(2)})\ge s$.
\item ${\rm diam} (({\rm link}_{\Delta} F)^{(1)}) \le 2$ for any face $F \in \Delta$ with 
$\dim {\rm link}_{\Delta} F \ge d-s+1$. 
\end{enumerate}
\end{thm}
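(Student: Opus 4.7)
The plan is to unpack the Serre-depth condition into a family of single-prime depth conditions at face primes of $S/I_{\Delta}^{(2)}$, and then to match each condition against a depth-$2$ refinement of the Rinaldo--Terai--Yoshida criterion applied to the corresponding link. Starting from the defining formula $S_{2}\text{-depth}(M) = \min\{j\colon \dim K_{M}^{j}\ge j-1\}$, the inequality $S_{2}\text{-depth}(M)\ge s$ is equivalent to $\dim K_{M}^{j}\le j-2$ for all $j<s$. Writing $K_{M}^{j}\cong \mathrm{Ext}_{S}^{\,n-j}(M,\omega_{S})$ and localizing at a prime $\mathfrak p$ with $c=\dim S/\mathfrak p$, local duality yields $(K_{M}^{j})_{\mathfrak p}\cong \mathrm{Hom}_{\Bbbk}(H^{\,j-c}_{\mathfrak p S_{\mathfrak p}}(M_{\mathfrak p}),\Bbbk)$. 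Setting $i=j-c$, the inequalities $\dim K_{M}^{j}\le j-2$ for $j<s$ translate into: $\mathrm{depth}(M_{\mathfrak p})\ge 2$ whenever $\dim S/\mathfrak p\le s-2$, and $\mathrm{depth}(M_{\mathfrak p})\ge 1$ whenever $\dim S/\mathfrak p=s-1$.

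Next I would specialize to $M=S/I_{\Delta}^{(2)}$ and $\mathfrak p=\mathfrak p_{F}$. Since $\Delta$ is pure, $\mathrm{Ass}(S/I_{\Delta}^{(2)})$ consists only of the facet primes (all of codimension $d$), so the depth-$\ge 1$ requirement at primes with $|F|=s-1<d$ is automatic. The standard identification---either through compatibility of symbolic powers with localization at face primes, or via the polarization proposition already established in the paper---says that $\mathrm{depth}(S/I_{\Delta}^{(2)})_{\mathfrak p_{F}}$ equals the depth at the maximal ideal of $S_{F}/I_{\mathrm{link}_{\Delta}F}^{(2)}$, where $S_{F}$ is the polynomial ring on the vertices of $\mathrm{link}_{\Delta}F$. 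Thus the pointwise criterion collapses to: $\mathrm{depth}(S_{F}/I_{\mathrm{link}_{\Delta}F}^{(2)})\ge 2$ for every $F\in\Delta$ with $|F|\le s-2$, equivalently with $\dim\mathrm{link}_{\Delta}F\ge d-s+1$.

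What remains is the main technical obstacle: a single-depth refinement of Rinaldo--Terai--Yoshida, namely that for a pure complex $\Delta'$ with $\dim\Delta'\ge 1$ one has $\mathrm{depth}(S'/I_{\Delta'}^{(2)})\ge 2$ if and only if $\mathrm{diam}(({\Delta'})^{(1)})\le 2$. I would derive this from the long exact sequence in local cohomology attached to
\[
0\longrightarrow I_{\Delta'}/I_{\Delta'}^{(2)}\longrightarrow S'/I_{\Delta'}^{(2)}\longrightarrow S'/I_{\Delta'}\longrightarrow 0,
\]
reducing the vanishing of $H^{0}_{\mathfrak m}$ and $H^{1}_{\mathfrak m}$ of $S'/I_{\Delta'}^{(2)}$ to a combinatorial vanishing on $I_{\Delta'}/I_{\Delta'}^{(2)}$, which an explicit analysis of the primary decomposition of $I_{\Delta'}^{(2)}$ (or, equivalently, a Hochster-type cohomology computation on its polarization in the spirit of the RTY argument at the level of a single depth) identifies with $\mathrm{diam}(({\Delta'})^{(1)})\le 2$. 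Applying this equivalence to $\Delta'=\mathrm{link}_{\Delta}F$ for each $F$ in the stated range then yields condition (2), establishing the equivalence with (1).
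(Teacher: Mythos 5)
Your proposal is correct in outline and follows essentially the same skeleton as the paper's argument: both reduce the inequality $S_{2}\mbox{-}{\rm depth}(S/I_{\Delta}^{(2)})\ge s$ to the statement that ${\rm depth}$ of the second symbolic power of $I_{{\rm link}_{\Delta}F}$ is at least $2$ for every face $F$ with $\dim {\rm link}_{\Delta}F\ge d-s+1$, and both then convert that depth condition into the diameter bound on the $1$-skeleton of the link. The differences lie in the mechanism. For the reduction to links, the paper reads everything off the generalized Takayama formula (Theorem \ref{Takayama} and Corollary \ref{dual}), which automatically handles two points your local-duality-and-localization argument leaves implicit: first, that it suffices to test $\dim K^{j}_{S/I_{\Delta}^{(2)}}\le j-2$ at monomial (face) primes, which holds because $K^{j}$ is $\mathbb{Z}^{n}$-graded; second, that ${\rm depth}\bigl((S/I_{\Delta}^{(2)})_{\mathfrak p_{F}}\bigr)$ really coincides with ${\rm depth}\bigl(\Bbbk[V({\rm link}_{\Delta}F)]/I^{(2)}_{{\rm link}_{\Delta}F}\bigr)$, which needs a word about those vertices of $[n]\setminus F$ that lie in no face of the link (the paper's Proposition \ref{S_2-depth alg} is the precise form of this identification, again obtained from Takayama's formula rather than from localization of symbolic powers). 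These points are routine, so your route does go through, but they should be stated. The more substantive remark is that what you call the ``main technical obstacle''---that ${\rm depth}(S'/I_{\Delta'}^{(2)})\ge 2$ if and only if ${\rm diam}((\Delta')^{(1)})\le 2$ for a complex of dimension at least $1$---is not an obstacle at all: it is exactly \cite[Theorem 3.2]{rty}, which the paper simply cites inside Lemma \ref{global}. Your sketched re-derivation via the sequence $0\to I_{\Delta'}/I_{\Delta'}^{(2)}\to S'/I_{\Delta'}^{(2)}\to S'/I_{\Delta'}\to 0$ is the least developed part of the proposal (the long exact sequence yields the desired vanishing only after a genuine combinatorial analysis of $H^{0}_{\mathfrak m}$ and $H^{1}_{\mathfrak m}$ of $I_{\Delta'}/I_{\Delta'}^{(2)}$, which you do not carry out, and it would not by itself deliver the converse direction); since the statement is available in the literature, however, this does not create a gap in the overall argument---it is simply work you did not need to redo.
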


In the rest of this section, we discuss the Serre depth for $(S_{2})$ and the depth on the symbolic powers of Stanley--Reisner ideals. There has been extensive research on the depth on the symbolic powers (\cite{cpfty, hlt, hktt, kty, mmvv, nt, rty, sf1, sf3, tt, v0}). In Theorem \ref{1st local coho}, we give a necessary and sufficient condition for the vanishing of the first local cohomology module on the symbolic powers of Stanley–Reisner ideals in terms of simplicial complexes. Accordingly, we obtain that the combinatorial characterization of the Serre depth for $(S_{2})$ on the symbolic powers of Stanley--Reisner ideals (see, Corollary \ref{combi of S_2-depth}). Moreover, we generalize

Also, it had been an open question whether the depth on the symbolic powers of Stanley–Reisner ideals satisfies a non-increasing property, but Nguyen and Trung provided a negative answer (\cite[Theorem 2.8]{nt}). In the following theorem, we construct a simplicial complex which is Cohen–Macaulay and whose the second symbolic power is also Cohen–Macaulay, but the depth does not satisfy this property. We also show that the Serre-depth for $(S_{2})$ does not satisfy this property. Note that  squarefree monomial ideals constructed by Nguyen and Trung is not unmixed.

\begin{thm}[\mbox{\rm see, Theorem \ref{CE of non-increasing}}]
There exists a non-cone simplicial complex $\Delta$ with $\dim\Delta=d-1\geq 2$ such that the following conditions are satisfied:  
\begin{enumerate}
\item $\Delta$ is pure and shellable,  
\item $S/I_{\Delta}^{(2)}$ is Cohen--Macaulay, 
\item ${\rm depth}(S/I_{\Delta}^{(d+1)})=1\mbox{ and }{\rm depth}(S/I_{\Delta}^{(d+2)})\geq2$, 
\item $S_{2}\mbox{-}{\rm depth}(S/I_{\Delta}^{(d+1)})=1\mbox{ and }S_{2}\mbox{-}{\rm depth}(S/I_{\Delta}^{(d+2)})\geq2$. 
\end{enumerate}
\end{thm}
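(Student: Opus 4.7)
The plan is to exhibit an explicit simplicial complex $\Delta$ and verify each of the four properties directly, using the combinatorial characterizations of $S_{2}\mbox{-}{\rm depth}$ and of the local cohomology of symbolic powers developed earlier in the paper. A natural source of candidates is the family of pure shellable complexes for which one can tune both the diameters of links in the original $\Delta$ (governing the Cohen--Macaulayness of $S/I_{\Delta}^{(2)}$) and the connectivity of the subcomplexes produced by Takayama's formula at higher symbolic powers.

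First, I would fix a dimension $d-1\geq 2$ and build $\Delta$ on an explicit vertex set as a pure shellable complex whose minimal non-faces are chosen so that the $1$-skeleton of every link has diameter at most $2$, while engineering a specific codimension-one face whose link is just complicated enough to make the symbolic power of order $d+1$ acquire a nontrivial first local cohomology in some multidegree that vanishes again at order $d+2$. Property (1) would be witnessed by an explicit shelling order on the facets, and property (2) would follow by applying Theorem \ref{S2depth} with $s=d$ to reduce Cohen--Macaulayness of $S/I_{\Delta}^{(2)}$ to checking the diameter-$2$ condition on $({\rm link}_{\Delta}F)^{(1)}$ for every face $F\in\Delta$ with $\dim {\rm link}_{\Delta}F\geq 1$, which is a finite combinatorial check on $\Delta$.

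To verify (3) and (4) I would use Takayama's formula together with Theorem \ref{1st local coho} to compute $H^{0}_{\mathfrak{m}}(S/I_{\Delta}^{(k)})$ and $H^{1}_{\mathfrak{m}}(S/I_{\Delta}^{(k)})$ for $k=d+1,d+2$. The mechanism is that certain multidegrees $\mathbf{a}$ produce subcomplexes appearing in Takayama's formula which become disconnected precisely at $k=d+1$, yielding ${\rm depth}(S/I_{\Delta}^{(d+1)})=1$ and, via Corollary \ref{combi of S_2-depth}, $S_{2}\mbox{-}{\rm depth}(S/I_{\Delta}^{(d+1)})=1$; at $k=d+2$ the same subcomplexes reconnect, thanks to the additional generators of $I_{\Delta}^{(d+2)}$ available at one higher power, forcing both invariants to be at least $2$. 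The non-coneness of $\Delta$ would be verified directly from its list of vertices and facets.

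The main obstacle is the existence of the example: one must find a single $\Delta$ that simultaneously satisfies (1)--(4). Purity and shellability are easy to arrange, but the tension between (2), which requires uniform diameter-$2$ control across all links of $\Delta$, and (3)--(4), which require a carefully timed collapse and recovery of depth at powers $d+1$ and $d+2$, is subtle, since both phenomena are controlled by the links of $\Delta$ and the facets of its symbolic Newton polytope. Once the right $\Delta$ is discovered, the four verifications reduce to finite, if intricate, case analyses on its face lattice; the creative step is the construction itself.
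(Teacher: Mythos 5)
Your proposal describes a strategy but never produces the object whose existence the theorem asserts; for an existence statement of this kind the explicit construction \emph{is} the proof, and you concede as much when you write that ``the creative step is the construction itself.'' The paper's argument consists precisely of exhibiting the complex --- on the vertex set $\{x_{1},\ldots,x_{d},y_{1},\ldots,y_{d},z\}$ with the two ``large'' facets $\{x_{1},\ldots,x_{d}\}$ and $\{y_{1},\ldots,y_{d}\}$ together with all $d$-subsets of the form $\{x_{i_{1}},\ldots,x_{i_{k}},y_{j_{1}},\ldots,y_{j_{d-k-1}},z\}$ and the deleted simplices through $z$ --- and then carrying out the four verifications. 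Without a candidate complex there is nothing to check, so the proposal has a genuine and fatal gap: the entire mathematical content is missing. The intuition you describe (two facets that become isolated in $\Delta_{\bf a}(I_{\Delta}^{(d+1)})$ for ${\bf a}=(1,\ldots,1,0)$ but reconnect at power $d+2$) is indeed the mechanism the paper exploits, but stating the mechanism is not the same as realizing it.

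Two further points in your outline would not survive contact with the actual verification. First, you propose to establish property (2) by applying Theorem \ref{S2depth} with $s=d$; but that theorem only characterizes when $S_{2}\mbox{-}{\rm depth}(S/I_{\Delta}^{(2)})\ge s$, so at best it yields Serre's condition $(S_{2})$ for $S/I_{\Delta}^{(2)}$, which is strictly weaker than the Cohen--Macaulayness demanded by (2). The paper instead invokes the Minh--Trung criterion (\cite[Theorem 2.1]{mt1}): $S/I_{\Delta}^{(2)}$ is Cohen--Macaulay if and only if $\Delta$ and all the subcomplexes $\Delta_{V}$ with $2\le|V|\le d$ are Cohen--Macaulay, and then checks shellability of each $\Delta_{V}$ by a lengthy case analysis. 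Second, the claim that the disconnected subcomplexes ``reconnect'' at power $d+2$ is not automatic and is in fact the delicate part: the paper proves connectivity of every $\Delta_{\bf a}(I_{\Delta}^{(d+2)})$ by an averaging argument on the coordinates of ${\bf a}$ that produces an explicit bridging facet containing $z$; nothing in your proposal indicates how this would be done, and for a generic pure shellable complex with diameter-$2$ links it simply fails.
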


Moreover, we discuss the Serre depth for $(S_{2})$ and the depth on the symbolic powers of  Stanley--Reisner ideals when the dimension of a simplicial complex is 1. We prove that the Serre depth for $(S_{2})$ and the depth satisfies a non-increasing property in this case: 
\begin{thm}[\mbox{\rm see, Theorem \ref{non-increasing for dim1}}]
Let $\Delta$ be a simplicial complex with $\dim \Delta=1$ and $\ell\geq1$. Then we have $${\rm depth}(S/I_{\Delta}^{(\ell)})\geq{\rm depth}(S/I_{\Delta}^{(\ell+1)}).$$Moreover, if $\Delta$ is pure, then we have$$S_{2}\mbox{-}{\rm depth}(S/I_{\Delta}^{(\ell)})\geq S_{2}\mbox{-}{\rm depth}(S/I_{\Delta}^{(\ell+1)}).$$
\end{thm}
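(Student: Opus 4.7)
The plan is to bound the depth between $1$ and $2$, dispose of the non-pure case by a quick dimension count, verify that the Serre depth and the depth coincide in the pure case, and reduce the remaining inequality to a combinatorial implication via the characterization of the vanishing of $H^{1}_{\mathfrak m}$ on symbolic powers established earlier in the paper.

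First, since $\dim \Delta = 1$ we have $\dim S/I_\Delta^{(\ell)} = 2$, and the primary decomposition $I_\Delta^{(\ell)} = \bigcap_F \mathfrak p_F^{\ell}$ (over facets $F$ of $\Delta$) gives ${\rm Ass}(S/I_\Delta^{(\ell)}) = \{\mathfrak p_F : F \text{ a facet}\}$, since each $\mathfrak p_F^\ell$ is $\mathfrak p_F$-primary (its generators form a regular sequence). Thus $\mathfrak m$ is never associated, so ${\rm depth}(S/I_\Delta^{(\ell)}) \in \{1, 2\}$ for every $\ell$. If $\Delta$ is not pure, some facet is a vertex $\{v\}$ with $\dim S/\mathfrak p_{\{v\}} = 1$, forcing ${\rm depth}(S/I_\Delta^{(\ell)}) \le 1$, hence $=1$. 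The depth sequence is then constantly $1$ and the first inequality is trivial; the $S_{2}$-depth statement is only asserted for $\Delta$ pure.

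Assume next that $\Delta$ is pure. Then $S/I_\Delta^{(\ell)}$ is unmixed of dimension $2$, so the Serre depth for $(S_{2})$ is defined. Because ${\rm depth} \ge 1$ forces $K^{0}_{S/I_\Delta^{(\ell)}} = 0$ and because $\dim K^{2}_{S/I_\Delta^{(\ell)}} = \dim S/I_\Delta^{(\ell)} = 2 \ge 1$ automatically, the definition of the Serre depth yields $S_{2}\mbox{-}{\rm depth}(S/I_\Delta^{(\ell)}) = 1$ precisely when $K^{1}_{S/I_\Delta^{(\ell)}} \ne 0$, i.e.\ when ${\rm depth}(S/I_\Delta^{(\ell)}) = 1$, and $S_{2}\mbox{-}{\rm depth}(S/I_\Delta^{(\ell)}) = 2$ otherwise. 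Thus the Serre depth and the depth coincide in the pure case, and the two inequalities in the theorem reduce to the single implication: if $S/I_\Delta^{(\ell+1)}$ is Cohen--Macaulay, then so is $S/I_\Delta^{(\ell)}$.

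To prove this last implication I would apply Corollary~\ref{combi of S_2-depth} (resting on Theorem~\ref{1st local coho}), which rephrases the vanishing of $H^{1}_{\mathfrak m}(S/I_\Delta^{(\ell)})$—equivalent here to Cohen--Macaulayness—as a combinatorial condition $(C_{\ell})$ on $\Delta$ of diameter-type on $({\rm link}_\Delta F)^{(1)}$ over faces $F$ with sufficiently large link-dimension. The whole argument then boils down to the combinatorial implication $(C_{\ell+1}) \Rightarrow (C_{\ell})$, which is the main obstacle. In the $\dim \Delta = 1$ regime I expect this to be a direct inspection, tracing through the progression \emph{connected} at $\ell = 1$, \emph{diameter $\le 2$} at $\ell = 2$, and a matroidal (complete-multipartite) condition for $\ell \ge 3$, each condition being at least as restrictive as the one before it.
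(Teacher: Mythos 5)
Your reduction is sound and matches the paper's: since $\dim S/I_{\Delta}^{(\ell)}=2$ and $\mathfrak{m}$ is never associated, the depth lies in $\{1,2\}$; the non-pure case is constantly $1$ (your associated-prime argument here is in fact more careful than the paper's, which tacitly takes both separating facets to be edges); and in the pure case $S_{2}\mbox{-}{\rm depth}(S/I_{\Delta}^{(\ell)})={\rm depth}(S/I_{\Delta}^{(\ell)})$ because $S_{2}\mbox{-}{\rm depth}=1$ exactly when $K^{1}\neq 0$. So the theorem correctly reduces to the single implication: $H^{1}_{\mathfrak m}(S/I_{\Delta}^{(\ell)})\neq 0$ implies $H^{1}_{\mathfrak m}(S/I_{\Delta}^{(\ell+1)})\neq 0$.

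That implication, however, is the entire mathematical content of the theorem, and you do not prove it. You defer it to ``a direct inspection'' of a progression of conditions (connected at $\ell=1$, ${\rm diam}\leq 2$ at $\ell=2$, matroid for $\ell\geq 3$) in which ``each condition is at least as restrictive as the one before it.'' Two problems. First, that progression is exactly Corollary \ref{classify of the depth}, which the paper deduces \emph{from} this theorem; asserting that each condition is more restrictive than the previous one is a restatement of the non-increasing property, not a proof of it, and making it non-circular would require independently establishing (e.g.\ via Minh--Trung/Varbaro) that Cohen--Macaulayness of $S/I_{\Delta}^{(\ell)}$ for a single $\ell\geq 3$ forces $\Delta$ to be a matroid --- a nontrivial result you neither cite precisely nor prove. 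Second, Corollary \ref{combi of S_2-depth} does not reduce to a clean diameter-type condition for general $\ell$; it quantifies over all weight vectors ${\bf a}$ and the facets of $\Delta_{\bf a}(I_{\Delta}^{(\ell)})$. The paper closes the gap by working directly with these Takayama complexes: if $\Delta_{\bf a}(I_{\Delta}^{(\ell)})$ is disconnected with separating edges $F_{0}=\{x_{1},x_{2}\}$ and $G_{0}=\{y_{1},y_{2}\}$, then either some $\{x_{i},y_{j}\}$ is a non-face, in which case incrementing $a_{x_{i}}$ and $a_{y_{j}}$ by $1$ yields ${\bf a}'$ with $\Delta_{{\bf a}'}(I_{\Delta}^{(\ell+1)})$ a subcomplex of $\Delta_{\bf a}(I_{\Delta}^{(\ell)})$ still containing $F_{0}$ and $G_{0}$ as facets, hence disconnected; or every $\{x_{i},y_{j}\}$ is a face, in which case (ordering $a_{x_{1}}\leq a_{x_{2}}$ and $a_{y_{1}}\leq a_{y_{2}}$) the edge $\{x_{2},y_{2}\}$ already survives in $\Delta_{\bf a}(I_{\Delta}^{(\ell)})$ and connects $F_{0}$ to $G_{0}$, contradicting disconnectedness. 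Some explicit construction of this kind is required; your proposal stops exactly where the work begins.
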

As a corollary, we completely classify the Serre depth for $(S_{2})$ and the depth of the symbolic powers of Stanley–Reisner ideals when the dimension of a simplicial complex is 1:  
\begin{cor}[\mbox{\rm see, Corollary \ref{classify of the depth}}]
Let $\Delta$ be a simplicial complex with $\dim \Delta=1$. Then the sequence of the depth on symbolic powers $({\rm depth}(S/I_{\Delta}^{(\ell)}))_{\ell\geq1}$ is as follows: 
$$({\rm depth}(S/I_{\Delta}^{(\ell)}))_{\ell\geq1}=
\begin{cases}
(2,2,2,\ldots) & \mbox{ if }\Delta\mbox{ is matroid}, \\
(2,2,1,\ldots) & \mbox{ if }{\rm diam}\Delta\leq2\mbox{ and }\Delta\mbox{ is not matroid}, \\
(2,1,1,\ldots) & \mbox{ if }3\leq{\rm diam}\Delta<\infty, \\
(1,1,1,\ldots) & \mbox{ if }{\rm diam}\Delta=\infty. 
\end{cases}
$$Moreover, if $\Delta$ is pure, then we have $(S_{2}\mbox{-}{\rm depth}(S/I_{\Delta}^{(\ell)}))_{\ell\geq1}=({\rm depth}(S/I_{\Delta}^{(\ell)}))_{\ell\geq1}$. 
\end{cor}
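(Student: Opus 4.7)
The plan is to combine the non-increasing property from Theorem \ref{non-increasing for dim1} with the combinatorial criterion of Theorem \ref{1st local coho} for vanishing of $H^{1}_{\mathfrak{m}}$ on symbolic powers. First I would observe that since $\dim \Delta = 1$ and $\Delta$ contains at least one edge, $\dim(S/I_{\Delta}^{(\ell)}) = 2$ for every $\ell \geq 1$; moreover ${\rm Ass}(S/I_{\Delta}^{(\ell)})$ consists of the minimal primes $P_{F}$ indexed by facets $F$ of $\Delta$, none of which equals $\mathfrak{m}$. Hence ${\rm depth}(S/I_{\Delta}^{(\ell)}) \in \{1,2\}$, and Cohen--Macaulayness is equivalent to the vanishing of $H^{1}_{\mathfrak{m}}(S/I_{\Delta}^{(\ell)})$.

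With this reduction I would dispose of the two extremal rows quickly. If ${\rm diam}\,\Delta = \infty$, then $\Delta$ is disconnected, so $\tilde{H}_{0}(\Delta;\Bbbk) \neq 0$ and hence $H^{1}_{\mathfrak{m}}(S/I_{\Delta}) \neq 0$ already at $\ell = 1$; the non-increasing property gives $(1,1,1,\ldots)$. If $\Delta$ is a matroid, then the classical theorem of Minh--Trung/Varbaro guarantees that $S/I_{\Delta}^{(\ell)}$ is Cohen--Macaulay for every $\ell \geq 1$, giving $(2,2,2,\ldots)$. For the row $3 \leq {\rm diam}\,\Delta < \infty$, $\Delta$ is connected so ${\rm depth}(S/I_{\Delta}) = 2$, while Theorem \ref{S2depth} applied with $s = 2$ shows that ${\rm diam}\,\Delta > 2$ forces $S/I_{\Delta}^{(2)}$ to fail $(S_{2})$, equivalently (since the dimension is $2$) to fail Cohen--Macaulayness, so the sequence is $(2,1,1,\ldots)$.

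The crux is the middle case ${\rm diam}\,\Delta \leq 2$ and $\Delta$ not a matroid. The equalities ${\rm depth}(S/I_{\Delta}^{(\ell)}) = 2$ for $\ell = 1, 2$ follow from connectedness together with Theorem \ref{S2depth} as above. To force the drop exactly at $\ell = 3$, I would use the fact that a $1$-dimensional simplicial complex is a matroid if and only if it is a complete multipartite graph; hence "not matroid" supplies three vertices $u, v, w$ with $\{u, w\} \in \Delta$ and $\{u, v\}, \{v, w\} \notin \Delta$. Feeding this triple into the combinatorial criterion of Theorem \ref{1st local coho} for $\ell = 3$ produces a nonzero class in $H^{1}_{\mathfrak{m}}(S/I_{\Delta}^{(3)})$, so ${\rm depth}(S/I_{\Delta}^{(3)}) = 1$, and the non-increasing property propagates this value to every $\ell \geq 3$. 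This explicit construction of a multigraded obstruction at $\ell = 3$ is the technical heart of the proof and the step most likely to require care; the matroid characterization via complete multipartiteness is precisely what makes the witness $\{u,v,w\}$ available.

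Finally, for the additional claim under the purity assumption, observe that in the pure dim-$1$ case every $S/I_{\Delta}^{(\ell)}$ is unmixed of dimension $2$, so the condition $(S_{2})$ coincides with Cohen--Macaulayness. Consequently, $S_{2}\mbox{-}{\rm depth}(S/I_{\Delta}^{(\ell)}) = 2$ iff ${\rm depth}(S/I_{\Delta}^{(\ell)}) = 2$, and both invariants equal $1$ otherwise, giving the asserted equality of sequences.
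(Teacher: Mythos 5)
Your proposal is correct and reaches all four rows of the table plus the purity claim, but in the decisive middle case it takes a genuinely different route from the paper. The paper disposes of the matroid dichotomy by citing the classification results of Minh--Trung and Varbaro (\cite[Theorem 3.5]{mt1}, \cite[Theorem 2.1]{v0}) together with \cite[Theorem 3.2]{rty} and the non-increasing property; in particular the fact that the depth drops exactly at $\ell=3$ for a non-matroid $\Delta$ is absorbed into those citations. You instead re-derive this drop from scratch: you use that a $1$-dimensional matroid complex is precisely a complete multipartite graph to extract a triple $u,v,w$ with $\{u,w\}\in\Delta$, $\{u,v\},\{v,w\}\notin\Delta$, and then build an explicit obstruction in $H^{1}_{\mathfrak m}(S/I_{\Delta}^{(3)})$. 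This buys a self-contained argument (and in fact makes explicit a step the paper leaves implicit), at the cost of having to verify the combinatorial witness. That verification is the one place you stop short, and it does require care: the most natural weight vector, $a_u=a_v=a_w=1$ and $0$ elsewhere, does \emph{not} disconnect $\Delta_{\bf a}(I_{\Delta}^{(3)})$. Take instead $a_u=a_w=1$, $a_v=2$, and $a_i=0$ otherwise. By Lemma \ref{symbolic power cpx} the facets of $\Delta_{\bf a}(I_{\Delta}^{(3)})$ are the edges $E$ with $\sum_{i\in E}a_i\ge 2$, namely $\{u,w\}$ and the edges through $v$; since $v$ is adjacent to neither $u$ nor $w$, the edge $\{u,w\}$ is a component of its own, so $\Delta_{\bf a}(I_{\Delta}^{(3)})$ is disconnected and Theorem \ref{Takayama} gives $H^{1}_{\mathfrak m}(S/I_{\Delta}^{(3)})\neq 0$. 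With that inserted, your argument is complete. Two harmless remarks: your appeal to Theorem \ref{S2depth} (stated for pure $\Delta$) in the rows with ${\rm diam}\,\Delta<\infty$ is legitimate because a connected $1$-dimensional complex is automatically pure; and your treatment of the purity claim via ``$(S_2)$ equals Cohen--Macaulay in dimension two'' matches what the paper extracts from the last part of the proof of Theorem \ref{non-increasing for dim1}.
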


Also, we obtain that if the number of vertices of a simplicial complex is less than or equal to 5, then the Serre depth for $(S_{2})$ and the depth on the symbolic powers of its Stanley–Reisner ideal satisfy a non-increasing property, otherwise, there exists an example that a non-increasing property does not hold (see, Proposition \ref{cor of dim1 case} and Example \ref{CE2}). Moreover, we prove that the sequence of the Serre depth for $(S_{2})$ on the symbolic powers is convergent and that its limit coincides with the minimum value: 
\begin{thm}[\mbox{\rm see, Theorem \ref{convergent}}]
For a pure simplicial complex $\Delta$, the sequence $\{S_{2}\mbox{-}{\rm depth}(S/I_{\Delta}^{(\ell)})\}_{\ell\geq1}^{\infty}$ is convergent. Moreover, we have $$\min_{\ell}\{S_{2}\mbox{-}{\rm depth}(S/I_{\Delta}^{(\ell)})\}=\lim_{\ell\rightarrow\infty}S_{2}\mbox{-}{\rm depth}(S/I_{\Delta}^{(\ell)}).$$
\end{thm}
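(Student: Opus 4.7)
The plan is as follows. Set $d=\dim S/I_\Delta$ and $a_\ell:=S_{2}\mbox{-}{\rm depth}(S/I_\Delta^{(\ell)})$. Because $\Delta$ is pure, each $I_\Delta^{(\ell)}$ is unmixed of dimension $d$, so $a_\ell\in\{1,\dots,d\}$. A sequence taking values in this finite set converges if and only if it is eventually constant, so proving convergence together with the equality $\lim a_\ell=m:=\min_\ell a_\ell$ is equivalent to producing an $N$ with $a_\ell=m$ for every $\ell\ge N$.

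The core of the proof is the multiplicative monotonicity
$$a_{k\ell}\le a_\ell\qquad\mbox{for all }k,\ell\ge 1.$$
The starting point is the identity $(I_\Delta^{(\ell)})^{(k)}=I_\Delta^{(k\ell)}$, which holds because the minimal primes of $I_\Delta^{(\ell)}$ coincide with those of $I_\Delta$ and each such prime $P$ is a monomial prime, whence every power $P^m$ is $P$-primary and the symbolic power of $I_\Delta^{(\ell)}$ is computed by intersecting these primary powers. Setting $J:=I_\Delta^{(\ell)}$, the inequality then reduces to $S_{2}\mbox{-}{\rm depth}(S/J^{(k)})\le S_{2}\mbox{-}{\rm depth}(S/J)$ for a general unmixed monomial ideal $J$, which I would deduce from the combinatorial description in Corollary~\ref{combi of S_2-depth}: the diameter-style condition on links characterizing $S_{2}\mbox{-}{\rm depth}(S/J^{(k)})\ge s$ is more demanding than the one characterizing $S_{2}\mbox{-}{\rm depth}(S/J)\ge s$, so the former implies the latter.

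With the monotonicity in place, choose $\ell_0$ with $a_{\ell_0}=m$; then $a_{k\ell_0}\le m$ and $a_{k\ell_0}\ge m$ together force $a_{k\ell_0}=m$ for all $k\ge 1$, exhibiting an infinite subsequence on which $a$ equals $m$. To upgrade this to eventual constancy of the full sequence, I would revisit the combinatorial characterization: for each face $F$ of $\Delta$ and each target value $s$, the condition making $F$ a witness to $a_\ell<s$ depends on $\ell$ in a way that is eventually stable---either holding for all sufficiently large $\ell$ or for none of them---since it is controlled by fixed diameter-type invariants of $({\rm link}_\Delta F)^{(1)}$. Since $\Delta$ has only finitely many faces, this yields an $N$ beyond which the entire profile of contributing faces is frozen, so $a_\ell$ is constant on $[N,\infty)$; the constant value equals $m$ by the subsequence argument.

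The hard part will be the general inequality $S_{2}\mbox{-}{\rm depth}(S/J^{(k)})\le S_{2}\mbox{-}{\rm depth}(S/J)$, since translating the shift $J\leadsto J^{(k)}$ through Corollary~\ref{combi of S_2-depth} requires controlling how the combinatorial data attached to the links transform under taking symbolic powers. A secondary difficulty is the eventual stability in $\ell$ of the per-face conditions: the multiplicative monotonicity alone only shows that the set $\{\ell:a_\ell\ge s\}$ is closed under division by positive integers, and converting this into full eventual constancy of $a_\ell$ requires the finer combinatorial analysis sketched above.
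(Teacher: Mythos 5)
Your overall strategy (values lie in a finite set, so convergence amounts to eventual constancy; find an inequality forcing $a_\ell$ down to the minimum for all large $\ell$) matches the shape of the paper's argument, but both of your key steps have gaps. First, the multiplicative monotonicity $a_{k\ell}\le a_\ell$ is indeed true (it is Lemma \ref{lemma2}), but your route to it does not work as stated: after reducing to $J=I_\Delta^{(\ell)}$ via $(I_\Delta^{(\ell)})^{(k)}=I_\Delta^{(k\ell)}$ you invoke Corollary \ref{combi of S_2-depth}, which is a statement about Stanley--Reisner ideals of simplicial complexes and does not apply to the non-squarefree ideal $J$. (A correct elementary route would be to observe from Lemma \ref{symbolic power cpx} that $\Delta_{k\mathbf a}(I_\Delta^{(k\ell)})=\Delta_{\mathbf a}(I_\Delta^{(\ell)})$, so disconnectedness of a degree complex at level $\ell$ propagates to level $k\ell$, and then apply Proposition \ref{S_2-depth alg} to each link; the paper instead obtains it as a special case of Lemma \ref{lemma1}.)

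Second, and more seriously, your passage from ``the minimum is attained on the subsequence of multiples of $\ell_0$'' to ``the full sequence is eventually constant'' is not established. You assert that the per-face witness condition is eventually stable in $\ell$ because it is ``controlled by fixed diameter-type invariants of $({\rm link}_\Delta F)^{(1)}$'', but the diameter characterization (Theorem \ref{S2depth}, Lemma \ref{global}) is special to the second symbolic power; for general $\ell$ the condition involves the complexes $\Delta_{\mathbf a}(I^{(\ell)})$ for all vectors $\mathbf a$, and eventual stability in $\ell$ is precisely the nontrivial content of the theorem (compare Example \ref{CE2}, where the sequence is $2,2,2,2,2,2,1,2,\ldots$, so no naive monotonicity or per-face stability is available for free). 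The paper closes this gap with Lemma \ref{lemma1}: $S_2\mbox{-}{\rm depth}(S/I_\Delta^{(m)})\ge S_2\mbox{-}{\rm depth}(S/I_\Delta^{(km+j)})$ for all $m-k\le j\le m$, proved by applying the Monta\~no--N\'u\~nez-Betancourt splitting result on depths of symbolic powers (\cite[Theorem 3.4]{mn}) to each link via Proposition \ref{S_2-depth alg}. Taking $m$ with $a_m=\min_\ell a_\ell$, every sufficiently large $\ell$ lies in one of the intervals $[km+m-k,\,km+m]$, which forces $a_\ell=a_m$ for all large $\ell$. Without an inequality of this interval-covering type, your argument does not yield convergence of the full sequence.
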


In the final section of this paper, we treat the Serre depth on the edge and cover ideals of graphs. As a corollary of Theorem \ref{eq dual intro}, we reformulate a result in \cite[Corollary 2.2]{th} and determine the Serre depth on the cover ideals of a special class of graphs. It has been an open question whether the depth on the symbolic powers of edge ideals satisfies the non-increasing property (cf. \cite[Problem 1.2]{kty}, \cite[Question 2.9]{nt}). As a natural analogue of this important problem, while it remains open for the depth, we prove that a non-increasing property of the Serre depth for $(S_{2})$ holds for the edge ideals of any well-covered graph. Note that well-coveredness means the edge ideal is unmixed. 

\begin{thm}[\mbox{\rm see, Theorem \ref{non-increasing of edge ideals}}]
For a well-covered graph $G$, we have$$S_{2}\mbox{-}{\rm depth}(S/I(G)^{(\ell)})\geq S_{2}\mbox{-}{\rm depth}(S/I(G)^{(\ell+1)})\mbox{ for all }\ell.$$
\end{thm}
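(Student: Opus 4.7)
The plan is to use the short exact sequence
\[
0 \longrightarrow M_\ell \longrightarrow S/I(G)^{(\ell+1)} \longrightarrow S/I(G)^{(\ell)} \longrightarrow 0,
\]
where $M_\ell = I(G)^{(\ell)}/I(G)^{(\ell+1)}$, and to track the induced cohomology in order to compare the two Serre depths. Setting $s = S_{2}\mbox{-}{\rm depth}(S/I(G)^{(\ell+1)})$, the task is, by definition of the Serre depth, to prove $\dim K_{S/I(G)^{(\ell)}}^{j} \le j-2$ for every $j < s$.

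Applying ${\rm Ext}_{S}^{\bullet}(-,\omega_{S})$ to the sequence above and passing through local duality yields the long exact sequence
\[
\cdots \longrightarrow K_{M_\ell}^{j+1} \longrightarrow K_{S/I(G)^{(\ell)}}^{j} \longrightarrow K_{S/I(G)^{(\ell+1)}}^{j} \longrightarrow K_{M_\ell}^{j} \longrightarrow \cdots.
\]
The standard dimension estimate for extensions then gives
\[
\dim K_{S/I(G)^{(\ell)}}^{j} \le \max\bigl(\dim K_{M_\ell}^{j+1},\ \dim K_{S/I(G)^{(\ell+1)}}^{j}\bigr).
\]
For $j<s$ the choice of $s$ guarantees $\dim K_{S/I(G)^{(\ell+1)}}^{j} \le j-2$, so the problem reduces to the cohomological bound $\dim K_{M_\ell}^{j+1} \le j-2$ for every $j<s$.

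The main obstacle is to establish this bound on $M_\ell$, and this is where the well-covered hypothesis is essential. Because $G$ is well-covered, the independence complex $\mathrm{Ind}(G)$ is pure, every symbolic power $I(G)^{(\ell)}$ is unmixed with the same set of minimal primes, and $M_\ell$ itself is an unmixed graded $S$-module of dimension $\dim S/I(G)$. I would compute the multigraded components of $H_{\mathfrak m}^{k}(M_\ell)$ via a Takayama-type formula, expressing each component as the reduced cohomology of an auxiliary simplicial complex built from links of faces of $\mathrm{Ind}(G)$ together with the symbolic-power data in the prescribed multidegree. Combining this with the combinatorial characterization of the Serre depth for $(S_{2})$ on symbolic powers given in Corollary~\ref{combi of S_2-depth}, the required bound on $\dim K_{M_\ell}^{j+1}$ translates into a diameter-type condition on links of faces of $\mathrm{Ind}(G)$ that becomes more restrictive as $\ell$ grows. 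Purity---a consequence of well-coveredness---is exactly what allows the condition at level $\ell+1$ to be inherited at level $\ell$, and after reducing to the squarefree setting via polarization (Proposition~\ref{polarization}) to deploy the combinatorial characterization in its cleanest form, the argument concludes.
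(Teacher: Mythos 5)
Your reduction breaks down at the key step. After the long exact sequence you need $\dim K_{M_\ell}^{j+1}\le j-2$ for all $j<s$, and in particular $K_{M_\ell}^{1}=0$ (take $j=0$, which always occurs since $s\ge 1$). This is false in general, even for the simplest well-covered graph: for $G$ a single edge $\{x,y\}$ one has $I(G)^{(\ell)}=(x^{\ell}y^{\ell})$, so $M_\ell\cong (S/(xy))(-2\ell)$ and $\dim K_{M_\ell}^{1}=1$, nowhere near the required $-\infty$. (The theorem is still true there -- the connecting maps in the long exact sequence are not surjective onto anything large -- but that is exactly the point: bounding $\dim K^{j}_{S/I^{(\ell)}}$ by $\max(\dim K^{j+1}_{M_\ell},\dim K^{j}_{S/I^{(\ell+1)}})$ throws away the information that actually makes the statement work.) Beyond this, the remainder of the argument is not carried out: there is no Takayama-type formula in the paper (or standard literature) for the module $I^{(\ell)}/I^{(\ell+1)}$, only for quotients $S/I$ by monomial ideals, and the claimed ``diameter-type condition'' and its inheritance under purity are asserted without proof. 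Those assertions are where the entire mathematical content of the theorem lives, so the proposal does not contain a proof.

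For comparison, the paper avoids $M_\ell$ entirely. Proposition \ref{S_2-depth alg} expresses $S_{2}\mbox{-}{\rm depth}(S/I_{\Delta}^{(\ell)})$ as $1$ plus the minimal $|F|$ over faces $F$ with $H^{1}_{\mathfrak m}\bigl(\Bbbk[V({\rm link}_{\Delta}F)]/I_{{\rm link}_{\Delta}F}^{(\ell)}\bigr)\neq 0$; since ${\rm link}_{\Delta(G)}F$ is the independence complex of $G-N[F]$, which is again well-covered, the whole theorem reduces to the single implication that $H^{1}_{\mathfrak m}(S/I(G)^{(\ell)})\neq 0$ forces $H^{1}_{\mathfrak m}(S/I(G)^{(\ell+1)})\neq 0$ (Lemma \ref{lem of edge ideals}). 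That lemma is proved combinatorially via Theorem \ref{Takayama} and Lemma \ref{symbolic power cpx}: if $\Delta_{\bf a}(I(G)^{(\ell)})$ is disconnected, well-coveredness yields an edge $\{x_i,x_j\}$ meeting both components, and incrementing $a_i$ and $a_j$ by one produces a disconnected $\Delta_{{\bf a}'}(I(G)^{(\ell+1)})$. If you want to salvage your approach, you would need to replace the dimension bound on $K_{M_\ell}^{j+1}$ by an analysis of the actual maps in the long exact sequence in the relevant multidegrees, which in effect reproduces this combinatorial argument.
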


Also, we prove that a non-increasing property of the Serre depth for $(S_{2})$ holds for cover ideals of any simple graph (see, Proposition \ref{non-increasing of cover ideals}), as an analogue of \cite[Theorem 3.2]{hktt}. Moreover, for any $r \geq 2$, we prove that a non-increasing property of the Serre depth for $(S_{r})$ holds for cover ideals of graphs belong to a special class (see, Theorem \ref{non-increasing for S_r}). Finally, we determine the Serre depth for $(S_{r})$ on the edge ideal of a very well-covered graphs by using the structure theorem of very well-covered graphs proved in \cite[Theorem 3.5]{kpty} (see, Theorem \ref{S_r very well}). 


\section{Preliminaries}
In this section, we present basic terminology and notation used in this paper. We refer the reader to \cite{bh2, hh, s4, v} for more detailed information. Readers who are already familiar with these concepts may skip to the next section. 

\subsection{Stanley--Reisner rings}
Let $n$ be a positive integer and let $[n] = \{1, \ldots, n\}$. A {\it simplicial complex} $\Delta \subset 2^{[n]}$ is a nonempty collection of subsets of $[n]$ that is closed under taking subsets, that is, if $F \in \Delta$ and $G \subset F$, then $G \in \Delta$. An element of $\Delta$ is called a {\it face} of $\Delta$. A face is called a {\it facet} if it is maximal with respect to inclusion. If the facets of $\Delta$ are $F_1, \ldots, F_r$, we write $\Delta = \langle F_1, \ldots, F_r \rangle$. For a subset $W$ of $[n]$, let $\Delta_{W}$ be the simplicial complex consisting all faces of $\Delta$ that are contained in $W$. Also, $\Delta$ is called a \textit{matroid complex}, if the induced subcomplex $\Delta_{W}$ is pure for any $W \subset [n]$. We do not assume that the singleton $\{i\}\in\Delta$ for all $i\in[n]$. Also, for a face $F$, the {\it link} of $\Delta$ with respect to $F$, denoted by ${\rm link}_{\Delta}F$ is defined as 
$${\rm link}_{\Delta}F=\{G\in\Delta\,\,: G\cup F\in\Delta, G\cap F=\emptyset\}.$$The {\it Stanley--Reisner ideal} $I_{\Delta}$ of $\Delta$ is the ideal of $S$ generated by monomials correspond to minimal non-faces of $\Delta$, that is, 
$$I_{\Delta}=(x_{i_{1}}\cdots x_{i_{r}}\,\,: \{i_{1},\ldots, i_{r}\}\mbox{ is a minimal non-face of }\Delta).$$Also, $\Bbbk[\Delta]=S/I_{\Delta}$ is called the {\it Stanley--Reisner ring} of $\Delta$. The {\it Alexander dual complex }$\Delta^{\vee}$ of $\Delta$ defined as $$\Delta^{\vee}=\{F\in 2^{[n]}\,\,: [n]\setminus F\notin\Delta\}.$$Suppose that $\Delta$ is connected with $\dim \Delta=1$ and let $p$, $q$ be two vertices. The \textit{distance} between $p$ and $q$, denoted by ${\rm dist}_{\Delta}(p, q)$, is the minimal length of paths from $p$ to $q$. The \textit{diameter}, denoted by ${\rm diam}\Delta$, is the maximal distance between two vertices in  $[n]$. We set ${\rm diam}\Delta= \infty$, if $\Delta$ is  disconnected. Also, let us recall the notion of symbolic powers. Given an integer $i\geq 1$ and a homogeneous ideal $I$, the \textit{$i$-th symbolic power of $I$} is defined to be an ideal $$I^{(i)}=S\cap\bigcap_{P\in{\rm Ass}(I)}I^{i}S_{P}.$$If $I$ is a squarefree monomial ideal, then one has $$I^{(i)}=\bigcap_{P\in{\rm Ass}(I)}P^i.$$

\subsection{Edge ideals and Cover ideals}
Throughout, we assume that graphs mean finite simple graphs, that is, finite undirected graphs without loops or multiple edges and let $\Bbbk$ be an arbitrary field. Let $G=(V(G), E(G))$ be a graph, where $V(G)$ denotes the vertex set of $G$, $E(G)$ denotes the edge set of $G$, and $\Bbbk[V(G)]=\Bbbk[x_1,\ldots, x_{n}]$ be a polynomial ring over $\Bbbk$. Then the {\it edge ideal}, denoted by $I(G)$, is the ideal  of $\Bbbk[V(G)]$ defined by $I(G)=(x_{i}x_{j}\,\,: \{x_{i}, x_{j}\}\in E(G)).$ Also, the {\it cover ideal} of $G$, denoted by $J(G)$, is the ideal defined by $J(G)=(x_{i_{1}}\cdots x_{i_{s}}\,\,:\{i_{1},\ldots, i_{s}\}\mbox{ is a vertex cover of }G).$ Cover ideals correspond to the Alexander dual ideals of edge ideals. A subset $C$ of $V(G)$ is called {\it vertex cover}, if $C\cap e\neq\emptyset$ for every edge $e$ of $G$. In particular, if $C$ is a vertex cover that is minimal with respect to inclusion, then $C$ is called a {\it minimal vertex cover} of $G$. If every minimal vertex cover of $G$ has the same cardinality, then $G$ is called {\it well-covered}. A subset $A$ of $V(G)$ is called an {\it independent set}, if no two vertices in $A$ are adjacent to each other. In particular, if $A$ is an independent set of $G$ that is maximal with respect to inclusion, then $A$ is called a {\it maximal independent set} of $G$. The {\it independence complex} of $G$ is the set of independent sets of $G$, which forms a simplicial complex $\Delta(G)$. It is known that the edge ideal of $G$ coincides with the Stanley--Reisner ideal of $\Delta(G)$ (see, for example, \cite[p. 73, Lemma 31]{v0}). A subset $M$ of $E(G)$ is called a {\it matching}, if no two  edges in $M$ share a common vertex. Moreover, if there is no edge in $E(G)\setminus M$ that is contained in the union of edges of $M$, then $M$ is called an {\it induced matching}. Then, the {\it induced matching number} of $G$, denoted by ${\rm im}(G)$, is defined as $${\rm im}(G)=\max\{|M|\,\,: M\mbox{ is an induced matching of }G\}.$$

\subsection{Minimal free resolutions and local cohomology modules}
Let $\Bbbk$ be any field, and set $S = \Bbbk[x_1, \ldots, x_n]$ to be the polynomial ring in $n$ variables over $\Bbbk$. For a finitely generated graded $S$-module $M$, there exists a {\it graded minimal free resolution} of $M$ of the form$$0 \rightarrow \bigoplus_{j \in \mathbb{Z}} S(-j)^{\beta_{p,j}(M)} \rightarrow \cdots \rightarrow \bigoplus_{j \in \mathbb{Z}} S(-j)^{\beta_{0,j}(M)} \rightarrow M \rightarrow 0,$$where $S(-j)$ denotes the graded $S$-module defined by $S(-j)_k = S_{k - j}$ for all $k \in \mathbb{Z}$. The number $\beta_{i,j}(M)$, referred to as the $(i,j)$-th {\it graded Betti number} of $M$, is an invariant that represents the number of minimal homogeneous generators of degree $j$ appearing in the $i$-th syzygy module in a minimal graded free resolution of $M$. Then the {\it Castelnuovo--Mumford regularity}, denoted by ${\rm reg}(M)$, is defined as 
$${\rm reg}(M)=\max\{j-i\,\,:\beta_{i,j}(M)\neq0\}.$$
Also the {\it projective dimension} of $M$, denoted by ${\rm pd}(M)$ is defined as 
$${\rm pd}(M)=\max\{i\,\,:\beta_{i,j}(M)\neq0\mbox{ for some }j\}.$$For Stanley--Reisner ideals, by Hochster \cite{h1}, it is known that the combinatorial description of graded Betti numbers and the Hilbert series of the local cohomology modules of Stanley--Reisner rings Moreover, by using a technique known as ``local Alexander duality'' shown by Eagon and Reiner \cite[proof of Proposition 1]{er}, we have 
\begin{thm}[\cite{h1, er}]
For a simplicial complex $\Delta$ on the vertex set $[n]$, the $(i,j)$-th graded Betti number of $\Bbbk[\Delta]$ is given by 
$$\beta_{i,j}(\Bbbk[\Delta])=\displaystyle \sum_{\substack{W\subset[n], |W|=j}}{\rm dim}_{\Bbbk}\widetilde{H}_{j-i-1}(\Delta_{W};\Bbbk)$$Moreover, by using ``local Alexander duality'', we have $$\beta_{i,j}(\Bbbk[\Delta])=\displaystyle \sum_{\substack{F\in\Delta^{\vee}, |F|=n-j}}{\rm dim}_{\Bbbk}\widetilde{H}_{i-2}({\rm link}_{\Delta^{\vee}}F;\Bbbk)$$
Also, the Hilbert series of the local cohomology modules of Stanley--Reisner rings given by 
\begin{align*}
F(H^i_{\mathfrak{m}}(\Bbbk[\Delta]),t)
& =\displaystyle\sum_j\dim_\Bbbk[H^i_{\mathfrak{m}}(\Bbbk[\Delta] )]_j \ t^j \\[0.15cm]
& =\displaystyle \sum_{F \in \Delta} \dim_\Bbbk\widetilde{H}_{i-|F|-1} ({\rm link}_\Delta F; \Bbbk)\left(\frac{t^{-1}}{1-t^{-1}}\right)^{|F|},
\end{align*}
Moreover, we may write 
$$F(K_ {S/I_{\Delta}}^ {i}, t) = \displaystyle\sum_{F\in\Delta} \widetilde {H}_ {i-|F|-1} ({\rm link}_ {\Delta} F; \mathbb {K}) \left(\frac{t} {1-t} \right)^ {|F|}.$$
\end{thm}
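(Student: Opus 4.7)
The plan is to exploit the fine $\mathbb{Z}^n$-grading on $S$ and $\Bbbk[\Delta]$ throughout, since each of the four formulas decomposes naturally into multidegrees indexed by subsets of $[n]$, and to reduce everything to (i) a Koszul computation and (ii) combinatorial Alexander duality applied to subcomplexes of $\Delta$.

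For the first Betti number formula I would compute $\beta_{i,j}(\Bbbk[\Delta]) = \dim_{\Bbbk} \mathrm{Tor}_i^S(\Bbbk[\Delta],\Bbbk)_j$ by tensoring $\Bbbk[\Delta]$ with the Koszul resolution $K_\bullet(x_1,\ldots,x_n;S)$ of $\Bbbk$. The resulting complex is $\mathbb{Z}^n$-graded, and because $\Bbbk[\Delta]$ is a squarefree monomial algebra only the squarefree multidegrees $\mathbf{a}=\mathbf{e}_W$ with $W\subset[n]$ can contribute. For such a $W$ with $|W|=j$ the strand in multidegree $\mathbf{e}_W$ is, up to a homological shift, the reduced simplicial chain complex of $\Delta_W$ over $\Bbbk$, so its $i$-th homology is $\widetilde{H}_{j-i-1}(\Delta_W;\Bbbk)$. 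Summing over all $W$ of cardinality $j$ coarsens the $\mathbb{Z}^n$-grading to the $\mathbb{Z}$-grading and yields Hochster's formula.

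For the ``local Alexander duality'' formula I would combine the first formula with a combinatorial Alexander duality applied to each induced subcomplex: for $W\subset[n]$ one has $(\Delta_W)^{\vee}= \mathrm{link}_{\Delta^{\vee}}([n]\setminus W)$, and combinatorial Alexander duality over a field gives
\[
\widetilde{H}_{k}(\Delta_W;\Bbbk)\;\cong\;\widetilde{H}^{\,|W|-k-3}\bigl(\mathrm{link}_{\Delta^{\vee}}([n]\setminus W);\Bbbk\bigr).
\]
Setting $F=[n]\setminus W$ and $k=j-i-1$ converts the index $j-i-1$ in the first formula into $|W|-k-3=i-2$, and the sum over $W$ of size $j$ becomes the sum over $F\in\Delta^{\vee}$ of size $n-j$; finite-dimensional $\Bbbk$-cohomology and homology have the same dimension, which gives the claim.

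For the Hilbert series of $H^{i}_{\mathfrak m}(\Bbbk[\Delta])$ I would use the \v{C}ech complex model $H^{i}_{\mathfrak m}(\Bbbk[\Delta])=H^{i}(C^{\bullet}_{\mathfrak m}(\Bbbk[\Delta]))$ together with Takayama's refinement of Hochster's formula: for $\mathbf{a}\in\mathbb{Z}^n$, put $F_{\mathbf{a}}=\{i:a_i<0\}$; then $[H^{i}_{\mathfrak m}(\Bbbk[\Delta])]_{\mathbf{a}}$ is $\widetilde{H}_{i-|F_{\mathbf{a}}|-1}(\mathrm{link}_\Delta F_{\mathbf{a}};\Bbbk)$ when $F_{\mathbf{a}}\in\Delta$ and the nonnegative part of $\mathbf{a}$ is supported on $F_{\mathbf{a}}$, and is zero otherwise. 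Collecting, for each fixed $F\in\Delta$, the contribution from all $\mathbf{a}$ with $F_{\mathbf{a}}=F$ yields the factor $\bigl(t^{-1}/(1-t^{-1})\bigr)^{|F|}$ (from summing $t^{a_i}$ over $a_i<0$ for $i\in F$, with the remaining variables contributing $1$ in the $\mathbb{Z}$-graded Hilbert series), and summing over $F\in\Delta$ gives the displayed expression. The canonical-module version follows from local duality $K^{i}_{S/I_\Delta}=\mathrm{Hom}_{\Bbbk}(H^{i}_{\mathfrak m}(\Bbbk[\Delta]),\Bbbk)$, which replaces each multidegree $\mathbf{a}$ by $-\mathbf{a}$; substituting $t\mapsto t^{-1}$ turns every factor $t^{-1}/(1-t^{-1})$ into $t/(1-t)$. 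The main obstacle in carrying this out rigorously is the correct setup of Takayama's formula with its sign and support conventions; the Koszul computation and Alexander duality steps are routine once the multigrading is unpacked.
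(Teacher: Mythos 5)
The paper offers no proof of this statement---it is quoted from Hochster \cite{h1} and Eagon--Reiner \cite{er}---and your sketch correctly reproduces the standard arguments from those sources: the $\mathbb{Z}^n$-graded Koszul computation for $\mathrm{Tor}$, combinatorial Alexander duality applied to $(\Delta_W)^{\vee}=\mathrm{link}_{\Delta^{\vee}}([n]\setminus W)$ with the index shift $|W|-(j-i-1)-3=i-2$, the multigraded pieces of the \v{C}ech complex for the local cohomology Hilbert series, and graded Matlis duality (substitution $t\mapsto t^{-1}$) for the canonical-module version. Two minor imprecisions: the Koszul strand in multidegree $\mathbf{e}_W$ is the reduced \emph{cochain} complex of $\Delta_W$ rather than the chain complex (harmless over a field, where only dimensions matter), and the vanishing condition for $[H^i_{\mathfrak m}(\Bbbk[\Delta])]_{\mathbf a}$ should simply read $\mathbf a\le 0$ componentwise with $F_{\mathbf a}=\{i: a_i<0\}\in\Delta$, rather than your phrasing about the nonnegative part of $\mathbf a$ being supported on $F_{\mathbf a}$.
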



\section{The Serre depth of Stanley--Reisner rings}\label{sqr}
In this section, we establish several properties of the Serre depth of Stanley--Reisner rings.  
In particular, we associate the Serre depth with a minimal free resolution of the Stanley--Reisner ring of a simplicial complex and its Alexander dual. Firstly, we characterize the Serre depth of Stanley--Reisner rings with reduced homology groups. 
\begin{lemma}\label{depth}
Let $\Delta$ be a pure simplicial complex and let $r\geq 2$. Then the following statements are equivalent: 
\begin{enumerate}
\item $s\leq S_{r}$-{\rm depth}$(\Bbbk[\Delta])$, 
\item $\widetilde{H}_{i}({\rm link}_{\Delta}F;\Bbbk)=0$ for all $i\leq r-2$ and all $F\in\Delta$ with $|F|\leq s-i-2$.
\end{enumerate} 
\end{lemma}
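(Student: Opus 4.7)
The plan is to read off $\dim K_{\Bbbk[\Delta]}^{j}$ from the Hilbert--series formula for $K_{S/I_{\Delta}}^{j}$ recorded in the preliminaries. Each coefficient of $\left(\frac{t}{1-t}\right)^{|F|}$ in that sum is the $\Bbbk$-dimension of a reduced homology group, hence a nonnegative integer, so no cancellation between summands corresponding to different values of $|F|$ can occur. Consequently the pole order of $F(K_{\Bbbk[\Delta]}^{j},t)$ at $t=1$, which equals the Krull dimension of $K_{\Bbbk[\Delta]}^{j}$ as an $S$-module, is given by
$$\dim K_{\Bbbk[\Delta]}^{j}=\max\bigl\{|F|\ :\ F\in\Delta,\ \widetilde{H}_{j-|F|-1}({\rm link}_{\Delta}F;\Bbbk)\neq 0\bigr\},$$
with the convention that the maximum of the empty set is $-\infty$.

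Unwinding the definition of the Serre depth, condition (1) is equivalent to $\dim K_{\Bbbk[\Delta]}^{j}\le j-r$ for every $j<s$. By the formula above, this says exactly that for every $j<s$ and every $F\in\Delta$ with $|F|\ge j-r+1$, the reduced homology $\widetilde{H}_{j-|F|-1}({\rm link}_{\Delta}F;\Bbbk)$ vanishes. The substitution $i:=j-|F|-1$ now sends the pair of constraints $\{j<s,\ |F|\ge j-r+1\}$ to $\{|F|\le s-i-2,\ i\le r-2\}$, and sends the vanishing condition to $\widetilde{H}_{i}({\rm link}_{\Delta}F;\Bbbk)=0$. This yields condition (2), and because $(j,F)\mapsto(i,F)$ is a bijection of index pairs the two conditions are equivalent.

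The main obstacle is extracting $\dim K_{\Bbbk[\Delta]}^{j}$ from the Hilbert series together with the bookkeeping at $i=-1$. Positivity of the coefficients rules out cancellation, so reading off the pole order is immediate; after grouping by $|F|$ the Hilbert series takes the form $P(t)/(1-t)^{D}$ with $D$ the claimed maximum and $P(1)>0$. The case $i=-1$ must be checked separately: since $\Delta$ is pure, every non-facet $F$ has ${\rm link}_{\Delta}F$ of dimension $\ge 0$, forcing $\widetilde{H}_{-1}({\rm link}_{\Delta}F;\Bbbk)=0$, while the facets $F$ have $|F|=d$ and contribute only to $\dim K_{\Bbbk[\Delta]}^{d}=d$; this is consistent with condition (1) because $s\le S_{r}\mbox{-depth}(\Bbbk[\Delta])\le d$. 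After these routine verifications the proof reduces to the clean index substitution $(j,F)\leftrightarrow(i,F)$.
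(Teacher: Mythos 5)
Your proposal is correct and follows essentially the same route as the paper: unwind the definition of the Serre depth to the condition $\dim K_{\Bbbk[\Delta]}^{j}\leq j-r$ for $j\leq s-1$, read off $\dim K_{\Bbbk[\Delta]}^{j}$ from Hochster's formula for (the dual of) local cohomology, and perform the index substitution $i=j-|F|-1$. The only difference is that you spell out the routine justification (nonnegativity of the coefficients, hence no cancellation in the pole order at $t=1$) that the paper leaves implicit when it cites Hochster's formula.
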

\begin{proof}
By the definition of the Serre depth, (1) is equivalent to ${\rm dim}K_{\Bbbk[\Delta]}^{i}\leq i-r$ for all $i\leq s-1$. Also, by Hochster's formula for local cohomology modules, 
the latter one is equivalent to $\widetilde{H}_{i-|F|-1}({\rm link}_{\Delta}F;\Bbbk)=0$
for all $i\leq s-1$ and a face $F$ of $\Delta$ with $|F|\geq i-r+1$. Therefore, it is also equivalent to (2), as required. 
\end{proof}

We present a relationship between the Serre depth $S_{r}\mbox{-}{\rm depth}(\Bbbk[\Delta])$ and the length of a minimal free resolution of $\Bbbk[\Delta]$ as a generalization of the result in \cite{mt2}.

\begin{thm}\label{length-mfr}
Let $\Delta$ be a pure simplicial complex on $[n]$. Then the maximal length of the $j$-th linear part of the minimal graded free resolution of $\Bbbk[\Delta]$ for $j<r$ is less than or equal to $n-S_{r}\mbox{-}{\rm depth}(\Bbbk[\Delta]),$ that is, 
$$\max\{i\,\,: \beta_{i,i+j}(\Bbbk[\Delta])\neq 0\mbox{ for all }j<r\}\leq n-S_{r}\mbox{-}{\rm depth}(\Bbbk[\Delta]).$$
\end{thm}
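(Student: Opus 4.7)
Set $s:=S_{r}\mbox{-}{\rm depth}(\Bbbk[\Delta])$. The plan is to convert the Betti-number condition into a combinatorial vanishing statement about induced subcomplexes via Hochster's formula, and then deduce it from the link-vanishing that Lemma \ref{depth} supplies. Hochster's formula gives
$$\beta_{i,i+j}(\Bbbk[\Delta])=\sum_{W\subseteq[n],\,|W|=i+j}\dim_{\Bbbk}\widetilde{H}_{j-1}(\Delta_{W};\Bbbk),$$
so the stated inequality is equivalent to showing $\widetilde{H}_{j-1}(\Delta_{W};\Bbbk)=0$ whenever $|[n]\setminus W|\le s-j-1$ and $0\le j\le r-1$. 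Meanwhile, Lemma \ref{depth} provides exactly the link-vanishing $\widetilde{H}_{j-1}({\rm link}_{\Delta}F;\Bbbk)=0$ for every $F\in\Delta$ with $|F|\le s-j-1$ and $0\le j\le r-1$. The task is therefore to transfer this link-vanishing of $\Delta$ into vanishing for induced subcomplexes $\Delta_{W}$.

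The technical core is the following auxiliary claim, which I would prove by induction. Let $\Sigma$ be a simplicial complex on a finite vertex set $V_{\Sigma}$, and let $s',r\ge 2$ be integers such that $\widetilde{H}_{j-1}({\rm link}_{\Sigma}F;\Bbbk)=0$ for every $F\in\Sigma$, $0\le j\le r-1$, and $|F|\le s'-j-1$. Then $\widetilde{H}_{j-1}(\Sigma_{W};\Bbbk)=0$ for every $W\subseteq V_{\Sigma}$, $0\le j\le r-1$, and $|V_{\Sigma}\setminus W|\le s'-j-1$. Applying the claim to $\Sigma=\Delta$, $V_{\Sigma}=[n]$, and $s'=s$ will close out the theorem.

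I would prove the claim by induction on $k:=|V_{\Sigma}\setminus W|$. The base case $k=0$ is the hypothesis applied to $F=\emptyset$. For $k\ge 1$, pick $v\in V_{\Sigma}\setminus W$. If $v$ is not a vertex of $\Sigma$ then $\Sigma_{W\cup\{v\}}=\Sigma_{W}$ and the inductive hypothesis on $W\cup\{v\}$ suffices. Otherwise, decompose $\Sigma_{W\cup\{v\}}=\Sigma_{W}\cup{\rm star}_{\Sigma_{W\cup\{v\}}}(v)$; the intersection is $({\rm link}_{\Sigma}v)_{W}$ and the second piece is a cone, hence contractible, so the Mayer--Vietoris sequence yields an exact triple
$$\widetilde{H}_{j-1}\bigl(({\rm link}_{\Sigma}v)_{W};\Bbbk\bigr)\longrightarrow \widetilde{H}_{j-1}(\Sigma_{W};\Bbbk)\longrightarrow \widetilde{H}_{j-1}(\Sigma_{W\cup\{v\}};\Bbbk).$$
The right-hand term vanishes by the inductive hypothesis applied to $\Sigma$ with $W\cup\{v\}$, where $k$ drops to $k-1$. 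The left-hand term vanishes by applying the inductive hypothesis to the complex ${\rm link}_{\Sigma}v$ with parameter $s'-1$ and subset $W\cap V({\rm link}_{\Sigma}v)$; the link-vanishing hypothesis transfers through the identity ${\rm link}_{{\rm link}_{\Sigma}v}(F')={\rm link}_{\Sigma}(F'\cup\{v\})$, and the missing-vertex count for the link is at most $k-1\le(s'-1)-j-1$. Exactness then forces $\widetilde{H}_{j-1}(\Sigma_{W};\Bbbk)=0$.

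The main obstacle is arranging the induction so that each Mayer--Vietoris branch falls under a strictly smaller instance of the same claim. This is why the claim must be stated uniformly over all simplicial complexes $\Sigma$ and all parameters $s'$: the step into the link simultaneously shrinks the missing-vertex count $k$ and the parameter $s'$, but a single induction on $k$ (with $\Sigma$ and $s'$ universally quantified) controls both reductions at once.
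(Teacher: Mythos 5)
Your proof is correct, and its outer skeleton coincides with the paper's: both use Hochster's formula to reduce the claim to showing $\widetilde{H}_{j-1}(\Delta_{W};\Bbbk)=0$ whenever $0\le j\le r-1$ and $|[n]\setminus W|\le s-j-1$, and both ultimately feed this into the link-vanishing furnished by Lemma \ref{depth}. Where you genuinely diverge is in the transfer from links to induced subcomplexes. The paper imports the inequality
$$\dim_{\Bbbk}\widetilde{H}_{|W|-i-1}(\Delta_{W};\Bbbk)\leq\sum_{\substack{F\in\Delta,\ F\cap W=\emptyset}}\dim_{\Bbbk}\widetilde{H}_{|W|-i-1}({\rm link}_{\Delta}F;\Bbbk)$$
from the proof of \cite[Corollary 2.6]{bcp}, and then notes that each face $F$ disjoint from $W$ has $|F|\le n-|W|\le s-j-1$, so every summand vanishes by Lemma \ref{depth}. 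You instead prove the qualitative content of that inequality from scratch: a strong induction on $|V_{\Sigma}\setminus W|$ in which the Mayer--Vietoris sequence for $\Sigma_{W\cup\{v\}}=\Sigma_{W}\cup{\rm star}_{\Sigma_{W\cup\{v\}}}(v)$, with contractible star and intersection $({\rm link}_{\Sigma}v)_{W}$, splits the problem into the instance with one fewer missing vertex and the instance for ${\rm link}_{\Sigma}v$ with parameter $s'-1$. The bookkeeping checks out: the identity ${\rm link}_{{\rm link}_{\Sigma}v}(F')={\rm link}_{\Sigma}(F'\cup\{v\})$ correctly propagates the hypothesis to the link, the missing vertices of the link instance all lie in $V_{\Sigma}\setminus(W\cup\{v\})$ so the count drops to at most $k-1\le s'-j-2$, and your remark that the claim must be quantified over all pairs $(\Sigma,s')$ so that a single induction on the missing-vertex count governs both branches is exactly the right structural point. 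What your route buys is a self-contained, purely topological argument that makes visible precisely which link-vanishings are consumed at each step; what the paper's route buys is brevity, at the cost of outsourcing the key estimate to an external reference.
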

\begin{proof}
Fix an integer $j\leq r$. We prove that $\beta_{i,i+j}(\Bbbk[\Delta])=0$ for all $i\geq n-S_{r}\mbox{-}{\rm depth}(\Bbbk[\Delta])$. 
Set $s=S_{r}\mbox{-}{\rm depth}(\Bbbk[\Delta])$ and fix an integer $i$ with $i\geq n-s$. 
From Hochster's formula on graded Betti numbers, we have 
$$\beta_{i,i+j}(\Bbbk[\Delta])=\displaystyle \sum_{\substack{W\subset[n], \\[0.1cm] |W|=i+j}}{\rm dim}_{\Bbbk}\widetilde{H}_{|W|-i-1}(\Delta_{W};\Bbbk).$$
Fix $W\subset[n]$ with $|W|=i+j$. It is enough to prove that ${\rm dim}_{\Bbbk}\widetilde{H}_{|W|-i-1}(\Delta_{{W}};\Bbbk)=0$. 
Now, by the proof in {\cite[Corollary 2.6]{bcp}}, we have 
$${\rm dim}_{\Bbbk}\widetilde{H}_{|W|-i-1}(\Delta_{W};\Bbbk)\leq\displaystyle  \sum_{\substack{F\in\Delta, \\[0.1cm] F\cap W=\emptyset}}{\rm dim}_{\Bbbk}\widetilde{H}_{|W|-i-1}({\rm link}_{\Delta}F;\Bbbk).$$
Hence, it suffices to show that ${\rm dim}_{\Bbbk}\widetilde{H}_{|W|-i-1}({\rm link}_{\Delta}F;\Bbbk)=0$ for all face $F$ of $\Delta$ with $F\cap W=\emptyset$. Fix a face $F$ of $\Delta$ with $F\cap W=\emptyset$. 
Now, since $|W|=i+j$, $|W|-i-1=j-1\leq r-2$. 
Also, since $F\cap W=\emptyset$, 
$|F|\leq n-|W|\leq s-(|W|-i-1)-2$. 
Thus, by Lemma \ref{depth}, 
 ${\rm dim}_{\Bbbk}\widetilde{H}_{|W|-i-1}({\rm link}_{\Delta}F;\Bbbk)=0$. 
Therefore, $\beta_{i, i+j}(\Bbbk[\Delta])=0$ for all $i\geq n-s$, as desired. 
\end{proof}

Thanks to Theorem \ref{length-mfr}, we have following corollaries: 
\begin{cor}
For a pure simplicial complex $\Delta$, if $\beta_{{\rm pd}(\Bbbk[\Delta]), {\rm pd}(\Bbbk[\Delta])+r}(\Bbbk[\Delta])\neq 0,$ then we have $S_{t}\mbox{-}{\rm depth}(\Bbbk[\Delta])={\rm depth}(\Bbbk[\Delta])$ for all $t>r$.
\end{cor}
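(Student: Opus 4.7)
The plan is to derive this directly from Theorem \ref{length-mfr} combined with the Auslander--Buchsbaum formula; no further combinatorial input should be needed.

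First I would set $p={\rm pd}(\Bbbk[\Delta])$ and invoke the Auslander--Buchsbaum formula for the finitely generated graded $S$-module $\Bbbk[\Delta]$ to record the identity $p=n-{\rm depth}(\Bbbk[\Delta])$. This is the single bridge between the homological data in the hypothesis (the extremal Betti number) and the depth one wants to read off on the right-hand side.

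Next, fix any $t>r$. The hypothesis says $\beta_{p,\,p+r}(\Bbbk[\Delta])\neq 0$, and since the second index increment $j=r$ satisfies $j<t$, Theorem \ref{length-mfr} applied with $t$ in place of $r$ yields the bound $p\leq n-S_{t}\mbox{-}{\rm depth}(\Bbbk[\Delta])$. Rearranging this and combining with the Auslander--Buchsbaum identity gives $S_{t}\mbox{-}{\rm depth}(\Bbbk[\Delta])\leq n-p={\rm depth}(\Bbbk[\Delta])$. The reverse inequality $S_{t}\mbox{-}{\rm depth}(\Bbbk[\Delta])\geq {\rm depth}(\Bbbk[\Delta])$ is one of the basic inequalities of the Serre depth recorded in the introduction, namely $S_{2}\mbox{-}{\rm depth}\geq\cdots\geq S_{d}\mbox{-}{\rm depth}={\rm depth}$. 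Together these give equality, which is the claim.

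There is essentially no obstacle here: the content of the corollary is already packaged inside Theorem \ref{length-mfr}, and the only translation needed is via Auslander--Buchsbaum to interchange the projective dimension in the hypothesis with the depth in the conclusion. The only minor care point is the range of $t$: if $t>\dim\Bbbk[\Delta]$ then $S_{t}\mbox{-}{\rm depth}$ equals ${\rm depth}$ by convention, so the statement is trivially true there, and for $r<t\leq\dim\Bbbk[\Delta]$ the argument above applies verbatim.
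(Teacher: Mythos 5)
Your proposal is correct and follows essentially the same route as the paper: both apply Theorem \ref{length-mfr} (with $t$ in place of $r$, using $\beta_{{\rm pd},{\rm pd}+r}\neq 0$ and $r<t$) and the Auslander--Buchsbaum formula to get $S_{t}\mbox{-}{\rm depth}(\Bbbk[\Delta])\leq{\rm depth}(\Bbbk[\Delta])$, with the reverse inequality coming from the general chain of Serre-depth inequalities. No substantive difference.
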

\begin{proof}
From Theorem \ref{length-mfr} and the Auslander--Buchsbaum formula, we have  
\[
\begin{array}{lll}
n-S_{t}\mbox{-}{\rm depth}(\Bbbk[\Delta])
& \geq & \max\{i \mid \beta_{i,i+j}(\Bbbk[\Delta])\neq 0\mbox{ for some }j< t\} \\[0.2cm]
& = & {\rm pd}(\Bbbk[\Delta]) \\[0.2cm]
& = & n-{\rm depth}(\Bbbk[\Delta]) ,
\end{array}
\]
which completes the proof. 
\end{proof}
\begin{cor}\label{lin}
Let $\Delta$ be a pure simplicial complex and $r\geq 2$. If $\Bbbk[\Delta]$ has an $r$-linear resolution, then $S_{t}\mbox{-}{\rm depth}(\Bbbk[\Delta])={\rm depth}(\Bbbk[\Delta])$ for all $t>r$. 
\end{cor}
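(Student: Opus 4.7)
The plan is to deduce Corollary \ref{lin} as a direct consequence of Theorem \ref{length-mfr}, essentially in the same spirit as the preceding corollary. First, I would note that $S_{t}\mbox{-}{\rm depth}(\Bbbk[\Delta]) \geq {\rm depth}(\Bbbk[\Delta])$ holds unconditionally by the chain of inequalities recorded in the introduction, so only the reverse inequality has to be established.

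Next, I would translate the hypothesis into Betti-number language: saying that $\Bbbk[\Delta]$ has an $r$-linear resolution means that every nonzero graded Betti number $\beta_{i,j}(\Bbbk[\Delta])$ with $i \geq 1$ satisfies $j - i = r - 1$. In particular, $\beta_{{\rm pd}(\Bbbk[\Delta]),\,{\rm pd}(\Bbbk[\Delta]) + r - 1}(\Bbbk[\Delta]) \neq 0$, and since $t > r$ forces $r - 1 < t$, this nonzero Betti number lies in a linear strand indexed by some $j < t$. Applying Theorem \ref{length-mfr} with $r$ replaced by $t$ therefore yields
$${\rm pd}(\Bbbk[\Delta]) \leq \max\{i \mid \beta_{i, i+j}(\Bbbk[\Delta]) \neq 0 \text{ for some } j < t\} \leq n - S_{t}\mbox{-}{\rm depth}(\Bbbk[\Delta]).$$

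Finally, invoking the Auslander--Buchsbaum formula ${\rm pd}(\Bbbk[\Delta]) = n - {\rm depth}(\Bbbk[\Delta])$ converts the chain above into $S_{t}\mbox{-}{\rm depth}(\Bbbk[\Delta]) \leq {\rm depth}(\Bbbk[\Delta])$, so equality holds as claimed. I do not expect any substantive obstacle: the argument is a three-line chain of inequalities, and the only delicate point is verifying that the $r$-linearity hypothesis places a nonzero Betti number in some strand with $j < t$, which is automatic from the assumption $t > r$.
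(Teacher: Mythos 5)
Your argument is correct and is essentially the paper's own: Corollary \ref{lin} is stated there without a separate proof as an immediate consequence of the preceding corollary, whose proof is exactly your chain ${\rm pd}(\Bbbk[\Delta])\leq\max\{i\mid\beta_{i,i+j}(\Bbbk[\Delta])\neq 0\mbox{ for some }j<t\}\leq n-S_{t}\mbox{-}{\rm depth}(\Bbbk[\Delta])$ combined with the Auslander--Buchsbaum formula. The only point worth keeping straight is the degree convention (whether the linear strand of $\Bbbk[\Delta]$ sits at $j-i=r-1$, as you take it, or at $j-i=r$), but under either reading the hypothesis $t>r$ places the top nonzero Betti number in a strand with index $<t$, so the argument goes through.
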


\begin{cor}
Let $\Delta$ be a pure $(d-1)$-dimensional simplicial complex on the vertex set $[n]$ and let $r\geq 2$ be an integer. 
If $\Delta$ satisfies Serre's condition $(S_{r})$, then the maximal length of the $i$-linear part of the minimal graded free resolution of $\Bbbk[\Delta]$ for $i< r$ is less than or equal to $n-d$. 
\end{cor}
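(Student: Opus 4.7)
The plan is that this corollary is essentially an immediate specialization of Theorem \ref{length-mfr}, so only a short argument is required. The key observation, recorded already in the introduction right after the definition of the Serre depth (via \cite[Lemma 3.2.1]{s0}), is that an unmixed finitely generated graded $S$-module $M$ satisfies Serre's condition $(S_{r})$ if and only if $S_{r}\mbox{-}{\rm depth}(M)=\dim M$. Since $\Delta$ is assumed to be pure of dimension $d-1$, the Stanley--Reisner ring $\Bbbk[\Delta]$ is unmixed of Krull dimension $d$, so the hypothesis that $\Bbbk[\Delta]$ satisfies $(S_{r})$ gives $S_{r}\mbox{-}{\rm depth}(\Bbbk[\Delta])=d$.

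Now I would simply plug this value into the bound provided by Theorem \ref{length-mfr}. That theorem yields
$$\max\{i\,\,:\beta_{i,i+j}(\Bbbk[\Delta])\neq 0\mbox{ for all }j<r\}\leq n-S_{r}\mbox{-}{\rm depth}(\Bbbk[\Delta])=n-d,$$
which is exactly the assertion that the maximal length of the $j$-th linear part of the minimal graded free resolution of $\Bbbk[\Delta]$ is at most $n-d$ for every $j<r$.

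There is essentially no obstacle here; the content is packaged entirely into Theorem \ref{length-mfr} and the standard characterization of $(S_{r})$ in terms of the Serre depth. The only thing to be careful about is that the purity hypothesis is what licenses the use of the equivalence $S_{r}\mbox{-}{\rm depth}(\Bbbk[\Delta])=\dim\Bbbk[\Delta]\Longleftrightarrow(S_{r})$, since the Serre depth is defined only for unmixed modules; this hypothesis is present, so the argument goes through verbatim.
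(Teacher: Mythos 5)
Your argument is correct and matches the paper's proof exactly: the paper likewise notes that $(S_{r})$ is equivalent to $S_{r}\mbox{-}{\rm depth}(\Bbbk[\Delta])=d$ and then invokes Theorem \ref{length-mfr}. Nothing is missing.
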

\begin{proof}
Notice that $\Delta$ satisfies Serre's condition $(S_{r})$ if and only if $S_{r}$-{\rm depth}$(\Bbbk[\Delta])=d$. Hence, by Theorem \ref{length-mfr}, the assertion follows. 
\end{proof}

We relate the Serre depth of $\Bbbk[\Delta]$ to a minimal free resolution of $\Bbbk[\Delta^{\vee}]$. Using the following theorem, we derive several corollaries concerning the Serre depth of Stanley--Reisner rings, thereby generalizing known results about the depth.

\begin{thm}\label{eq dual}
Let $\Delta$ be a pure simplicial complex and let $r\geq2$. Then we have 
$$\max\{j\mid \beta_{i,i+j}(\Bbbk[\Delta^{\vee}])\neq 0\}\leq n-S_{r}\mbox{-}{\rm depth}(\Bbbk[\Delta])-1\mbox{ for all }i\leq r.$$
In particular, 
$$\max\{j\mid \beta_{i,i+j}(\Bbbk[\Delta^{\vee}])\neq 0\mbox{ for some }i\leq r\}= n-S_{r}\mbox{-}{\rm depth}(\Bbbk[\Delta])-1.$$
\end{thm}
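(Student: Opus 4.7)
The plan is to translate Betti numbers of $\Bbbk[\Delta^{\vee}]$ into reduced link homology on $\Delta$, then combine this translation with Lemma~\ref{depth} for the upper bound and with the defining property of the Serre depth for a matching lower bound.

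First, I would apply the Alexander-dual version of Hochster's formula (the second displayed formula on Betti numbers in the preliminaries) to $\Delta^{\vee}$, using $(\Delta^{\vee})^{\vee}=\Delta$, to obtain
$$\beta_{i,i+j}(\Bbbk[\Delta^{\vee}]) \;=\; \sum_{\substack{F\in\Delta \\ |F|=n-i-j}} \dim_{\Bbbk}\widetilde{H}_{i-2}({\rm link}_{\Delta}F;\Bbbk).$$
Non-vanishing of the left side therefore requires a face $F\in\Delta$ with $|F|=n-i-j$ and $\widetilde{H}_{i-2}({\rm link}_{\Delta}F;\Bbbk)\neq 0$.

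For the upper bound, set $s:=S_{r}\mbox{-}{\rm depth}(\Bbbk[\Delta])$ and fix $i\leq r$. Since $i-2\leq r-2$, Lemma~\ref{depth} forces $\widetilde{H}_{i-2}({\rm link}_{\Delta}F;\Bbbk)=0$ whenever $|F|\leq s-i$. Contrapositively, any witness $F$ to a nonzero Hochster summand must satisfy $|F|\geq s-i+1$, which rearranges to $j\leq n-s-1$. This yields the first inequality for every $i\leq r$.

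To match the bound, I would construct a witness at $j=n-s-1$ straight from the definition of the Serre depth. By the minimality of $s$, we have $\dim K_{\Bbbk[\Delta]}^{s}\geq s-r+1$; write $t:=\dim K_{\Bbbk[\Delta]}^{s}$. The Hilbert-series expression $F(K_{S/I_{\Delta}}^{s},u)$ recalled in the preliminaries is a sum of terms $\dim_{\Bbbk}\widetilde{H}_{s-|F|-1}({\rm link}_{\Delta}F;\Bbbk)\,(u/(1-u))^{|F|}$ with non-negative coefficients, so no cancellation occurs and the pole order at $u=1$ is attained by some face: there exists $F\in\Delta$ with $|F|=t$ and $\widetilde{H}_{s-t-1}({\rm link}_{\Delta}F;\Bbbk)\neq 0$. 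Setting $i:=s-t+1$ and $j:=n-s-1$ gives $i-2=s-t-1$ and $n-i-j=t$, so this $F$ contributes a nonzero summand to $\beta_{i,i+j}(\Bbbk[\Delta^{\vee}])$. The inequality $t\geq s-r+1$ gives $i\leq r$, while the general bound $\dim {\rm Ext}_{S}^{n-s}(M,\omega_{S})\leq s$, i.e.\ $t\leq s$, gives $i\geq 1$, keeping $i$ in the admissible range.

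The main obstacle is this final matching step: one must honestly extract a face realizing the Krull dimension $t$ of $K^{s}$ from the Hilbert-series data, and then check that the resulting homological index $i=s-t+1$ lies in $[1,r]$. Both checks rest on standard facts (the defining inequality for $S_{r}$-depth and the Ext-dimension bound), so once the Hochster translation is in place the argument should go through cleanly.
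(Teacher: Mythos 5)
Your proposal is correct and follows essentially the same route as the paper's proof: the upper bound via Lemma \ref{depth} combined with the Alexander-dual form of Hochster's formula, and the matching value via the defining inequality $\dim K_{\Bbbk[\Delta]}^{s}\geq s-r+1$ together with a face extracted from the Hilbert series of $K^{s}$ (the paper takes any face $G$ with $|G|\geq s-r+1$ and nonzero link homology, which amounts to the same witness with homological index $s-|G|+1$). The only cosmetic difference is that you normalize the witness to $|F|=\dim K^{s}$ and verify $1\leq i\leq r$ explicitly, which the paper leaves implicit.
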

\begin{proof}
Fix an integer $i\leq r$ and set $s=S_{r}\mbox{-}{\rm depth}(\Bbbk[\Delta])$. From Lemma \ref{depth}, we have $\widetilde{H}_{i-2}({\rm link}_{\Delta}F;\Bbbk)=0$ for all face $F$ of $\Delta$ with $|F|\leq s-i$. Notice that $n-j\leq s-i$, if $j\geq n-s+i$. Now, from Hochster's formula on graded Betti numbers, we have 
$$\beta_{i,j}(\Bbbk[\Delta^{\vee}])=\displaystyle \sum_{\substack{G\in\Delta, \\[0.1cm] |G|=n-j}}{\rm dim}_{\Bbbk}\widetilde{H}_{i-2}({\rm link}_{\Delta}G;\Bbbk)=0\mbox{ for all } j\geq n-s+i$$
Hence, $\beta_{i,i+n-s}(\Bbbk[\Delta^{\vee}])=\beta_{i,i+n-s+1}(\Bbbk[\Delta^{\vee}])=\cdots=0$. Also, since ${\rm dim}K_{\Bbbk[\Delta]}^{s}\geq s-r+1$, from Hochster's formula for local cohomology modules, there exists a face $G$ of $\Delta$ with $|G|\geq s-r+1$ such that $\widetilde{H}_{s-|G|-1}({\rm link}_{\Delta}G;\Bbbk)\neq0$. Then, by setting $|G|=s-r+l$ for $l\geq 1$, we have 
$$\beta_{r-l+1,(r-l+1)+n-s-1}(\Bbbk[\Delta^{\vee}])=\displaystyle\sum_{\substack{F\in\Delta, \\[0.1cm] |F|=s-r+l}}{\rm dim}_{\Bbbk}\widetilde{H}_{r-l-1}({\rm link}_{\Delta}F;\Bbbk)\neq 0,$$as required. 
\end{proof}

Before stating corollaries of Theorem \ref{eq dual}, let us recall the definition of the condition $(N_{c,r})$, which was introduced in \cite[Definition 3.6]{y}. For a homogeneous ideal $I$ of $S$, $I$ satisfies the condition $(N_{c,r})$ if $\beta_{i,j}(I)=0$ for all $i<r$ and $j\neq i+c$. 
For a simplicial complex $\Delta$ and $r\geq 2$,  
we set 
$${\rm reg}_{\leq r}I_{\Delta}=\max\{j\,\,:\beta_{i, i+j}(I_{\Delta})\neq 0\mbox{ for some }i\leq r\}. $$ 
Then, we have the following statement. 
\begin{prop}\label{N_c,r}
Let $\Delta$ be a simplicial complex. Then the followings are equivalent: 
\begin{enumerate}
\item ${\rm reg}_{\leq r-1}I_{\Delta}={\rm indeg}I_{\Delta}$
\item $I_{\Delta}$ satisfies $(N_{{\rm indeg}I_{\Delta},r})$
\end{enumerate}
\end{prop}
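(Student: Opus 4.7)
The plan is to show that both conditions reduce to the same vanishing statement about the graded Betti numbers of $I_{\Delta}$, after invoking the standard lower bound $\beta_{i,j}(I)=0$ for $j<i+{\rm indeg}(I)$. Set $c={\rm indeg}(I_{\Delta})$ throughout.

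First, I would recall (or briefly verify) this automatic vanishing: the minimal graded free resolution of $I_{\Delta}$ begins with generators in degree $\geq c$, and in a minimal resolution each successive syzygy strictly increases the minimal internal degree by at least $1$, so a short induction on the homological degree yields $\beta_{i,i+j}(I_{\Delta})=0$ whenever $j<c$, while $\beta_{0,c}(I_{\Delta})\neq 0$ by the very definition of $c$.

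Granted this, condition (1) amounts to saying that $c$ is the maximum of the set $\{j:\beta_{i,i+j}(I_{\Delta})\neq 0 \text{ for some } i\leq r-1\}$; since this set always contains $c$, (1) is equivalent to the vanishing $\beta_{i,i+j}(I_{\Delta})=0$ for all $i<r$ and all $j>c$. On the other hand, $(N_{c,r})$ is by definition the condition $\beta_{i,j}(I_{\Delta})=0$ for all $i<r$ and $j\neq i+c$, and after removing the automatically vanishing range $j<i+c$ this becomes precisely $\beta_{i,i+j}(I_{\Delta})=0$ for all $i<r$ and $j>c$, matching the reformulation of (1). Hence (1) $\Longleftrightarrow$ (2).

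The argument is essentially a definitional unraveling, so there is no genuine obstacle; the only point to be careful about is the re-indexing between $\beta_{i,j}$ (total internal degree) used in the definition of $(N_{c,r})$ and $\beta_{i,i+j}$ (shifted into the linear-strand coordinate) used in ${\rm reg}_{\leq r-1}$.
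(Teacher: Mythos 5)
Your proof is correct and follows essentially the same route as the paper: both arguments reduce the two conditions to the vanishing $\beta_{i,i+j}(I_{\Delta})=0$ for $i<r$ and $j>{\rm indeg}\,I_{\Delta}$, using the automatic vanishing $\beta_{i,i+j}(I_{\Delta})=0$ for $j<{\rm indeg}\,I_{\Delta}$ together with $\beta_{0,{\rm indeg}\,I_{\Delta}}(I_{\Delta})\neq 0$. Your explicit justification of the low-degree vanishing via minimality of the resolution is a small point the paper only asserts, but the substance is identical.
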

\begin{proof}
If $I_{\Delta}$ does not satisfy the condition $(N_{{\rm indeg}\,I_{\Delta},r})$, then there exist $i < r$ and $j \neq {\rm indeg}\,I_{\Delta}$ such that $\beta_{i,i+j}(I_{\Delta}) \neq 0$. Since $\beta_{i,i+j}(I_{\Delta}) = 0$ whenever $j < {\rm indeg}\,I_{\Delta}$, we may assume that $j > {\rm indeg}\,I_{\Delta}$. However, this leads to a contradiction, because$$\max\left\{ j \,:\, \beta_{i,i+j}(I_{\Delta}) \neq 0 \text{ for some } i \leq r - 1 \right\} = {\rm reg}_{\leq r-1}(I_{\Delta}) = {\rm indeg}\,I_{\Delta}.$$Conversely, if $I_{\Delta}$ satisfies the condition $(N_{{\rm indeg}\,I_{\Delta},r})$, then by the definition, we have $\beta_{i,i+j}(I_{\Delta}) = 0$ for all $i < r$ and $j \neq {\rm indeg}\,I_{\Delta}$. Therefore, we obtain 
$${\rm indeg}\,I_{\Delta} = \max\left\{ j \,:\, \beta_{i,i+j}(I_{\Delta}) \neq 0 \text{ for some } i \leq r - 1 \right\} = {\rm reg}_{\leq r-1}(I_{\Delta}),$$
which completes the proof.
\end{proof}

As an application of Theorem \ref{eq dual} and Proposition \ref{N_c,r}, we obtain the following corollary, which both unifies and extends the results of \cite[Corollary 0.3]{t} and \cite[Corollary 3.7]{y} in pure case.

\begin{cor}\label{gene}
Let $\Delta$ be a pure simplicial complex and let $r\geq 2$. Then, we have 
$${\rm reg}_{\leq r-1}I_{\Delta^{\vee}}-{\rm indeg}I_{\Delta^{\vee}}=\dim \Bbbk[\Delta]-S_{r}\mbox{-}{\rm depth}(\Bbbk[\Delta]).$$
\end{cor}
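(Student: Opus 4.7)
The plan is to derive this corollary directly from Theorem \ref{eq dual} after translating Betti numbers of the quotient ring $\Bbbk[\Delta^{\vee}]=S/I_{\Delta^{\vee}}$ into Betti numbers of the ideal $I_{\Delta^{\vee}}$, together with a quick combinatorial computation of $\operatorname{indeg} I_{\Delta^{\vee}}$ for pure complexes.

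First, I would use the purity hypothesis to identify $\operatorname{indeg} I_{\Delta^{\vee}}$. The minimal generators of $I_{\Delta^{\vee}}$ correspond to the minimal non-faces of $\Delta^{\vee}$; unwinding the definition of the Alexander dual $\Delta^{\vee}=\{F\subset[n] : [n]\setminus F \notin \Delta\}$ shows that $G$ is a minimal non-face of $\Delta^{\vee}$ if and only if $[n]\setminus G$ is a facet of $\Delta$. Since $\Delta$ is pure with $\dim\Delta=d-1$, every facet has cardinality $d$, so $I_{\Delta^{\vee}}$ is generated in the single degree $n-d$, giving
$$\operatorname{indeg} I_{\Delta^{\vee}}=n-d=n-\dim\Bbbk[\Delta].$$

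Next, I would translate between the graded Betti numbers of $\Bbbk[\Delta^{\vee}]$ and those of $I_{\Delta^{\vee}}$. From the short exact sequence $0\to I_{\Delta^{\vee}}\to S\to \Bbbk[\Delta^{\vee}]\to 0$, we have $\beta_{i,j}(\Bbbk[\Delta^{\vee}])=\beta_{i-1,j}(I_{\Delta^{\vee}})$ for every $i\ge 1$, while the only nonzero Betti number on the $0$-th step of $\Bbbk[\Delta^{\vee}]$ is $\beta_{0,0}=1$. Reindexing $i\mapsto i-1$ and $j\mapsto j+1$ turns the set $\{(i,i+j): 1\le i\le r,\ \beta_{i,i+j}(\Bbbk[\Delta^{\vee}])\ne 0\}$ into $\{(i',i'+j'): 0\le i'\le r-1,\ \beta_{i',i'+j'}(I_{\Delta^{\vee}})\ne 0\}$ under $j=j'-1$. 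Consequently,
$$\max\{j\mid \beta_{i,i+j}(\Bbbk[\Delta^{\vee}])\neq 0\mbox{ for some }i\leq r\}=\operatorname{reg}_{\le r-1} I_{\Delta^{\vee}}-1,$$
where the $i=0$ contribution $j=0$ is absorbed because $\operatorname{reg}_{\le r-1} I_{\Delta^{\vee}}\ge \operatorname{indeg} I_{\Delta^{\vee}}=n-d\ge 1$.

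Finally, I would invoke Theorem \ref{eq dual}, which says that the left-hand side above equals $n-S_{r}\mbox{-}\operatorname{depth}(\Bbbk[\Delta])-1$. Combining this with the previous step yields
$$\operatorname{reg}_{\le r-1} I_{\Delta^{\vee}}=n-S_{r}\mbox{-}\operatorname{depth}(\Bbbk[\Delta]),$$
and subtracting $\operatorname{indeg} I_{\Delta^{\vee}}=n-\dim\Bbbk[\Delta]$ gives the claimed identity. The bulk of the work is bookkeeping in the reindexing step; the only real content is Theorem \ref{eq dual} itself (already established) and the combinatorial observation that purity forces $I_{\Delta^{\vee}}$ to be generated in a single degree, so I do not anticipate a serious obstacle.
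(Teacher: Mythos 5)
Your proposal is correct and follows essentially the same route as the paper: both reduce the claim to Theorem \ref{eq dual} via the shift $\beta_{i,j}(\Bbbk[\Delta^{\vee}])=\beta_{i-1,j}(I_{\Delta^{\vee}})$ (the paper records this as ${\rm reg}_{\leq r}\Bbbk[\Delta^{\vee}]={\rm reg}_{\leq r-1}I_{\Delta^{\vee}}-1$) and the identity ${\rm indeg}\,I_{\Delta^{\vee}}={\rm ht}\,I_{\Delta}=n-\dim\Bbbk[\Delta]$, which you verify combinatorially from the facets of $\Delta$. Your write-up merely makes the reindexing and the $i=0$ bookkeeping explicit, which the paper leaves implicit.
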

\begin{proof}
Notice that ${\rm reg}_{\leq r}\Bbbk[\Delta^{\vee}]={\rm reg}_{\leq r-1}I_{\Delta^{\vee}}-1$.  
Now, from Theorem \ref{eq dual}, we have ${\rm reg}_{\leq r-1}I_{\Delta^{\vee}}=n-S_{r}\mbox{-}{\rm depth}(\Bbbk[\Delta])$, where $[n]$ is the vertex set of $\Delta$. 
Hence, we obtain that 
$${\rm reg}_{\leq r-1}I_{\Delta^{\vee}}-{\rm indeg}I_{\Delta^{\vee}}=n-S_{r}\mbox{-}{\rm depth}(\Bbbk[\Delta])-{\rm ht}I_{\Delta}=\dim \Bbbk[\Delta]-S_{r}\mbox{-}{\rm depth}(\Bbbk[\Delta]),$$
as required. 
\end{proof}

\begin{cor}[\cite{t}, Corollary 0.3]
For a pure simplicial complex $\Delta$, we have 
$${\rm reg}I_{\Delta^{\vee}}-{\rm indeg}I_{\Delta^{\vee}}=\dim \Bbbk[\Delta]-{\rm depth}(\Bbbk[\Delta])$$
\end{cor}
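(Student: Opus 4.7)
The plan is to deduce this classical result of Terai as a special case of the previously established Corollary \ref{gene} by letting $r$ be sufficiently large. The key observation is that both invariants in Corollary \ref{gene} stabilize to their classical counterparts once $r$ is large enough: the Serre depth $S_r\mbox{-}{\rm depth}(\Bbbk[\Delta])$ collapses to ${\rm depth}(\Bbbk[\Delta])$, and the truncated regularity ${\rm reg}_{\leq r-1} I_{\Delta^{\vee}}$ collapses to ${\rm reg}\, I_{\Delta^{\vee}}$.

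First, I would choose $r$ to be any integer satisfying $r \geq d := \dim \Bbbk[\Delta]$ and $r - 1 \geq {\rm pd}(I_{\Delta^{\vee}})$; for instance, $r = n+1$ certainly works. For the first condition, the chain of inequalities stated in the introduction gives
$$\dim M \geq S_{2}\mbox{-}{\rm depth}(M) \geq \cdots \geq S_{d}\mbox{-}{\rm depth}(M) = {\rm depth}(M),$$
and one verifies directly from the definition that $S_{r}\mbox{-}{\rm depth}(M)$ is nonincreasing in $r$ and equals ${\rm depth}(M)$ once $r \geq d$. For the second condition, by definition
$${\rm reg}_{\leq r-1}I_{\Delta^{\vee}} = \max\{j : \beta_{i,i+j}(I_{\Delta^{\vee}}) \neq 0 \text{ for some } i \leq r-1\},$$
and once $r - 1$ exceeds ${\rm pd}(I_{\Delta^{\vee}})$, no further Betti numbers are omitted by the truncation, so ${\rm reg}_{\leq r-1}I_{\Delta^{\vee}} = {\rm reg}\, I_{\Delta^{\vee}}$.

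Second, I would apply Corollary \ref{gene} for this choice of $r$ to obtain
$${\rm reg}_{\leq r-1}I_{\Delta^{\vee}} - {\rm indeg}\, I_{\Delta^{\vee}} = \dim \Bbbk[\Delta] - S_{r}\mbox{-}{\rm depth}(\Bbbk[\Delta]).$$
Substituting ${\rm reg}_{\leq r-1}I_{\Delta^{\vee}} = {\rm reg}\, I_{\Delta^{\vee}}$ on the left and $S_{r}\mbox{-}{\rm depth}(\Bbbk[\Delta]) = {\rm depth}(\Bbbk[\Delta])$ on the right yields the desired identity.

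There is no real obstacle here; the only thing to be slightly careful about is verifying the stabilization statements, both of which are immediate from the respective definitions and Hochster-type formulas already recalled in the preliminaries.
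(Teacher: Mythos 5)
Your proof is correct, and it reaches the statement by a route that differs from the paper's in one substantive detail. Both arguments specialize Corollary \ref{gene}, but the paper takes $r=d=\dim \Bbbk[\Delta]$, where $S_{d}\mbox{-}{\rm depth}=\mathrm{depth}$ is immediate; the price is that ${\rm pd}(I_{\Delta^{\vee}})$ can equal $d$, so the truncation ${\rm reg}_{\leq d-1}I_{\Delta^{\vee}}$ might a priori omit the top homological degree, and the paper must run a case analysis together with a Hochster-formula computation showing that $\beta_{d,d+j}(I_{\Delta^{\vee}})$ can only be nonzero for $j={\rm indeg}\,I_{\Delta^{\vee}}$, whence the omission is harmless. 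You instead take $r$ large (e.g.\ $r=n+1$), so that both truncations collapse for trivial reasons: $S_{r}\mbox{-}{\rm depth}$ is nonincreasing in $r$, bounded below by the depth, and equal to it for $r\geq d$ (all of which follow directly from the definition, since $K^{j}_{\Bbbk[\Delta]}=0$ for $j<\mathrm{depth}$ and $\dim K^{j}_{\Bbbk[\Delta]}\geq 0\geq j-r+1$ at $j=\mathrm{depth}$ once $r$ is large), while ${\rm reg}_{\leq r-1}I_{\Delta^{\vee}}={\rm reg}\,I_{\Delta^{\vee}}$ as soon as $r-1\geq{\rm pd}(I_{\Delta^{\vee}})$. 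This is legitimate because Theorem \ref{eq dual} and Corollary \ref{gene} carry no upper bound on $r$, and their proofs indeed work for $r>d$. Your version is shorter and avoids the case analysis entirely; the paper's version yields the slightly finer fact that the identity already holds at the threshold $r=d$, i.e.\ that ${\rm reg}_{\leq d-1}I_{\Delta^{\vee}}={\rm reg}\,I_{\Delta^{\vee}}$, which is of some independent interest.
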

\begin{proof}
Let $d=\dim \Bbbk[\Delta]$. 
Notice that $S_{d}$-{\rm depth}$(\Bbbk[\Delta])={\rm depth}(\Bbbk[\Delta])$. Also, we have ${\rm reg}_{\leq d-1}\Bbbk[\Delta^{\vee}]={\rm reg}\Bbbk[\Delta^{\vee}]$. Indeed, if ${\rm pd}I_{\Delta^{\vee}}<d$, then it is obvious. Suppose that ${\rm pd}I_{\Delta^{\vee}}=d$. Now, from Hochster's formula for graded Betti numbers, we have 
$$\beta_{d,d+j}(I_{\Delta^{\vee}})=\displaystyle  \sum_{\substack{F\in\Delta, \\[0.1cm] |F|=n-d-j}}{\rm dim}_{\Bbbk}\widetilde{H}_{d-1}({\rm link}_{\Delta}F;\Bbbk)\mbox{ for all }j.$$
Hence, $\beta_{{\rm pd}I_{\Delta^{\vee}}}(I_{\Delta^{\vee}})=\beta_{{\rm pd}I_{\Delta^{\vee}},{\rm pd}I_{\Delta^{\vee}}+{\rm indeg}I_{\Delta^{\vee}}}(I_{\Delta^{\vee}})$.  So, we have  ${\rm reg}_{\leq d-1}\Bbbk[\Delta^{\vee}]={\rm reg}\Bbbk[\Delta^{\vee}]$. Therefore, from Corollary \ref{gene}, we obtain the desired equality. 
\end{proof}

\begin{cor}[\cite{y}, Corollary 3.7]
For a pure simplicial complex $\Delta$ of codimension $c$ and $r\geq2$, the followings are equivalent: 
\begin{enumerate}
\item $\Delta$ satisfies Serre's condition $(S_{r})$,
\item $I_{\Delta^{\vee}}$ satisfies the condition $(N_{c,r})$. 
\end{enumerate}
\end{cor}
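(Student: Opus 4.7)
The plan is to reduce the claim directly to Corollary~\ref{gene} combined with Proposition~\ref{N_c,r}, using one standard combinatorial observation to tie the invariants together.

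First, I would record the bookkeeping on ${\rm indeg}\,I_{\Delta^{\vee}}$. Since $\Delta$ is pure of codimension $c$ on $[n]$ we have $\dim \Bbbk[\Delta] = d = n - c$, so every facet of $\Delta$ has cardinality $d$. The minimal generators of $I_{\Delta^{\vee}}$ correspond (via complementation) to minimal non-faces of $\Delta^{\vee}$, which in turn correspond to facets of $\Delta$. Hence all minimal generators of $I_{\Delta^{\vee}}$ have degree $n - d = c$, giving ${\rm indeg}\,I_{\Delta^{\vee}} = c$.

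Next, I would translate the $(S_r)$ condition through the Serre depth. By the general equality mentioned in the introduction, $\Delta$ satisfies $(S_r)$ if and only if $S_{r}\mbox{-}{\rm depth}(\Bbbk[\Delta]) = \dim \Bbbk[\Delta]$. Plugging this into Corollary~\ref{gene} yields that $\Delta$ satisfies $(S_r)$ if and only if
$$ {\rm reg}_{\leq r-1}\,I_{\Delta^{\vee}} - {\rm indeg}\,I_{\Delta^{\vee}} = 0, $$
that is, ${\rm reg}_{\leq r-1}\,I_{\Delta^{\vee}} = {\rm indeg}\,I_{\Delta^{\vee}}$. Applying Proposition~\ref{N_c,r} converts this last equality into the statement that $I_{\Delta^{\vee}}$ satisfies $(N_{{\rm indeg}\,I_{\Delta^{\vee}},\,r}) = (N_{c,r})$, completing the chain of equivalences.

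There is no real obstacle here; the only place to be careful is the identification ${\rm indeg}\,I_{\Delta^{\vee}} = c$, which crucially uses purity (without purity the minimal generators would come from facets of different sizes and the initial degree would only be bounded above by $c$). Everything else is a direct substitution of previously established identities.
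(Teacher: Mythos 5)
Your proof is correct and follows exactly the paper's route: the paper's own proof is the one-line "From Proposition \ref{N_c,r} and Corollary \ref{gene}, the assertion follows," and you have simply filled in the substitution $S_{r}\mbox{-}{\rm depth}(\Bbbk[\Delta])=\dim\Bbbk[\Delta]$ and the identification ${\rm indeg}\,I_{\Delta^{\vee}}=c$. (Your closing parenthetical is slightly off --- minimal generators of $I_{\Delta^{\vee}}$ come from complements of facets, so ${\rm indeg}\,I_{\Delta^{\vee}}=n-\dim\Bbbk[\Delta]=c$ even without purity --- but this does not affect the argument.)
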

\begin{proof}
From Proposition \ref{N_c,r} and Corollary \ref{gene}, the assertion follows. 
\end{proof}

As a further consequence of Theorem \ref{eq dual}, we generalize a beautiful result proved by Smith on the depth of Stanley--Reisner rings to the case of the Serre depth. We begin by recalling Smith’s theorem. Let $\Delta^{i}=\{F\mid {\rm dim}F\leq i\}$ be the $i$-{\it skeleton} for $i=-1,\ldots,\dim \Delta$.

\begin{thm}[\cite{s1}, Theorem 3.7]\label{depth-CM}
For a simplicial complex $\Delta$, we have 
$${\rm depth}(\Bbbk[\Delta])=1+\max\{i\mid \Bbbk[\Delta^{i}]\mbox{ is Cohen--Macaulay}\}.$$
\end{thm}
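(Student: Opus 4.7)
The plan is to establish, for every integer $s\geq 1$, the equivalence
\[
{\rm depth}(\Bbbk[\Delta])\geq s \iff \Bbbk[\Delta^{s-1}]\text{ is Cohen--Macaulay},
\]
from which the asserted identity follows by taking $s={\rm depth}(\Bbbk[\Delta])$. The proof rests on three inputs already available in the excerpt: the characterization of depth via vanishing of the local cohomology modules $H^{j}_{\mathfrak{m}}(\Bbbk[\Delta])$, the Hochster formula for these modules recorded in the Preliminaries, and Reisner's criterion for Cohen--Macaulayness. The only extra topological input needed is the elementary observation that for any simplicial complex $X$ and integer $k<m$ one has $\widetilde{H}_{k}(X^{m};\Bbbk)=\widetilde{H}_{k}(X;\Bbbk)$, since the reduced chain complex computing $\widetilde{H}_{k}$ only involves faces of dimension $\leq k+1\leq m$.

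First, I would translate ${\rm depth}(\Bbbk[\Delta])\geq s$ into link homology. By Grothendieck vanishing together with the Hochster formula, this depth inequality is equivalent to the condition
\[
(\dagger)\qquad \widetilde{H}_{k}({\rm link}_{\Delta}F;\Bbbk)=0\quad\text{for every } F\in\Delta\text{ and every } k\leq s-|F|-2.
\]

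Next I would do the same on the Cohen--Macaulay side. A direct verification yields ${\rm link}_{\Delta^{i}}F=({\rm link}_{\Delta}F)^{i-|F|}$ for any $F\in\Delta^{i}$. Provided $\Delta^{s-1}$ is pure of dimension $s-1$, Reisner's criterion for $\Bbbk[\Delta^{s-1}]$ demands $\widetilde{H}_{k}(({\rm link}_{\Delta}F)^{s-1-|F|};\Bbbk)=0$ for all $F\in\Delta$ with $|F|\leq s-1$ and all $k<s-1-|F|$; by the skeleton fact above this collapses to $\widetilde{H}_{k}({\rm link}_{\Delta}F;\Bbbk)=0$ for those $F$ and $k\leq s-|F|-2$. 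For faces $F$ with $|F|\geq s$ the condition $(\dagger)$ is vacuous since $s-|F|-2\leq -2$, so the two vanishing conditions literally coincide.

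The last and only genuinely subtle step is the purity check. Cohen--Macaulayness of $\Bbbk[\Delta^{s-1}]$ automatically forces $\Delta^{s-1}$ to be pure of dimension $s-1$, i.e.\ every facet of $\Delta$ has cardinality at least $s$. Conversely, if ${\rm depth}(\Bbbk[\Delta])\geq s$ but some facet $F$ of $\Delta$ satisfied $|F|\leq s-1$, then ${\rm link}_{\Delta}F=\{\emptyset\}$, giving $\widetilde{H}_{-1}({\rm link}_{\Delta}F;\Bbbk)=\Bbbk\neq 0$, and since $-1\leq s-|F|-2$ this would contradict $(\dagger)$. Hence depth $\geq s$ also forces $\Delta^{s-1}$ to be pure of dimension $s-1$. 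The main (modest) obstacle in the argument is this purity check; everything else is a careful but mechanical comparison of dimension shifts and the handling of the edge case $F=\emptyset$.
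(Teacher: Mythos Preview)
Your argument is correct. Note, however, that the paper does not give its own proof of this statement: Theorem~\ref{depth-CM} is simply quoted from Smith~\cite{s1} as background for the generalization Theorem~\ref{formula}. So there is no ``paper's proof'' to compare against directly.

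That said, it is worth contrasting your approach with the paper's proof of the generalization. You argue entirely on the $\Delta$ side: Hochster's formula converts ${\rm depth}(\Bbbk[\Delta])\geq s$ into the link-homology vanishing $(\dagger)$, the identity ${\rm link}_{\Delta^{s-1}}F=({\rm link}_{\Delta}F)^{s-1-|F|}$ together with the skeleton observation $\widetilde{H}_k(X^m;\Bbbk)=\widetilde{H}_k(X;\Bbbk)$ for $k<m$ matches this with Reisner's criterion for $\Delta^{s-1}$, and the purity check closes the loop. This is essentially the classical route (and is in the spirit of~\cite[Proposition~6.3.17]{v}, which the paper itself invokes). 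By contrast, the paper proves Theorem~\ref{formula} by passing to the Alexander dual: Theorem~\ref{eq dual} expresses $S_r\text{-}{\rm depth}(\Bbbk[\Delta])$ as a bound on where nonzero Betti numbers $\beta_{i,i+j}(\Bbbk[\Delta^{\vee}])$ can sit, and then the dual Hochster formula is used to transport information between $\Delta$ and its skeletons. Your method is more self-contained and avoids Alexander duality entirely; the paper's method has the advantage that it handles all $S_r$-depths uniformly and ties the result to the regularity-type invariant ${\rm reg}_{\leq r-1}I_{\Delta^{\vee}}$ developed in Section~\ref{sqr}.

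One small remark: your equivalence ${\rm depth}(\Bbbk[\Delta])\geq s \Leftrightarrow \Bbbk[\Delta^{s-1}]$ is Cohen--Macaulay is only asserted (and only needed) for $1\leq s\leq \dim\Bbbk[\Delta]$, since for larger $s$ the skeleton stabilizes at $\Delta$ and the equivalence can fail trivially. You use it only at $s={\rm depth}(\Bbbk[\Delta])$ and at $s$ one larger, both of which are in range, so the deduction of the maximum formula goes through.
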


We give a generalization result of Theorem \ref{depth-CM} as follows. 

\begin{thm}\label{formula}
Let $\Delta$ be a pure simplicial complex with $\dim \Delta=d-1$ and $2\leq r\leq d$. Then we have 
$$S_{r}\mbox{-}{\rm depth}(\Bbbk[\Delta])=S_{r}\mbox{-}{\rm depth}(\Bbbk[\Delta^{t-1}])\mbox{ for all }t\geq S_{r}\mbox{-}{\rm depth}(\Bbbk[\Delta]).$$
Moreover, we have 
$$S_{r}\mbox{-}{\rm depth}(\Bbbk[\Delta])=1+\max\{i\mid \Bbbk[\Delta^{i}]\mbox{ satisfies Serre's condition }(S_{r})\}.$$
\end{thm}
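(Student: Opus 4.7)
The plan is to prove the first identity and deduce the ``Moreover'' statement as an immediate consequence. The first identity rests on Lemma \ref{depth} together with two elementary ingredients: the link–skeleton identity
$$\operatorname{link}_{\Delta^{t-1}} F = (\operatorname{link}_\Delta F)^{t-|F|-1}$$
valid for every $F \in \Delta^{t-1}$, and the standard homological comparison that the inclusion $X^k \hookrightarrow X$ induces $\widetilde{H}_i(X^k;\Bbbk) \simeq \widetilde{H}_i(X;\Bbbk)$ for $i<k$ and a surjection $\widetilde{H}_k(X^k;\Bbbk) \twoheadrightarrow \widetilde{H}_k(X;\Bbbk)$. Since $\Delta$ is pure of dimension $d-1$ and every $t$ in play satisfies $t \leq d$, the skeleton $\Delta^{t-1}$ is itself pure of dimension $t-1$, so Lemma \ref{depth} applies.

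Setting $s_0 := S_r\text{-depth}(\Bbbk[\Delta])$, the inequality $S_r\text{-depth}(\Bbbk[\Delta^{t-1}]) \geq s_0$ would follow by using Lemma \ref{depth} in the ``$\Leftarrow$'' direction: for any $i \leq r-2$ and any $F \in \Delta^{t-1}$ with $|F|\leq s_0-i-2$, the hypothesis $t \geq s_0$ forces $|F| < t - i - 1$, so the relevant link homology of $\Delta^{t-1}$ agrees with that of $\Delta$ and hence vanishes. For the reverse inequality, I would use Hochster's formula (recalled in the Preliminaries) to produce a witness face $F_0 \in \Delta$ with $|F_0| \geq s_0-r+1$ and $\widetilde{H}_{s_0-|F_0|-1}(\operatorname{link}_\Delta F_0;\Bbbk) \neq 0$. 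Since $|F_0| \leq s_0-1 \leq t$, one has $F_0 \in \Delta^{t-1}$, and the skeletal comparison (an isomorphism when $s_0 < t$ and merely a surjection in the boundary case $s_0=t$) transfers the nonvanishing into $\Delta^{t-1}$, giving $\dim K^{s_0}_{\Bbbk[\Delta^{t-1}]} \geq s_0-r+1$ and therefore $S_r\text{-depth}(\Bbbk[\Delta^{t-1}]) \leq s_0$.

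Given the first identity, the ``Moreover'' part is immediate. Because $\Delta^{t-1}$ is pure of dimension $t-1$, one has $\Bbbk[\Delta^{t-1}]$ satisfies $(S_r)$ iff $S_r\text{-depth}(\Bbbk[\Delta^{t-1}]) = t$. Applying the first identity with $t = s_0$ yields $S_r\text{-depth}(\Bbbk[\Delta^{s_0-1}]) = s_0$, so $\Bbbk[\Delta^{s_0-1}]$ satisfies $(S_r)$; applying it with $t = i+1$ for any $i \geq s_0$ yields $S_r\text{-depth}(\Bbbk[\Delta^{i}]) = s_0 < i+1$, so $\Bbbk[\Delta^{i}]$ fails $(S_r)$. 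Thus the largest $i$ for which $\Bbbk[\Delta^{i}]$ satisfies $(S_r)$ is exactly $s_0-1$, which is the desired identity.

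The main point requiring care is the index bookkeeping linking the vanishing range $|F| \leq s_0-i-2$ in Lemma \ref{depth} with the skeletal cutoff $|F| < t-i-1$; one must check that $t \geq s_0$ is precisely what makes the former imply the latter. The only nontrivial homological input is the skeletal surjection $\widetilde{H}_k(X^k;\Bbbk) \twoheadrightarrow \widetilde{H}_k(X;\Bbbk)$, invoked in the boundary case $s_0 = t$ of the upper-bound argument to ensure the Hochster witness is not annihilated when passing from $\Delta$ to $\Delta^{t-1}$.
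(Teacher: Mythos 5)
Your argument is correct and rests on the same essential mechanism as the paper's proof: the identity $\operatorname{link}_{\Delta^{t-1}}F=(\operatorname{link}_{\Delta}F)^{t-|F|-1}$ together with the fact that passing to a $k$-skeleton preserves reduced homology below degree $k$, fed into Hochster-type formulas. The only differences are cosmetic: you work directly with Lemma \ref{depth} and the local cohomology modules rather than detouring through the Alexander-dual Betti numbers of Theorem \ref{eq dual}, and you treat the boundary skeleton $\Delta^{s_0-1}$ uniformly via the surjection on top homology, whereas the paper proves the identity for $\Delta^{t}$ and then verifies separately that $\Bbbk[\Delta^{s_0-1}]$ satisfies $(S_r)$.
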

\begin{proof}
From Theorem \ref{eq dual}, there exists an integer $i\leq r$ such that $$\beta_{i, i+(n-S_{r}\mbox{-}{\rm depth}(\Bbbk[\Delta])-1)}(\Bbbk[\Delta^{\vee}])\neq 0.$$ Hence, from Hochster's formula on graded Betti numbers, there exists a face $F$ of $\Delta$ with $|F|=S_{r}\mbox{-}{\rm depth}(\Bbbk[\Delta])-i+1$ such that $\widetilde{H}_{i-2}({\rm link}_{\Delta}F;\Bbbk)\neq 0$. Then, since $|F|=S_{r}\mbox{-}{\rm depth}(\Bbbk[\Delta])-i+1\leq t-i+1\leq t+1$, $F\in\Delta^{t}$. Also, since $i-2=S_{r}\mbox{-}{\rm depth}(\Bbbk[\Delta])-|F|-1\leq t-|F|-1$, by the proof in \cite[Proposition 6.3.17]{v}, we have$$\widetilde{H}_{i-2}({\rm link}_{\Delta^{t}}F;\Bbbk)=\widetilde{H}_{i-2}({\rm link}_{\Delta}F;\Bbbk)\neq0.$$Thus, $\beta_{i-1,i+(n-S_{r}\mbox{-}{\rm depth}(\Bbbk[\Delta]))-1)}(\Bbbk[(\Delta^{t})^{\vee}])\neq 0$. Therefore, from Theorem \ref{eq dual}, we obtain $n-S_{r}\mbox{-}{\rm depth}(\Bbbk[\Delta])-1\leq n-S_{r}\mbox{-}{\rm depth}(\Bbbk[\Delta^{t}])-1$ and hence $S_{r}\mbox{-}{\rm depth}(\Bbbk[\Delta])\geq S_{r}\mbox{-}{\rm depth}(\Bbbk[\Delta^{t}])$. 
Conversely, from Theorem \ref{eq dual}, there exists an integer $i\leq r$ such that $$\beta_{i, i+(n-S_{r}\mbox{-}{\rm depth}(\Bbbk[\Delta^{t}]-1)}(\Bbbk[(\Delta^{t})^{\vee}])\neq 0.$$ From Hochster's formula on graded Betti numbers, there exists a face $F$ of $\Delta^{t}$ with $|F|=S_{r}\mbox{-}{\rm depth}(\Bbbk[\Delta^{t}])-i+1$ such that $\widetilde{H}_{i-2}({\rm link}_{\Delta^{t}}F;\Bbbk)\neq 0$. 
Also, since $t\geq S_{r}\mbox{-}{\rm depth}(\Bbbk[\Delta])\geq S_{r}\mbox{-}{\rm depth}(\Bbbk[\Delta^{t}])$, $i-2=S_{r}\mbox{-}{\rm depth}(\Bbbk[\Delta])-|F|-1\leq t-|F|-1$. Hence, by the proof in \cite[Proposition 6.3.17]{v}, we have$$\widetilde{H}_{i-2}({\rm link}_{\Delta}F;\Bbbk)=\widetilde{H}_{i-2}({\rm link}_{\Delta^{t}}F;\Bbbk)\neq0.$$Hence, $\beta_{i,i+(n-S_{r}\mbox{-}{\rm depth}(\Bbbk[\Delta^{t}])-1)}(\Bbbk[(\Delta)^{\vee}])\neq 0$. Finally we obtain that 
$n-S_{r}\mbox{-}{\rm depth}(\Bbbk[\Delta^{t}])-1\leq n-S_{r}\mbox{-}{\rm depth}(\Bbbk[\Delta])-1,$ and hence $S_{r}\mbox{-}{\rm depth}(\Bbbk[\Delta])\geq S_{r}\mbox{-}{\rm depth}(\Bbbk[\Delta^{t}])$. Therefore, we have $$S_{r}\mbox{-}{\rm depth}(\Bbbk[\Delta])=S_{r}\mbox{-}{\rm depth}(\Bbbk[\Delta^{t}])\mbox{ for all }t\geq S_{r}\mbox{-}{\rm depth}(\Bbbk[\Delta]).$$To prove the equality of the statement for $t-1$, we prove that $\Bbbk[\Delta^{S_{r}\mbox{-}{\rm depth}(\Bbbk[\Delta])-1}]$ satisfies Serre's condition $(S_{r})$. Fix an integer $-1\leq i\leq r-2$ and a face $F$ of $\Delta^{S_{r}\mbox{-}{\rm depth}(\Bbbk[\Delta])-1}$ such that $|F|\leq S_{r}\mbox{-}{\rm depth}(\Bbbk[\Delta])-i-2$. Then, since $i\leq S_{r}\mbox{-}{\rm depth}(\Bbbk[\Delta])-2-|F|$ and $|F|\leq S_{r}\mbox{-}{\rm depth}(\Bbbk[\Delta])-1$, we have$$\widetilde{H}_{i}({\rm link}_{\Delta^{S_{r}\mbox{-}{\rm depth}(\Bbbk[\Delta])-1}}F;\Bbbk)\simeq\widetilde{H}_{i}({\rm link}_{\Delta}F;\Bbbk).$$From  Lemma \ref{depth}, the latter group is zero, by \cite[page 4, following Theorem 1.7]{t} as desired. Finally we prove the equality$$S_{r}\mbox{-}{\rm depth}(\Bbbk[\Delta])=1+\max\{i\mid \Bbbk[\Delta^{i}]\mbox{ satisfies Serre's condition }(S_{r})\}.$$From the equality proved above, for all $t\geq S_{r}\mbox{-}{\rm depth}(\Bbbk[\Delta])$, we have 
$$\dim \Bbbk[\Delta^{t}]=t+1\geq S_{r}\mbox{-}{\rm depth}(\Bbbk[\Delta])+1=S_{r}\mbox{-}{\rm depth}(\Bbbk[\Delta^{t}])+1$$ and hence $\Delta^{t}$ does not satisfy $(S_{r})$. This means that $$S_{r}\mbox{-}{\rm depth}(\Bbbk[\Delta])\geq1+\max\{i\mid \Bbbk[\Delta^{i}]\mbox{ satisfies }(S_{r})\}.$$Also, the equality proved above, we have $\Bbbk[\Delta^{S_{r}\mbox{-}{\rm depth}(\Bbbk[\Delta])-1}]$ satisfies Serre's condition $(S_{r})$, as required. 
\end{proof}

Thanks to Theorem \ref{formula} and the proof technique in \cite[Lemma 2.2]{mmvv}, we have the following corollary. 
\begin{cor}
Let $I$ be a unmixed squarefree monomial ideal and let $f$ be a squarefree monomial. 
Then $S_{r}\mbox{-}{\rm depth}(S/(I:f))\geq S_{r}\mbox{-}{\rm depth}(S/I)$ for all $r\geq 2$. 
\end{cor}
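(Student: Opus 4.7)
The plan is to translate the statement into a combinatorial one about stars in a pure simplicial complex, and then apply the link criterion of Lemma~\ref{depth}. Write $I = I_{\Delta}$ for a pure simplicial complex $\Delta$ on $[n]$, and let $f = x^{F}$ with $F \subseteq [n]$. If $F \notin \Delta$, then $I:f = S$ and the inequality is vacuous, so I may assume $F \in \Delta$. A direct computation on squarefree monomials identifies $I:f$ with $I_{\Gamma}$, where $\Gamma := \{G \subseteq [n] : G \cup F \in \Delta\} = {\rm star}_{\Delta}F$; since $\Delta$ is pure, $\Gamma$ is also pure, of the same dimension.

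Set $s = S_{r}\mbox{-}{\rm depth}(S/I)$. By Lemma~\ref{depth} applied to $\Delta$, $\widetilde{H}_{i}({\rm link}_{\Delta}G; \Bbbk) = 0$ for every $i \le r - 2$ and every $G \in \Delta$ with $|G| \le s - i - 2$. To conclude $S_{r}\mbox{-}{\rm depth}(S/(I:f)) \ge s$ via Lemma~\ref{depth} applied to $\Gamma$, I fix $i \le r - 2$ and $G' \in \Gamma$ with $|G'| \le s - i - 2$, and analyze ${\rm link}_{\Gamma}G'$ in two cases. If $F \subseteq G'$, then $G' = G' \cup F \in \Delta$ and ${\rm link}_{\Gamma}G' = {\rm link}_{\Delta}G'$, so the required vanishing follows immediately from the hypothesis on $\Delta$. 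If $F \not\subseteq G'$, pick $v \in F \setminus G'$; for every $H \in {\rm link}_{\Gamma}G'$, the set $H \cup \{v\}$ is still disjoint from $G'$ and $(H \cup \{v\}) \cup G' \cup F = H \cup G' \cup F \in \Delta$ because $v \in F$. Hence $H \cup \{v\} \in {\rm link}_{\Gamma}G'$, so $v$ is a cone vertex of ${\rm link}_{\Gamma}G'$, which is therefore contractible and has vanishing reduced homology.

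Combining the two cases and invoking Lemma~\ref{depth} for $\Gamma$ yields the desired inequality. One may equivalently phrase the argument in the language of Theorem~\ref{formula}: the vanishing verified above is exactly the requirement for $\Bbbk[\Gamma^{s-1}]$ to satisfy Serre's condition $(S_{r})$, so the skeleton formula for the Serre depth concludes. The main technical point I expect is the cone-vertex verification in the second case, where the specific structure of the star (as opposed to an arbitrary subcomplex of $\Delta$) is essential; the remaining steps are routine bookkeeping.
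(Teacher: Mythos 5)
Your proof is correct, and it follows a genuinely more self-contained route than the paper's. The paper deduces the corollary from Theorem~\ref{formula}: it reduces to showing that $(\Delta^{\prime})^{i-1}={\rm star}_{\Delta^{i-1}}(F)$ satisfies $(S_{r})$ whenever $\Delta^{i-1}$ does, invoking (without proof) the fact that the star of a face of an $(S_{r})$ complex is again $(S_{r})$, together with a short argument that $i-1\leq\dim\Delta^{\prime}$. You instead bypass the skeleton characterization entirely and verify the criterion of Lemma~\ref{depth} for $\Gamma={\rm star}_{\Delta}F$ directly: when $F\subseteq G^{\prime}$ the link in $\Gamma$ coincides with the link in $\Delta$, and when $F\not\subseteq G^{\prime}$ any $v\in F\setminus G^{\prime}$ is a cone vertex of ${\rm link}_{\Gamma}G^{\prime}$, killing all reduced homology; both cases check out, as does the purity of $\Gamma$ needed to apply Lemma~\ref{depth}. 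In effect your cone-vertex computation is exactly the content of the ``star preserves $(S_{r})$'' fact the paper cites, so your argument supplies the missing justification while the paper's version is shorter and showcases Theorem~\ref{formula} as the engine. Your closing remark that the verified vanishing is equivalent to $\Bbbk[\Gamma^{s-1}]$ satisfying $(S_{r})$ correctly reconnects the two approaches.
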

\begin{proof}
Fix an integer  $r\geq 2$ and let $F={\rm supp}(f)$. We may assume that $f$ is a zero divisor otherwise $I:f=I$. Also, we may assume that $f$ is not in all minimal prime ideals of $I$ otherwise $I:f=S$. Let $\Delta$ and $\Delta^{\prime}$ be simplicial complexes such that $I_{\Delta}=I$ and $I_{\Delta^{\prime}}=I:f$. We assume that $\Delta^{i-1}$ satisfies Serre's condition $(S_{r})$. We suppose that $i>\dim \Delta^{\prime}$. Then, we take a facet $G$ of $\Delta^{\prime}$ containing $F$. Since $G$ is a face of $\Delta^{i-1}$ and this complex satisfies Serre's condition $(S_{r})$, in particular being pure (see \cite[Lemma 2.6]{mt}), $G$ is properly contained in a face of $\Delta$ of dimension $i-1$, which contradicts to the fact that $\dim \Delta\geq\dim \Delta^{\prime}$. Therefore, we get $i-1\leq\dim \Delta^{\prime}$. 
Notice that $$(\Delta^{\prime})^{i-1}=({\rm star}_{\Delta}(F))^{i-1}={\rm star}_{\Delta^{i-1}}(F),$$
where ${\rm star}_{\Delta}F=\{G\in\Delta\,\,: G\cup F\in\Delta\}$. Now, since the star of a face of a simplicial complex satisfying Serre's condition $(S_{r})$ also satisfies Serre's condition $(S_{r})$, $(\Delta^{\prime})^{i-1}$ satisfies Serre's condition $(S_{r})$. Therefore, by Theorem \ref{formula}, we have 
$$S_{r}\mbox{-}{\rm depth}(S/(I:f))\geq S_{r}\mbox{-}{\rm depth}(S/I),$$
which completes the proof. 
\end{proof}

Moreover, by Theorem \ref{formula}, we obtain the Serre depth on the link of a simplicial complex. This corresponds to the Serre depth associated with the localization of Stanley–Reisner rings at prime ideals.

\begin{cor}
Let $\Delta$ be a simplicial complex and $r\geq2$. Then we have 
$$S_{r}\mbox{-}{\rm depth}(\Bbbk[\Delta])\leq S_{r}\mbox{-}{\rm depth}(\Bbbk[{\rm link}_{\Delta}F])+|F|\mbox{ for all }F\in\Delta.$$
\end{cor}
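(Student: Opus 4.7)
The plan is to apply Theorem \ref{formula} to both $\Delta$ and ${\rm link}_\Delta F$, bridged by the fact that Serre's condition $(S_r)$ is inherited by links of simplicial complexes. Write $d = S_r\mbox{-}{\rm depth}(\Bbbk[\Delta])$ and $s = S_r\mbox{-}{\rm depth}(\Bbbk[{\rm link}_\Delta F])$; the goal is $d \leq s + |F|$. Note that since $\Bbbk[\Delta]$ must be unmixed for its Serre depth to be defined, $\Delta$ is pure, and consequently ${\rm link}_\Delta F$ is pure as well (any facet $G$ of the link extends to a facet $F \cup G$ of $\Delta$), so Theorem \ref{formula} applies to both sides.

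First, Theorem \ref{formula} applied to $\Delta$ shows that the skeleton $\Delta^{d-1}$ satisfies $(S_r)$. A direct unwinding of the definitions yields the combinatorial identity
$${\rm link}_{\Delta^{d-1}} F = ({\rm link}_\Delta F)^{d-1-|F|},$$
since $G \in {\rm link}_{\Delta^{d-1}} F$ is equivalent to $G \cap F = \emptyset$, $G \cup F \in \Delta$, and $|G \cup F| \leq d$. Invoking the standard fact that taking the link at a face preserves Serre's condition $(S_r)$---verified via the Terai--Yoshida-type criterion for $(S_r)$ in terms of vanishing reduced homologies of links, combined with the transitivity ${\rm link}_{{\rm link}_\Gamma G} H = {\rm link}_\Gamma (G \cup H)$ for disjoint $G,H$---I conclude that $({\rm link}_\Delta F)^{d-1-|F|}$ satisfies $(S_r)$. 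Applying Theorem \ref{formula} to ${\rm link}_\Delta F$ then forces $d - 1 - |F| \leq s - 1$, which is the desired inequality $d \leq s + |F|$.

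The main potential obstacle is a clean verification that $(S_r)$ is inherited by links; while well known, its reference or short direct proof should be included explicitly. An alternative route that sidesteps this lemma is to argue directly from Lemma \ref{depth}: for any $i \leq r-2$ and any $H \in {\rm link}_\Delta F$ with $|H| \leq (d - |F|) - i - 2$, the face $F \cup H \in \Delta$ satisfies $|F \cup H| \leq d - i - 2$, so Lemma \ref{depth} applied at level $d$ to $\Delta$ gives $\widetilde{H}_i({\rm link}_\Delta (F \cup H); \Bbbk) = 0$; this group coincides with $\widetilde{H}_i({\rm link}_{{\rm link}_\Delta F} H; \Bbbk)$ by the transitivity of links, and a second application of Lemma \ref{depth} to ${\rm link}_\Delta F$ then yields $d - |F| \leq s$ at once.
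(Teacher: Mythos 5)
Your first argument is essentially the paper's own proof: both apply Theorem \ref{formula} to see that $\Delta^{d-1}$ satisfies $(S_r)$, use the identity ${\rm link}_{\Delta^{d-1}}F=({\rm link}_{\Delta}F)^{d-1-|F|}$ together with the fact that $(S_r)$ passes to links, and then apply Theorem \ref{formula} again to the link. The only detail the paper adds is to dispatch the trivial case $S_r\mbox{-}{\rm depth}(\Bbbk[\Delta])\leq|F|$ first, so that $F$ is actually a face of the skeleton $\Delta^{d-1}$ and the identity makes sense; you should include that reduction (your alternative route via Lemma \ref{depth} is also fine but is not the paper's argument).
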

\begin{proof}
Fix a face $F\in\Delta$. If $S_{r}\mbox{-}{\rm depth}(\Bbbk[\Delta])\leq|F|$, then the desired inequality clearly holds. Hence, we may assume that $S_{r}\mbox{-}{\rm depth}(\Bbbk[\Delta])>|F|$. By the proof in \cite[Proposition 6.3.17]{v} and the assumption that $S_{r}\mbox{-}{\rm depth}(\Bbbk[\Delta])-1\geq|F|$, we have$${\rm link}_{\Delta^{S_{r}\mbox{-}{\rm depth}(\Bbbk[\Delta])-1}}F=({\rm link}_{\Delta}F)^{S_{r}\mbox{-}{\rm depth}(\Bbbk[\Delta])-1-|F|}.$$
Now, from Theorem \ref{formula} and the fact that the link of every face satisfies $(S_{r})$ if $\Delta$ satisfies Serre's condition $(S_{r})$, $({\rm link}_{\Delta}F)^{S_{r}\mbox{-}{\rm depth}(\Bbbk[\Delta])-1-|F|}$ satisfies Serre's condition $(S_{r})$. Hence, we have 
$$S_{r}\mbox{-}{\rm depth}(\Bbbk[\Delta])-1-|F|\leq\max\{i\,\,:({\rm link}_{\Delta}F)^{i}\mbox{ satisfies Serre's condition }(S_{r})\}.$$Therefore, again by Theorem \ref{formula}, we obtain that $$S_{r}\mbox{-}{\rm depth}(\Bbbk[\Delta])-|F|\leq S_{r}\mbox{-}{\rm depth}(\Bbbk[{\rm link}_{\Delta}F]),$$
which implies the desired inequality. 
\end{proof}

In the rest of this section, we consider an operation to Stanley--Reisner rings called an 1-vertex inflation, which was introduced in \cite{bh1}. We follow the notation in \cite{rt}. We begin recalling the definition of it. Let $\Delta$ be a simplicial complex on $[n]$. We write $I_{\Delta}=(m_{1},\ldots, m_{s})$, where $m_{1},\ldots, m_{k}$ are monomials divisible by $x_{n}$ and $m_{k+1},\ldots, m_{s}$ are monomials indivisible $x_{n}$. Let $J$ be the squarefree monomial ideal in $S^{\prime}=\Bbbk[x_{1},\ldots, x_{n}, x_{n+1}]$ such that 
$$J=(m_{1}x_{n+1},\ldots, m_{k}x_{n+1},m_{k+1},\ldots, m_{s}).$$
Let $\Delta^{\prime}$ be the simplicial  complex on $[n+1]$ such that $J=I_{\Delta^{\prime}}$. This simplicial complex is called the {\it 1-vertex inflation} of $\Delta$ with respect to $x_{n}$.  

To study the Serre depth on a 1-vertex inflation, we prepare the following lemma concerning the relationship between the Alexander dual complexes of a simplicial complex and of its 1-vertex inflation.
\begin{lemma}\label{lemma for 1-vertex}
Let $\Delta$ be a simplicial complex on $X_{[n]}=\{x_{1},\ldots, x_{n}\}$ and $\Delta^{\prime}$ be the 1-vertex inflation of $\Delta$ with respect to $x_{n}$,  $W^{\prime}\subset X_{n}\cup\{x_{n+1}\}$. Then the following statements hold:
\begin{enumerate}
\item If $x_{n+1}\notin W^{\prime}$, then $(\Delta^{\prime})^{\vee}_{W^{\prime}}=\Delta^{\vee}_{W^{\prime}}$.  \\
\item If $x_{n}\notin W^{\prime}$ and $x_{n+1}\in W^{\prime}$, then $(\Delta^{\prime})^{\vee}_{W^{\prime}}\simeq\Delta^{\vee}_{(W^{\prime}\setminus\{x_{n+1}\})\cup\{x_{n}\}}$. \\
\item If $x_{n}, x_{n+1}\in W^{\prime}$, then the geometric realizations $|(\Delta^{\prime})^{\vee}_{W^{\prime}}|$ and $|\Delta^{\vee}_{W^{\prime}\setminus\{x_{n+1}\}}|$ are homotopy equivalent. 
\end{enumerate}
\end{lemma}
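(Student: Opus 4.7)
The plan is to extract a concrete face--membership criterion for $\Delta'$ and then, in each of the three configurations of $W'$, to identify or compare the two restricted Alexander duals directly through this criterion. Let $F_j$ denote the minimal non-face of $\Delta$ corresponding to $m_j$, so that $F_1,\ldots,F_k$ contain $x_n$ while $F_{k+1},\ldots,F_s$ do not; then the minimal non-faces of $\Delta'$ are $F_j \cup \{x_{n+1}\}$ for $j \leq k$ and $F_j$ for $j \geq k+1$. Reading this off gives the criterion: for $G \subseteq X_{[n]} \cup \{x_{n+1}\}$, $G \in \Delta'$ if and only if either $x_{n+1} \in G$ and $G \setminus \{x_{n+1}\} \in \Delta$, or $x_{n+1} \notin G$ and $G$ contains no $F_j$ with $j \geq k+1$. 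Every subsequent identification is an application of this criterion to the defining equivalence $F \in (\Delta')^{\vee}_{W'} \iff (X_{[n]} \cup \{x_{n+1}\}) \setminus F \notin \Delta'$.

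For (1), since $x_{n+1} \notin W'$, the complement of any $F \subseteq W'$ contains $x_{n+1}$, and the criterion forces non-membership in $\Delta'$ to be equivalent to $X_{[n]} \setminus F \notin \Delta$; this yields $(\Delta')^{\vee}_{W'} = \Delta^{\vee}_{W'}$ on the nose. For (2), I would set $W'' = (W' \setminus \{x_{n+1}\}) \cup \{x_n\}$ and introduce the vertex map $\varphi$ defined by $\varphi(x_{n+1}) = x_n$ and $\varphi(x_i) = x_i$ otherwise. A short case analysis, splitting on whether $x_{n+1} \in F$, will show that in either subcase the criterion expresses membership purely in terms of $F \cap X_{[n-1]}$ and the same collection of $F_j$'s; thus $F \in (\Delta')^{\vee}_{W'}$ iff $\varphi(F) \in \Delta^{\vee}_{W''}$, and $\varphi$ induces a simplicial isomorphism.

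For (3) the crux is to show that ${\rm link}_{(\Delta')^{\vee}_{W'}}(x_{n+1})$ is a cone with apex $x_n$, and hence contractible. Applying the criterion, $G \cup \{x_{n+1}\} \in (\Delta')^{\vee}_{W'}$ is equivalent to $X_{[n]} \setminus G$ containing some $F_j$ with $j \geq k+1$; since such $F_j$ avoid $x_n$, adjoining or removing $x_n$ from $G$ does not affect this condition, proving the cone structure. The degenerate situation $k = s$, in which the link is empty and $x_{n+1}$ is not even a vertex of $(\Delta')^{\vee}_{W'}$, is handled separately by observing that the criterion then identifies $(\Delta')^{\vee}_{W'}$ with $\Delta^{\vee}_{W' \setminus \{x_{n+1}\}}$ as sets. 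In the main case, the classical fact that deleting a vertex with contractible link induces a homotopy equivalence gives $|(\Delta')^{\vee}_{W'}| \simeq |(\Delta')^{\vee}_{W'} \setminus x_{n+1}|$, and one last application of the criterion identifies this deletion with $\Delta^{\vee}_{W' \setminus \{x_{n+1}\}}$.

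The hard part is the cone-structure argument in (3): one has to recognize that the newly introduced vertex $x_n$ is exactly the apex of the link of $x_{n+1}$, which is the combinatorial mechanism by which the inflation leaves the homotopy type of the restricted Alexander dual unchanged. Once this is in place, everything else is bookkeeping with the minimal non-face description of $\Delta'$.
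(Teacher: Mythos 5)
Your proposal is correct. For parts (1) and (2) it is essentially the paper's argument in dual clothing: the paper computes the facets of $(\Delta^{\prime})^{\vee}$ directly (they are $F\cup\{x_{n+1}\}$ for facets $F$ of $\Delta^{\vee}$ containing $x_{n}$, together with the facets avoiding $x_{n}$), whereas you read the same information off the minimal non-faces of $\Delta^{\prime}$; the two descriptions are equivalent and both make (1) and (2) routine. The genuine difference is in (3). The paper constructs explicit continuous maps $f,g$ (with $g$ collapsing $x_{n+1}$ onto $x_{n}$) and exhibits a straight-line homotopy $h(p,t)$ sliding the $x_{n+1}$-coordinate onto $x_{n}$, so that $f\circ g\simeq \mathrm{id}$; you instead verify that $\mathrm{link}_{(\Delta^{\prime})^{\vee}_{W^{\prime}}}(x_{n+1})$ is a cone with apex $x_{n}$ and invoke the standard fact that deleting a vertex with contractible link preserves the homotopy type, then identify the deletion with $\Delta^{\vee}_{W^{\prime}\setminus\{x_{n+1}\}}$ via part (1). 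Both routes rest on the same combinatorial kernel --- every face containing $x_{n+1}$ remains a face after adjoining $x_{n}$, because the relevant minimal non-faces $F_{j}$ ($j\geq k+1$) avoid $x_{n}$ --- which is also exactly what makes the paper's linear homotopy land inside the complex (a point the paper leaves implicit). Your version buys a cleaner treatment of the degenerate case in which $x_{n+1}$ is not a vertex of $(\Delta^{\prime})^{\vee}_{W^{\prime}}$ (all minimal non-faces of $\Delta$ contain $x_{n}$), which you handle explicitly and the paper glosses over; the paper's version buys a concrete, self-contained homotopy without appeal to the deletion--link lemma.
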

\begin{proof}
A straightforward computation shows that
$$(\Delta^{\prime})^{\vee}=\langle\{F\cup\{x_{n+1}\}\,\,: x_{n}\in F\in\mathcal{F}(\Delta^{\vee})\}\cup\{G\,\,: x_{n}\notin G\in\mathcal{F}(\Delta^{\vee})\}\rangle.$$Therefore, it is clear that $(1)$ and $(2)$ hold. 
To show the claim of $(3)$, we now take two continuous maps
$$f:|\Delta^{\vee}_{W^{\prime}\setminus\{x_{n+1}\}}|\rightarrow|(\Delta^{\prime})^{\vee}_{W^{\prime}}|\mbox{ and }g: |(\Delta^{\prime})^{\vee}_{W^{\prime}}|\rightarrow|\Delta^{\vee}_{W^{\prime}\setminus\{x_{n+1}\}}|$$
such that $f(x_{i})=x_{i}, g(x_{i})=x_{i}$ for all $1\leq i\leq n$ and $g(x_{n+1})=g(x_{n})$. Notice that $g\circ f$ is the identity on $|(\Delta^{\prime})^{\vee}_{W^{\prime}}|$. We now define the continuous map $h:|(\Delta^{\prime})^{\vee}_{W^{\prime}}|\times[0,1]\rightarrow|(\Delta^{\prime})^{\vee}_{W^{\prime}}|$ by 
$$h\left(\left(\displaystyle\sum_{i=1}^{n}a_{i}x_{i}\right)+a_{n+1}x_{n+1}, t\right)=\left(\displaystyle\sum_{i=1}^{n}a_{i}x_{i}\right)+a_{n+1}(tx_{n+1}+(1-t)x_{n}).$$ Then, by the definition of $h$, $h(-,1)$ is the identity on $|(\Delta^{\prime})^{\vee}_{W^{\prime}}|$ and $h(-,0)=f\circ g$. Hence, $f\circ g$ is homotopic to the identity on $|(\Delta^{\prime})^{\vee}_{W^{\prime}}|$, as required. 
\end{proof}

\begin{thm}\label{1-vertex}
Let $\Delta$ be a pure simplicial complex on $X_{[n]}=\{x_{1},\ldots, x_{n}\}$ and $\Delta^{\prime}$ be the 1-vertex inflation of $\Delta$ with respect to $x_{n}$ and let $r\geq 2$. Then, we have 
$$S_{r}\mbox{-}{\rm depth}(S^{\prime}/I_{\Delta^{\prime}})=S_{r}\mbox{-}{\rm depth}(S/I_{\Delta})+1.$$
Therefore, $\Delta$ satisfies Serre's condition $(S_{r})$ if and only if $\Delta^{\prime}$ does. 
\end{thm}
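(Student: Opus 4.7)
The plan is to use Theorem \ref{eq dual} to reduce the statement to an equality of two ``truncated'' Betti-number maxima. Setting
\[
M = \max\{j \mid \beta_{i,i+j}(\Bbbk[\Delta^{\vee}]) \neq 0 \text{ for some } i \leq r\}, \qquad M' = \max\{j \mid \beta_{i,i+j}(\Bbbk[(\Delta')^{\vee}]) \neq 0 \text{ for some } i \leq r\},
\]
Theorem \ref{eq dual} gives $M = n - S_{r}\mbox{-}{\rm depth}(\Bbbk[\Delta]) - 1$ and $M' = (n+1) - S_{r}\mbox{-}{\rm depth}(\Bbbk[\Delta']) - 1$, so establishing $M = M'$ yields the desired shift of $1$. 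The second assertion (the ``iff'' for Serre's condition $(S_{r})$) then follows, since a direct check of the facet structure of a 1-vertex inflation shows $\dim \Bbbk[\Delta'] = \dim \Bbbk[\Delta] + 1$.

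To prove $M = M'$, I would apply Hochster's formula to $\beta_{i,i+j}(\Bbbk[(\Delta')^{\vee}])$ and split the sum over subsets $W' \subset [n+1]$ with $|W'|=i+j$ into three parts according to the cases of Lemma \ref{lemma for 1-vertex}, depending on whether $x_{n+1}$ and $x_{n}$ lie in $W'$. In case (1), when $x_{n+1}\notin W'$, the induced subcomplex is literally $\Delta^{\vee}_{W'}$, so summing over such $W'$ reproduces $\beta_{i,i+j}(\Bbbk[\Delta^{\vee}])$ exactly. In case (2), when $x_{n+1}\in W'$ but $x_{n}\notin W'$, the isomorphism in Lemma \ref{lemma for 1-vertex}(2) identifies the summand with $\dim_{\Bbbk}\widetilde{H}_{j-1}(\Delta^{\vee}_{(W'\setminus\{x_{n+1}\})\cup\{x_{n}\}})$, producing a partial sum of $\beta_{i,i+j}(\Bbbk[\Delta^{\vee}])$ indexed by subsets of $[n]$ containing $x_{n}$. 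In case (3), when $\{x_{n},x_{n+1}\}\subset W'$, the homotopy equivalence of Lemma \ref{lemma for 1-vertex}(3) together with the homotopy invariance of simplicial homology identifies the summand with $\dim_{\Bbbk}\widetilde{H}_{j-1}(\Delta^{\vee}_{W'\setminus\{x_{n+1}\}})$ for a set of size $i+j-1$ containing $x_{n}$, so this part is a partial sum of $\beta_{i-1,(i-1)+j}(\Bbbk[\Delta^{\vee}])$.

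Combining the three bounds, if $j > M$ and $i \leq r$, then every contribution vanishes (using $i-1 \leq r$ in case (3)), whence $\beta_{i,i+j}(\Bbbk[(\Delta')^{\vee}]) = 0$ and $M' \leq M$. Conversely, case (1) alone yields $\beta_{i,i+j}(\Bbbk[(\Delta')^{\vee}]) \geq \beta_{i,i+j}(\Bbbk[\Delta^{\vee}])$ for every $(i,j)$, so $M' \geq M$. Therefore $M = M'$, which gives the equality $S_{r}\mbox{-}{\rm depth}(S'/I_{\Delta'}) = S_{r}\mbox{-}{\rm depth}(S/I_{\Delta})+1$.

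The main delicate point I anticipate is case (3), where the Betti index of $\Bbbk[\Delta^{\vee}]$ shifts from $(i,i+j)$ to $(i-1,(i-1)+j)$; this is harmless because the allowed range $i \leq r$ is preserved under $i \mapsto i-1$, so no Betti number outside the window $\{0,1,\ldots,r\}$ of first indices is ever touched. The edge case $i=0$ requires a small separate check (since $\beta_{-1,\cdot}$ is formally undefined), but there $|W| = j-1$ forces $\widetilde{H}_{j-1}(\Delta^{\vee}_{W}) = 0$ by dimension, so case (3) contributes nothing when $i=0$.
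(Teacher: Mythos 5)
Your proposal is correct and follows essentially the same route as the paper: both reduce the statement via Theorem \ref{eq dual} to comparing the truncated Betti maxima of $\Bbbk[\Delta^{\vee}]$ and $\Bbbk[(\Delta')^{\vee}]$, use the inclusion of the $x_{n+1}\notin W'$ terms of Hochster's formula for one inequality, and the three-case analysis of Lemma \ref{lemma for 1-vertex} (with the index shift $(i,i+j)\mapsto(i-1,(i-1)+j)$ in the case $x_n,x_{n+1}\in W'$) for the other. Your explicit treatment of the $i=0$ edge case is a small point the paper leaves implicit, but the argument is the same.
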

\begin{proof}
Notice that $\beta_{i,j}(S/I_{\Delta^{\vee}})\neq 0$ implies that $\beta_{i,j}(S/I_{(\Delta^{\prime})^{\vee}}|_{W})\neq 0$. Indeed, for a subset $W$ of $[n]$ such that $\dim_{\Bbbk}\widetilde{H}_{|W|-i-1}(\Delta^{\vee}_{W};\Bbbk)\neq 0$, then we have $$\dim_{\Bbbk}\widetilde{H}_{|W|-i-1}((\Delta^{\prime})^{\vee}_{W};\Bbbk)=\dim_{\Bbbk}\widetilde{H}_{|W|-i-1}(\Delta^{\vee}|_{W};\Bbbk)\neq 0.$$ The implication follows from Hochster’s formula on graded Betti numbers.
Therefore, $S_{r}\mbox{-}{\rm depth}(S^{\prime}/I_{\Delta^{\prime}})\leq S_{r}\mbox{-}{\rm depth}(S/I_{\Delta})+1$ follows from Theorem \ref{eq dual}. We now prove the reverse inequality. Let $s^{\prime}=S_{r}\mbox{-}{\rm depth}(S^{\prime}/I_{\Delta^{\prime}})$. Then, there exists $i\leq r$ such that $\beta_{i,i+n-s^{\prime}}(S^{\prime}/I_{(\Delta^{\prime})^{\vee}})\neq0$ from Theorem \ref{eq dual}. From Hochster's formula on graded Betti numbers, there exists a subset $W^{\prime}$ of $V(\Delta^{\prime})$ such that $|W^{\prime}|=i+n-s^{\prime}$ and $\dim_{\Bbbk}\widetilde{H}_{n-s^{\prime}-1}((\Delta^{\prime})^{\vee}_{W^{\prime}};\Bbbk)\neq 0$. We distinguish the following cases:

\vspace{0.1cm}
{\it Case 1}: $x_{n+1}\notin W^{\prime}$

\vspace{0.1cm} From Lemma \ref{lemma for 1-vertex} (1), we have $(\Delta^{\prime})^{\vee}_{W^{\prime}}=\Delta^{\vee}_{W^{\prime}}$. Hence, we obtain 
$$\dim_{\Bbbk}\widetilde{H}_{n-s^{\prime}-1}(\Delta^{\vee}_{W^{\prime}};\Bbbk)=\dim_{\Bbbk}\widetilde{H}_{n-s^{\prime}-1}((\Delta^{\prime})^{\vee}_{W^{\prime}};\Bbbk)\neq 0.$$ 
Thus, from Hochster's formula on graded Betti numbers, we have $\beta_{i,i+n-s^{\prime}}(S/I_{\Delta^{\vee}})\neq0$. 

\vspace{0.1cm}
{\it Case 2}: $x_{n}\notin W^{\prime}$ and $x_{n+1}\in W^{\prime}$

\vspace{0.1cm}From Lemma \ref{lemma for 1-vertex} (2), we have $(\Delta^{\prime})^{\vee}_{W^{\prime}}\simeq\Delta^{\vee}_{(W^{\prime}\setminus\{x_{n+1}\})\cup\{x_{n}\}}$. Hence, we obtain 
$$\dim_{\Bbbk}\widetilde{H}_{n-s^{\prime}-1}(\Delta^{\vee}_{(W^{\prime}\setminus\{x_{n+1}\})\cup\{x_{n}\}};\Bbbk)=\dim_{\Bbbk}\widetilde{H}_{n-s^{\prime}-1}((\Delta^{\prime})^{\vee}_{W^{\prime}};\Bbbk)\neq 0.$$ 
Thus, from Hochster's formula on graded Betti numbers, we have $\beta_{i,i+n-s^{\prime}}(S/I_{\Delta^{\vee}})\neq0$. 

\vspace{0.1cm}
{\it Case 3}: $x_{n}, x_{n+1}\in W^{\prime}$

\vspace{0.1cm}From Lemma \ref{lemma for 1-vertex} (3), the geometric realizations $|(\Delta^{\prime})^{\vee}_{W^{\prime}}|$ and $|\Delta^{\vee}_{W^{\prime}\setminus\{x_{n+1}\}}|$ are homotopy equivalent, that is, $$\widetilde{H}_{k}((\Delta^{\prime})^{\vee}_{W^{\prime}};\Bbbk)\simeq\widetilde{H}_{k}(\Delta^{\vee}_{W^{\prime}\setminus\{x_{n+1}\}};\Bbbk)\mbox{ for all }k.$$
Hence, we obtain $$\dim_{\Bbbk}\widetilde{H}_{n-s^{\prime}-1}(\Delta^{\vee}_{W^{\prime}\setminus\{x_{n+1}\}};\Bbbk)=\dim_{\Bbbk}\widetilde{H}_{n-s^{\prime}-1}((\Delta^{\prime})^{\vee}_{W^{\prime}};\Bbbk)\neq 0.$$ 
Since $|W^{\prime}\setminus\{x_{n+1}\}|=|W^{\prime}|-1$, from Hochster's formula on graded Betti numbers, we have $\beta_{i-1, i-1+n-s^{\prime}}(S/I_{\Delta^{\vee}})\neq0$. In all cases, the assertion follows from Theorem \ref{eq dual}. Moreover, since $\dim \Bbbk[\Delta^{\prime}]=\dim \Bbbk[\Delta]+1$, the latter one holds. 
\end{proof}

As a corollary, we derive a result on the Serre depth of the cover ideal of the graph obtained by parallelization. Let us recall the definition of parallelization (see e.g. \cite{mmv}).  
Let $G$ be a graph on the vertex set $[n]$, and let ${\bf a} = (a_1, \ldots, a_n) \in \mathbb{Z}_{> 0}^n$. The {\it parallelization} $G^{\bf a}$ is the graph with vertex set
$$V(G^{\bf a}) = \{1_1, \ldots, 1_{a_1}, \ldots, n_1, \ldots, n_{a_n}\}$$and edge set$$E(G^{\bf a}) = \left\{ \{i_r, j_s\} \mid \{i, j\} \in E(G),\ 1 \le r \le a_i,\ 1 \le s \le a_j \right\}.
$$
Then, by Theorem \ref{1-vertex}, we obtain the following corollary. 
\begin{cor}
Let $G$ be a graph on the vertex set $[n]$ and let ${\bf a}\in\mathbb{Z}_{>0}^{n}$, $r\geq 2$. Then, 
$$S_{r}\mbox{-}{\rm depth}(S^{\prime}/J(G^{\bf a}))=S_{r}\mbox{-}{\rm depth}(S/J(G))+|{\bf a}|-n,$$
where $|{\bf a}|=a_{1}+\cdots+a_{n}$ and $S^{\prime}=\Bbbk[V(G^{\bf a})]$
\end{cor}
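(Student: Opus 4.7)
The plan is to deduce this from Theorem~\ref{1-vertex} by showing that a single vertex-duplication step in the parallelization corresponds to one 1-vertex inflation on the Stanley--Reisner complex of the cover ideal, and then to iterate. Throughout, I write $\Sigma^{\bf b}$ for the simplicial complex with $I_{\Sigma^{\bf b}}=J(G^{\bf b})$. The case where $G$ has no edge is trivial (then $J(G)=S$), so I assume $G$ has at least one edge.

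It suffices to treat the one-step case ${\bf a}\mapsto{\bf a}':={\bf a}+{\bf e}_i$ for a single coordinate $i$, since the corollary then follows by induction on $|{\bf a}|-n$ starting from ${\bf a}={\bf 1}$ (so that $G^{\bf 1}=G$) and each step adds exactly one variable to the ambient polynomial ring. For the purity hypothesis in Theorem~\ref{1-vertex}, I would verify that $\Sigma^{\bf a}$ is pure: a subset $F\subseteq V(G^{\bf a})$ is a face of $\Sigma^{\bf a}$ iff $V(G^{\bf a})\setminus F$ is not an independent set (i.e.\ contains an edge), and maximality of $F$ forces its complement to be a single edge. Hence every facet of $\Sigma^{\bf a}$ has cardinality $|V(G^{\bf a})|-2$.

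The main combinatorial step is to establish the bijection
\[S\longmapsto\widetilde S:=\bigcup_{j\in S}\{j_1,\ldots,j_{a_j}\}\]
between minimal vertex covers of $G$ and minimal vertex covers of $G^{\bf a}$. The non-obvious direction runs as follows: if $C$ is a minimal vertex cover of $G^{\bf a}$ and some copy $i_{r'}$ of a vertex $i$ lies outside $C$, then covering every edge $\{i_{r'},j_s\}$ forces every copy $j_s$ of every neighbour $j$ of $i$ in $G$ into $C$, which makes any remaining copy $i_r\in C$ redundant and contradicts minimality. Consequently every minimal cover of $G^{\bf a}$ is of the form $\widetilde S$ for a minimal cover $S$ of $G$. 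Using this bijection and identifying the generator of $J(G^{\bf b})$ associated to $S$ with $\prod_{j\in S}\prod_{r=1}^{b_j} x_{j_r}$, the minimal generators of $J(G^{{\bf a}'})$ are obtained from those of $J(G^{\bf a})$ by multiplying each generator divisible by $x_{i_{a_i}}$ by the fresh variable $x_{i_{a_i+1}}$ (these are precisely the generators coming from covers $S\ni i$), and leaving the other generators untouched. This matches verbatim the recipe defining the 1-vertex inflation of $\Sigma^{\bf a}$ with respect to $x_{i_{a_i}}$ (after the harmless relabelling that places this variable last).

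Applying Theorem~\ref{1-vertex} therefore yields
\[S_r\mbox{-}{\rm depth}(\Bbbk[V(G^{{\bf a}'})]/J(G^{{\bf a}'}))=S_r\mbox{-}{\rm depth}(\Bbbk[V(G^{\bf a})]/J(G^{\bf a}))+1,\]
and telescoping across a sequence of $|{\bf a}|-n$ such one-step inflations, starting at ${\bf a}={\bf 1}$, produces the claimed equality. The only genuine obstacle is the minimal-cover bijection together with its translation into the 1-vertex inflation formula; once these are in place, the corollary is essentially a bookkeeping consequence of Theorem~\ref{1-vertex}.
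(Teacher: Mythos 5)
Your proof is correct and follows essentially the same route as the paper: both arguments identify one step of parallelization (adding a single extra copy of a vertex) with a 1-vertex inflation of the complex whose Stanley--Reisner ideal is the cover ideal, apply Theorem~\ref{1-vertex}, and iterate. Your additional verifications (purity of $\Sigma^{\bf a}$ and the minimal-vertex-cover bijection) are correct and merely make explicit what the paper leaves implicit.
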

\begin{proof}
Assume that $a_{1}\geq 2$ and fix $r\geq 2$.  
Let $G^{\prime}$ be the induced subgraph of $G^{\bf a}$ on $\{1_{1},1_{2},2\ldots, n\}.$ Then, for a minimal vertex cover $C$ of $G$ containing $1$, $(C\cup\{1_{1}, 1_{2}\})\setminus\{1\}$ is a minimal vertex cover of $G^{\prime}$ by the definition of parallelization. Hence we have $$J(G^{\prime})=x_{1_{2}}(m\,\,: m\in\mathcal{G}(J(G))\mbox{ with }x_{1}\mid m)+(m\,\,: m\in\mathcal{G}(J(G))\mbox{ with }x_{1}\nmid m),$$
where we identify $x_{1_{1}}$ with $x_{1}$. Therefore $\Delta(G^{\prime})^{\vee}$ is the 1-vertex inflation of $\Delta(G)^{\vee}$ with respect to $x_{1}$. From Theorem \ref{1-vertex}, we get $$S_{r}\mbox{-}{\rm depth}(\Bbbk[V(G^{\prime})]/J(G^{\prime}))=S_{r}\mbox{-}{\rm depth}(S/J(G))+1.$$ By repeating this process inductively, we obtain the desired equality.
\end{proof}


\section{The Serre depth and the depth on the symbolic powers}\label{mono}
In this section, we treat monomial ideals and symbolic powers of squarefree monomial ideals. First, we consider monomial ideals. It is easy to see that the following result by using \cite[Corollary 2.3]{htt}. This is a generalized result of Serre's condition $(S_{r})$ property of a monomial ideal is inherited by its radical. 
\begin{prop}\label{polarization}
For a monomial ideal $I$ and $r\geq 2$
$$S_{r}\mbox{-}{\rm depth}(S/I)\leq S_{r}\mbox{-}{\rm depth}(S/\sqrt{I}).$$
\end{prop}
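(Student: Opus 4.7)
The plan is to unwind the definition of the Serre depth so that the statement becomes a comparison of the Krull dimensions of the canonical cohomology modules $K^{j}_{S/I}$ and $K^{j}_{S/\sqrt{I}}$, and then to invoke the comparison of these dimensions supplied by \cite[Corollary 2.3]{htt}. Recall that, by local duality, $K^{j}_{M}\cong{\rm Ext}_{S}^{n-j}(M,\omega_{S})$, and by definition
$$S_{r}\mbox{-}{\rm depth}(M)=\min\{j\,\,: \dim K^{j}_{M}\geq j-r+1\}.$$
So the desired inequality on Serre depths translates into a pointwise comparison of $\dim K^{j}$ between $S/I$ and its radical.

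The main input is the inequality
$$\dim K^{j}_{S/I}\geq \dim K^{j}_{S/\sqrt{I}}\quad\text{for every }j\geq 0,$$
which should follow immediately (or with a brief intermediate step) from \cite[Corollary 2.3]{htt}, as that result controls the dimensions of the Ext modules ${\rm Ext}_{S}^{i}(S/I,\omega_{S})$ for a monomial ideal $I$ in terms of the corresponding Ext modules of $S/\sqrt{I}$. Heuristically, since $\sqrt{I}/I$ is annihilated by a power of each generator, killing the nilpotent part of $S/I$ can only shrink (or preserve) the dimensions of the canonical cohomology.

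With this pointwise comparison in hand, the proof finishes formally: set $s=S_{r}\mbox{-}{\rm depth}(S/\sqrt{I})$, so that $\dim K^{s}_{S/\sqrt{I}}\geq s-r+1$ by definition of the Serre depth. The displayed inequality then yields $\dim K^{s}_{S/I}\geq s-r+1$, which means that $s$ lies in the set whose minimum defines $S_{r}\mbox{-}{\rm depth}(S/I)$. Hence
$$S_{r}\mbox{-}{\rm depth}(S/I)\leq s= S_{r}\mbox{-}{\rm depth}(S/\sqrt{I}),$$
as required.

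The main obstacle is the verification that \cite[Corollary 2.3]{htt} really delivers the dimension inequality in the precise form stated above for each individual index $j$; if that corollary is phrased in terms of associated primes of ${\rm Ext}_{S}^{i}$, or in terms of depth, then a short supplementary observation (passing from support/associated primes to Krull dimension) will be needed. Once this translation is made explicit, as the paper itself indicates, the argument is essentially immediate from the definition of Serre depth.
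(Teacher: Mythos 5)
Your argument is correct and follows essentially the same route as the paper, which gives no written proof beyond the remark that the statement "is easy to see by using \cite[Corollary 2.3]{htt}": that corollary supplies exactly the pointwise comparison $\dim K^{j}_{S/I}\geq \dim K^{j}_{S/\sqrt{I}}$ for all $j$ (via containment of the supports of the corresponding ${\rm Ext}$ modules), after which the conclusion is the formal unwinding of the definition of the Serre depth that you carry out. Nothing further is needed.
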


In order to investigate the Serre depth of monomial ideals, we consider a concept called polarization. Let us recall the definition of the polarization, following the notation in \cite{mmvv}.  
Let $S = \Bbbk[x_1, \ldots, x_n]$ be a polynomial ring, and let $I$ be a monomial ideal of $S$. Denote by $\mathcal{G}(I)$ the set of minimal monomial generators of $I$ and set $\mathcal{G}(I)=\{m_{1},\ldots, m_{s}\}$. For each $i$, define$$\gamma_i = \max\{ \deg_{x_i}(u) \mid u \in \mathcal{G}(I) \},$$where $\deg_{x_i}(u)$ denotes the exponent of $x_i$ in the monomial $u$. We define a new set of variables by$$X_{I} = \bigcup_{1 \leq i \leq n} \{ x_{i,2}, \ldots, x_{i,\gamma_i} \},$$where the set $\{ x_{i,2}, \ldots, x_{i,\gamma_i} \}$ is taken to be empty if $\gamma_i = 0$ or $\gamma_i = 1$.
Observe that a power $x_i^{c_i}$ of a variable $x_i$, $1 \le c_i \le \gamma_i$, polarizes as follows:
\[
(x_i^{c_i})^{\mathrm{pol}} = 
\begin{cases} 
x_i & \text{if } \gamma_i = 1,\\[1mm]
x_{i,2} \cdots x_{i,c_i+1} & \text{if } c_i < \gamma_i,\\[1mm]
x_{i,2} \cdots x_{i,\gamma_i} x_i & \text{if } c_i = \gamma_i.
\end{cases}
\]
This defines a polarization $m_{i}^{\rm pol}$ of each $m_{i}$ for $i = 1, \dots, s$. 
The {\it polarization} $I^{\rm pol}$ of $I$ is then the ideal in $S[X_{I}]$ generated by $m_{1}^{\rm pol}, \dots, m_{s}^{\rm pol}$.

\begin{prop}\label{polarization}
For a monomial ideal $I$ of $S$ and $r\geq 2$, we have 
$$S_{r}\mbox{-}{\rm depth}(S^{\rm pol}/I^{\rm pol})=S_{r}\mbox{-}{\rm depth}(S/I)+|X_{I}|.$$
\end{prop}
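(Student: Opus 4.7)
The plan is to exploit the defining property of the polarization: there is a sequence of linear forms $\boldsymbol{\theta}=\theta_{1},\ldots,\theta_{k}$ in $S^{\rm pol}$, with $k=|X_{I}|$, of the form $x_{i,j+1}-x_{i,j}$ (and $x_{i,2}-x_{i}$), such that $\boldsymbol{\theta}$ is a regular sequence on $S^{\rm pol}/I^{\rm pol}$ and $(S^{\rm pol}/I^{\rm pol})/(\boldsymbol{\theta})\cong S/I$. In particular, since polarization preserves the associated-prime structure, $S^{\rm pol}/I^{\rm pol}$ is unmixed if and only if $S/I$ is unmixed, so the Serre depth is defined on both sides and $\dim(S^{\rm pol}/I^{\rm pol})=\dim(S/I)+k$.

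The key input, which I expect to be the main obstacle, is that $\boldsymbol{\theta}$ is \emph{also} a regular sequence on each deficiency module $K^{j}_{S^{\rm pol}/I^{\rm pol}}={\rm Ext}^{n+k-j}_{S^{\rm pol}}(S^{\rm pol}/I^{\rm pol},\omega_{S^{\rm pol}})$. This is the content of Sbarra's results on Ext and polarization in \cite{s0}; one way to see it is to observe that the minimal graded free resolution of $S^{\rm pol}/I^{\rm pol}$ over $S^{\rm pol}$ is the polarization of the minimal free resolution of $S/I$ over $S$, so that the dual complex computing $\mathrm{Ext}^{\bullet}(-,\omega)$ also polarizes and $\boldsymbol{\theta}$ is Koszul-regular on its cohomology.

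Granted this regularity, for each linear regular element $\theta_{i}$ acting on $M_{i-1}:=(S^{\rm pol}/I^{\rm pol})/(\theta_{1},\ldots,\theta_{i-1})$, the short exact sequence $0\to M_{i-1}(-1)\xrightarrow{\theta_{i}} M_{i-1}\to M_{i}\to 0$ yields, via the long exact sequence of local cohomology together with Matlis duality, a short exact sequence
$$0\longrightarrow K^{j+1}_{M_{i-1}}/\theta_{i}K^{j+1}_{M_{i-1}}\longrightarrow K^{j}_{M_{i}}\longrightarrow (0:_{K^{j}_{M_{i-1}}}\theta_{i})\longrightarrow 0$$
(up to a grading shift that does not affect Krull dimension). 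Since $\theta_{i}$ is regular on every $K^{\bullet}_{M_{i-1}}$, the rightmost term vanishes and the leftmost term has Krull dimension $\dim K^{j+1}_{M_{i-1}}-1$. Iterating over $i=1,\ldots,k$ produces a canonical isomorphism $K^{j}_{S/I}\cong K^{j+k}_{S^{\rm pol}/I^{\rm pol}}/(\boldsymbol{\theta})K^{j+k}_{S^{\rm pol}/I^{\rm pol}}$ together with the dimension identity
$$\dim K^{j}_{S/I}=\dim K^{j+k}_{S^{\rm pol}/I^{\rm pol}}-k\quad\text{for every }j.$$

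Substituting this identity into the definition of the Serre depth and performing the change of index $j'=j+k$ gives
$$S_{r}\mbox{-}{\rm depth}(S/I)=\min\{j:\dim K^{j+k}_{S^{\rm pol}/I^{\rm pol}}\ge (j+k)-r+1\}=S_{r}\mbox{-}{\rm depth}(S^{\rm pol}/I^{\rm pol})-k,$$
which is the stated equality. The only nontrivial step is the second paragraph; once the $\boldsymbol{\theta}$-regularity on each $K^{j}$ is secured, the remaining bookkeeping is a routine manipulation of the short exact sequences and of the defining formula for $S_{r}\mbox{-}{\rm depth}$.
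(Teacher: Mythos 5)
Your proposal is essentially the paper's proof: both arguments reduce the statement to the single fact that polarization raises the Krull dimension of every deficiency module ${\rm Ext}^{j}(\,\cdot\,,\omega)$ by exactly $|X_{I}|$, and the index bookkeeping that converts this identity into the equality of Serre depths is the same in both. The paper obtains that dimension identity directly from Sbarra \cite[Theorem 4.9, Corollary 4.10]{s2}; you propose to derive it from the stronger claim that the polarization sequence $\boldsymbol{\theta}$ is a regular sequence on each $K^{j}_{S^{\rm pol}/I^{\rm pol}}$, which you also attribute to the literature (note that \cite{s0} in this paper is Schenzel's lecture notes; the reference you want is \cite{s2}). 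One caution: your one-line justification of that regularity claim is circular --- the fact that the dualized minimal free resolution specializes modulo $\boldsymbol{\theta}$ to the dualized resolution of $S/I$ does not by itself imply that its cohomology specializes, since that implication is precisely the Tor-vanishing (equivalently, the $\boldsymbol{\theta}$-regularity on the ${\rm Ext}$ modules) you are trying to establish. As both you and the paper ultimately take this key input from Sbarra, the argument stands, but the sketch in your second paragraph should be presented as a citation rather than as a proof.
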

\begin{proof}
By using results \cite[Theorem 4.9, Corollary 4.10]{s2}, one can see that $$\dim {\rm Ext}_{S}^{j}(S/I,\omega_{S})+|X_{I}|=\dim {\rm Ext}_{S^{\rm pol}}^{j}(S^{\rm pol}/I^{\rm pol},\omega_{S^{\rm pol}}),$$
if $\dim {\rm Ext}_{S}^{j}(S/I,\omega_{S})>0$. 
From now, we prove the equality of the statement. Let $p=S_{r}\mbox{-}{\rm depth}(S^{\rm pol}/I^{\rm pol})$. Then, by the above equality and the definition of the Serre depth, we have $$\dim {\rm Ext}_{S}^{n-(-|X_{I}|+p)}(S/I,\omega_{S})+|X_{I}|=\dim {\rm Ext}_{S^{\rm pol}}^{n+|X_{I}|-p}(S^{\rm pol}/I^{\rm pol},\omega_{S^{\rm pol}})\geq p-r+1,$$and hence,$$\dim {\rm Ext}_{S}^{n-(|X_{I}|+p)}(S/I,\omega_{S})\geq(-|X_{I}|+p)-r+1.$$ Therefore, $p\geq S_{r}\mbox{-}{\rm depth}(S/I)+|X_{I}|.$ Conversely, let $q=S_{r}\mbox{-}{\rm depth}(S/I)$. Then, by the above equality and the definition of the Serre depth, we have $$\dim {\rm Ext}_{S^{\rm pol}}^{n+|X_{I}|-(|X_{I}+q|)}(S^{\rm pol}/I^{\rm pol},\omega_{S^{\rm pol}})=\dim {\rm Ext}_{S}^{n-q}(S/I,\omega_{S})+|X_{I}|\geq |X_{I}|+q-r+1,$$as required. 
\end{proof}

The following corollary is well-known (see e.g. \cite[Proof of Theorem 4.1]{mt2}):
\begin{cor}
For a monomial ideal $I$ of $S$ and $r\geq 2$, $S/I$ satisfies Serre's condition $(S_{r})$ if and only if $S^{\rm pol}/I^{\rm pol}$ does. 
\end{cor}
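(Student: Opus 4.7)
The plan is to reduce the claim to the previous Proposition \ref{polarization}. Recall from the introduction that by \cite[Lemma 3.2.1]{s0}, for any unmixed finitely generated graded $S$-module $M$, one has $S_{r}\mbox{-}{\rm depth}(M)=\dim M$ if and only if $M$ satisfies $(S_{r})$. Thus the corollary will follow once I verify that the equality in Proposition \ref{polarization} is compatible with dimensions, that is, the two sides of the equivalence become the corresponding dimension equalities.

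First I would record the standard fact that polarization preserves the dimension in the following sense: $\dim(S^{\rm pol}/I^{\rm pol})=\dim(S/I)+|X_{I}|$. This is classical (see, e.g., \cite{hh}), and follows because polarization preserves the projective dimension, hence the depth, while the ring $S^{\rm pol}$ has $|X_{I}|$ more variables than $S$; combining with the fact that $S^{\rm pol}/I^{\rm pol}$ is a Cohen--Macaulay deformation-free extension in the sense that $\mathrm{pd}$ is preserved, one obtains the dimension formula via Auslander--Buchsbaum applied to the Cohen--Macaulay case, or directly from the fact that passing from $I$ to $I^{\rm pol}$ corresponds to adding regular sequences. Alternatively, $I^{\rm pol}$ being squarefree with the same minimal generators (up to renaming of variables and one extra variable per step), each polarization step adds one variable and increases the dimension by exactly one.

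Next I would combine this with Proposition \ref{polarization}. On the one hand, $S/I$ satisfies $(S_{r})$ iff $S_{r}\mbox{-}{\rm depth}(S/I)=\dim(S/I)$; on the other, $S^{\rm pol}/I^{\rm pol}$ satisfies $(S_{r})$ iff $S_{r}\mbox{-}{\rm depth}(S^{\rm pol}/I^{\rm pol})=\dim(S^{\rm pol}/I^{\rm pol})$. By Proposition \ref{polarization} the left-hand sides differ by $|X_{I}|$, and by the dimension identity above the right-hand sides differ by the same quantity $|X_{I}|$. Subtracting, the two equalities are equivalent, which yields the corollary.

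I do not expect any real obstacle here: the entire content is packaged in Proposition \ref{polarization} and the characterization of $(S_{r})$ by the Serre depth reaching the dimension. The only care needed is to ensure that both $S/I$ and $S^{\rm pol}/I^{\rm pol}$ are unmixed, so that the characterization $S_{r}\mbox{-}{\rm depth}=\dim$ applies; but unmixedness of $I$ is equivalent to unmixedness of $I^{\rm pol}$ since polarization preserves the associated primes up to the obvious correspondence, so this assumption can be made throughout (and in any case is implicit in the framework of Section \ref{mono}, where the Serre depth is only defined for unmixed modules).
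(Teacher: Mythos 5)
Your argument is correct and is essentially the derivation the paper intends: the paper states this corollary without proof (citing it as well-known) immediately after Proposition \ref{polarization}, and your combination of that proposition with the dimension shift $\dim(S^{\rm pol}/I^{\rm pol})=\dim(S/I)+|X_{I}|$ and the characterization ``$S_{r}\mbox{-}{\rm depth}(M)=\dim M$ if and only if $M$ satisfies $(S_{r})$'' is exactly the intended route. The only cosmetic remark is that the dimension shift is most cleanly justified by noting that the linear forms $x_{i,j}-x_{i}$ form a regular sequence on $S^{\rm pol}/I^{\rm pol}$ whose quotient is $S/I$, rather than via the somewhat muddled Auslander--Buchsbaum detour you sketch.
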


By Proposition \ref{lin} and Proposition \ref{polarization}, we obtain the following corollary: 
\begin{prop}
Let $I$ be a monomial ideal generated in one degree $d$. If $I$ has a d-linear resolution and $S/I$ satisfies Serre's condition $S_{r}$ for some $r>d$, then $S/I$ is Cohen--Macaulay. 
\end{prop}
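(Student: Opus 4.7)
The plan is to reduce the general monomial setting to the squarefree (Stanley--Reisner) case via polarization and then invoke Corollary~\ref{lin}. The hypothesis that $S/I$ satisfies $(S_r)$ for some $r \geq 2$ guarantees in particular that $S/I$ is unmixed, so that the Serre depth is defined for $S/I$ and the machinery developed above applies.

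First, I would pass from $I$ to its polarization $I^{\rm pol}$, a squarefree monomial ideal. Two well-known features of polarization are that (i) it preserves graded Betti numbers, so $I^{\rm pol}$ is again generated in one degree $d$ and still has a $d$-linear resolution, and (ii) $S/I$ is Cohen--Macaulay if and only if $S^{\rm pol}/I^{\rm pol}$ is. Using the corollary stated right after Proposition~\ref{polarization}, the condition $(S_r)$ also transfers: $S^{\rm pol}/I^{\rm pol}$ satisfies $(S_r)$. Since $I^{\rm pol}$ is squarefree, I can write $S^{\rm pol}/I^{\rm pol} = \Bbbk[\Delta]$ for a simplicial complex $\Delta$, and $\Delta$ is pure because $(S_r)$ for $r \geq 2$ forces unmixedness.

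Next, Corollary~\ref{lin} applied to $\Bbbk[\Delta]$ yields $S_t\mbox{-}{\rm depth}(\Bbbk[\Delta]) = {\rm depth}(\Bbbk[\Delta])$ for every $t > d$. Taking $t = r$, which is permissible because the hypothesis gives $r > d$, I obtain
\[
S_r\mbox{-}{\rm depth}(\Bbbk[\Delta]) = {\rm depth}(\Bbbk[\Delta]).
\]
On the other hand, since $\Bbbk[\Delta]$ satisfies $(S_r)$, the Serre depth coincides with the Krull dimension, by the equivalence recalled in the introduction (from \cite[Lemma 3.2.1]{s0}). Hence ${\rm depth}(\Bbbk[\Delta]) = \dim \Bbbk[\Delta]$, so $\Bbbk[\Delta] = S^{\rm pol}/I^{\rm pol}$ is Cohen--Macaulay. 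Because polarization preserves the Cohen--Macaulay property, $S/I$ is Cohen--Macaulay as well.

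There is no substantive obstacle here; the argument is a bookkeeping chain that combines Proposition~\ref{polarization}, its corollary transferring $(S_r)$, and Corollary~\ref{lin}. The only points worth double-checking are that polarization genuinely preserves $d$-linearity (which is immediate from the preservation of graded Betti numbers) and that the strict inequality $r > d$ is exactly the input required to apply Corollary~\ref{lin} with $t = r$.
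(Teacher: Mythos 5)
Your proof is correct and follows essentially the same route as the paper: the paper's (one-line) argument likewise combines Proposition~\ref{polarization} with Corollary~\ref{lin} to get $\dim S/I = S_{r}\mbox{-}{\rm depth}(S/I) = {\rm depth}(S/I)$. Your write-up merely makes explicit the bookkeeping (transfer of linearity, $(S_r)$, and Cohen--Macaulayness across polarization) that the paper leaves implicit.
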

\begin{proof}
In this case, by Proposition \ref{lin} and Proposition \ref{polarization}, we have $\dim S/I=S_{r}\mbox{-}{\rm depth}(S/I)={\rm depth}(S/I)$, as required. 
\end{proof}

In the rest of this section, we consider about the symbolic powers of Stanley--Reisner ideals. It is known that the reformulate version of Takayama's formula on symbolic powers of Stanley--Reisner ideals by Kataoka and the authors of this paper. For ${\bf a}=(a_1, \ldots, a_n) \in\mathbb{Z}^n$, we set $x^{\bf a}=x_{1}^{a_1}\cdots x_{n}^{a_n}$ and ${\rm supp}_{+}{\bf a}=\{i \,:\, \ a_i>0\}$,  ${\rm supp}_{-}{\bf a}=\{i \,:\, \ a_i<0\}$. For $F\subset [n]$, we define $S_F=S[x_j^{-1} \,:\, j\in F]$. Then the simplicial complex $\Delta_{\bf a}(I)$ is defined as 
$$\Delta_{\bf a}(I)=\{F\setminus {\rm supp}_{-}{\bf a}; \ {\rm supp}_{-}{\bf a} \subset F, x^{\bf a}    \not\in IS_F \}.$$The following Theorem is the reformulate version of Takayama's formula on the symbolic powers of Stanley--Reisner ideals. 

\begin{thm}[\cite{kmt}, Theorem 2.2] \label{Takayama}
Let $\Delta$ be a  simplicial complex.  Then the Hilbert series of the local cohomology modules of $S/I_{\Delta}^{(\ell)}$ is given by 
$$F\big(H^{i}_{\frak{m}}(S/I_{\Delta}^{(\ell)}),t\big)=\sum_{F \in \Delta} \left(\sum_{{\bf a}\in \mathbb{N}^{V({\rm link}_{\Delta}F)}} \dim_k \widetilde{H}_{i-|F|-1}\big(\Delta_{{\bf a}}(I_{{\rm link}_ {\Delta}F}^{(\ell)});k\big)t^{|{\bf a}|}\left(\frac{t^{-1}}{1-t^{-1}}\right)^{|F|}\right),$$where $ \mathbb{N}^{V({\rm link}_{\Delta}F)}\hookrightarrow  \mathbb{N}^{V(\Delta)}$ is induced by $ V({\rm link}_{\Delta}F)\hookrightarrow  V(\Delta)$. 
\end{thm}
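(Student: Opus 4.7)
The plan is to derive the stated single-variable Hilbert series from the multigraded version of Takayama's formula for local cohomology of monomial quotients, by grouping terms according to the support of the negative part of the multi-degree ${\bf a}$.

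First, I would recall Takayama's formula in its multigraded form: for any monomial ideal $I\subset S$ and every ${\bf a}\in\mathbb{Z}^n$, one has
$$\dim_k [H^i_{\mathfrak{m}}(S/I)]_{\bf a} = \dim_k \widetilde{H}_{i-|{\rm supp}_-({\bf a})|-1}(\Delta_{\bf a}(I);k).$$
Applying this to $I=I_{\Delta}^{(\ell)}$, the single-variable Hilbert series becomes
$$F\bigl(H^i_{\mathfrak{m}}(S/I_{\Delta}^{(\ell)}),t\bigr) = \sum_{{\bf a}\in\mathbb{Z}^n} \dim_k \widetilde{H}_{i-|{\rm supp}_-({\bf a})|-1}(\Delta_{\bf a}(I_{\Delta}^{(\ell)});k)\, t^{|{\bf a}|}.$$

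Next, I would partition this sum by $F:={\rm supp}_-({\bf a})\subseteq [n]$ and decompose each ${\bf a}$ as ${\bf a}={\bf c}-{\bf b}$, where ${\bf b}\in\mathbb{Z}_{>0}^F$ carries the strictly negative part and ${\bf c}\in\mathbb{N}^{[n]\setminus F}$ the nonnegative part. For each fixed $F$, the ${\bf b}$-sum factors as a geometric series
$$\sum_{{\bf b}\in\mathbb{Z}_{>0}^F} t^{-|{\bf b}|} = \left(\frac{t^{-1}}{1-t^{-1}}\right)^{|F|},$$
so it remains to understand the dependence on ${\bf c}$ and to see that only $F\in\Delta$ and ${\bf c}\in\mathbb{N}^{V({\rm link}_{\Delta}F)}$ contribute.

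The key technical step is a combinatorial identification: if $F\notin\Delta$ then $\Delta_{\bf a}(I_{\Delta}^{(\ell)})$ is empty; if $F\in\Delta$ but ${\rm supp}({\bf c})\not\subseteq V({\rm link}_{\Delta}F)$ then some such vertex becomes a cone point (or is excluded), forcing $\widetilde{H}_*=0$; and if $F\in\Delta$ with ${\bf c}\in\mathbb{N}^{V({\rm link}_{\Delta}F)}$, then
$$\Delta_{\bf a}(I_{\Delta}^{(\ell)}) \;=\; \Delta_{\bf c}(I_{{\rm link}_{\Delta}F}^{(\ell)}).$$
This is proved by unpacking the definitions through the primary decomposition $I_{\Delta}^{(\ell)}=\bigcap_{H\in\mathcal{F}(\Delta)} P_{[n]\setminus H}^{\ell}$: after localizing at the variables in any $G\supseteq F$, only the facets containing $F$ survive, and these correspond bijectively to the facets of ${\rm link}_{\Delta}F$, whose primary components comprise exactly $I_{{\rm link}_{\Delta}F}^{(\ell)}$.

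Assembling the three pieces (partition by $F$, the ${\bf b}$-factor, and the identification of complexes) yields exactly the claimed formula. The main obstacle is the combinatorial identification in the key step: while the ordinary-power case is essentially standard, the symbolic-power version requires careful bookkeeping of the exponent $\ell$ against each facet containing $F$ under the localization, together with verifying that vertices outside $V({\rm link}_{\Delta}F)$ genuinely contribute a cone. Once this identification is in place, the remainder reduces to straightforward manipulation of formal power series.
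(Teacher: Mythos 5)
You should first note that the paper itself gives no proof of this statement: it is quoted verbatim from \cite{kmt}, so there is nothing in-paper to compare against. Your outline --- the multigraded Takayama/Minh--Trung formula, partitioning $\mathbb{Z}^n$ by $F={\rm supp}_-({\bf a})$, factoring out $\sum_{{\bf b}\in\mathbb{Z}_{>0}^F}t^{-|{\bf b}|}=\bigl(\tfrac{t^{-1}}{1-t^{-1}}\bigr)^{|F|}$, and identifying $\Delta_{\bf a}(I_{\Delta}^{(\ell)})$ with a degree complex of $I_{{\rm link}_{\Delta}F}^{(\ell)}$ --- is the natural derivation and is surely the one used there; your localization argument also correctly shows that $\Delta_{\bf a}(I_{\Delta}^{(\ell)})$ depends only on $F$ and on the nonnegative part ${\bf c}$, which is exactly what lets the ${\bf b}$-sum factor out.

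The genuine problem is your disposal of the multidegrees with ${\rm supp}({\bf c})\not\subseteq V({\rm link}_{\Delta}F)$: the claim that such a vertex ``becomes a cone point (or is excluded), forcing $\widetilde{H}_*=0$'' is false. A vertex $v$ with $F\cup\{v\}\notin\Delta$ is indeed not a vertex of $\Delta_{\bf a}(I_{\Delta}^{(\ell)})$, but being excluded is not being a cone point and says nothing about homology; its weight $c_v$ is simply added uniformly to every budget $\sum_{i\notin H}a_i$, and for $\ell\ge 2$ the threshold $\ell-1$ can absorb it while leaving a non-acyclic complex. Concretely, for $\Delta=\langle\{1,2\},\{1,3\},\{4,5\}\rangle$, $\ell=3$ and ${\bf a}=(-1,0,0,1,0)$ one has $F=\{1\}$, $c_4=1$ with $4$ not a vertex of ${\rm link}_{\Delta}\{1\}$, yet both facets $\{1,2\},\{1,3\}$ satisfy $\sum_{i\notin H}a_i=1\le 2$, so $\Delta_{\bf a}(I_{\Delta}^{(3)})=\langle\{2\},\{3\}\rangle$ is disconnected and $[H^{2}_{\mathfrak{m}}(S/I_{\Delta}^{(3)})]_{\bf a}\neq 0$. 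These degrees cannot be thrown away; they are accounted for in the stated formula only because, under the paper's conventions (singletons need not be faces), ${\rm link}_{\Delta}F$ is a complex on the ground set $[n]\setminus F$, so that each such $v$ contributes the generator $x_v$ to $I_{{\rm link}_{\Delta}F}$ (equivalently lies in every minimal prime), the index set $\mathbb{N}^{V({\rm link}_{\Delta}F)}$ contains the coordinate $v$, and the identification $\Delta_{\bf a}(I_{\Delta}^{(\ell)})=\Delta_{\bf c}(I_{{\rm link}_{\Delta}F}^{(\ell)})$ --- both generated by the $H\setminus F$ with $H\in\mathcal{F}(\Delta)$, $H\supseteq F$, $\sum_{i\notin H}c_i\le\ell-1$ --- holds for \emph{every} ${\bf c}\in\mathbb{N}^{[n]\setminus F}$, with no vanishing statement needed. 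If you instead insist on the strict reading of $V({\rm link}_{\Delta}F)$ as the set of actual vertices of the link, the identity you would be proving is false (the example above already changes the coefficient of $t^0$). So replace the cone-point step by this convention and extend your key identification to all ${\bf c}\in\mathbb{N}^{[n]\setminus F}$; the rest of your argument then goes through.
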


\begin{cor}[\cite{kmt}, Corollary 2.3] \label{dual}
For a simplicial complex $\Delta$, we have 
\begin{eqnarray*}
F\big(K^{i}_{S/I_{\Delta}^{(\ell)}},t\big)
& = & \sum_j \dim _{k}\big[K^{i}_{S/I_{\Delta}^{(\ell)}}\big]_{j}\ t^{j}\\[-0.2cm]
& = & \sum_{F \in \Delta} \big(\sum_{{\bf a}\in \mathbb{N}^{V({\rm link}_{\Delta}F)}} \dim_k \widetilde{H}_{i-|F|-1}\big(\Delta_{{\bf a}}(I_{{\rm link}_ {\Delta}F}^{(\ell)});k\big)t^{-|{\bf a}|}\left(\frac{t}{1-t}\right)^{|F|}\big),
\end{eqnarray*}
where $ \mathbb{N}^{V({\rm link}_{\Delta}F)}\hookrightarrow  \mathbb{N}^{V(\Delta)}$ is induced by $ V({\rm link}_{\Delta}F)\hookrightarrow  V(\Delta)$. 
\end{cor}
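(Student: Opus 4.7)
The plan is to derive Corollary \ref{dual} as an almost immediate consequence of Theorem \ref{Takayama} by applying Matlis (local) duality at the level of Hilbert series. By the very definition $K^{i}_{M}=\operatorname{Hom}_{\Bbbk}(H^{i}_{\mathfrak{m}}(M),\Bbbk)$, and since the graded $\Bbbk$-dual reverses the grading, one has $[K^{i}_{M}]_{j}\cong\operatorname{Hom}_{\Bbbk}([H^{i}_{\mathfrak{m}}(M)]_{-j},\Bbbk)$ in each degree. Because the local cohomology modules of $S/I_{\Delta}^{(\ell)}$ have finite-dimensional graded pieces, taking Hilbert series yields the formal identity
\[
F(K^{i}_{S/I_{\Delta}^{(\ell)}},t)
=\sum_{j}\dim_{\Bbbk}[K^{i}_{S/I_{\Delta}^{(\ell)}}]_{j}\,t^{j}
=\sum_{j}\dim_{\Bbbk}[H^{i}_{\mathfrak{m}}(S/I_{\Delta}^{(\ell)})]_{-j}\,t^{j}
=F(H^{i}_{\mathfrak{m}}(S/I_{\Delta}^{(\ell)}),t^{-1}).
\]

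Next, I would substitute $t\mapsto t^{-1}$ directly into the right-hand side of Theorem \ref{Takayama}. Term by term inside the double sum, this replaces $t^{|\mathbf{a}|}$ by $t^{-|\mathbf{a}|}$, while the rational factor $\bigl(\frac{t^{-1}}{1-t^{-1}}\bigr)^{|F|}$ transforms under $t \mapsto t^{-1}$ into $\bigl(\frac{t}{1-t}\bigr)^{|F|}$ after the routine simplification (multiplying numerator and denominator by $t$ inside each factor). The simplicial data $\dim_{\Bbbk}\widetilde{H}_{i-|F|-1}(\Delta_{\mathbf{a}}(I^{(\ell)}_{\operatorname{link}_{\Delta}F});\Bbbk)$ is independent of $t$ and therefore unchanged. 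Combining this with the identity from the previous paragraph produces exactly the formula asserted in Corollary \ref{dual}.

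The only subtlety is to confirm that the substitution $t\mapsto t^{-1}$ is well defined on the formal expression appearing in Theorem \ref{Takayama}. This is not a convergence issue but rather a matter of working in the appropriate formal framework: the sums are indexed over $\mathbf{a}\in\mathbb{N}^{V(\operatorname{link}_{\Delta}F)}$, so inside each fixed $F$-summand only finitely many $\mathbf{a}$ contribute in any bounded range of degrees, and the rational factors are to be interpreted as formal Laurent expressions via the standard identity $\frac{t^{-1}}{1-t^{-1}}=\frac{1}{t-1}$. Since this is the same convention already in force in the cited reformulation of Takayama's formula, I do not anticipate any genuine obstacle; the corollary then follows at once by equating the two presentations of $F(K^{i}_{S/I_{\Delta}^{(\ell)}},t)$.
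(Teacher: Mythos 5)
Your derivation is correct and is exactly the intended one: the paper states this as an immediate corollary of Theorem \ref{Takayama}, obtained by the graded duality $[K^{i}_{M}]_{j}\cong\operatorname{Hom}_{\Bbbk}([H^{i}_{\mathfrak{m}}(M)]_{-j},\Bbbk)$ and the substitution $t\mapsto t^{-1}$, which sends $t^{|\mathbf{a}|}$ to $t^{-|\mathbf{a}|}$ and $\bigl(\tfrac{t^{-1}}{1-t^{-1}}\bigr)^{|F|}$ to $\bigl(\tfrac{t}{1-t}\bigr)^{|F|}$, matching the same convention used for the pair of Hochster-type formulas in the preliminaries. No gaps.
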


In order to generalize \cite[Corollary 3.3]{rty} to the Serre depth 
$S_2\mbox{-}{\rm depth}(S/I_{\Delta}^{(2)})$, we first prepare the following lemma.

\begin{lemma} \label{global}
Let $\Delta$ be a pure simplicial complex with $\dim \Delta \ge 1$. 
Then the following conditions are equivalent$:$
\begin{enumerate}
\item $\widetilde{H}_{0}\big(\Delta_{{\bf a}}(I_{\Delta}^{(2)});k\big)=0$ for any ${\bf a}\in \mathbb{N}^{n}$.
\item ${\rm depth}(S/I_{\Delta}^{(2)}) \ge 2$. 
\item ${\rm diam} \Delta^{(1)} \le 2$. 
\end{enumerate}
\end{lemma}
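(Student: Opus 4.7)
\emph{Plan.} Our plan is to establish $(1)\Leftrightarrow(2)$ via Takayama's formula (Theorem~\ref{Takayama}) and $(1)\Leftrightarrow(3)$ by analyzing the combinatorial structure of $\Delta_{\bf a}(I_\Delta^{(2)})$ directly.

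\emph{Equivalence of $(1)$ and $(2)$.} The ideal $I_\Delta^{(2)}$ has no embedded associated primes, and $\dim\Delta\geq 1$ forces $\mathfrak m$ not to be a minimal prime of $I_\Delta$; hence $H^0_{\mathfrak m}(S/I_\Delta^{(2)})=0$, so $(2)$ is equivalent to $H^1_{\mathfrak m}(S/I_\Delta^{(2)})=0$. By Theorem~\ref{Takayama} with $i=1$, the latter is equivalent to $\widetilde H_{-|F|}(\Delta_{\bf a}(I_{{\rm link}_\Delta F}^{(2)});k)=0$ for every $F\in\Delta$ and every ${\bf a}\in\mathbb N^{V({\rm link}_\Delta F)}$. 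Summands with $|F|\geq 2$ vanish trivially, and the summand at $F=\emptyset$ recovers exactly $(1)$. For the $F=\{j\}$ summand, $\widetilde H_{-1}$ of a complex is nonzero only when the complex equals $\{\emptyset\}$; but since $\Delta$ is pure with $\dim\Delta\geq 1$, the link ${\rm link}_\Delta(\{j\})$ has a vertex, so any facet of ${\rm link}_\Delta(\{j\})$ witnessing $\emptyset\in\Delta_{\bf a}(I_{{\rm link}_\Delta \{j\}}^{(2)})$ has size $\geq 1$, and then any vertex $v$ of it also lies in $\Delta_{\bf a}(I_{{\rm link}_\Delta \{j\}}^{(2)})$. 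This rules out the exceptional value, and we conclude $(1)\Leftrightarrow(2)$.

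\emph{Implication $(1)\Rightarrow(3)$.} Argue contrapositively. If ${\rm diam}\,\Delta^{(1)}\geq 3$ (possibly $\infty$), choose $v,w\in V(\Delta)$ with ${\rm dist}_{\Delta^{(1)}}(v,w)\geq 3$, so that $\{v,w\}\notin\Delta$ and no vertex is a common neighbor of $v$ and $w$. Set ${\bf a}=e_v+e_w\in\mathbb N^n$. The characterization of $\Delta_{\bf a}(I_\Delta^{(2)})$ together with the purity of $\Delta$ gives $\Delta_{\bf a}(I_\Delta^{(2)})={\rm star}_\Delta(v)\cup {\rm star}_\Delta(w)$. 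Each star is a contractible subcomplex; by the choice of $v,w$ their intersection contains only the empty face, so $\Delta_{\bf a}(I_\Delta^{(2)})$ has two connected components and $\widetilde H_0(\Delta_{\bf a}(I_\Delta^{(2)});k)\neq 0$, contradicting $(1)$.

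\emph{Implication $(3)\Rightarrow(1)$.} Fix ${\bf a}\in\mathbb N^n$ and set $A_2=\{i:a_i\geq 2\}$ and $A_1=\{i:a_i=1\}$. Then $F\in\Delta_{\bf a}(I_\Delta^{(2)})$ iff some facet $G$ of $\Delta$ satisfies $F\cup A_2\subset G$ and $|A_1\setminus G|\leq 1$; call such $G$ good. Analyze four cases: $(\mathrm{i})$ if $A_2\neq\emptyset$, every good facet contains $A_2$, so any two good facets share the vertices of $A_2$; $(\mathrm{ii})$ if $A_2=\emptyset$ and $|A_1|\leq 1$, every facet is good and $\Delta_{\bf a}(I_\Delta^{(2)})=\Delta$, which is connected since ${\rm diam}\,\Delta^{(1)}\leq 2$; $(\mathrm{iii})$ if $A_2=\emptyset$ and $A_1=\{j,k\}$, then $\Delta_{\bf a}(I_\Delta^{(2)})={\rm star}_\Delta(j)\cup {\rm star}_\Delta(k)$ (dropping summands for indices not in $V(\Delta)$), and ${\rm dist}_{\Delta^{(1)}}(j,k)\leq 2$ guarantees the two stars share a vertex, namely $j$, $k$, or a common neighbor; $(\mathrm{iv})$ if $A_2=\emptyset$ and $|A_1|\geq 3$, any two good facets $G_1,G_2$ each contain at least $|A_1|-1$ elements of $A_1$, so by inclusion-exclusion $|G_1\cap G_2\cap A_1|\geq|A_1|-2\geq 1$. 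In each case $\Delta_{\bf a}(I_\Delta^{(2)})$ is connected or void, so $\widetilde H_0(\Delta_{\bf a}(I_\Delta^{(2)});k)=0$. The principal obstacle is subcase $(\mathrm{iii})$, where the diameter hypothesis enters most critically.
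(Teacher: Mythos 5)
Your proposal is correct, but it is organized differently from the paper's proof, which is essentially a two-line citation argument: the paper deduces $(1)\Leftrightarrow(2)$ from Theorem \ref{Takayama} together with unmixedness of $S/I_{\Delta}^{(2)}$ (exactly your first step, though you spell out the vanishing of the $|F|\ge 2$ summands and the $\widetilde{H}_{-1}$ summands at vertices, which the paper leaves implicit), and then simply quotes the equivalence $(2)\Leftrightarrow(3)$ from Rinaldo--Terai--Yoshida \cite[Theorem 3.2]{rty}. Where you genuinely diverge is in replacing that citation by a direct, self-contained proof of $(1)\Leftrightarrow(3)$: for $(1)\Rightarrow(3)$ you exhibit the witness ${\bf a}=e_v+e_w$ for two vertices at distance at least $3$ and identify $\Delta_{\bf a}(I_{\Delta}^{(2)})$ with the disjoint-up-to-$\emptyset$ union ${\rm star}_{\Delta}(v)\cup{\rm star}_{\Delta}(w)$, and for $(3)\Rightarrow(1)$ you run a clean case analysis on $A_2=\{i:a_i\ge 2\}$ and $A_1=\{i:a_i=1\}$ using the Minh--Trung description (Lemma \ref{symbolic power cpx}) of the good facets; the inclusion--exclusion bound $|G_1\cap G_2\cap A_1|\ge |A_1|-2$ in case $|A_1|\ge 3$ and the common-neighbor argument in case $|A_1|=2$ are exactly the points where the diameter hypothesis is consumed. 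What your approach buys is independence from the external reference and an explicit picture of which vectors ${\bf a}$ can disconnect $\Delta_{\bf a}(I_{\Delta}^{(2)})$ (only those with $A_2=\emptyset$ and $|A_1|=2$, up to degenerate cases); what the paper's approach buys is brevity. I see no gap in your argument.
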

\begin{proof}
Since $\Delta$ is pure, $S/I_{\Delta}^{(2)}$ is unmixed. Hence the equivalence of (1) and (2) follows from  Theorem \ref{Takayama}. Also, the equivalence of (2) and (3) is proved in \cite[Theorem 3.2]{rty}.
\end{proof}

\begin{thm}  \label{S2depth}
Let $\Delta$ be a pure simplicial complex with $ \dim \Delta=d-1$ and $2 \le s\le d$.
Then the following conditions are equivalent$:$
\begin{enumerate}
\item $\mbox{\rm $S_2$-depth}(S/I_{\Delta}^{(2)})\ge s$.
\item ${\rm diam} (({\rm link}_{\Delta} F)^{(1)}) \le 2$ for any face $F \in \Delta$ with 
$\dim {\rm link}_{\Delta} F \ge d-s+1$. 
\end{enumerate}
\end{thm}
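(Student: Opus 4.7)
The plan is to unfold the definition of the Serre depth via Corollary \ref{dual} and then translate the resulting combinatorial conditions into diameter conditions through a link-by-link application of Lemma \ref{global}. By the definition of the Serre depth, condition (1) is equivalent to $\dim K^{i}_{S/I_{\Delta}^{(2)}} \leq i-2$ for every $1 \leq i \leq s-1$. Corollary \ref{dual} expresses this dimension as the maximum of $|F|$ over faces $F \in \Delta$ for which some ${\bf a} \in \mathbb{N}^{V({\rm link}_{\Delta} F)}$ yields a nonzero $\widetilde{H}_{i-|F|-1}(\Delta_{{\bf a}}(I_{{\rm link}_{\Delta} F}^{(2)});\Bbbk)$. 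Since reduced homology vanishes in degree $\leq -2$, only the cases $|F|=i-1$ (a $\widetilde{H}_{0}$ condition) and $|F|=i$ (a $\widetilde{H}_{-1}$ condition) can obstruct the bound.

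Letting $i$ range over $1,\ldots,s-1$, these two families of obstructions become (A) $\widetilde{H}_{0}(\Delta_{{\bf a}}(I_{{\rm link}_{\Delta} F}^{(2)});\Bbbk)=0$ for every $F \in \Delta$ with $0 \leq |F| \leq s-2$ and every ${\bf a}$, and (B) $\Delta_{{\bf a}}(I_{{\rm link}_{\Delta} F}^{(2)}) \neq \{\emptyset\}$ for every $F \in \Delta$ with $1 \leq |F| \leq s-1$ and every ${\bf a}$. For (A), purity of $\Delta$ passes to ${\rm link}_{\Delta} F$ and $\dim {\rm link}_{\Delta} F = d-1-|F| \geq d-s+1 \geq 1$, so Lemma \ref{global} applies and identifies (A) at $F$ with ${\rm diam}(({\rm link}_{\Delta} F)^{(1)}) \leq 2$. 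Since under purity $|F| \leq s-2$ is equivalent to $\dim {\rm link}_{\Delta} F \geq d-s+1$, this reproduces condition (2).

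The only step that really requires work, and where I expect the main obstacle to sit, is to verify that condition (B) is automatic. The plan is to use the primary decomposition $I_{{\rm link}_{\Delta} F}^{(2)} = \bigcap_{F'} P_{F'}^{2}$, where $F'$ runs over the facets of ${\rm link}_{\Delta} F$ and $P_{F'}$ is generated by the variables outside $F'$, and to decode the equality $\Delta_{{\bf a}}(I_{{\rm link}_{\Delta} F}^{(2)}) = \{\emptyset\}$ into two simultaneous requirements: some facet $F_{0}$ satisfies $\sum_{w \notin F_{0}} a_{w} \leq 1$, so that $\emptyset$ is retained; and for every vertex $v$ of ${\rm link}_{\Delta} F$ together with every facet $F' \ni v$, $\sum_{w \notin F'} a_{w} \geq 2$, so that $\{v\}$ is removed. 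Combining these forces every vertex of ${\rm link}_{\Delta} F$ to lie outside $F_{0}$, contradicting the fact that $F_{0}$ is a facet of dimension $d-1-|F| \geq d-s \geq 0$ and therefore contains at least one vertex. With (B) thus disposed of, the theorem drops out of the translation in the previous paragraph.
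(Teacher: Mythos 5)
Your proof is correct and follows essentially the same route as the paper's: unwind the definition of $S_2$-depth into dimension bounds on the modules $K^{i}_{S/I_{\Delta}^{(2)}}$, read those off from Corollary \ref{dual}, and convert the resulting $\widetilde{H}_{0}$-vanishing statements for each link into the diameter condition via Lemma \ref{global}. The one addition is your explicit check that the $\widetilde{H}_{-1}$ obstruction (the case $|F|=i$) never occurs because each $\Delta_{\bf a}(I_{{\rm link}_{\Delta}F}^{(2)})$ is generated by facets of the link, which are nonempty when $\dim{\rm link}_{\Delta}F\ge 0$; the paper leaves this implicit, and your argument for it is correct.
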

\begin{proof}
Let $F$ be a face of $\Delta$ with $\dim {\rm link}_{\Delta} F \ge d-s+1$. Then, by the assumption and $|F|+1 \le s-1$, we have $\dim K^{|F|+1}_{S/I_{\Delta}^{(2)}} < |F|$. Hence, by Corollary \ref{dual},
$ \widetilde{H}_{0}\big(\Delta_{{\bf a}}(I_{{\rm link}_ {\Delta}F}^{(2)});k\big)=0$  for any ${\bf a}\in \mathbb{N}$. Thus, by Lemma \ref{global},  ${\rm diam} ({{\rm link}_ {\Delta}F})^{(1)} \leq 2$. Conversely, by Lemma \ref{global}, $ \widetilde{H}_{0}\big(\Delta_{{\bf a}}(I_{{\rm link}_ {\Delta}F}^{(2)});k\big)=0$  for any ${\bf a}\in \mathbb{N}^{n}$. Then Corollary \ref{dual} and $|F|+1 \le s-1$ implies that $\dim K^{|F|+1}_{S/I_{\Delta}^{(2)}} < |F|$   for any $F \in \Delta$ with $\dim {\rm link}_{\Delta} F \ge d-s+1$. Hence we have   $\mbox{\rm $S_2$-depth}(S/I_{\Delta}^{(2)})\ge s$.
\end{proof}

We provide the following characterizations of the Serre depth for $(S_2)$ on the symbolic powers of a Stanley--Reisner ideal, based on Theorem \ref{Takayama}:

\begin{prop}\label{S_2-depth comb}
Let $\Delta$ be a pure simplicial complex and $\ell\geq1$. Then, we have 
\[
S_{2}\mbox{-}{\rm depth}(S/I_{\Delta}^{(\ell)})
= \min \Bigl\{\, |F| :
   \substack{
     \displaystyle F\in\Delta\text{ and there exists a vector } {\bf a}\in\mathbb{N}^{V({\rm link}_{\Delta}F)}\\
     \displaystyle \text{such that } \Delta_{\bf a}(I_{{\rm link}_{\Delta}F}^{(\ell)}) \text{ is disconnected}
   } \,\Bigr\} + 1.
\]
\end{prop}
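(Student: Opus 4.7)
The plan is to unwind the definition of the Serre depth using Corollary~\ref{dual} and to determine precisely which reduced homologies of $\Delta_{\bf a}(I^{(\ell)}_{{\rm link}_\Delta F})$ can witness $\dim K^{j}_{S/I_\Delta^{(\ell)}} \ge j-1$. Write $K^j=K^j_{S/I_\Delta^{(\ell)}}$ and $d=\dim\Bbbk[\Delta]$. By definition, $S_2\mbox{-}{\rm depth}(S/I_\Delta^{(\ell)})=\min\{j : \dim K^j \ge j-1\}$. Corollary~\ref{dual} writes the Hilbert series of $K^j$ as a nonnegative sum, indexed by pairs $(F,{\bf a})$ with $F\in\Delta$ and ${\bf a}\in\mathbb{N}^{V({\rm link}_\Delta F)}$, of terms $\dim_\Bbbk\widetilde{H}_{j-|F|-1}(\Delta_{\bf a}(I^{(\ell)}_{{\rm link}_\Delta F}))\, t^{-|{\bf a}|}(t/(1-t))^{|F|}$. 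Each nonzero such term has a pole of order $|F|$ at $t=1$, and since all the multiplicities are nonnegative no cancellation between faces of different sizes can occur; hence
$$\dim K^j \;=\; \max\bigl\{|F| \;:\; F\in\Delta,\ \exists\,{\bf a}\in\mathbb{N}^{V({\rm link}_\Delta F)},\ \widetilde{H}_{j-|F|-1}(\Delta_{\bf a}(I^{(\ell)}_{{\rm link}_\Delta F}))\neq 0\bigr\}.$$

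Because reduced simplicial homology vanishes in degrees below $-1$, the condition $\dim K^j\ge j-1$ forces $|F|\in\{j-1,j\}$. The case $|F|=j-1$ gives $\widetilde{H}_0(\Delta_{\bf a})\neq 0$, i.e., $\Delta_{\bf a}$ is disconnected; the case $|F|=j$ gives $\widetilde{H}_{-1}(\Delta_{\bf a})\neq 0$, i.e., $\Delta_{\bf a}=\{\emptyset\}$. The crux is the following lemma, which I would isolate: for pure $\Delta$ and any non-facet $F\in\Delta$, one has $\Delta_{\bf a}(I^{(\ell)}_{{\rm link}_\Delta F})\neq\{\emptyset\}$ for every ${\bf a}$. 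I would prove this via the primary decomposition $I^{(\ell)}_{{\rm link}_\Delta F}=\bigcap_{G'\in\mathcal{F}({\rm link}_\Delta F)}P_{G'}^\ell$; demanding $\Delta_{\bf a}=\{\emptyset\}$ would require $\sum_{u\notin G'}a_u<\ell$ for some facet $G'$ of the link (so that $\emptyset\in\Delta_{\bf a}$) together with $\sum_{u\notin G'}a_u\ge\ell$ for every facet $G'\ni v$ of the link and every vertex $v$ (so that no vertex lies in $\Delta_{\bf a}$), and since every facet of a nonempty link contains a vertex these two requirements are mutually inconsistent.

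Granted the lemma, the case $|F|=j$ can only occur when $F$ is a facet of $\Delta$, which forces $j=d$ and accounts solely for the universal equality $\dim K^d=d$ (and hence the bound $S_2\mbox{-}{\rm depth}\le d$). For every $j<d$, only the disconnected case with $|F|=j-1$ is operative, so $\dim K^j\ge j-1$ is equivalent to the existence of $F\in\Delta$ with $|F|=j-1$ and some ${\bf a}$ making $\Delta_{\bf a}(I^{(\ell)}_{{\rm link}_\Delta F})$ disconnected. Taking the minimum over such $j$ (and respecting the bound by $d$) produces precisely the formula of the proposition. The main obstacle is the lemma excluding $\Delta_{\bf a}=\{\emptyset\}$ for non-facet $F$; once it is in hand, the rest is a direct extraction of Krull dimensions from the Hilbert-series expression supplied by Corollary~\ref{dual}.
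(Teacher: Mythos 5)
Your proposal is correct and follows the same route as the paper: both read off $\dim K^{j}_{S/I_{\Delta}^{(\ell)}}$ from the Hilbert--series expression of Corollary~\ref{dual} and translate the condition $\dim K^{j}\geq j-1$ into a statement about $\widetilde{H}_{j-|F|-1}(\Delta_{\bf a}(I^{(\ell)}_{{\rm link}_{\Delta}F}))$. The genuine difference is in the reverse implication: the paper passes directly from ``$\widetilde{H}_{s-|F|-1}\neq 0$ for some $F$ with $|F|\geq s-1$'' to ``$\widetilde{H}_{0}\neq 0$'', silently discarding the possibility $|F|=s$, in which the nonvanishing homology sits in degree $-1$ and only says $\Delta_{\bf a}=\{\emptyset\}$. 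Your isolated lemma --- that $\Delta_{\bf a}(I^{(\ell)}_{{\rm link}_{\Delta}F})\neq\{\emptyset\}$ whenever $F$ is not a facet, proved from the decomposition $\bigcap_{G'}P_{G'}^{\ell}$ and the observation that every facet of a nonempty link contains a vertex --- closes exactly this gap, and your argument for it is correct; this is a worthwhile addition that the paper's own proof needs. The one point you brush past is the tail case $j=d$: your reduction shows that $S_{2}\mbox{-}{\rm depth}=d$ forbids disconnecting faces of cardinality at most $d-2$, but to match the right-hand side one must still exhibit \emph{some} disconnecting pair $(F,{\bf a})$. This can be done whenever $\Delta$ has at least two facets (take $F$ maximal among faces lying in at least two facets; its link is then a disjoint union of at least two nonempty faces and ${\bf a}={\bf 0}$ disconnects it), but it genuinely fails when $\Delta$ is a simplex, where the right-hand side is a minimum over the empty set. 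That degenerate case is a defect of the statement itself that the paper's proof also does not address, so it should not be held against you; apart from it, your argument is complete and, if anything, more careful than the published one.
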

\begin{proof}
First, we prove that the left-hand side is less than or equal to the right-hand side. Take $F\in\Delta$ such that $\Delta_{\bf a}(I_{{\rm link}_{\Delta}F}^{(\ell)})$ is disconnected for some ${\bf a}\in\mathbb{N}^{V({\rm link}_{\Delta}F)}$. It follows that $\widetilde{H}_{(|F|+1)-|F|-1}(\Delta_{\bf a}(I_{{\rm link}_{\Delta}F}^{(\ell)}); \Bbbk)=\widetilde{H}_{0}(\Delta_{\bf a}(I_{{\rm link}_{\Delta}F}^{(\ell)}); \Bbbk)\neq0$. Hence by the Hochster's formula for local cohomology modules, we obtain that $\dim K_{S/I_{\Delta}^{(\ell)}}^{|F|+1}\geq|F|$ and thus, we obtain that $|F|+1\geq S_{2}\mbox{-}{\rm depth}(S/I_{\Delta}^{(\ell)})$. Conversely, by the definition and Theorem \ref{Takayama} of the Serre depth for $(S_{2})$, there exist a face $F\in\Delta$ and ${\bf a}\in\mathbb{N}^{V({\rm link}_{\Delta}F)}$ with $|F|\geq S_{2}\mbox{-}{\rm depth}(S/I_{\Delta}^{(\ell)})-1$ such that $\dim_{\Bbbk}\widetilde{H}_{S_{2}\mbox{-}{\rm depth}(S/I_{\Delta}^{(\ell)})-|F|-1}(\Delta_{\bf a}(I_{{\rm link}_{\Delta}F}^{(\ell)});\Bbbk)\neq0$. Hence we have $\widetilde{H}_{0}(\Delta_{\bf a}(I_{{\rm link}_{\Delta}F}^{(\ell)});\Bbbk)\neq0$, that is, $\Delta_{\bf a}(I_{{\rm link}_{\Delta}F}^{(\ell)})$ is disconnected. 
\end{proof}

\begin{prop}\label{S_2-depth alg}
Let $\Delta$ be a pure simplicial complex and $\ell\geq1$. Then we have $$S_{2}\mbox{-}{\rm depth}(S/I_{\Delta}^{(\ell)})=\min\{|F|\,\,: F\in\Delta\mbox{ with }H_{\mathfrak{m}}^{1}(\Bbbk[V({\rm link}_{\Delta}F)]/I_{{\rm link}_{\Delta}F}^{(\ell)})\neq0\}+1$$
\end{prop}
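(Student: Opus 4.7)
The approach is to combine Proposition \ref{S_2-depth comb} with Takayama's formula (Theorem \ref{Takayama}). Specifically, the plan is to show that for every face $F \in \Delta$, writing $\Gamma := {\rm link}_\Delta F$, there exists ${\bf a} \in \mathbb{N}^{V(\Gamma)}$ with $\Delta_{\bf a}(I_\Gamma^{(\ell)})$ disconnected if and only if $H^1_{\mathfrak{m}}(\Bbbk[V(\Gamma)]/I_\Gamma^{(\ell)}) \neq 0$. Once this local equivalence is in hand, the stated formula follows immediately from Proposition \ref{S_2-depth comb}. Conceptually, the claim extends the equivalence of (1) and (2) in Lemma \ref{global} (which is the case $\ell=2$, $\Gamma = \Delta$) to arbitrary $\ell$ and arbitrary pure $\Gamma$, and the proof follows exactly the same pattern.

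To establish the equivalence, I would apply Theorem \ref{Takayama} to $\Bbbk[V(\Gamma)]/I_\Gamma^{(\ell)}$ with $i = 1$. Each summand of the resulting Hilbert series is indexed by a face $G \in \Gamma$ and a vector ${\bf b} \in \mathbb{N}^{V({\rm link}_\Gamma G)}$, and involves $\widetilde{H}_{-|G|}(\Delta_{\bf b}(I_{{\rm link}_\Gamma G}^{(\ell)}); \Bbbk)$. For $|G| \geq 2$ this reduced homology vanishes for trivial dimensional reasons; for $|G| = 0$ it is $\widetilde{H}_0(\Delta_{\bf a}(I_\Gamma^{(\ell)}); \Bbbk)$, which is nonzero exactly when $\Delta_{\bf a}(I_\Gamma^{(\ell)})$ is disconnected. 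The only remaining case is $|G| = 1$, and the crux of the proof, indeed the main obstacle, is to show that these contributions always vanish.

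To control the $|G|=1$ terms, fix a vertex $G = \{v\}$ of $\Gamma$. Then ${\rm link}_\Gamma\{v\}$ is again pure, because $\Gamma$ is pure as the link of a pure complex. Provided $\{v\}$ is not itself a facet of $\Gamma$, the Stanley--Reisner ring of ${\rm link}_\Gamma\{v\}$ has Krull dimension at least $1$; since the associated primes of a symbolic power of a squarefree monomial ideal coincide with the minimal primes, the quotient $\Bbbk[V({\rm link}_\Gamma\{v\})]/I_{{\rm link}_\Gamma\{v\}}^{(\ell)}$ is unmixed of positive dimension, and hence $H^0_{\mathfrak{m}}$ of it vanishes. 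Applying Theorem \ref{Takayama} to this smaller ring with $i=0$, only the empty face contributes (each nonempty face $H$ gives $\widetilde{H}_{-|H|-1}=0$), so the vanishing of $H^0_{\mathfrak{m}}$ forces $\widetilde{H}_{-1}(\Delta_{\bf b}(I_{{\rm link}_\Gamma\{v\}}^{(\ell)}); \Bbbk) = 0$ for every ${\bf b}$. If instead $\{v\}$ is a facet of $\Gamma$, then ${\rm link}_\Gamma\{v\} = \{\emptyset\}$ and a direct inspection of Takayama's formula yields the same vanishing. This annihilates the $|G|=1$ piece in the Hilbert series of $H^1_{\mathfrak{m}}(\Bbbk[V(\Gamma)]/I_\Gamma^{(\ell)})$, and the equivalence is complete. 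The residual degenerate situation, namely $F$ being a facet of $\Delta$ so that $\Gamma = \{\emptyset\}$, is handled by noting that both sides of the claimed equivalence vanish trivially in that case.
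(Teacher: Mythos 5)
Your overall route is the same as the paper's: the paper likewise deduces the statement from Proposition \ref{S_2-depth comb} together with the equivalence, for $\Gamma={\rm link}_{\Delta}F$, between the existence of a disconnected $\Delta_{\bf a}(I_{\Gamma}^{(\ell)})$ and the non-vanishing of $H^{1}_{\mathfrak{m}}(\Bbbk[V(\Gamma)]/I_{\Gamma}^{(\ell)})$ (the paper simply asserts this equivalence). Your analysis of the $|G|=1$ terms via $H^{0}_{\mathfrak{m}}=0$ for the unmixed, positive-dimensional quotient $\Bbbk[V({\rm link}_{\Gamma}\{v\})]/I_{{\rm link}_{\Gamma}\{v\}}^{(\ell)}$ is exactly the detail the paper omits, and it is correct whenever $\{v\}$ is not a facet of $\Gamma$.

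The step that fails is your last case. If $\{v\}$ is a facet of $\Gamma$, then ${\rm link}_{\Gamma}\{v\}=\{\emptyset\}$ lives on the empty vertex set, its Stanley--Reisner ideal is $(0)\subset\Bbbk$, and the unique (empty) vector ${\bf b}$ gives $\Delta_{\bf b}((0)^{(\ell)})=\{\emptyset\}$, whence $\widetilde{H}_{-1}=\Bbbk\neq 0$: this contribution does \emph{not} vanish. Since $\Gamma$ is pure, this occurs exactly when $\dim\Gamma=0$, where $H^{1}_{\mathfrak{m}}(\Bbbk[V(\Gamma)]/I_{\Gamma}^{(\ell)})$ is the top local cohomology of a one-dimensional ring and is automatically nonzero; the local equivalence you are proving then genuinely fails when $\Gamma$ is a single vertex, since no $\Delta_{\bf a}(I_{\Gamma}^{(\ell)})$ is disconnected there. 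The proposition itself survives this: such an $F$ has $|F|=\dim\Delta$, so it contributes only the a priori bound $S_{2}\mbox{-}{\rm depth}(S/I_{\Delta}^{(\ell)})\le\dim S/I_{\Delta}^{(\ell)}$ to the minimum (and in fact it is what makes the right-hand side well defined when no disconnection ever occurs, e.g.\ for a simplex). But as written, your ``direct inspection yields the same vanishing'' is false, and the argument should instead treat codimension-one faces separately along the lines just indicated, restricting the claimed equivalence to $\dim\Gamma\ge 1$.
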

\begin{proof}
For a face $F$ of $\Delta$, by Theorem \ref{Takayama}, there exists a vector ${\bf a}\in\mathbb{N}^{{\rm link}_{\Delta}F}$ such that $\Delta_{\bf a}(I_{{\rm link}_{\Delta}F}^{(\ell)})$ is disconnected if and only if $H_{\mathfrak{m}}^{1}(\Bbbk[V({\rm link}_{\Delta}F)]/I_{{\rm link}_{\Delta}F}^{(\ell)})\neq0$. Hence, by Proposition \ref{S_2-depth comb}, we obtain that the desired equality.  
\end{proof}

As a corollary, we prove the following statement. 
\begin{cor}
For a pure simplicial complex $\Delta$, we have 
$$S_{2}\mbox{-}{\rm depth}(S/I_{\Delta}^{(2)})\geq S_{2}\mbox{-}{\rm depth}(S/I_{\Delta}^{(\ell)})\mbox{ for all }\ell\geq3.$$
\end{cor}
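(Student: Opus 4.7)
The plan is to invoke Proposition \ref{S_2-depth comb}, which writes both
$S_{2}\mbox{-}{\rm depth}(S/I_{\Delta}^{(2)})$ and $S_{2}\mbox{-}{\rm depth}(S/I_{\Delta}^{(\ell)})$ as $1$ plus the minimum of $|F|$ over those faces $F\in\Delta$ that admit some ${\bf a}\in\mathbb{N}^{V({\rm link}_{\Delta}F)}$ making $\Delta_{{\bf a}}(I_{{\rm link}_{\Delta}F}^{(2)})$, respectively $\Delta_{{\bf a}}(I_{{\rm link}_{\Delta}F}^{(\ell)})$, disconnected. Hence it is enough to prove the following transfer statement: whenever a pair $(F,{\bf a})$ witnesses disconnectedness for the second symbolic power, the \emph{same} face $F$ together with a suitable ${\bf a}'$ also witnesses disconnectedness for the $\ell$-th symbolic power, for every $\ell\ge 3$.

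Writing $\Lambda:={\rm link}_{\Delta}F$ (which is pure because $\Delta$ is), I would simply rescale and take ${\bf a}':=(\ell-1){\bf a}\in\mathbb{N}^{V(\Lambda)}$, and then prove the stronger equality of Takayama complexes
\[
\Delta_{{\bf a}'}(I_{\Lambda}^{(\ell)})\;=\;\Delta_{{\bf a}}(I_{\Lambda}^{(2)}).
\]
The technical heart is an explicit combinatorial description valid for any ${\bf c}\in\mathbb{N}^{V(\Lambda)}$ and any $k\ge 1$:
\[
\Delta_{{\bf c}}(I_{\Lambda}^{(k)})=\Bigl\{\,F'\subseteq V(\Lambda)\,:\,\exists\,G\in\mathcal{F}(\Lambda)\text{ with }F'\subseteq G\text{ and }\textstyle\sum_{j\notin G}c_{j}<k\,\Bigr\}.
\]
To derive this, one unwinds the definition of $\Delta_{{\bf c}}$ through the primary decomposition $I_{\Lambda}^{(k)}=\bigcap_{G\in\mathcal{F}(\Lambda)}P_{G}^{\,k}$ where $P_{G}=(x_{j}:j\notin G)$. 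When $F'\not\subseteq G$ the localization $P_{G}^{\,k}S_{F'}$ already contains a unit and so equals $S_{F'}$; when $F'\subseteq G$, no variable appearing in $P_{G}$ is inverted in $S_{F'}$, hence $P_{G}^{\,k}S_{F'}\cap S=P_{G}^{\,k}$, and the membership $x^{{\bf c}}\in P_{G}^{\,k}S_{F'}$ reduces to the monomial condition $\sum_{j\notin G}c_{j}\ge k$.

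With this combinatorial formula in hand, the identity $\Delta_{{\bf a}'}(I_{\Lambda}^{(\ell)})=\Delta_{{\bf a}}(I_{\Lambda}^{(2)})$ is a purely arithmetic check: since $a_{j}\in\mathbb{N}$ and $\ell\ge 2$, the strict inequality $(\ell-1)\sum_{j\notin G}a_{j}<\ell$ is equivalent to $\sum_{j\notin G}a_{j}\le 1$, which is in turn equivalent to $\sum_{j\notin G}a_{j}<2$. Disconnectedness then transfers verbatim, which is the required conclusion. The only step requiring real care, rather than routine verification, is the combinatorial description of $\Delta_{{\bf c}}(I_{\Lambda}^{(k)})$ via the flat-pullback argument for $P_{G}^{\,k}S_{F'}\cap S=P_{G}^{\,k}$; once this is in place, no further obstacle appears.
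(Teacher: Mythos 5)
Your argument is correct, but it takes a genuinely different route from the paper. The paper's proof is a two-line reduction: it cites the Nguyen--Trung result \cite[Theorem 2.7]{nt} to get ${\rm depth}(S/I_{\Delta}^{(2)})\geq{\rm depth}(S/I_{\Delta}^{(\ell)})$, observes that (since the depth of these unmixed quotients is always at least $1$) this means non-vanishing of $H^{1}_{\mathfrak{m}}$ passes from the second symbolic power to the $\ell$-th, applies this to every link, and concludes via Proposition \ref{S_2-depth alg}. You instead work directly with the Takayama complexes via Proposition \ref{S_2-depth comb} and show the exact equality $\Delta_{(\ell-1){\bf a}}(I_{\Lambda}^{(\ell)})=\Delta_{{\bf a}}(I_{\Lambda}^{(2)})$, which follows from the Minh--Trung description (the paper's Lemma \ref{symbolic power cpx}, which you re-derive correctly) together with the arithmetic fact that $(\ell-1)s\leq\ell-1$ iff $s\leq 1$ for $s\in\mathbb{N}$. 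This makes every witnessing pair $(F,{\bf a})$ for the second power into a witnessing pair $(F,(\ell-1){\bf a})$ for the $\ell$-th power, so the minimum defining $S_{2}$-depth can only decrease. Your version is self-contained and exposes the combinatorial mechanism (essentially the same rescaling trick that underlies the cited depth result and the paper's Lemma \ref{lemma1}), at the cost of being longer; the paper's version is shorter but opaque, since it outsources the key monotonicity to an external theorem. The only point you leave implicit is the degenerate case where no witness exists for the second power, but then $S_{2}\mbox{-}{\rm depth}(S/I_{\Delta}^{(2)})=\dim S/I_{\Delta}^{(2)}$ and the inequality is trivial, so nothing is lost.
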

\begin{proof}
Fix $\ell\geq3$. By \cite[Theorem 2.7]{nt}, one can see that ${\rm depth}(S/I_{\Delta}^{(2)})\geq{\rm depth}(S/I_{\Delta}^{(\ell)})$. This implies that if $H_{\mathfrak{m}}^{1}(S/I_{\Delta}^{(2)})\neq0$, then $H_{\mathfrak{m}}^{1}(S/I_{\Delta}^{(\ell)})\neq0$. Therefore, for any face $F$ of $\Delta$, if $H_{\mathfrak{m}}^{1}(\Bbbk[V({\rm link}_{\Delta}F)]/I_{{\rm link}_{\Delta}F}^{(2)})\neq0$, then $H_{\mathfrak{m}}^{1}(\Bbbk[V({\rm link}_{\Delta}F)]/I_{{\rm link}_{\Delta}F}^{(\ell)})\neq0$, which implies the desired equality holds by Proposition \ref{S_2-depth alg}. 
\end{proof}

Let us recall the following lemma, which will be used to analyze the complexes appearing in Takayama's formula and in Theorem \ref{Takayama}, proved by Minh and Trung:

\begin{lemma}[\cite{mt}]\label{symbolic power cpx}
For a simplicial complex $\Delta$ on the vertex set $[n]$, a positive integer $\ell$, and ${\bf a}\in\mathbb{N}^{n}$, we have 
$$\Delta_{\bf a}(I_{\Delta}^{(\ell)})=\langle F\in\mathcal{F}(\Delta)\,\,: \displaystyle\sum_{i\notin F}a_{i}\leq\ell-1\rangle.$$
\end{lemma}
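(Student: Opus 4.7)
The plan is to unpack the definition of $\Delta_{\bf a}(I)$ combinatorially, using the primary decomposition
$$I_\Delta^{(\ell)}=\bigcap_{G\in\mathcal F(\Delta)} P_G^{\ell},\qquad P_G=(x_i:i\notin G).$$
Since ${\bf a}\in\mathbb N^n$ we have ${\rm supp}_-{\bf a}=\emptyset$, so $\Delta_{\bf a}(I_\Delta^{(\ell)})=\{F\subseteq[n]:x^{\bf a}\notin I_\Delta^{(\ell)}S_F\}$. I will therefore translate $x^{\bf a}\in I_\Delta^{(\ell)}S_F$ into an explicit condition on ${\bf a}$ and $F$, then negate.

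First I would record the basic observation that $x^{\bf a}\in I_\Delta^{(\ell)}S_F$ is equivalent to the existence of ${\bf b}\in\mathbb N^n$ with $b_i=a_i$ for every $i\notin F$ and $x^{\bf b}\in I_\Delta^{(\ell)}$. By the primary decomposition this last condition is $\sum_{i\notin G}b_i\ge\ell$ for every facet $G\in\mathcal F(\Delta)$. I then split the facets according to whether $F\subseteq G$ or not. If $F\subseteq G$ then every $i\in F$ satisfies $i\in G$, so the terms $b_i$ with $i\in F$ drop out of $\sum_{i\notin G}b_i$, and the constraint reduces to $\sum_{i\notin G}a_i\ge\ell$, a condition purely on ${\bf a}$. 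If $F\not\subseteq G$ then $F\setminus G\neq\emptyset$, and by choosing each $b_i$ with $i\in F$ sufficiently large (say $b_i=M$ for all $i\in F$ with $M$ large) we make $\sum_{i\in F\setminus G}b_i\ge M$, which eventually dominates $\ell$; these constraints are therefore vacuous. The mild step I must verify is that the large choice of the $b_i$'s works for all such facets at once, and this is immediate because a single uniform $M$ handles every facet $G$ with $F\not\subseteq G$.

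Combining the two cases yields the clean criterion
$$x^{\bf a}\in I_\Delta^{(\ell)}S_F\iff \sum_{i\notin G}a_i\ge\ell\ \text{for every facet }G\in\mathcal F(\Delta)\text{ with }F\subseteq G.$$
In particular, if $F\notin\Delta$ then no facet contains $F$, the condition is vacuously true, and such $F$ is excluded from $\Delta_{\bf a}(I_\Delta^{(\ell)})$. Negating, $F\in\Delta_{\bf a}(I_\Delta^{(\ell)})$ if and only if there is a facet $G\in\mathcal F(\Delta)$ with $F\subseteq G$ and $\sum_{i\notin G}a_i\le\ell-1$. This is precisely membership in the simplicial complex $\langle G\in\mathcal F(\Delta):\sum_{i\notin G}a_i\le\ell-1\rangle$, which finishes the proof.

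The argument is short; the only delicate point is the simultaneous-satisfaction step in the non-containing case (making sure the degrees of freedom in ${\bf b}|_F$ can clear all non-containing facet constraints at once), but as explained this is handled by a uniform large choice of the $b_i$ on $F$. Once that is observed, the reduction to a condition involving only facets containing $F$ is automatic, and matching with the facets-generated complex in the statement is syntactic.
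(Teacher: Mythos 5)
Your argument is correct: the reduction of $x^{\bf a}\in I_\Delta^{(\ell)}S_F$ to the conditions $\sum_{i\notin G}a_i\ge\ell$ for the facets $G\supseteq F$ (with the constraints from facets not containing $F$ discharged by a uniform large choice of ${\bf b}$ on $F$) is sound, and the negation matches the facet-generated complex in the statement. The paper itself cites this lemma from Minh--Trung without reproving it, and your localization computation is essentially the standard derivation given there, so there is nothing to flag.
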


Using this characterization of $\Delta_{\bf a}(I_{\Delta}^{(\ell)})$, we provide a combinatorial necessary and sufficient condition for the vanishing of the first local cohomology modules of symbolic powers of the Stanley–Reisner ideal: 

\begin{thm}\label{1st local coho}
Let $\Delta$ be a simplicial complex on $[n]$ with $\dim \Delta\geq1$ and let $\ell\geq2$. Then the following conditions are equivalent:
\begin{enumerate}
\item $H_{\mathfrak{m}}^{1}(S/I_{\Delta}^{(\ell)})=0$ 
\item For any $F_{0}, G_{0}\in\mathcal{F}(\Delta)$ with $F_{0}\cap G_{0}=\emptyset$ and any ${\bf a}=(a_{1},\ldots, a_{n})\in\mathbb{N}^{n}$ such that $\displaystyle\sum_{i\in[n]}a_{i}\geq\ell$, $\displaystyle\sum_{i\notin F_{0}}a_{i}=\ell-1$ and $\displaystyle\sum_{i\notin G_{0}}a_{i}=\ell-1$, there exist facets $F, G\in\mathcal{F}(\Delta_{\bf a}(I_{\Delta}^{(\ell)}))$ such that $F\cap F_{0}\neq\emptyset$, $G\cap G_{0}\neq\emptyset$ and $F\cap G\neq\emptyset$. 
\end{enumerate}
\end{thm}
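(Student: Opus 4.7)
The plan is to use Takayama's formula (Theorem \ref{Takayama}) to reduce condition (1) to connectedness of every $\Delta_{\bf a}(I_\Delta^{(\ell)})$, and then to match this with (2) via weight-vector modifications.

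First, applying Theorem \ref{Takayama} with $i=1$, the contribution from a face $F\in\Delta$ involves $\widetilde H_{-|F|}$ of $\Delta_{\bf a}(I_{{\rm link}_\Delta F}^{(\ell)})$. Reduced homology in degrees $\le -2$ vanishes unconditionally, and by Lemma \ref{symbolic power cpx} this complex is either void (so $\widetilde H_{-1}=0$) or generated by facets of ${\rm link}_\Delta F$ containing at least one vertex (so $\widetilde H_{-1}$ again vanishes). Thus only the $|F|=0$ term matters, and
$$H^{1}_{\mathfrak m}(S/I_\Delta^{(\ell)})=0\iff \Delta_{\bf a}(I_\Delta^{(\ell)})\text{ is connected or void for every }{\bf a}\in\mathbb N^n.$$

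For $(2)\Rightarrow(1)$ I would argue by contrapositive. If some $\Delta_{{\bf a}^{(0)}}$ is disconnected, pick facets $F_0,G_0$ in different components (automatically $F_0\cap G_0=\emptyset$). Writing $\gamma=\sum_{i\notin F_0\cup G_0}a_i^{(0)}$, the inclusions $F_0,G_0\in\Delta_{{\bf a}^{(0)}}$ give $\gamma\le\ell-1$. When $\gamma\le\ell-2$, I would raise ${\bf a}^{(0)}$ only on vertices of $F_0\cup G_0$ to a vector ${\bf a}'\ge{\bf a}^{(0)}$ with $\sum_{i\notin F_0}a'_i=\sum_{i\notin G_0}a'_i=\ell-1$ and $\sum_i a'_i=2(\ell-1)-\gamma\ge\ell$; since ${\bf a}'\ge{\bf a}^{(0)}$ gives $\mathcal F(\Delta_{{\bf a}'})\subseteq\mathcal F(\Delta_{{\bf a}^{(0)}})$, the facets $F_0,G_0$ remain in different components of $\Delta_{{\bf a}'}$, and any $F,G\in\mathcal F(\Delta_{{\bf a}'})$ with $F\cap F_0\ne\emptyset$ and $G\cap G_0\ne\emptyset$ must satisfy $F\cap G=\emptyset$, violating (2). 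The borderline $\gamma=\ell-1$ forces $\sum{\bf a}^{(0)}<\ell$ and $\Delta_{{\bf a}^{(0)}}=\Delta$ itself disconnected; then for $v_0\in F_0,w_0\in G_0$ taken in the respective components, the concentration ${\bf a}'=(\ell-1)({\bf e}_{v_0}+{\bf e}_{w_0})$ yields $\Delta_{{\bf a}'}=\mathrm{Star}_\Delta(v_0)\cup\mathrm{Star}_\Delta(w_0)$, which is disconnected and again violates (2).

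For $(1)\Rightarrow(2)$ I would argue contrapositively. Given failure of (2) for some $(F_0,G_0,{\bf a})$, tightness forces $\sum_{i\in F_0}a_i\ge1$, so pick $v_0\in F_0$ with $a_{v_0}\ge1$ and analogously $w_0\in G_0$. With the concentration ${\bf a}''=(\ell-1)({\bf e}_{v_0}+{\bf e}_{w_0})$, Lemma \ref{symbolic power cpx} gives $\mathcal F(\Delta_{{\bf a}''})=\mathrm{Star}_\Delta(v_0)\cup\mathrm{Star}_\Delta(w_0)$, and the goal is to conclude from the failure of (2) that this star-union is disconnected, thereby falsifying (1). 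I expect the hardest step to be precisely this last one: (2)-failure only controls facets lying in $\mathcal F(\Delta_{\bf a})$, whereas the stars above are built from all of $\mathcal F(\Delta)$, so closing this gap likely requires either choosing $v_0,w_0$ to maximise $a_{v_0},a_{w_0}$ (forcing any bridging facet of $\Delta$ into $\mathcal F(\Delta_{\bf a})$ by a weight estimate) or using the hybrid ${\bf a}''={\bf a}+(\ell-1-a_{v_0}){\bf e}_{v_0}+(\ell-1-a_{w_0}){\bf e}_{w_0}$ so that $\mathcal F(\Delta_{{\bf a}''})$ consists only of $\mathcal F(\Delta_{\bf a})$-facets already governed by the hypothesis.
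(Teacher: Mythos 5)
Your direction $(2)\Rightarrow(1)$ is sound and is essentially the paper's argument run contrapositively: you enlarge the weight vector on $F_{0}\cup G_{0}$ to reach the extremal values $\ell-1$, use ${\bf a}'\geq{\bf a}^{(0)}$ to pull the hypothetical bridge $F_{0}$--$F$--$G$--$G_{0}$ back into $\Delta_{{\bf a}^{(0)}}$, and you even isolate the boundary case $\gamma=\ell-1$ (where the total weight cannot reach $\ell$ and $\Delta$ itself must be disconnected), which the paper passes over silently. The reduction of (1) to connectedness of all $\Delta_{\bf a}(I_{\Delta}^{(\ell)})$ via Theorem \ref{Takayama} is also acceptable, at the same level of detail as the paper.

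The direction $(1)\Rightarrow(2)$, however, is genuinely missing, and it is where all the content of the theorem lives. Failure of (2) for a triple $(F_{0},G_{0},{\bf a})$ only forbids paths of length at most $3$ from $F_{0}$ to $G_{0}$ inside $\Delta_{\bf a}$; it does \emph{not} make $\Delta_{\bf a}$ disconnected. For instance, with $\ell=3$ and $\Delta$ the path $x_{2}$--$x_{1}$--$u$--$z$--$v$--$y_{1}$--$y_{2}$, $F_{0}=\{x_{1},x_{2}\}$, $G_{0}=\{y_{1},y_{2}\}$, $a_{x_{1}}=a_{y_{1}}=a_{z}=1$ and all other entries $0$, the hypotheses of (2) hold and no admissible pair $F,G$ exists, yet $\Delta_{\bf a}(I_{\Delta}^{(3)})=\Delta$ is connected. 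So a disconnected witness must be manufactured from a \emph{different} weight vector, and neither of your two suggestions achieves this in general. Maximising $a_{v_{0}}$ over $F_{0}$ does not force a bridging facet $F\ni v_{0}$ of $\mathrm{Star}_{\Delta}(v_{0})\cup\mathrm{Star}_{\Delta}(w_{0})$ into $\mathcal{F}(\Delta_{\bf a})$: one only gets $\sum_{i\in F}a_{i}\geq a_{v_{0}}$, whereas membership requires $\sum_{i\in F}a_{i}\geq\sum_{i\in F_{0}}a_{i}$, so the estimate works only when all of $F_{0}$'s weight sits on $v_{0}$. The hybrid ${\bf a}''={\bf a}+(\ell-1-a_{v_{0}}){\bf e}_{v_{0}}+(\ell-1-a_{w_{0}}){\bf e}_{w_{0}}$ does give $\mathcal{F}(\Delta_{{\bf a}''})\subseteq\mathcal{F}(\Delta_{\bf a})$, but $\sum_{i\notin F_{0}}a''_{i}=(\ell-1)+(\ell-1-a_{w_{0}})>\ell-1$ unless $a_{w_{0}}=\ell-1$, so $F_{0}$ (and likewise $G_{0}$) generally drops out of $\Delta_{{\bf a}''}$ and no disconnection is exhibited. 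The paper closes this gap with a four-case analysis on $q=\sum_{i\notin F_{0}\cup G_{0}}a_{i}$ versus $p=\sum_{i\in F_{0}}a_{i}$, applying connectedness not only to $\Delta_{\bf a}$ but to auxiliary complexes $\Delta_{{\bf a}'}$ for rescaled vectors (multiplying the weights on $F_{0}\cup G_{0}$ by $\lfloor q/p\rfloor+1$ and redistributing the remainder) and then transferring the facets so obtained back into $\mathcal{F}(\Delta_{\bf a})$ by explicit inequalities based on Lemma \ref{symbolic power cpx}. Nothing of this kind appears in your sketch, and you flag the step yourself as unresolved; as it stands the proposal proves only one implication.
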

\begin{proof}We set $\Delta_{\bf a}=\Delta_{\bf a}(I_{\Delta}^{(\ell)})$ for all ${\bf a}\in\mathbb{N}^{n}$. Notice that $H_{\mathfrak{m}}^{1}(S/I_{\Delta}^{(\ell)}) = 0$ if and only if, for all ${\bf a} \in \mathbb{N}^n$, the complex $\Delta_{\bf a}$ is connected, by Theorem \ref{Takayama}. We suppose that $H_{\mathfrak{m}}^{1}(S/I_{\Delta}^{(\ell)})=0$. Fix $F_{0}, G_{0}\in\mathcal{F}(\Delta)$ with $F_{0}\cap G_{0}=\emptyset$ and any ${\bf a}=(a_{1},\ldots, a_{n})\in\mathbb{N}^{n}$ such that $\displaystyle\sum_{i\in[n]}a_{i}\geq\ell$, $\displaystyle\sum_{i\notin F_{0}}a_{i}=\ell-1$ and $\displaystyle\sum_{i\notin G_{0}}a_{i}=\ell-1$. We distinguish the following cases:

\vspace{0.1cm}
{\it Case 1}: $\displaystyle\sum_{i\in[n]}a_{i}=2\ell-2$. 

\vspace{0.1cm}In this case, we prove that one of the following conditions holds. 
\begin{enumerate}[label={(1-\textrm{\roman*})}]
\item there exist facets $F, G\in\mathcal{F}(\Delta_{\bf a})$ such that $F\supset{\rm supp}({\bf a})\cap F_{0}$, $G\supset{\rm supp}({\bf a})\cap G_{0}$ and $F\cap G\neq\emptyset$, 
\item there exists a facet $H\in\mathcal{F}(\Delta_{\bf a})$ such that $H\cap F_{0}\neq\emptyset$ and $H\cap G_{0}\neq\emptyset$.  
\end{enumerate}
Notice that for any facet $F\in\mathcal{F}(\Delta_{\bf a})$, we have one of the following conditions:
\begin{enumerate}[label=(\alph*)]
\item $F\supset{\rm supp}({\bf a})\cap F_{0}$,  
\item $F\supset{\rm supp}({\bf a})\cap G_{0}$, 
\item $F\cap F_{0}\neq\emptyset$, $F\cap G_{0}\neq\emptyset$ and $\displaystyle\sum_{i\in F}a_{i}\geq\ell-1$. 
\end{enumerate}
If at least one facet satisfies condition (c), then condition (ii) holds. Hence, we assume that no facet satisfies condition (c). Let $A=\{F\in\mathcal{F}(\Delta_{\bf a})\,\,: F\supset{\rm supp}({\bf a})\cap F_{0}\}$ and $B=\{G\in\mathcal{F}(\Delta_{\bf a})\,\,: G\supset{\rm supp}({\bf a})\cap G_{0}\}$. It is obvious that $F_{0}\in A$ and $G_{0}\in B$. If $F\cap G = \emptyset$ for every $F\in A$ and every $G\in B$, then this contradicts the assumption that $\Delta_{\mathbf{a}}$ is connected. Hence, there exist facets $F\in A$ and $G\in B$ such that $F\cap G \neq\emptyset$, that is, condition (i) holds.

\vspace{0.1cm}
{\it Case 2}: $\sum_{i \in [n] \setminus (F_{0} \cup G_{0})} a_{i}\leq\left\lfloor \tfrac{\ell - 1}{2} \right\rfloor$. 

\vspace{0.1cm}In this, case, we have $F\cap F_{0}\neq\emptyset$ or $F\cap G_{0}\neq\emptyset$. Indeed, by the assumption on the choice of $F_0$ and $G_0$, we obtain that$$\displaystyle\sum_{i\in F_{0}}a_{i}=\displaystyle\sum_{i\in G_{0}}a_{i}=(\ell-1)-\left\lfloor \tfrac{\ell - 1}{2} \right\rfloor.$$Hence we obtain that $\displaystyle\sum_{i\in F_{0}\cup G_{0}}a_{i}\geq\ell-1$. Therefore, if $F\cap F_{0}=\emptyset$ and $F\cap G_{0}=\emptyset$, then $\displaystyle\sum_{i\in F}a_{i}\geq\ell$ and  $\displaystyle\sum_{i\in G}a_{i}\geq\ell$, which leads to a contradiction the assumption that $F, G\in\mathcal{F}(\Delta_{\bf a})$. Therefore, since $\Delta_{\bf a}$ is connected, there exist facets $F, G\in\mathcal{F}(\Delta_{\bf a})$ such that $F\cap F_{0}\neq\emptyset$, $G\cap G_{0}\neq\emptyset$ and $F\cap G\neq\emptyset$. 

\vspace{0.1cm}
{\it Case 3}:  $q=\sum_{i \in [n] \setminus (F_{0} \cup G_{0})} a_{i}>\left\lfloor \tfrac{\ell - 1}{2} \right\rfloor$ and $p$ divides $q$, where $p=\displaystyle\sum_{i\in F_{0}}a_{i}=\displaystyle\sum_{i\in G_{0}}a_{i}$. 

\vspace{0.1cm}In this case, we prove that one of the following conditions holds. 
\begin{enumerate}[label={(3-\textrm{\roman*})}]
\item there exist facets $F, G\in\mathcal{F}(\Delta_{\bf a})$ such that $F\supset{\rm supp}({\bf a})\cap F_{0}$, $G\supset{\rm supp}({\bf a})\cap G_{0}$ and $F\cap G\neq\emptyset$, 
\item there exists a facet $H\in\mathcal{F}(\Delta_{\bf a})$ such that $H\cap F_{0}\neq\emptyset$ and $H\cap G_{0}\neq\emptyset$.  
\end{enumerate}
Notice that $p+q=\ell-1$. Let ${\bf a}^{\prime}=(a_{1}^{\prime},\ldots, a_{n}^{\prime})\in\mathbb{N}^{n}$, where 
\[
a_{i}^{\prime} =
\begin{cases}
(\frac{q}{p}+1)a_{i} & (\text{if } i \in F_{0}\cup G_{0}), \\
0 & \text{otherwise}.
\end{cases}
\]
Then, since $\displaystyle\sum_{i\in F_{0}}a_{i}^{\prime}=\displaystyle\sum_{i\in G_{0}}a_{i}^{\prime}=\ell-1$, from the conditions (1-i) and (1-ii) in the case 1, there exist facets $F, G\in\mathcal{F}(\Delta_{{\bf a}^{\prime}})$ such that $F\supset{\rm supp}({\bf a})\cap F_{0}$, $G\supset{\rm supp}({{\bf a}^{\prime}})\cap G_{0}$ and $F\cap G\neq\emptyset$, or there exists a facet $H\in\mathcal{F}(\Delta_{{\bf a}^{\prime}})$ such that $H\cap F_{0}\neq\emptyset$ and $H\cap G_{0}\neq\emptyset$. It remains to verify that $F$, $G$, and $H$ are facets of $\Delta_{\bf a}$. For $F$, since $$\displaystyle\sum_{i\notin F}a_{i}\leq p+q=\displaystyle\sum_{i\notin F}a_{i}^{\prime}\leq\ell-1,$$it follows that $F$ is a facet of $\Delta_{\bf a}$ from Lemma \ref{symbolic power cpx}. The same argument holds for $G$ as well. For $H$, since $H\in\mathcal{F}(\Delta_{{\bf a}^{\prime}})$ and $\displaystyle\sum_{i\in[n]}a_{i}^{\prime}=2\ell-2$, we have $$\ell-1\leq\displaystyle\sum_{i\in H}a_{i}^{\prime}=(\tfrac{q}{p}+1)\displaystyle\sum_{i\in H}a_{i}.$$Hence, from $p+q=\ell-1$, we obtain that $$\displaystyle\sum_{i\in H}a_{i}\geq\frac{\ell-1}{\frac{q}{p}+1}=\frac{p(\ell-1)}{p+q}=\frac{p(\ell-1)}{\ell-1}=p,$$ and thus we have $$\displaystyle\sum_{i\notin H}a_{i}\leq p+q=\ell-1$$ since $\displaystyle\sum_{i\in[n]}a_{i}=2p+q$. Therefore, we get $H\in\mathcal{F}(\Delta_{\bf a})$.   

\vspace{0.1cm}
{\it Case 4}:  $q=\sum_{i \in [n] \setminus (F_{0} \cup G_{0})} a_{i}>\left\lfloor \tfrac{\ell - 1}{2} \right\rfloor$ and $p$ does not divide $q$, where $p=\displaystyle\sum_{i\in F_{0}}a_{i}=\displaystyle\sum_{i\in G_{0}}a_{i}$. 

\vspace{0.1cm}In this case, we prove that there exist facets $F, G\in\Delta_{\bf a}$ such that $F\cap F_{0}\neq\emptyset$, $G\cap G_{0}\neq\emptyset$ and $F\cap G\neq\emptyset$. Observe that there exists a vector ${\bf a}^{\prime}=(a_{1}^{\prime},\ldots, a_{n}^{\prime})\in\mathbb{N}^{n}$ such that if $ i \in F_{0}\cup G_{0}$, then $a_{i}^{\prime}=(\left\lfloor\frac{q}{p}\right\rfloor+1)a_{i}$, otherwise $a_{i}^{\prime}\leq a_{i}$ and $$\sum_{i\in[n]}a_{i}^{\prime}=2p+2\left\lfloor\tfrac{q}{p}\right\rfloor+(q-p\left\lfloor\tfrac{q}{p}\right\rfloor).$$
We set $p^{\prime}=(1+\left\lfloor\tfrac{q}{p}\right\rfloor)p$ and $q^{\prime}=q-p\left\lfloor\tfrac{q}{p}\right\rfloor$. Notice that $p^{\prime}>q^{\prime}$. Then, since $\displaystyle\sum_{i\notin F_{0}}a_{i}^{\prime}=\displaystyle\sum_{i\notin G_{0}}a_{i}^{\prime}=p^{\prime}+q^{\prime}=p+q=\ell-1$ and $\displaystyle\sum_{i\in[n]\setminus(F_{0}\cup G_{0})}a_{i}^{\prime}=q^{\prime}\leq\left\lfloor\tfrac{\ell-1}{2}\right\rfloor$, by the result of Case 2, there exist facets $F, G\in\mathcal{F}(\Delta_{{\bf a}^{\prime}})$ such that $F\cap F_{0}\neq\emptyset$, $G\cap G_{0}\neq\emptyset$ and $F\cap G\neq\emptyset$. It remains to verify that $F$ and $G$ are facets of $\Delta_{\bf a}$. For $F$, we have the following inequality: 
\begin{align*}
p^{\prime}&\leq\displaystyle\sum_{i\in F}a_{i}^{\prime} \\
&=\left(\displaystyle\sum_{i\in(F_{0}\cup G_{0})\cap F}a_{i}^{\prime}\right)+\left(\displaystyle\sum_{i\in([n]\setminus(F_{0}\cap G_{0}))\cap F}a_{i}^{\prime}\right) \\
&\leq \left(\left(\left\lfloor\tfrac{q}{p}\right\rfloor+1\right)\displaystyle\sum_{i\in(F_{0}\cup G_{0})\cap F}a_{i}\right)+\left(\displaystyle\sum_{i\in([n]\setminus(F_{0}\cap G_{0}))\cap F}a_{i}\right) \\
&\leq \left(\left(\left\lfloor\tfrac{q}{p}\right\rfloor+1\right)\displaystyle\sum_{i\in F}a_{i}\right).
\end{align*}
Therefore, we obtain that $$\displaystyle\sum_{i\in F}a_{i}\geq\frac{p^{\prime}}{\left\lfloor\tfrac{q}{p}\right\rfloor+1}=\frac{(\left\lfloor\tfrac{q}{p}\right\rfloor+1)p}{\left\lfloor\tfrac{q}{p}\right\rfloor+1}=p,$$and hence we see that $F$ is a facet of $\Delta_{\bf a}$ from Lemma \ref{symbolic power cpx}. The same argument holds for $G$ as well. This completes the proof that (1) implies (2). To prove the converse, we assume that (2) holds. Fix $F_{0}, G_{0}\in\mathcal{F}(\Delta)$ with $F_{0}\cap G_{0}\neq\emptyset$ and any ${\bf a}=(a_{1},\ldots, a_{n})\in\mathbb{N}^{n}$ such that $\displaystyle\sum_{i\in[n]}a_{i}\geq\ell$, $\displaystyle\sum_{i\notin F_{0}}a_{i}=\ell-1$ and $\displaystyle\sum_{i\notin G_{0}}a_{i}=\ell-1$. It suffices to prove that there exist facets $F, G\in\mathcal{F}(\Delta_{\bf a})$ such that $F\cap F_{0}\neq\emptyset$, $G\cap G_{0}\neq\emptyset$ and $F\cap G\neq\emptyset$. Observe that there exists a vector ${\bf a}^{\prime}=(a_{1}^{\prime},\ldots, a_{n}^{\prime})\in\mathbb{N}^{n}$ such that $a_{i}^{\prime}\geq a_{i}$ for all $i$ and $\displaystyle\sum_{i\notin F_{0}}a_{i}^{\prime}=\displaystyle\sum_{i\notin G_{0}}a_{i}^{\prime}=\ell-1$. Now, by the assumption, with respect to ${\bf a}^{\prime}$, there exist facets $F, G\in\mathcal{F}(\Delta_{{\bf a}^{\prime}})$ such that $F\cap F_{0}\neq\emptyset$, $G\cap G_{0}\neq\emptyset$ and $F\cap G\neq\emptyset$. Since  $\ell-1\geq\displaystyle\sum_{i\notin F}a_{i}^{\prime}\geq\displaystyle\sum_{i\notin F}a_{i}$, we see that $F\in\mathcal{F}(\Delta_{\bf a})$. The same argument holds for $G$ as well. Therefore, $\Delta_{\bf a}$ is connected, as required. 
\end{proof}

We have the following characterization of $S_{2}\mbox{-}{\rm depth}(S/I_{\Delta}^{(\ell)})$ by combining Proposition \ref{S_2-depth alg} and Theorem \ref{1st local coho}.
\begin{cor}\label{combi of S_2-depth}
Let $\Delta$ be a simplicial complex and $\ell\geq1$. Then we have 
$$S_{2}\mbox{-}{\rm depth}(S/I_{\Delta}^{(\ell)})=\min\{|F|\,\,: F\in\Delta\mbox{ satisfies condition }(*)\}+1,$$
where $(*):$ there exist facets $F_{0}, G_{0}\in\mathcal{F}({\rm link}_{\Delta}F)$ with $F_{0}\cap G_{0}=\emptyset$ and ${\bf a}\in\mathbb{N}^{V({\rm link}_{\Delta}F)}$ such that $\displaystyle\sum_{i\in[V({\rm link}_{\Delta}F)]}a_{i}\geq\ell$, $\displaystyle\sum_{i\notin F_{0}}a_{i}=\ell-1$ and $\displaystyle\sum_{i\notin G_{0}}a_{i}=\ell-1$, there are no facets $F, G\in\mathcal{F}(\Delta_{\bf a}(I_{{\rm link}_{\Delta}F}^{(\ell)}))$ such that $F\cap F_{0}\neq\emptyset$, $G\cap G_{0}\neq\emptyset$ and $F\cap G\neq\emptyset$. 
\end{cor}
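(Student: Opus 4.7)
The plan is to combine Proposition \ref{S_2-depth alg} with Theorem \ref{1st local coho} directly. By Proposition \ref{S_2-depth alg} we already know
$$S_{2}\mbox{-}{\rm depth}(S/I_{\Delta}^{(\ell)})=\min\{|F|\,\,:\, F\in\Delta,\ H_{\mathfrak{m}}^{1}(\Bbbk[V({\rm link}_{\Delta}F)]/I_{{\rm link}_{\Delta}F}^{(\ell)})\neq 0\}+1,$$
so the task reduces to recasting, for each face $F\in\Delta$, the cohomological nonvanishing statement $H_{\mathfrak{m}}^{1}(\Bbbk[V({\rm link}_{\Delta}F)]/I_{{\rm link}_{\Delta}F}^{(\ell)})\neq 0$ in purely combinatorial terms.

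For a fixed $F\in\Delta$, I would apply Theorem \ref{1st local coho} to the simplicial complex ${\rm link}_{\Delta}F$ in place of $\Delta$. Unwinding the definitions, condition $(*)$ at $F$ is precisely the logical negation of condition $(2)$ of Theorem \ref{1st local coho} applied to ${\rm link}_{\Delta}F$: the quantifiers over $F_0,G_0,{\bf a}$, the arithmetic constraints $\sum_i a_i\geq\ell$, $\sum_{i\notin F_0}a_i=\ell-1$, $\sum_{i\notin G_0}a_i=\ell-1$, and the required existence/nonexistence of facets $F,G\in\mathcal{F}(\Delta_{\bf a}(I_{{\rm link}_{\Delta}F}^{(\ell)}))$ intersecting $F_0$, $G_0$, and each other, match verbatim after negation. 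Consequently, Theorem \ref{1st local coho} gives
$$H_{\mathfrak{m}}^{1}(\Bbbk[V({\rm link}_{\Delta}F)]/I_{{\rm link}_{\Delta}F}^{(\ell)})\neq 0 \iff F \text{ satisfies } (*).$$
Substituting this equivalence into the formula of Proposition \ref{S_2-depth alg} yields the required equality.

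The hard part is already absorbed into Theorem \ref{1st local coho}, where the nontrivial bijection between the vanishing of $H^1_{\mathfrak{m}}$ on a symbolic power and the explicit facet/exponent condition is established. Given this, the present corollary is a formal consequence. The only point requiring a moment of care is the degenerate situation when $\dim {\rm link}_{\Delta}F\leq 0$ (so that Theorem \ref{1st local coho} does not literally apply): in that case one checks directly that $(*)$ fails vacuously, since no two disjoint facets $F_0,G_0$ of ${\rm link}_{\Delta}F$ can carry exponent vectors of total weight at least $\ell$ while each complement has weight exactly $\ell-1$, so such $F$ contribute to neither minimum and can be safely ignored.
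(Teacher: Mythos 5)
Your main line is exactly the paper's: the corollary is obtained by substituting the equivalence of Theorem \ref{1st local coho}, applied to ${\rm link}_{\Delta}F$ for each face $F$, into the formula of Proposition \ref{S_2-depth alg}, and the paper itself offers nothing beyond this one-sentence combination. So the core of your proposal is correct and follows the same route.

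Your handling of the degenerate case $\dim{\rm link}_{\Delta}F\le 0$, however, is wrong, and wrong in a direction that would contradict the very equality you are proving if it were true. Take ${\rm link}_{\Delta}F$ to be two isolated vertices $u,v$ and $\ell\ge 2$. Then $F_{0}=\{u\}$ and $G_{0}=\{v\}$ are disjoint facets, and the vector ${\bf a}$ with $a_{u}=a_{v}=\ell-1$ satisfies $\sum_{i}a_{i}=2\ell-2\ge\ell$ and $\sum_{i\notin F_{0}}a_{i}=\sum_{i\notin G_{0}}a_{i}=\ell-1$; the complex $\Delta_{\bf a}(I_{{\rm link}_{\Delta}F}^{(\ell)})$ has facets $\{u\}$ and $\{v\}$, and no pair of intersecting facets meeting $F_{0}$ and $G_{0}$ respectively exists, so condition $(*)$ \emph{does} hold at such an $F$. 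It must, because $H^{1}_{\mathfrak m}$ of $\Bbbk[x_{u},x_{v}]/(x_{u}^{\ell}x_{v}^{\ell})$ is nonzero (the ring has depth $1$ and dimension $1$), so this $F$ contributes to the left-hand minimum of Proposition \ref{S_2-depth alg}. If, as you assert, $(*)$ ``failed vacuously'' for all such $F$, these faces would contribute to one minimum but not the other and the corollary would be false. The correct repair is to observe that the equivalence of Theorem \ref{1st local coho} extends trivially to dimension $\le 0$ (or to verify the zero-dimensional case directly as above), not to discard those faces from both sides. Your parenthetical observation is accurate only for $\ell=1$, where the constraints force ${\bf a}=0$ and $(*)$ is indeed unsatisfiable; but that exposes a defect in the statement itself for $\ell=1$ (Theorem \ref{1st local coho} assumes $\ell\ge 2$), not a case your argument can safely wave away.
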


Moreover, we generalize one of the implications of the equivalence given in \cite[Theorem 3.2]{rty} by Theorem \ref{1st local coho}. To this end, we introduce the following notions. Let $\Delta$ be a connected simplicial complex with $\dim\Delta=d-1$ and let $1\leq t\leq d-1$. We denote by $\mathcal{F}^{i}(\Delta)$ the set of faces of $\Delta$ of dimension $i$. Let $F, G\in\mathcal{F}^{t}(\Delta)$. Then, we define the $t$-{\it distance} of $\Delta$, denoted by $t\mbox{-}{\rm dist}(F, G)$ as the minimal length of a sequence 
$F\subset F_{1}, \ldots, F_{p}\supset G$ in $\mathcal{F}^{t+1}(\Delta)$ such that $F_{i}\cap F_{i+1} \neq \emptyset$ for all $i=1, \ldots, p-1$. Also we define the $t$-{\it diameter}, denoted by $t\mbox{-}{\rm diam}(\Delta)$, is the maximal $t$-distance between faces in $\mathcal{F}^{t+1}(\Delta)$.  

\begin{cor}\label{gene of rty}
Let $\Delta$ be a simplicial complex with $\dim\Delta=d-1$ and let $2\leq \ell\leq d-1$. If $(\ell-1)\mbox{-}{\rm diam}(\Delta)\leq2$, then we have $H_{\mathfrak{m}}^{1}(S/I_{\Delta}^{(\ell)})=0$ and hence ${\rm depth}(S/I_{\Delta}^{(\ell)})\geq2$.  
\end{cor}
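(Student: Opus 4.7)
My plan is to verify condition~(2) of Theorem~\ref{1st local coho}; once this yields $H^{1}_{\mathfrak{m}}(S/I_{\Delta}^{(\ell)})=0$, the depth bound will follow automatically because $\mathfrak{m}$ is not associated to $S/I_{\Delta}^{(\ell)}$.

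I would start by fixing facets $F_{0},G_{0}\in\mathcal{F}(\Delta)$ with $F_{0}\cap G_{0}=\emptyset$ and a vector ${\bf a}\in\mathbb{N}^{n}$ satisfying $\sum_{i}a_{i}\ge\ell$ and $\sum_{i\notin F_{0}}a_{i}=\sum_{i\notin G_{0}}a_{i}=\ell-1$. Since $F_{0}\cap G_{0}=\emptyset$, the latter two equalities force $\sum_{i\in F_{0}}a_{i}\le\ell-1$, which implies $|F_{0}\cap {\rm supp}({\bf a})|\le\ell-1$, and analogously for $G_{0}$. The next step is to enlarge these supports to $(\ell-1)$-dimensional subfaces $F'\subset F_{0}$ and $G'\subset G_{0}$ of cardinality $\ell$ with $F'\supset F_{0}\cap {\rm supp}({\bf a})$ and $G'\supset G_{0}\cap {\rm supp}({\bf a})$; this is possible provided $|F_{0}|,|G_{0}|\ge\ell$. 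By construction $F'\cap G'=\emptyset$ and the ${\bf a}$-weight on $F_{0}\setminus F'$ and $G_{0}\setminus G'$ vanishes.

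Applying the hypothesis $(\ell-1)\mbox{-}{\rm diam}(\Delta)\le 2$ to $F',G'\in\mathcal{F}^{\ell-1}(\Delta)$ then supplies faces $F_{1},F_{2}\in\mathcal{F}^{\ell}(\Delta)$ with $F'\subset F_{1}$, $G'\subset F_{2}$ and $F_{1}\cap F_{2}\ne\emptyset$; the distance cannot equal one because $|F'\cup G'|=2\ell>\ell+1$ under $\ell\ge 2$, so only a two-step chain can work. I would then extend $F_{1}$ and $F_{2}$ to facets $\tilde{F}_{1},\tilde{F}_{2}$ of $\Delta$ and observe that
\[
\sum_{i\notin\tilde{F}_{1}}a_{i}\;\le\;\sum_{i\notin F'}a_{i}\;=\;\sum_{i\notin F_{0}}a_{i}+\sum_{i\in F_{0}\setminus F'}a_{i}\;=\;\ell-1,
\]
which by Lemma~\ref{symbolic power cpx} proves $\tilde{F}_{1}\in\mathcal{F}(\Delta_{{\bf a}}(I_{\Delta}^{(\ell)}))$; the symmetric argument handles $\tilde{F}_{2}$. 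These two facets then satisfy the three intersection requirements demanded by Theorem~\ref{1st local coho}(2), completing the verification.

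The main technical point to be careful about is the hypothesis $|F_{0}|,|G_{0}|\ge\ell$ used to extract the subfaces $F',G'$. This is automatic when $\Delta$ is pure, since every facet has cardinality $d\ge\ell+1$; in the non-pure setting one needs an auxiliary argument treating a facet of cardinality below $\ell$ directly, for instance by noting that such a facet itself already lies in $\Delta_{{\bf a}}(I_{\Delta}^{(\ell)})$ and can be linked to the opposite side through the $(\ell-1)$-skeleton by the diameter hypothesis. Apart from that bookkeeping, the heart of the proof is the translation ``combinatorial diameter $\le 2$ $\Rightarrow$ connectivity of $\Delta_{{\bf a}}(I_{\Delta}^{(\ell)})$'' supplied by the extension step above.
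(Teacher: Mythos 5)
Your argument is essentially identical to the paper's proof: both verify condition (2) of Theorem~\ref{1st local coho} by choosing an $\ell$-element subface of each of $F_{0}$ and $G_{0}$ that absorbs the support of ${\bf a}$ inside that facet, invoking the $(\ell-1)$-diameter hypothesis to join these subfaces through intersecting $\ell$-dimensional faces extended to facets, and checking membership in $\Delta_{\bf a}(I_{\Delta}^{(\ell)})$ via the same weight estimate $\sum_{i\notin F}a_{i}\leq\ell-1$. Your caveat about facets of cardinality below $\ell$ in the non-pure case is a point the paper's proof silently glosses over, so flagging it is a minor improvement rather than a divergence.
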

\begin{proof}
Fix $F_{0},G_{0}\in\mathcal{F}(\Delta_{\bf a}(I_{\Delta}^{(\ell)}))$ with $F_{0}\cap G_{0}=\emptyset$ and ${\bf a}\in\mathbb{N}^{n}$ such that  $\sum_{i\in[n]}a_{i}\geq\ell$, $\sum_{i\notin F_{0}}a_{i}=\ell-1$ and $\sum_{i\notin G_{0}}a_{i}=\ell-1$. Since $\sum_{i\in F_{0}}a_{i}\leq\sum_{i\notin G_{0}}a_{i}\leq\ell-1$, there exists a face $F_{1}$ of $\Delta_{\bf a}(I_{\Delta}^{(\ell)})$ such that $\dim F_{1}=\ell-1$ and $\sum_{i\in F_{0}\setminus F_{1}}a_{i}=0$. Similarly, we see that there exists a face $G_{1}$ of $\Delta_{\bf a}(I_{\Delta}^{(\ell)})$ such that $\dim G_{1}=\ell-1$ and $\sum_{i\in G_{0}\setminus G_{1}}a_{i}=0$. Since $(\ell-1)\mbox{-}{\rm diam}(\Delta)\leq2$, there exist facets $F$ and $G$ of $\Delta$ such that $F_{1}\subset F$, $G_{0}\subset G$ and $F\cap G\neq\emptyset$. We must check that $F, G\in\mathcal{F}(\Delta_{\bf a}(I_{\Delta}^{(\ell)}))$. From $\sum_{i\in F_{0}\setminus F_{1}}a_{i}=0$ and $\sum_{i\notin F_{0}}a_{i}=\ell-1$, we obtain that $$\sum_{i\notin F}a_{i}\leq\left(\sum_{i\in F_{0}\setminus F_{1}}a_{i}\right)+\left( \sum_{i\notin F_{0}\cup G_{0}}a_{i}\right)+\left(\sum_{i\in G_{0}}a_{i}\right)=\ell-1.$$Similarly, we see that $\sum_{i\notin G}a_{i}\leq\ell-1$. Hence, by Lemma \ref{symbolic power cpx}, we have $F, G\in\mathcal{F}(\Delta_{\bf a}(I_{\Delta}^{(\ell)}))$. Therefore, by Theorem \ref{1st local coho}, the assertion follows. 
\end{proof}

By Proposition \ref{S_2-depth alg} and Corollary \ref{gene of rty}, we obtain a combinatorial lower bound for $S_{2}\mbox{-}{\rm depth}(S/I_{\Delta}^{(\ell)})$ in terms of $\Delta$. 
\begin{cor}
For a simplicial complex $\Delta$ and $\ell\geq2$, we have 
$$S_{2}\mbox{-}{\rm depth}(S/I_{\Delta}^{(\ell)})\geq\min\{|F|\,\,:F\in\Delta\mbox{ with }(\ell-1)\mbox{-}{\rm diam}({\rm link}_{\Delta}F)>2\}+1.$$
\end{cor}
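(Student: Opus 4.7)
The plan is short: combine Proposition \ref{S_2-depth alg} with the contrapositive of Corollary \ref{gene of rty}. Proposition \ref{S_2-depth alg} already gives the identity
$$S_{2}\mbox{-}{\rm depth}(S/I_{\Delta}^{(\ell)})=\min\{|F|\,:\,F\in\Delta,\ H_{\mathfrak{m}}^{1}(\Bbbk[V({\rm link}_{\Delta}F)]/I_{{\rm link}_{\Delta}F}^{(\ell)})\neq 0\}+1,$$
so it suffices to bound this minimum from below in terms of the $(\ell-1)$-diameter of links.

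Next I would apply Corollary \ref{gene of rty} with ${\rm link}_{\Delta}F$ in place of $\Delta$. The contrapositive of that corollary reads: whenever $H_{\mathfrak{m}}^{1}(\Bbbk[V({\rm link}_{\Delta}F)]/I_{{\rm link}_{\Delta}F}^{(\ell)})\neq 0$, we must have $(\ell-1)\mbox{-}{\rm diam}({\rm link}_{\Delta}F)>2$. Consequently
$$\{F\in\Delta : H_{\mathfrak{m}}^{1}(\Bbbk[V({\rm link}_{\Delta}F)]/I_{{\rm link}_{\Delta}F}^{(\ell)})\neq 0\}\ \subseteq\ \{F\in\Delta : (\ell-1)\mbox{-}{\rm diam}({\rm link}_{\Delta}F)>2\}.$$
Since the minimum of $|F|$ over a subset is at least the minimum over the ambient set, adding $1$ to both sides of this comparison of minima yields the claimed inequality.

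The main obstacle I anticipate is not the core logic but the bookkeeping around the hypothesis range of Corollary \ref{gene of rty}, which is stated for $2\leq\ell\leq\dim{\rm link}_{\Delta}F$. If $\dim{\rm link}_{\Delta}F<\ell-1$, there are no faces of dimension $\ell-1$ in the link, so $(\ell-1)\mbox{-}{\rm diam}({\rm link}_{\Delta}F)$ is taken over an empty collection and such $F$ do not contribute to the right-hand minimum of the corollary. In this degenerate range one should verify independently, via Theorem \ref{Takayama} together with Lemma \ref{symbolic power cpx}, that $H_{\mathfrak{m}}^{1}(\Bbbk[V({\rm link}_{\Delta}F)]/I_{{\rm link}_{\Delta}F}^{(\ell)})$ also vanishes, so those $F$ drop out of the left-hand minimum as well and the set containment above is preserved. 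Once this corner case is handled, the proof reduces to the one-line application outlined above.
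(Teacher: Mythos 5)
Your main line of argument is exactly the one the paper intends: the corollary is stated with no proof beyond the remark that it follows from Proposition \ref{S_2-depth alg} and Corollary \ref{gene of rty}, and your combination of the first identity with the contrapositive of the second, via a set containment and a comparison of minima, is precisely that argument. The problem lies in your treatment of the corner case you correctly flag. You claim that when $\dim {\rm link}_{\Delta}F<\ell-1$ one can verify that $H^{1}_{\mathfrak{m}}(\Bbbk[V({\rm link}_{\Delta}F)]/I^{(\ell)}_{{\rm link}_{\Delta}F})$ vanishes, so that such $F$ drop out of the left-hand minimum. This is false. If ${\rm link}_{\Delta}F$ is a single vertex, the quotient is a polynomial ring in one variable, which is Cohen--Macaulay of dimension $1$, so $H^{1}_{\mathfrak{m}}\neq 0$. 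More generally, for any face $F$ of cardinality $d-1$ in a pure $(d-1)$-dimensional complex the link is a nonempty zero-dimensional complex, and $\Bbbk[V({\rm link}_{\Delta}F)]/I^{(\ell)}_{{\rm link}_{\Delta}F}$ is a one-dimensional unmixed ring with $H^{0}_{\mathfrak{m}}=0$, hence Cohen--Macaulay of dimension $1$ with $H^{1}_{\mathfrak{m}}\neq 0$; this is exactly why the minimum in Proposition \ref{S_2-depth alg} is always attained at some face of cardinality at most $d-1$. So these faces do not leave the left-hand set, and the step ``those $F$ drop out of the left-hand minimum'' would fail.

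The repair goes in the opposite direction: one must place such $F$ into the \emph{right}-hand set, i.e.\ read the degenerate $(\ell-1)$-diameter as $\infty$ (hence $>2$) whenever ${\rm link}_{\Delta}F$ has no faces of dimension $\ell$ through which two $(\ell-1)$-faces could be connected. This is the convention consistent with the paper's ${\rm diam}\,\Delta=\infty$ for disconnected complexes and with the fact that the $t$-diameter, and Corollary \ref{gene of rty} itself, are only formulated for $t\le \dim$. With that convention every face with $\dim{\rm link}_{\Delta}F<\ell$ lies in the right-hand set automatically, the contrapositive of Corollary \ref{gene of rty} handles the remaining faces, and your containment and the resulting inequality of minima go through. (You should also record that Proposition \ref{S_2-depth alg} is stated only for pure $\Delta$, a hypothesis the corollary omits; your argument silently assumes it.)
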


Although we expected the Serre depth for $(S_{2})$ on the symbolic powers of an arbitrary Stanley–Reisner ideal to be non-increasing, we construct a simplicial complex whose the Serre depth for $(S_{2})$ and the depth fail to satisfy this property. First, let us recall the definition of shellable simplicial complexes. Let $\Delta$ be a simplicial complex on $[n]$. Then $\Delta$ is called {\it shellable}, if its facets can be ordered $F_{1},\ldots, F_{m}$ such that, for all $2\leq m$, the subcomplex $\langle F_{1},\ldots, F_{j-1}\rangle\cap\langle F_{j}\rangle$ is pure of dimension $\dim F_{j}-1$. It is known that this condition is equivalent to that there exists a facet ordering $F_{1},\ldots, F_{m}$ such that for all $i$ and all $j<i$, there exists $t\in F_{i}\setminus F_{j}$ and $k<i$ such that $F_{i}\setminus F_{k}=\{t\}$ (see, e.g. \cite{hh}). Moreover, it is known that if $\Delta$ is pure and shellable, then its Stanley--Reisner ring is Cohen--Macaulay over an arbitrary field (see, e.g. \cite[Theorem 8.2.6]{hh}). 

\begin{thm}\label{CE of non-increasing}
Suppose that $d\geq 3$. Let $\Delta$ be the simplicial complex on the vertex set $\{x_{1},\ldots, x_{d}, y_{1},\ldots, y_{d}, z\}$ whose facets are
\begin{enumerate}[label=(\alph*)]
\item $\{x_{1},\ldots x_{d}\}$, 
\item $\{y_{1},\ldots, y_{d}\}$, 
\item $\{\{x_{1},\ldots, x_{d}, z\}\setminus\{x_{i}\}\,\,: 1\leq i\leq d\}$, 
\item $\{\{y_{1},\ldots, y_{d}, z\}\setminus\{y_{j}\}\,\,: 1\leq j\leq d\}$, 
\item $\left\{\{x_{i_{1}},\ldots, x_{i_{k}}, y_{j_{1}},\ldots, y_{j_{d-k-1}}, z\}:\, \begin{aligned}[t] &1\leq k\leq d-1,\; 1\leq i_{1}<\cdots<i_{k}\leq d,\\ &\mbox{ and }1\leq j_{1}<\cdots<j_{d-k-1}\leq d \end{aligned} \right\}$. 
\end{enumerate}
Then, we have 
\begin{enumerate}
\item $\Delta$ is pure and shellable, 
\item $S/I_{\Delta}^{(2)}$ is Cohen--Macaulay, 
\item ${\rm depth}(S/I_{\Delta}^{(d+1)})=1\mbox{ and }{\rm depth}(S/I_{\Delta}^{(d+2)})\geq2$, 
\item $S_{2}\mbox{-}{\rm depth}(S/I_{\Delta}^{(d+1)})=1\mbox{ and }S_{2}\mbox{-}{\rm depth}(S/I_{\Delta}^{(d+2)})\geq2$. 
\end{enumerate}
\end{thm}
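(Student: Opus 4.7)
A first observation is that the facets of $\Delta$ are exactly $F_X := \{x_1,\ldots,x_d\}$, $F_Y := \{y_1,\ldots,y_d\}$, and $G_S := S \cup \{z\}$ for every $(d-1)$-subset $S$ of $\{x_1,\ldots,x_d,y_1,\ldots,y_d\}$, so $\Delta$ is pure of dimension $d-1$. For part~(1), I would exhibit an explicit shelling order: start with $F_X$, then list the facets $G_S$ in blocks indexed by the number of $y$-coordinates in $S$ (from $0$ up to $d-1$) with a lexicographic rule inside each block, and end with $F_Y$. The shelling condition is verified by case analysis: relative to an earlier facet $F_j$, the element to remove from the current facet is either $z$ (when $F_j$ does not contain $z$) or a single coordinate of the $(d-1)$-subset that can be traded against a previously shelled subset, with $F_Y$ necessarily following all type (d) facets because those are the only codimension-one facet neighbors of $F_Y$.

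For part~(2), the plan is to apply Takayama's formula (Theorem~\ref{Takayama}) together with the description $\Delta_{\mathbf{a}}(I_\Delta^{(2)}) = \langle F \in \mathcal{F}(\Delta) : \sum_{i \notin F} a_i \leq 1\rangle$ from Lemma~\ref{symbolic power cpx}. The crucial structural feature is that $z$ lies in every $G_S$-facet, so across every link ${\rm link}_\Delta F$ (for $z \notin F$) the vertex $z$ is a universal connector among the $G_S$-derived facets. A case analysis on $\sum a_i$ and on the support of $\mathbf{a}$ reduces $\Delta_{\mathbf{a}}(I_{{\rm link}_\Delta F}^{(2)})$ to either the whole link (Cohen--Macaulay, because links of shellable complexes are shellable), a star (a cone, hence contractible), or a union of stars sharing $z$ as a common vertex (connected, with enough higher connectivity). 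Assembled via Takayama, these vanishings yield $H^i_{\mathfrak{m}}(S/I_\Delta^{(2)}) = 0$ for $i < d$.

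For part~(3), the case $\ell = d+1$ is proved by the explicit choice $a_{x_i} = a_{y_j} = 1$ for all $i,j$ and $a_z = 0$. By Lemma~\ref{symbolic power cpx} the facets in $\Delta_{\mathbf{a}}(I_\Delta^{(d+1)})$ are precisely $F_X$ and $F_Y$, which are disjoint, so $\widetilde{H}_0(\Delta_{\mathbf{a}}) \neq 0$ and Theorem~\ref{Takayama} gives $H^1_{\mathfrak{m}}(S/I_\Delta^{(d+1)}) \neq 0$; combined with ${\rm depth} \geq 1$ from unmixedness, this forces depth $= 1$. For $\ell = d+2$, I would apply Theorem~\ref{1st local coho}. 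The disjoint facet pairs of $\Delta$ are $(F_X, F_Y)$ and, for each type (c) or (d) facet $G_0$, the pair $(F_Y, G_0)$ or $(F_X, G_0)$. Writing $A = \sum a_{x_i}$, $B = \sum a_{y_j}$, $C = a_z$, the symmetric case forces $A = B = d+1-C$ and $A + B + C \geq d+2$, and a counting argument produces a $(d-1)$-subset $S$ containing both an $x_i$ and a $y_j$ with $\sum_{v \in S} a_v \geq A + B - (d+1) = d + 1 - 2C$; taking $F = G = G_S$ verifies the condition. The asymmetric cases are handled analogously, finding a $G_S$ that shares an $x$ (resp.\ $y$) with the first facet and $z$ with the second.

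Part~(4) follows from part~(3) via $H^1_{\mathfrak{m}} \neq 0 \iff K^1 \neq 0 \iff S_2\mbox{-}{\rm depth} \leq 1$ and $H^1_{\mathfrak{m}} = 0 \iff S_2\mbox{-}{\rm depth} \geq 2$ (combined with $S_2\mbox{-}{\rm depth} \geq {\rm depth}$). The principal obstacle will be the combinatorial analysis for the $(d+2)$nd symbolic power in part~(3): verifying Theorem~\ref{1st local coho} across the three distinct disjoint-facet configurations requires showing carefully that the $d$ x-values and $d$ y-values, subject to $A + C = B + C = d+1$, cannot be concentrated in a way that eliminates every connecting $G_S$ from $\Delta_\mathbf{a}(I_\Delta^{(d+2)})$. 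The Cohen--Macaulay verification in part~(2) is also non-trivial, since it is strictly stronger than the $(S_2)$ characterization in \cite[Corollary 3.3]{rty} and requires a link-by-link homology analysis.
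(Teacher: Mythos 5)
Your outline for part (1), for the $\ell=d+1$ half of part (3), and for part (4) coincides with the paper's argument: the same shelling order organised by the number of $x$-coordinates, the same witness vector $\mathbf{a}=(1,\ldots,1,0)$ isolating $\{x_{1},\ldots,x_{d}\}$ from $\{y_{1},\ldots,y_{d}\}$, and the same passage from $H_{\mathfrak{m}}^{1}$ to the $S_{2}$-depth. For the $\ell=d+2$ half of (3) you route through Theorem \ref{1st local coho}, whereas the paper directly proves connectivity of every $\Delta_{\mathbf{a}}(I_{\Delta}^{(d+2)})$ by a case analysis on which of $\{x_{1},\ldots,x_{d}\}$, $\{y_{1},\ldots,y_{d}\}$ survive; your packaging is legitimate, but the ``counting argument'' is not enough as stated. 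When $a_{z}=0$ the $2d$ coordinates sum to $2d+2$, so the $d-1$ largest of them are only guaranteed to sum to $(d-1)(d+1)/d=d-\tfrac{1}{d}$, which falls short of the required $d+1$. The paper closes exactly this gap by sorting the $x$- and $y$-coordinates separately, averaging over each block with an unequal split (e.g.\ the $d/2$ largest $x$'s and the $d/2-1$ largest $y$'s), and then invoking integrality of the partial sums; you would need the same refinement, together with an explicit treatment of the asymmetric disjoint pairs $(\{x_{1},\ldots,x_{d}\},\{y_{1},\ldots,y_{d},z\}\setminus\{y_{j}\})$, which you only gesture at.

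The genuine gap is part (2). The paper never computes local cohomology of $S/I_{\Delta}^{(2)}$: it invokes \cite[Theorem 2.1]{mt1}, which reduces Cohen--Macaulayness of $S/I_{\Delta}^{(2)}$ to Cohen--Macaulayness of $\Delta$ together with that of the subcomplexes $\Delta_{V}$ (facets meeting $V$ in at least $|V|-1$ vertices) for all $2\le|V|\le d$, and then exhibits a shelling of each $\Delta_{V}$, so the entire verification stays at the level of shellability. Your proposed route through Takayama's formula instead requires the vanishing of $\widetilde{H}_{j}\big(\Delta_{\mathbf{a}}(I_{{\rm link}_{\Delta}F}^{(2)})\big)$ for every face $F$, every $\mathbf{a}$, and every $j$ strictly below the top dimension, and your trichotomy ``whole link / a star / a union of stars sharing $z$'' does not exhaust the possibilities: already for $\mathbf{a}$ with three coordinates equal to $1$ the complex $\Delta_{\mathbf{a}}$ consists of the facets containing at least two of the three supporting vertices, which is not a union of vertex stars. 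Moreover, for none of these shapes is ``enough higher connectivity'' automatic --- a union of stars through a common vertex is connected but can carry nontrivial $\widetilde{H}_{j}$ for $j\ge1$ unless the pairwise intersections are controlled via Mayer--Vietoris or a nerve argument. As written, part (2) is asserted rather than proved; either carry out the homology computations for all $\mathbf{a}$ and all links, or switch to the Minh--Trung criterion as the paper does.
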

\begin{proof}
$(1):$ We consider the ordering of facets according to the number of elements among $x_{1}, \ldots, x_{d}$ contained in each facet, with facets containing more of these elements placed first. When two facets contain the same number of such elements, they are ordered in decreasing lexicographic order with respect to $x_{1}>\cdots>x_{d}>z>y_{1}>\cdots>y_{d}$. We denote by $F_{1},\ldots,F_{m}$ the sequence of facets according to this order. Fix $j<i$ and we prove that there exists $u\in F_{i}\setminus F_{j}$ and $k<i$ such that $F_{i}\setminus F_{k}=\{u\}$. To this end, we distinguish the following cases:

\vspace{0.1cm}
{\it Case} I: There exists $p$ such that $F_{i}=\{x_{1},\ldots, x_{d},z\}\setminus\{x_{p}\}$. 

\vspace{0.1cm}The ordering of the facets leads to the following cases:
\begin{enumerate}[label={(I-\textnormal{\roman*})}]
\item $F_{j}=\{x_{1},\ldots, x_{d}\}$, 
\item There exists $q$ with $p<q$ such that $F_{j}=\{x_{1},\ldots, x_{d},z\}\setminus\{x_{q}\}$. 
\end{enumerate}
If condition (I-i) holds, then $F_{i}\setminus F_{j}=\{z\}$, while if condition (I-ii) holds, then $F_{i}\setminus F_{j}=\{x_{q}\}$. In either case, the assertion follows.

\vspace{0.1cm}
{\it Case} II: There exist $x_{s_{1}},\ldots, x_{s_{t}}$ and $y_{r_{1}},\ldots, y_{r_{d-t-1}}$ with $s_{1}<\cdots<s_{t}$ and $r_{1}<\cdots<r_{d-t-1}$ such that $F_{i}=\{x_{s_{1}},\ldots, x_{s_{t}}, y_{r_{1}},\ldots, y_{r_{d-t-1}}, z\}$. 

\vspace{0.1cm}The ordering of the facets leads to the following distinctions:
\begin{enumerate}[label={(II-\textrm{\roman*})}]
\item $F_{j}=\{x_{1},\ldots, x_{d}\}$, 
\item There exists $q$ such that $F_{j}=\{x_{1},\ldots, x_{d},z\}\setminus\{x_{q}\}$. 
\item There exist $x_{s_{1}^{\prime}},\ldots, x_{s_{t^{\prime}}^{\prime}}$ and $y_{r_{1}^{\prime}},\ldots, y_{r_{d-t^{\prime}-1}^{\prime}}$ with $s_{1}^{\prime}<\cdots<s_{t^{\prime}}^{\prime}$ and $r_{1}^{\prime}<\cdots<r_{d-t^{\prime}-1}^{\prime}$ such that $F_{j}=\{x_{s_{1}^{\prime}},\ldots, x_{s_{t^{\prime}}^{\prime}}, y_{r_{1}^{\prime}},\ldots, y_{r_{d-t^{\prime}-1}^{\prime}}, z\}$. 
\end{enumerate}
If either condition (II-i) or (II-ii) is satisfied, then we have $F_{i}\setminus F_{j}\supset\{y_{r_{1}},\ldots, y_{r_{d-k-1}}, z\}$.  By considering a facet $F_{k}$ defined as $\{x_{s_{1}},\ldots, x_{s_{1}}, x_{s_{t+1}}, y_{r_{1}},\ldots, y_{r_{d-k-2}}, z\}$, we have $F_{i}\setminus F_{k}=\{y_{r_{d-k-1}}\}$ and by the definition of a facet ordering, we see that $k<i$. We suppose that the condition (II-iii) is satisfied. If $y_{r_{u}}\in F_{i}\setminus F_{j}$, then we may take $F_{k}$ to be $\{x_{s_{1}},\ldots, x_{s_{t}}, x_{s_{t+1}}, y_{r_{1}},\ldots, y_{r_{d-t-1}}, z\}\setminus\{y_{r_{u}}\}$, where $x_{s_{t+1}}\notin\{x_{s_{1}},\ldots, x_{s_{t}}\}$. Then we have $F_{i}\setminus F_{k}=\{y_{r_{u}}\}$ and by the definition of the ordering of facets, we see that $k<i$. Hence we may assume that $F_{i}\setminus F_{j}\subset\{x_{1},\ldots, x_{d}\}$. Suppose that $x_{s_{v}}\in F_{i}\setminus F_{j}$. If $x_{s_{v}}=x_{1}$, then since $\{y_{r_{1}},\ldots, y_{r_{d-t-1}}\}=\{y_{r_{1}^{\prime}},\ldots, y_{r_{d-t^{\prime}-1}^{\prime}}\}$ and the facets are ordered based on how many of $x_{1},\ldots, x_{s}$ they contain, this case does not arise. Moreover, if $s_{1},\ldots, s_{t}$ are consecutive, then $x_{s_{1}}\neq x_{1}$ since the facets are ordered by lexicographic order induced by $x_{1}>\cdots>x_{d}>z>y_{1}>\cdots>y_{d}$. Therefore it is enough to consider the situation where these cases do not happen. In this case, we may take $F_{k}$ to be $(\{x_{s_{1}},\ldots, x_{s_{t}}, y_{r_{1}},\ldots, y_{r_{d-t-1}}, z\}\setminus\{x_{s_{v}}\})\cup\{x_{s_{w}}\}$, where $s_{w}<s_{v}$ and $x_{s_{w}}\notin\{x_{s_{1}},\ldots, x_{s_{t}}\}$.  

\vspace{0.1cm}
{\it Case} III: There exists $p$ such that $F_{i}=\{y_{1},\ldots, y_{d},z\}\setminus\{y_{p}\}$. 

\vspace{0.1cm}The ordering of the facets leads to the following distinctions:
\begin{enumerate}[label={(III-\textrm{\roman*})}]
\item $F_{j}=\{x_{1},\ldots, x_{d}\}$, 
\item There exists $q$ such that $F_{j}=\{x_{1},\ldots, x_{d},z\}\setminus\{x_{q}\}$. 
\item There exist $x_{s_{1}^{\prime}},\ldots, x_{s_{t^{\prime}}^{\prime}}$ and $y_{r_{1}^{\prime}},\ldots, y_{r_{d-t^{\prime}-1}^{\prime}}$ with $s_{1}^{\prime}<\cdots<s_{t^{\prime}}^{\prime}$ and $r_{1}^{\prime}<\cdots<r_{d-t^{\prime}-1}^{\prime}$ such that $F_{j}=\{x_{s_{1}^{\prime}},\ldots, x_{s_{t^{\prime}}^{\prime}}, y_{r_{1}^{\prime}},\ldots, y_{r_{d-t^{\prime}-1}^{\prime}}, z\}$. 
\end{enumerate}
If either condition (III-i) or (III-ii) is satisfied, then we have $F_{i}\setminus F_{j}=\{y_{1},\ldots, y_{d}, z\}\setminus\{y_{p}\}$.  Take $q$ be an integer with $p\neq q$. By considering a facet $F_{k}$ defined as $\{x_{1}, y_{1},\ldots, y_{d}, z\}\setminus\{y_{p}, y_{q}\}$, we have $F_{i}\setminus F_{k}=\{y_{q}\}$ and by the definition of a facet ordering, we see that $k<i$. We suppose that the condition (III-iii) is satisfied. In this case, we have $F_{i}\setminus F_{j}\subset\{y_{1},\ldots, y_{d}\}$. Let $y_{s_{v}}\in F_{i}\setminus F_{j}$. Then, by considering a facet $F_{k}$ as $\{x_{1}, y_{1},\ldots, y_{d}, z\}\setminus\{y_{p}, y_{s_{v}}\}$, we obtain that $F_{i}\setminus F_{k}=\{y_{s_{v}}\}$ and $k<i$. 

\vspace{0.1cm}
{\it Case} IV: $F_{i}=\{y_{1},\ldots, y_{d}\}$. 

\vspace{0.1cm}The ordering of the facets leads to the following distinctions:
\begin{enumerate}[label={(IV-\textrm{\roman*})}]
\item $F_{j}=\{x_{1},\ldots, x_{d}\}$, 
\item There exists $q$ such that $F_{j}=\{x_{1},\ldots, x_{d},z\}\setminus\{x_{q}\}$. 
\item There exist $x_{s_{1}^{\prime}},\ldots, x_{s_{t^{\prime}}^{\prime}}$ and $y_{r_{1}^{\prime}},\ldots, y_{r_{d-t^{\prime}-1}^{\prime}}$ with $s_{1}^{\prime}<\cdots<s_{t^{\prime}}^{\prime}$ and $r_{1}^{\prime}<\cdots<r_{d-t^{\prime}-1}^{\prime}$ such that $F_{j}=\{x_{s_{1}^{\prime}},\ldots, x_{s_{t^{\prime}}^{\prime}}, y_{r_{1}^{\prime}},\ldots, y_{r_{d-t^{\prime}-1}^{\prime}}, z\}$. 
\item  There exists $p$ such that $F_{j}=\{y_{1},\ldots, y_{d},z\}\setminus\{y_{p}\}$. 
\end{enumerate}
For all cases, we may consider $y_{m}\in F_{i}\setminus F_{j}$. Then, by considering a facet $F_{k}$ as $\{y_{1},\ldots, y_{d}, z\}\setminus\{y_{m}\}$, we obtain that $F_{i}\setminus F_{k}=\{y_{m}\}$ and $k<i$. Considering all the cases above, the proof is complete. 

$(2):$ From \cite[Theorem 2.1]{mt1}, it is known that $S/I_{\Delta}^{(\ell)}$ is Cohen--Macaulay if and only if $\Delta$ is Cohen--Macaulay and $\Delta_{V}$ is Cohen--Macaulay for a subset $V$ of vertices of $\Delta$ with $2\leq|V|\leq d$, where $\Delta_{V}$ is the subcomplex of $\Delta$ whose facets are the facets of $\Delta$ with at least $|V|-1$ vertices in $V$. Hence, by (1), it is enough to prove that $\Delta_{V}$ is Cohen--Macaulay for any subset $V$ of vertices of $\Delta$ with $2\leq|V|\leq d$. We distinguish the following cases:

\vspace{0.1cm}
{\it Case} I: $V\subset\{x_{1},\ldots, x_{d}, z\}$. 

\vspace{0.1cm}In this case, we consider the ordering of facets induced by (1). Fix $j<i$. We now prove that there exists $u\in F_{i}\setminus F_{j}$ and $k<i$ such that $F_{i}\setminus F_{k}=\{u\}$ in $\Delta_{V}$.
If $F_{i}$ is of the form $\{x_{1},\ldots, x_{d}, z\}\setminus\{x_{p}\}$ for some $p$, then $F_{i}\setminus F_{1}=\{z\}$ and $F_{i}\setminus F_{j}=\{x_{q}\}$ for any $2\leq j<i$ and some $q$.  
Suppose that there exist $x_{s_{1}},\ldots, x_{s_{t}}$ and $y_{r_{1}},\ldots, y_{r_{d-t-1}}$ with $s_{1}<\cdots<s_{t}$ and $r_{1}<\cdots<r_{d-t-1}$ such that $F_{i}=\{x_{s_{1}},\ldots, x_{s_{t}}, y_{r_{1}},\ldots, y_{r_{d-t-1}}, z\}$. Suppose that there exists $y_{r_{v}}\in F_{i}\setminus F_{j}$. Then, we may take $F_{k}$ to be $\{x_{s_{1}},\ldots, x_{s_{t}}, x_{s_{t+1}}, y_{r_{1}},\ldots, y_{r_{d-t-1}}, z\}\setminus\{y_{r_{v}}\}$, where $x_{s_{t+1}}\notin\{x_{s_{1}},\ldots, x_{s_{t}}\}$. Since $|F_{k}\cap V|\geq|F_{i}\cap V|\geq|V|-1$, $F_{k}$ is a facet of $\Delta_{V}$. Suppose that $F_{i}\setminus F_{j}\subset\{x_{1},\ldots, x_{d}\}$. Fix $x_{s_{v}}\in F_{i}\setminus F_{j}$. Then we may take $F_{k}$ to be $\{x_{s_{1}},\ldots, x_{s_{t}}, x_{s_{t+1}}, y_{r_{1}},\ldots, y_{r_{d-t-1}}, z\}\setminus\{x_{s_{v}}\}$, where $x_{s_{t+1}}\notin\{x_{s_{1}},\ldots, x_{s_{t}}\}$. If $x_{s_{v}}\notin V$, then $|F_{k}\cap V|=|F_{i}\cap V|\geq |V|-1$ and hence $F_{k}$ is a facet of $\Delta_{V}$. Suppose that $x_{s_{v}}\in V$. Since $|F_{j}\cap V|=|V|-1$, there exists $x_{s_{w}}\in F_{j}\setminus F_{i}$ such that $x_{s_{w}}\in V$, and hence by setting $x_{s+1}$ as $x_{s_{w}}$, we obtain that $F_{k}\in\mathcal{F}(\Delta_{V})$. Therefore, $\Delta_{V}$ is pure and shellable and thus, Cohen--Macaulay.  

\vspace{0.1cm}
{\it Case} II: $V\subset\{y_{1},\ldots, y_{d}, z\}$. 

\vspace{0.1cm}In this case, by symmetry and Case 1, this case follows by considering the ordering of facets with respect to $y_{1}, \ldots, y_{d}$ instead of $x_{1}, \ldots, x_{d}$. 

\vspace{0.1cm}
{\it Case} III: $V\subset\{x_{1},\ldots, x_{d}\}\cup\{y_{1},\ldots, y_{d}\}$ and $V$ does not satisfy Case I and Case II. 

\vspace{0.1cm}Fix $j<i$ and  a subset $V$ of vertices of $\Delta$ with $2\leq|V|\leq d$. First we suppose that $|V\cap\{x_{1},\ldots, x_{d}\}|\geq2$ and $|V\cap\{y_{1},\ldots, y_{d}\}|\geq2$. Then, by the definition of $\Delta_{V}$ and $z\notin V$, we have $\mathcal{F}(\Delta_{V})=\{F\in\mathcal{F}(\Delta)\,\,: V\subset F\}.$ Notice that an elements of $F_{i}\setminus F_{j}$ is not contained in $V$ since $F_{i}$ and $F_{j}$ contain $V$. Hence, if there exists $y_{r_{v}}$ such that $y_{r_{v}}\in F_{i}\setminus F_{j}$, then by considering a facet $F_{k}$ as $\{x_{s_{1}},\ldots, x_{s_{t}}, x_{s_{t+1}}, y_{r_{1}},\ldots, y_{r_{d-t-1}}, z\}\setminus\{y_{r_{v}}\}$, where $x_{s_{t+1}}\notin\{x_{s_{1}},\ldots, x_{s_{t}}\}$, $F_{k}$ is a facet of $\Delta_{V}$, $F_{i}\setminus F_{k}=\{y_{r_{v}}\}$ and $k<i$. Hence we suppose that $F_{i}\setminus F_{j}\subset\{x_{1},\ldots, x_{d}\}$. We suppose that there exists $x_{s_{v}}$ such that $x_{s_{v}}\in F_{i}\setminus F_{j}$. Then, as in case (II-iii) of (1), there exists $x_{s_{w}}$ such that $s_{w}<s_{v}$ and $x_{s_{w}}\notin\{x_{s_{1}},\ldots, x_{s_{t}}\}$, $F_{i}\setminus F_{k}=\{x_{s_{v}}\}$. Hence, by considering $F_{k}$ as $(\{x_{s_{1}},\ldots, x_{s_{t}}, y_{r_{1}},\ldots, y_{r_{d-t-1}}, z\}\setminus\{x_{s_{v}}\})\cup\{x_{s_{w}}\}$, we obtain that $F_{k}$ is a facet of $\Delta_{V}$, $F_{i}\setminus F_{k}=\{x_{s_{v}}\}$ and $k<i$, which implies that $\Delta_{V}$ is pure and shellable, and hence Cohen--Macaulay. Next, we suppose that $|V\cap\{x_{1},\ldots, x_{d}\}|=1$ or $|V\cap\{y_{1},\ldots, y_{d}\}|=1$. In the case that of $|V\cap\{x_{1},\ldots, x_{d}\}|=1$, we have $\mathcal{F}(\Delta_{V})=\{F\in\mathcal{F}(\Delta)\,\,: V\subset F\}\cup\{x_{1},\ldots, x_{d}\}$. Since $F_{i}\setminus F_{1}\subset\{x_{1},\ldots, x_{d}\}$, as in the previous discussion, the assertion follows. In the case that of $|V\cap\{y_{1},\ldots, y_{d}\}|=1$, by symmetry, the result follows by considering the same ordering of facets as in Case II. Therefore, for all cases, $\Delta_{V}$ is pure and shellable, and hence Cohen--Macaulay. 

\vspace{0.1cm}
{\it Case} IV: $z\in V$ and $V$ does not satisfy Case I and Case II. 

\vspace{0.1cm}Fix $j<i$ and  a subset $V$ of vertices of $\Delta$ with $2\leq|V|\leq d$. First, we suppose that $|V\cap\{y_{1},\ldots, y_{d}\}|=1$. If $F_{i}$ is of the form $\{x_{1},\ldots, x_{d}, z\}\setminus\{x_{p}\}$ for some $p$, then $F_{i}\setminus F_{1}=\{z\}$ and $F_{i}\setminus F_{j}=\{x_{q}\}$, where $F_{j}=\{x_{1},\ldots, x_{d}, z\}\setminus\{x_{q}\}$. Hence we suppose that there exist $x_{s_{1}},\ldots, x_{s_{t}}$ and $y_{r_{1}},\ldots, y_{r_{d-t-1}}$ with $s_{1}<\cdots<s_{t}$ and $r_{1}<\cdots<r_{d-t-1}$ such that $F_{i}=\{x_{s_{1}},\ldots, x_{s_{t}}, y_{r_{1}},\ldots, y_{r_{d-t-1}}, z\}$. Then $F_{i}\setminus F_{1}=\{y_{r_{1}},\ldots, y_{r_{d-t-1}}, z\}$. Fix $y_{r_{v}}\in F_{i}\setminus F_{1}$. We consider a facet $F_{k}$ defined as $\{x_{s_{1}},\ldots, x_{s_{t}}, x_{s_{t+1}} y_{r_{1}},\ldots, y_{r_{d-t-1}}, z\}\setminus\{y_{r_{v}}\}$, where $x_{s_{t+1}}\notin\{x_{s_{1}},\ldots, x_{s_{t}}\}$. Then $F_{i}\setminus F_{k}=\{y_{r_{v}}\}$. It is enough to check that $F_{k}$ is a facet of $\Delta_{V}$. If $y_{r_{v}}\notin V$, then we have $|F_{k}\cap V|=|F_{i}\cap V|\geq|V|-1$, and hence $F_{k}\in\mathcal{F}(\Delta_{V})$. Hence we may assume that $y_{r_{v}}\in V$. If $|\{x_{s_{1}},\ldots, x_{s_{t}}\}\cap V|=|V|-1$, then it is clear that $F_{k}\in\mathcal{F}(\Delta_{V})$. If $|\{x_{s_{1}},\ldots, x_{s_{t}}\}\cap V|=|V|-2$, then, by the assumption that $|V\cap\{y_{1},\ldots, y_{d}\}|=1$, there exists $x_{s_{w}}\in V$ such that $x_{s_{w}}\notin\{x_{s_{1}},\ldots, x_{s_{t}}\}$. Therefore, by setting $x_{s_{t+1}}$ as $x_{s_{w}}$, we obtain that $F_{k}\in\mathcal{F}(\Delta_{V})$. If there exists $q$ such that $F_{j}=\{x_{1},\ldots, x_{d}, z\}\setminus\{x_{q}\}$, then there exists $y_{r_{v}}$ such that $y_{r_{v}}\in F_{i}\setminus F_{j}$. Hence, in this case, as in the previous discussion, there exists a facet $F_{k}$ of $\Delta_{V}$ such that $F_{i}\setminus F_{k}=\{y_{r_{v}}\}$ and $k<i$. Suppose that there exist $x_{s_{1}^{\prime}},\ldots, x_{s_{t^{\prime}}^{\prime}}$ and $y_{r_{1}^{\prime}},\ldots, y_{r_{d-t^{\prime}-1}^{\prime}}$ with $s_{1}^{\prime}<\cdots<s_{t^{\prime}}^{\prime}$ and $r_{1}^{\prime}<\cdots<r_{d-t^{\prime}-1}^{\prime}$ such that $F_{j}=\{x_{s_{1}^{\prime}},\ldots, x_{s_{t^{\prime}}^{\prime}}, y_{r_{1}^{\prime}},\ldots, y_{r_{d-t^{\prime}-1}^{\prime}}, z\}$. We suppose that there exists $x_{s_{v}}$ such that $x_{s_{v}}\in F_{i}\setminus F_{j}$. Then we may take $F_{k}$ to be $\{x_{s_{1}},\ldots, x_{s_{t}}, x_{s_{t+1}}, y_{r_{1}},\ldots, y_{r_{d-t-1}}, z\}\setminus\{x_{s_{v}}\}$, where $x_{s_{t+1}}\notin\{x_{s_{1}},\ldots, x_{s_{t}}\}$. If $x_{s_{v}}\notin V$, then since $|F_{k}\cap V|=|F_{i}\cap V|\geq|V|-1$, $F_{k}$ is a facet of $\Delta_{V}$. Also, if $x_{k}\in V$, then $|F_{j}\cap V|=|V|-1$ and hence $F_{k}$ is a facet of $\Delta_{V}$. Therefore, $F_{k}$ is a facet of $\Delta_{V}$ such that $k<i$ and $F_{i}\setminus F_{k}=\{x_{s_{v}}\}$. Moreover, if there exists $y_{r_{v}}$ such that $y_{r_{v}}\in F_{i}\setminus F_{j}$, then by considering $F_{k}$ as $\{x_{s_{1}},\ldots, x_{s_{t}}, x_{s_{t+1}} y_{r_{1}},\ldots, y_{r_{d-t-1}}, z\}\setminus\{y_{r_{v}}\}$, where $x_{s_{t+1}}\notin\{x_{s_{1}},\ldots, x_{s_{t}}\}$. Then $F_{i}\setminus F_{k}=\{y_{r_{v}}\}$. It is enough to check that $F_{k}$ is a facet of $\Delta_{V}$. If $y_{r_{v}}\notin V$, then we have $|F_{k}\cap V|=|F_{i}\cap V|\geq|V|-1$, and hence $F_{k}\in\mathcal{F}(\Delta_{V})$. Hence we may assume that $y_{r_{v}}\in V$. If $|\{x_{s_{1}},\ldots, x_{s_{t}}\}\cap V|=|V|-1$, then it is clear that $F_{k}\in\mathcal{F}(\Delta_{V})$. If $|\{x_{s_{1}},\ldots, x_{s_{t}}\}\cap V|=|V|-2$, then, by the assumption that $|V\cap\{y_{1},\ldots, y_{d}\}|=1$ and $z\in V$, we see that $|F_{k}\cap V|=|F_{i}\setminus\{y_{r_{v}}\}|=|V|-1$. Hence $F_{k}$ is a facet of $\Delta_{V}$. Therefore, $\Delta_{V}$ is pure and shellable, and hence Cohen--Macaulay. Moreover, if $|V\cap\{x_{1},\ldots, x_{d}\}|=1$, then, by symmetry, the result follows by considering the same ordering of facets as in Case II. Finally, we suppose that $|V\cap\{x_{1},\ldots, x_{d}\}|\geq 2$ and $|V\cap\{y_{1},\ldots, y_{d}\}|\geq2$. In this case, since $\{x_{1},\ldots, x_{d}\}$ and $\{y_{1},\ldots, y_{d}\}$ are not facets of $\Delta_{V}$, $\Delta_{V}$ is a cone with apex at the vertex $z$. Notice that ${\rm link}_{\Delta_{V}}F$ is also a cone with apex at the vertex $z$ for any face $F$ of $\Delta_{V}$ with $z\notin F$. Therefore, by Reisner's criterion (\cite[Theorem 1]{r}), it suffices to prove that ${\rm link}_{\Delta_{V}}z$ is Cohen--Macaulay. We may assume that $\{x_{1},\ldots, x_{d}\}\cap V=\{x_{1},\ldots, x_{p}\}$ and $V\cap\{y_{1},\ldots, y_{d}\}=\{y_{1},\ldots, y_{q}\}$, where $p\geq2$ and $q\geq2$. Notice that the $(d-2)$-skeleton $\langle x_{1},\ldots, x_{d}, y_{1},\ldots, y_{d}\rangle^{d-2}$ is a matroid complex. By \cite[Theorem 3.5]{mt1} or \cite[Theorem 2.1]{v0}, $\Bbbk[x_{1},\ldots, x_{d}, y_{1},\ldots, y_{d}]/I_{\langle x_{1},\ldots, x_{d}, y_{1},\ldots, y_{d}\rangle^{d-2}}^{(2)}$ is Cohen--Macaulay and hence by \cite[Theorem 2.1]{mt1}, $(\langle x_{1},\ldots, x_{d}, y_{1},\ldots, y_{d}\rangle^{d-2})_{V^{\prime}}$ is Cohen--Macaulay for any $V^{\prime}\subset\{x_{1}, \ldots, x_{d}, y_{1},\ldots, y_{d}\}$ with $2\leq|V^{\prime}|\leq d-1$. Therefore, it suffices to prove that $${\rm link}_{\Delta_{V}}z=(\langle x_{1},\ldots, x_{d}, y_{1},\ldots, y_{d}\rangle^{d-2})_{\{x_{1},\ldots, x_{p}, y_{1}, \ldots, y_{q}\}}.$$Fix a facet $F$ of ${\rm link}_{\Delta_{V}}z$, then by the definition $\Delta_{V}$ and the assumption that $z\in V$, we have 
\begin{align*}
|F\cap(\{x_{1},\ldots, x_{p}\}\cup\{y_{1},\ldots, y_{q}\})|&=|F\cap V| \\
&\geq|V|-2 \\
&=|\{x_{1},\ldots, x_{p}\}\cup\{y_{1},\ldots, y_{q}\}|-1.
\end{align*}
Therefore, $F$ is a facet of $(\langle x_{1},\ldots, x_{d}, y_{1},\ldots, y_{d}\rangle^{d-2})_{\{x_{1},\ldots, x_{p}, y_{1}, \ldots, y_{q}\}}.$Conversely, take $\{x_{s_{1}},\ldots, x_{s_{t}}, y_{r_{1}},\ldots, y_{r_{d-t-1}}\}\in(\langle x_{1},\ldots, x_{d}, y_{1},\ldots, y_{d}\rangle^{d-2})_{\{x_{1},\ldots, x_{p}, y_{1}, \ldots, y_{q}\}}$. Then, we see that 
$$|\{x_{s_{1}},\ldots, x_{s_{t}}, y_{r_{1}},\ldots, y_{r_{d-t-1}}\}\cap\{x_{1},\ldots, x_{d}, y_{1},\ldots, y_{d}\}|\geq p+q-1=|V|-2.$$Therefore, $\{x_{s_{1}},\ldots, x_{s_{t}}, y_{r_{1}},\ldots, y_{r_{d-t-1}}, z\}$ is a facet of $\Delta_{V}$, as required.  

$(3):$ Let ${\bf a}=(a_{x_{1}},\ldots, a_{x_{d}},a_{y_{1}},\ldots, a_{y_{d}},a_{z})\in\mathbb{N}^{2d+1}$, where $a_{x_{i}}$ (resp., $a_{y_{j}}$, $a_{z}$) is a non-negative integer corresponding to a vertex $x_{i}$ (resp., $y_{j}$,  $z$). We consider $a_{x_{1}}=\cdots=a_{x_{d}}=a_{y_{1}}=\cdots=a_{y_{d}}=1$ and $a_{z}=0$, namely ${\bf a}=(1,\ldots, 1, 0)$. Then, since $\Delta_{\bf a}(I_{\Delta}^{(d+1)})=\langle F\in\mathcal{F}(\Delta)\,\,: \sum_{i\notin F}a_{i}\leq d\rangle$, we see that $\mathcal{F}(\Delta_{\bf a}(I_{\Delta}^{(d+1)}))=\{\{x_{1},\ldots, x_{d}\}, \{y_{1},\ldots, y_{d}\}\}$, and this complex is disconnected. Therefore, by Theorem \ref{Takayama}, we have $H_{\mathfrak{m}}^{1}(S/I_{\Delta}^{(d+1)})\neq 0$, that is, ${\rm depth}(S/I_{\Delta}^{(d+1)})=1$. We now prove ${\rm depth}(S/I_{\Delta}^{(d+2)})\geq2$. To this end, it is enough to show that $\Delta_{\bf a}(I_{\Delta}^{(d+2)})$ is connected for all vector ${\bf a}\in\mathbb{N}^{2d+1}$. We distinguish the following cases:

{\it Case 1}: $\{x_{1},\ldots, x_{d}\}$, $\{y_{1},\ldots, y_{d}\}\in\Delta_{\b a}(I_{\Delta}^{(d+2)})$. 

\vspace{0.1cm}In this case, since $\Delta_{\bf a}(I_{\Delta}^{(d+2)})=\langle F\in\mathcal{F}(\Delta)\,\,: \sum_{i\notin F}a_{i}\leq d+1\rangle$, we have$$a_{y_{1}}+\cdots a_{y{d}}+a_{z}\leq d+1\mbox{ and }a_{x_{1}}+\cdots a_{x_{d}}+a_{z}\leq d+1.$$Also, we may assume that $a_{x_{1}}\leq\cdots\leq a_{x_{d}}$ and $a_{y_{1}}\leq\cdots\leq a_{y_{d}}.$ We distinguish the following cases:

(1-i): Suppose that $d\geq 4$ and $d$ is even.  

\vspace{0.1cm}In this case, we prove that $\{x_{\frac{d}{2}+1},\ldots, x_{d}, y_{\frac{d}{2}+2},\ldots, y_{d}, z\}$ is a facet of $\Delta_{\bf a}(I_{\Delta}^{(d+2)})$. It is enough to prove that $a_{x_{1}}+\cdots+a_{x_{\frac{d}{2}}}+a_{y_{1}}+\cdots+a_{y_{\frac{d}{2}}+1}\leq d+1$. Now, by the assumption that $a_{x_{1}}+\cdots a_{x_{d}}+a_{z}\leq d+1$ and $a_{x_{1}}\leq\cdots\leq a_{x_{d}}$, by considering the average, we have 
$$\dfrac{a_{x_{1}}+\cdots+a_{x_{\frac{d}{2}}}}{\frac{d}{2}}\leq \dfrac{a_{x_{1}}+\cdots+a_{x_{d}}}{d}\leq \dfrac{d+1}{d}.$$
Therefore, we obtain that $a_{x_{1}}+\cdots+a_{x_{\frac{d}{2}}}\leq\frac{d}{2}\times\frac{d+1}{d}=\frac{d+1}{2}$. Similarly, by considering the average, we have 
$$\dfrac{a_{y_{1}}+\cdots+a_{y_{\frac{d}{2}+1}}}{\frac{d}{2}}\leq \dfrac{a_{y_{1}}+\cdots+a_{y_{d}}}{d}\leq\dfrac{d+1}{d}.$$Therefore, we obtain that $a_{y_{1}}+\cdots+a_{y_{\frac{d}{2}+1}}\leq(\frac{d}{2}+1)\frac{d+1}{d}=\frac{d}{2}+1+\frac{d+2}{2d}$. Now since $d\geq 4$, $\frac{d+2}{2d}<1$. As a consequence, it follows that $a_{x_{\frac{d}{2}+1}}+\cdots+a_{x_{\frac{d}{2}}}+a_{y_{1}}+\cdots+a_{y_{\frac{d}{2}+1}}\leq d+1$, and hence $\{x_{\frac{d}{2}+1},\ldots, x_{d}, y_{\frac{d}{2}+2},\ldots, y_{d}, z\}$ is a facet of $\Delta_{\bf a}(I_{\Delta}^{(d+2)})$.

(1-ii): Suppose that $d\geq 3$ and $d$ is odd.  

\vspace{0.1cm}In this case, we prove that $\{x_{\frac{d+3}{2}},\ldots, x_{d}, y_{\frac{d+1}{2}},\ldots, y_{d}, z\}$ is a facet of $\Delta_{\bf a}(I_{\Delta}^{(d+2)})$. It is enough to prove that $a_{x_{1}}+\cdots+a_{x_{\frac{d+1}{2}}}+a_{y_{1}}+\cdots+a_{y_{\frac{d+1}{2}}}\leq d+1$. By the assumption that $a_{x_{1}}+\cdots a_{x_{d}}+a_{z}\leq d+1$ and $a_{x_{1}}\leq\cdots\leq a_{x_{d}}$, by considering the average, we have 
$$\dfrac{a_{x_{1}}+\cdots+a_{x_{\frac{d+1}{2}}}}{\frac{d+1}{2}}\leq \dfrac{a_{x_{1}}+\cdots+a_{x_{d}}}{d}\leq \dfrac{d+1}{d}.$$Therefore, we obtain that $a_{x_{1}}+\cdots+a_{x_{\frac{d+1}{2}}}\leq\frac{d+1}{2}\times\frac{d+1}{d}=\frac{d+1}{2}+\frac{d+1}{2d}$. Now since $d\geq 3$, $\frac{d+1}{2d}<1$. Hence $a_{x_{1}}+\cdots+a_{x_{\frac{d+1}{2}}}\leq\frac{d+1}{2}$. Similarly, we obtain $a_{y_{1}}+\cdots+a_{y_{\frac{d+1}{2}}}\leq\frac{d+1}{2}$. As a consequence, it follows that $a_{x_{1}}+\cdots+a_{x_{\frac{d+1}{2}}}+a_{y_{1}}+\cdots+a_{y_{\frac{d+1}{2}}}\leq d+1$, and hence $\{x_{\frac{d+3}{2}},\ldots, x_{d}, y_{\frac{d+1}{2}},\ldots, y_{d}, z\}$ is a facet of $\Delta_{\bf a}(I_{\Delta}^{(d+2)})$.

{\it Case 2}: $\{x_{1},\ldots, x_{d}\}\in\Delta_{\b a}(I_{\Delta}^{(d+2)})$ and $\{y_{1},\ldots, y_{d}\}\notin\Delta_{\b a}(I_{\Delta}^{(d+2)})$. 

\vspace{0.1cm}In this case, if there is no facet of the form (5) in $\Delta_{\bf a}(I_{\Delta}^{(d+2)})$, then $\Delta_{\bf a}(I_{\Delta}^{(d+2)})$ is clearly connected. Hence, we suppose that there exists a facet $\{y_{1},\ldots, y_{d}, z\}\setminus\{y_{i}\}$ of $\Delta_{\bf a}(I_{\Delta}^{(d+2)})$. Then since $\Delta_{\bf a}(I_{\Delta}^{(d+2)})=\langle F\in\mathcal{F}(\Delta)\,\,: \sum_{i\notin F}a_{i}\leq d+1\rangle$, we have $$a_{y_{1}}+\cdots+a_{y_{d}}+a_{z}\leq d+1\mbox{ and }a_{x_{1}}+\cdots+a_{x_{d}}+a_{y_{i}}\leq d+1.$$Also, we may assume that $a_{x_{1}}\leq\cdots\leq a_{x_{d}}$ and $a_{y_{1}}\leq\cdots\leq a_{y_{d}}.$ We now prove $\{x_{d-i+1},\ldots, x_{d}, y_{i+1},\ldots, y_{d}, z\}$ is a facet of $\Delta_{a}(I_{\Delta}^{(d+2)})$. By considering the average, we have 
$$\dfrac{a_{x_{1}}+\cdots+a_{x_{d-i}}}{d-i}\leq\dfrac{a_{x_{1}}+\cdots+a_{x_{d}}}{d}\leq\dfrac{d+1}{d}.$$Hence we obtain that $a_{y_{1}}+\cdots+a_{y_{d-i}}\leq\frac{d+1}{d}(d-i)=d-i+1-\frac{i}{d}$. Similarly, we obtain $a_{y_{1}}+\cdots+a_{y_{i}}\leq i+\frac{i}{d}$. As a consequence, it follows that $$a_{x_{1}}+\cdots+a_{x_{d-i}}+a_{y_{1}}+\cdots+a_{y_{i}}\leq\left(i+\frac{i}{d}\right)+\left(d-i+1-\frac{i}{d}\right)=d+1,$$which implies that $\{x_{d-i+1},\ldots, x_{d}, y_{i+1},\ldots, y_{d}, z\}$ is a facet of $\Delta_{a}(I_{\Delta}^{(d+2)})$. 

{\it Case 3}: $\{x_{1},\ldots, x_{d}\}\notin\Delta_{\b a}(I_{\Delta}^{(d+2)})$ and $\{y_{1},\ldots, y_{d}\}\in\Delta_{\b a}(I_{\Delta}^{(d+2)})$. 

\vspace{0.1cm}Similarly to Case 2, we can prove $\{x_{i+1},\ldots, x_{d}, y_{d-i+1},\ldots, y_{d}\}$ is a facet of $\Delta_{\b a}(I_{\Delta}^{(d+2)})$. 

$(4):$ From $(3)$, we have $H_{\mathfrak{m}}^{1}(S/I_{\Delta}^{(d)})\neq0$. Then, since $\dim K_{S/I_{\Delta}^{(d)}}^{1}=0$, we see that $S_{2}\mbox{-}{\rm depth}(S/I_{\Delta}^{(d)})=1$. Moreover, we obtain  that $$S_{2}\mbox{-}{\rm depth}(S/I_{\Delta}^{(d+2)})\geq{\rm depth}(S/I_{\Delta}^{(d+2)})\geq2,$$as required. 
\end{proof}

We prove that the Serre depth for $(S_{2})$ and the depth on the symbolic powers satisfy a non-increasing property when the dimension of a simplicial complex is one.
 
\begin{thm}\label{non-increasing for dim1}
Let $\Delta$ be a simplicial complex with $\dim \Delta=1$ and $\ell\geq1$. Then we have $${\rm depth}(S/I_{\Delta}^{(\ell)})\geq{\rm depth}(S/I_{\Delta}^{(\ell+1)}).$$Moreover, if $\Delta$ is pure, then we have$$S_{2}\mbox{-}{\rm depth}(S/I_{\Delta}^{(\ell)})\geq S_{2}\mbox{-}{\rm depth}(S/I_{\Delta}^{(\ell+1)}).$$
\end{thm}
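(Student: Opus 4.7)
First, since $\dim S/I_{\Delta}^{(\ell)}=2$ whenever $\dim\Delta=1$, and since $\mathrm{Ass}(S/I_{\Delta}^{(\ell)})=\mathrm{Min}(I_{\Delta})$ for symbolic powers of squarefree monomial ideals, we have ${\rm depth}(S/I_{\Delta}^{(\ell)})\in\{1,2\}$; so the desired inequality is equivalent to the implication $H^{1}_{\mathfrak{m}}(S/I_{\Delta}^{(\ell+1)})=0\Rightarrow H^{1}_{\mathfrak{m}}(S/I_{\Delta}^{(\ell)})=0$. If $\Delta$ has an isolated vertex $v$, then ${\rm link}_{\Delta}v=\{\emptyset\}$, and Theorem \ref{Takayama} applied with $F=\{v\}$ and $\mathbf{a}=0$ yields a nonzero contribution $\widetilde{H}_{-1}(\{\emptyset\})\cong\Bbbk$ to $H^{1}_{\mathfrak{m}}(S/I_{\Delta}^{(\ell)})$ for every $\ell$; so the depth is identically $1$ and the inequality is trivial. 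I may therefore assume $\Delta$ is pure, and invoke Theorem \ref{1st local coho} to rephrase $H^{1}_{\mathfrak{m}}(S/I_{\Delta}^{(\ell)})=0$ as its combinatorial condition $(2)$ at level $\ell$; the task becomes showing that $(2)$ at level $\ell+1$ implies $(2)$ at level $\ell$.

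Fix disjoint facets $F_{0}=\{f_{1},f_{2}\}$ and $G_{0}=\{g_{1},g_{2}\}$ of $\Delta$ and a vector $\mathbf{a}$ with $|\mathbf{a}|\geq\ell$ and $\sum_{r\notin F_{0}}a_{r}=\sum_{r\notin G_{0}}a_{r}=\ell-1$; I seek facets $F,G\in\mathcal{F}(\Delta_{\mathbf{a}}(I_{\Delta}^{(\ell)}))$ with $F\cap F_{0}$, $G\cap G_{0}$, and $F\cap G$ all nonempty. The argument splits on whether all four cross pairs $\{f_{\alpha},g_{\beta}\}$ are edges of $\Delta$. If yes, then from $a_{f_{1}}+a_{f_{2}}=a_{g_{1}}+a_{g_{2}}=|\mathbf{a}|-\ell+1$ one computes $\sum_{\alpha,\beta}(a_{f_{\alpha}}+a_{g_{\beta}})=4(|\mathbf{a}|-\ell+1)$, so by averaging some cross pair satisfies $a_{f_{\alpha}}+a_{g_{\beta}}\geq|\mathbf{a}|-\ell+1$ and is thus a facet of $\Delta_{\mathbf{a}}(I_{\Delta}^{(\ell)})$; taking $F=G=\{f_{\alpha},g_{\beta}\}$ works. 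Otherwise, after relabeling we may assume $\{f_{1},g_{1}\}\notin\Delta$; set $\mathbf{a}'=\mathbf{a}+e_{f_{1}}+e_{g_{1}}$, which satisfies $|\mathbf{a}'|\geq\ell+1$ and $\sum_{r\notin F_{0}}a'_{r}=\sum_{r\notin G_{0}}a'_{r}=\ell$, so the hypothesis at $\ell+1$ produces $F,G\in\mathcal{F}(\Delta_{\mathbf{a}'}(I_{\Delta}^{(\ell+1)}))$ with the desired intersection conditions. Since $\{f_{1},g_{1}\}\notin\Delta$ forces $F,G\neq\{f_{1},g_{1}\}$, each of $F,G$ contains at most one of $f_{1},g_{1}$, so
$$\sum_{r\notin F}a_{r}=\sum_{r\notin F}a'_{r}-|\{f_{1},g_{1}\}\setminus F|\leq\ell-1,$$
placing $F$ (and similarly $G$) in $\mathcal{F}(\Delta_{\mathbf{a}}(I_{\Delta}^{(\ell)}))$, completing the first inequality.

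For the $S_{2}$-depth statement, when $\Delta$ is pure of dimension $1$ the module $S/I_{\Delta}^{(\ell)}$ is unmixed of dimension $2$, and on such modules Serre's condition $(S_{2})$ coincides with Cohen--Macaulayness; hence $S_{2}\mbox{-}{\rm depth}(S/I_{\Delta}^{(\ell)})={\rm depth}(S/I_{\Delta}^{(\ell)})$, and the second inequality reduces to the first. The main technical obstacle is the weight-bump step in the second case: it is precisely the non-edge hypothesis $\{f_{1},g_{1}\}\notin\Delta$ that guarantees $|\{f_{1},g_{1}\}\setminus F|\geq 1$ for any candidate facet $F$ of $\Delta_{\mathbf{a}'}(I_{\Delta}^{(\ell+1)})$, thereby transferring the combinatorial connectivity data from level $\ell+1$ back to level $\ell$.
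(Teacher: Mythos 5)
Your argument is, at its core, the paper's own proof run in the contrapositive: the paper shows directly that a disconnecting pair $F_{0},G_{0}$ for some $\Delta_{\mathbf a}(I_{\Delta}^{(\ell)})$ survives to level $\ell+1$, splitting into exactly your two cases --- if some cross pair $\{x_{i},y_{j}\}$ is a non-face, bump $\mathbf a$ by $e_{x_{i}}+e_{y_{j}}$; if every cross pair is an edge, a minimum (in your version, averaging) argument produces a connecting edge and hence a contradiction. Your weight-bump computation, the verification that the facets produced at level $\ell+1$ descend from $\mathcal F(\Delta_{\mathbf a'}(I_{\Delta}^{(\ell+1)}))$ to $\mathcal F(\Delta_{\mathbf a}(I_{\Delta}^{(\ell)}))$, the reduction of the $S_{2}$-statement to the depth statement via $S_{2}\mbox{-}{\rm depth}={\rm depth}$ for unmixed two-dimensional modules, and the explicit disposal of the non-pure case through isolated vertices (a point the paper's proof glosses over) are all correct.

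The one genuine defect is the case $\ell=1$. You route the argument through Theorem \ref{1st local coho}, which is stated (and is only true) for $\ell\geq 2$: at level $1$ its condition (2) is vacuous, because $\sum_{i\notin F_{0}}a_{i}=\sum_{i\notin G_{0}}a_{i}=0$ together with $F_{0}\cap G_{0}=\emptyset$ forces $\mathbf a=\mathbf 0$, contradicting $|\mathbf a|\geq 1$; yet $H^{1}_{\mathfrak m}(S/I_{\Delta})$ certainly need not vanish. So for $\ell=1$ your argument establishes only the vacuous implication that condition (2) at level $2$ implies condition (2) at level $1$, and says nothing about ${\rm depth}(S/I_{\Delta})\geq{\rm depth}(S/I_{\Delta}^{(2)})$. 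The repair is immediate: if $\Delta$ is disconnected then $\Delta_{\mathbf 0}(I_{\Delta}^{(m)})=\Delta$ is disconnected for every $m$, so every depth equals $1$; if $\Delta$ is connected then ${\rm depth}(S/I_{\Delta})=2$ is already maximal. Please add this base case, or replace the appeal to Theorem \ref{1st local coho} by the direct characterization, valid for all $\ell$, that $H^{1}_{\mathfrak m}(S/I_{\Delta}^{(\ell)})\neq 0$ if and only if some $\Delta_{\mathbf a}(I_{\Delta}^{(\ell)})$ is disconnected; with that substitution your two-case analysis goes through unchanged for all $\ell\geq 1$.
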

\begin{proof}
By the assumption that $\dim \Delta=1$, it is enough to prove that ${\rm depth}(S/I_{\Delta}^{(\ell)})=1$ implies ${\rm depth}(S/I_{\Delta}^{(\ell+1)})=1$. We suppose that ${\rm depth}(S/I_{\Delta}^{(\ell)})=1$. Since $H_{\mathfrak{m}}^{1}(S/I_{\Delta}^{(\ell)})\neq0$, there exists a vector ${\bf a}\in\mathbb{N}^{n}$ such that $\Delta_{\bf a}$ is disconnected. Then there exist facets $F_{0}, G_{0}$ of $\Delta_{\bf a}$ with $F_{0}\cap G_{0}=\emptyset$ such that there are no facets $F, G$ of $\Delta$ satisfying $F_{0}\cap F\neq \emptyset$, $G_{0}\cap G\neq\emptyset$ and $F\cap G\neq\emptyset$. Then we can write $F_{0}=\{x_{1}, x_{2}\}$ and $G_{0}=\{y_{1}, y_{2}\}$ since $\dim \Delta=1$. Fix a vector ${\bf a}=(a_{x_{1}}, a_{x_{2}}, a_{y_{1}}, a_{y_{2}}, a_{z_{1}},\ldots, a_{z_{n-4}})$. We distinguish the following cases:

\vspace{0.1cm}
{\it Case 1}: There exist $i, j$ such that $\{x_{i}, y_{j}\}\notin\Delta$. 

\vspace{0.1cm}We may assume that $i=1$ and $j=1$. Set $${\bf a}^{\prime}=(a_{x_{1}}+1, a_{x_{2}}, a_{y_{1}}+1, a_{y_{2}}, a_{z_{1}},\ldots, a_{z_{n-4}}).$$Then, since $\Delta_{{\bf a}^{\prime}}(I_{\Delta}^{(\ell+1)})=\langle F\in\mathcal{F}(\Delta)\,\,: \sum_{i\notin F}a_{i}^{\prime}\leq\ell\rangle$, $F_{0}$ and $G_{0}$ is a facet of $\Delta_{{\bf a}^{\prime}}(I_{\Delta}^{(\ell+1)})$ and $\Delta_{{\bf a}^{\prime}}(I_{\Delta}^{(\ell)})=\langle F\in\mathcal{F}(\Delta_{a}(I_{\Delta}^{(\ell+1)}))\,\,: x_{1}\in F\mbox{ or }y_{1}\in F\rangle$, that is, $\Delta_{{\bf a}^{\prime}}(I_{\Delta}^{(\ell+1)})$ is a subcomplex of $\Delta_{\bf a}(I_{\Delta}^{(\ell)})$ which contains $F_{0}$ and $G_{0}$ as facets. Therefore, by the assumption that $\Delta_{\bf a}(I_{\Delta}^{(\ell)})$ is disconnected by $F_{0}$ and $G_{0}$, $\Delta_{{\bf a}^{\prime}}(I_{\Delta}^{(\ell+1)})$ is disconnected, and hence we have ${\rm depth}(S/I_{\Delta}^{(\ell+1)})=1$. 

\vspace{0.1cm}
{\it Case 2}: For any $i$ and $j$, $\{x_{i}, y_{j}\}\in\Delta$.  

\vspace{0.1cm}In this case, since $F_{0}, G_{0}\in\Delta_{\bf a}(I_{\Delta}^{(\ell)})=\langle F\in\Delta\,\,: \sum_{i\notin F}a_{i}\leq\ell-1\rangle$, we have $$a_{y_{1}}+a_{y_{2}}+\sum_{1\leq i\leq n-4}a_{z_{i}}\leq\ell-1\mbox{ and }a_{x_{1}}+a_{x_{2}}+\sum_{1\leq i\leq n-4}a_{z_{i}}\leq\ell-1.$$We may assume that $a_{x_{1}}\leq a_{x_{2}}$ and $a_{y_{1}}\leq a_{y_{2}}$. By the above inequalities, we have $a_{x_{1}}+a_{y_{1}}+\sum_{1\leq i\leq n-4}a_{z_{i}}\leq\ell-1$. Then since $\{x_{2}, y_{2}\}$ is a facet of $\Delta$, one can see that $\{x_{2}, y_{2}\}$ is also a facet of $\Delta_{\bf a}(I_{\Delta}^{(\ell)})$, which contradicts to the fact that $F_{0}$ and $G_{0}$ are disconnected in $\Delta_{\bf a}(I_{\Delta}^{(\ell)})$. 

Finally, we prove the desired inequality for the Serre depth. If $S_{2}\mbox{-}{\rm depth}(S/I_{\Delta}^{(\ell)})=2$, then since $\dim\Delta=1$, the desired inequality clearly holds. Hence we may assume that $S_{2}\mbox{-}{\rm depth}(S/I_{\Delta}^{(\ell)})=1$. Then $S_{2}\mbox{-}{\rm depth}(S/I_{\Delta}^{(\ell)})\geq{\rm depth}(S/I_{\Delta}^{(\ell)})$, in general, we obtain that $S_{2}\mbox{-}{\rm depth}(S/I_{\Delta}^{(\ell)})={\rm depth}(S/I_{\Delta}^{(\ell)})=1$ for all $\ell\geq1$, as required. 
\end{proof}

As a corollary, we give a classification of the Serre depth for $(S_{2})$ and the depth on the symbolic powers of Stanley--Reisner ideals of one-dimensional simplicial complexes.

\begin{cor}\label{classify of the depth}
Let $\Delta$ be a simplicial complex with $\dim \Delta=1$. Then the sequence of the depth on symbolic powers $({\rm depth}(S/I_{\Delta}^{(\ell)}))_{\ell\geq1}$ is as follows: 
$$({\rm depth}(S/I_{\Delta}^{(\ell)}))_{\ell\geq1}=
\begin{cases}
(2,2,2,\ldots) & \mbox{ if }\Delta\mbox{ is matroid}, \\
(2,2,1,\ldots) & \mbox{ if }{\rm diam}\Delta\leq2\mbox{ and }\Delta\mbox{ is not matroid}, \\
(2,1,1,\ldots) & \mbox{ if }3\leq{\rm diam}\Delta<\infty, \\
(1,1,1,\ldots) & \mbox{ if }{\rm diam}\Delta=\infty. 
\end{cases}
$$Moreover, if $\Delta$ is pure, then we have $(S_{2}\mbox{-}{\rm depth}(S/I_{\Delta}^{(\ell)}))_{\ell\geq1}=({\rm depth}(S/I_{\Delta}^{(\ell)}))_{\ell\geq1}$. 
\end{cor}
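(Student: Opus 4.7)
The plan is to determine ${\rm depth}(S/I_{\Delta}^{(\ell)})$ for $\ell \in \{1,2,3\}$ in each of the four cases of the corollary and then to propagate the value $1$ to all larger $\ell$ via the non-increasing property of Theorem \ref{non-increasing for dim1}. Since $\dim S/I_{\Delta}^{(\ell)} = 2$ and the associated primes of $S/I_{\Delta}^{(\ell)}$ are the minimal primes of $I_{\Delta}$ (hence distinct from $\mathfrak{m}$), one always has ${\rm depth}(S/I_{\Delta}^{(\ell)}) \in \{1, 2\}$, so this propagation is enough.

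Three inputs are already at hand. First, ${\rm depth}(S/I_{\Delta}) = 2$ iff $\Bbbk[\Delta]$ is Cohen--Macaulay iff $\Delta$ is connected, i.e.\ ${\rm diam}\,\Delta < \infty$. Second, ${\rm depth}(S/I_{\Delta}^{(2)}) \ge 2$ iff ${\rm diam}\,\Delta \le 2$, by \cite[Theorem 3.2]{rty}. Third, $S/I_{\Delta}^{(\ell)}$ is Cohen--Macaulay for every $\ell \ge 1$ iff $\Delta$ is a matroid, by \cite[Theorem 3.5]{mt1}. Together these give the full sequence in the matroid case, the ${\rm diam} \ge 3$ case, and the disconnected case, and they pin down the first two entries in the remaining case (${\rm diam}\,\Delta \le 2$, not a matroid).

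The nontrivial step is to verify ${\rm depth}(S/I_{\Delta}^{(3)}) = 1$ when ${\rm diam}\,\Delta \le 2$ and $\Delta$ is not a matroid. By Theorem \ref{Takayama}, it suffices to exhibit $\mathbf{a} \in \mathbb{N}^{n}$ such that $\Delta_{\mathbf{a}}(I_{\Delta}^{(3)})$ is disconnected. Since $\Delta$ is not a matroid, there exist vertices $u, v, w$ with $\{u,v\}$ the unique edge in the induced subcomplex on $\{u,v,w\}$, so $\{u,w\}, \{v,w\} \notin E(\Delta)$. Because ${\rm diam}\,\Delta \le 2$, the non-edge $\{u,w\}$ has a common neighbour $x$; the hypothesis $\{v,w\} \notin E(\Delta)$ forces $x \ne v$, so $x \notin \{u,v,w\}$. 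I would set $a_{u} = a_{v} = 1$, $a_{w} = 2$, and all other entries of $\mathbf{a}$ equal to $0$, and use Lemma \ref{symbolic power cpx} to enumerate the facets of $\Delta_{\mathbf{a}}(I_{\Delta}^{(3)})$ via the inequality $\sum_{i \notin F} a_{i} \le 2$. A brief case analysis according to whether $F$ contains $u$, $v$, or $w$ shows that the surviving facets are exactly $\{u,v\}$ together with all edges $\{w, z\}$ for $z$ a neighbour of $w$; the edge $\{w, x\}$ belongs to this second family and is disjoint from $\{u,v\}$, producing the desired disconnection. This verification is the main combinatorial obstacle of the argument.

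For the ``Moreover'' clause, when $\Delta$ is pure of dimension $1$, the module $S/I_{\Delta}^{(\ell)}$ is unmixed of dimension $2$. Hence $S_{2}\mbox{-}{\rm depth}(S/I_{\Delta}^{(\ell)}) = 2$ iff $S/I_{\Delta}^{(\ell)}$ satisfies $(S_{2})$ iff it is Cohen--Macaulay iff ${\rm depth}(S/I_{\Delta}^{(\ell)}) = 2$; combined with the general bounds ${\rm depth} \le S_{2}\mbox{-}{\rm depth} \le \dim = 2$, the two invariants coincide in each case.
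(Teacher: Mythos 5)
Your proposal is correct, and the overall skeleton (pin down the first entries, then propagate the value $1$ via Theorem \ref{non-increasing for dim1}, using that ${\rm depth}\in\{1,2\}$ since $\mathfrak m\notin{\rm Ass}(S/I_\Delta^{(\ell)})$) matches the paper's. Where you diverge is the one genuinely nontrivial step, namely showing ${\rm depth}(S/I_\Delta^{(3)})=1$ when ${\rm diam}\,\Delta\le2$ and $\Delta$ is not a matroid. The paper's proof passes over this silently: it only cites \cite[Theorem 3.5]{mt1} / \cite[Theorem 2.1]{v0} for the matroid case and \cite[Theorem 3.2]{rty} for the second entry, and the intended (unstated) input for the third entry is the known converse that Cohen--Macaulayness of $I_\Delta^{(\ell)}$ for some $\ell\ge3$ forces $\Delta$ to be a matroid (Minh--Trung/Terai--Trung/Varbaro). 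You instead make this step self-contained: from the non-matroid hypothesis you extract a triple $u,v,w$ inducing exactly one edge $\{u,v\}$ (which is exactly the failure of purity of $\Delta_W$ for a $3$-set, so the reduction is valid), and the weight vector $a_u=a_v=1$, $a_w=2$ does produce, via Lemma \ref{symbolic power cpx}, the facet set $\{u,v\}\cup\{\{w,z\}:z\in N(w)\}$ of $\Delta_{\bf a}(I_\Delta^{(3)})$, which is disconnected because $u,v\notin N(w)$ and $N(w)\ne\emptyset$ by connectedness; Theorem \ref{Takayama} then gives $H^1_{\mathfrak m}\ne0$. I checked the facet enumeration and it is right. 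Your route costs a short combinatorial verification but avoids importing the deep classification theorem, and as a by-product it exhibits an explicit witness for the depth drop; the paper's route is shorter on the page but rests on a citation it does not actually make explicit. The ``Moreover'' clause is handled the same way in both: for an unmixed $2$-dimensional module, $(S_2)$ is equivalent to Cohen--Macaulayness, and $1\le{\rm depth}\le S_2\mbox{-}{\rm depth}\le2$ forces the two sequences to agree.
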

\begin{proof}
From \cite[Theorem 3.5]{mt1} or \cite[Theorem 2.1]{v0}, if $\Delta$ is matroid, $S/I_{\Delta}^{(\ell)}$ is Cohen--Macaulay for all $\ell\geq1$ and hence $({\rm depth}(S/I_{\Delta}^{(\ell)}))_{\ell\geq1}=(2,2,2,\ldots)$.  Also, from \cite[Theorem 3.2]{rty}, it is known that ${\rm depth}(S/I_{\Delta}^{(2)})\geq2$ if and only if ${\rm diam}\Delta\leq2$. Hence by Proposition \ref{non-increasing for dim1}, we have $({\rm depth}(S/I_{\Delta}^{(\ell)}))_{\ell\geq1}=(2,2,1,\ldots)$. Moreover, if ${\rm diam}\Delta<\infty$, that is, $\Delta$ is connected, then $\Delta$ is Cohen--Macaulay and hence $({\rm depth}(S/I_{\Delta}^{(\ell)}))_{\ell\geq1}=(2,1,1,\ldots)$, which completes the proof. Moreover, if $\Delta$ is pure, then, by the latter part of the proof in Theorem \ref{non-increasing for dim1}, the assertion follows. 
\end{proof}

Moreover, we prove that a non-increasing property holds for the Serre depth for $(S_{2})$ and the depth, if the number of vertices of a simplicial complex is less than or equal to 5. 

\begin{cor}\label{cor of dim1 case}
Let $\Delta$ be a simplicial complex on the vertex set $[n]$. If $n\leq 5$, then we have$${\rm depth}(S/I_{\Delta}^{(\ell)})\geq{\rm depth}(S/I_{\Delta}^{(\ell+1)})\mbox{ for all }\ell\geq1.$$Moreover, if $\Delta$ is pure, then we have$$S_{2}\mbox{-}{\rm depth}(S/I_{\Delta}^{(\ell)})\geq S_{2}\mbox{-}{\rm depth}(S/I_{\Delta}^{(\ell+1)}).$$
\end{cor}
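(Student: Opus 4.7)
The plan is to reduce the claim to the one-dimensional case already handled in Theorem \ref{non-increasing for dim1} by stripping cone vertices, and then to dispose of the remaining cone-free cases on few vertices by a finite combinatorial check.

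First, if $\dim\Delta\le 1$, both inequalities are immediate from Theorem \ref{non-increasing for dim1} (the pure case of that theorem gives the Serre $S_{2}$-depth assertion). The case $\dim\Delta=0$ is trivial since $\dim S/I_{\Delta}=1$ forces ${\rm depth}(S/I_{\Delta}^{(\ell)})=1$ for all $\ell$, and similarly $S_{2}\mbox{-}{\rm depth}(S/I_{\Delta}^{(\ell)})=1$ in the pure case. So assume $\dim\Delta\ge 2$ and $n\le 5$.

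Next I would strip cone vertices. If $v\in[n]$ is a cone vertex of $\Delta$, write $\Delta=v*\Gamma$ with $\Gamma=\mathrm{link}_{\Delta}v$, a complex on at most $n-1\le 4$ vertices. Since $x_{v}$ appears in no minimal non-face of $\Delta$, one has $I_{\Delta}^{(\ell)}=I_{\Gamma}^{(\ell)}\cdot S$, and consequently
\[
S/I_{\Delta}^{(\ell)}\;\cong\;\bigl(\Bbbk[V(\Gamma)]/I_{\Gamma}^{(\ell)}\bigr)[x_{v}].
\]
Both ${\rm depth}$ and $S_{2}\mbox{-}{\rm depth}$ increase by exactly $1$ under extension by a polynomial variable (the Serre depth case follows from the definition via the identification $K^{j}_{M[x_{v}]}\cong K^{j-1}_{M}[x_{v}](-1)$, which comes from flat base change and the shift in $\omega$). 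Since $\Gamma$ is pure whenever $\Delta$ is, the desired non-increasing inequalities for $\Delta$ follow from the corresponding ones for $\Gamma$, and we may argue by induction on $n$. Iterating, we may assume $\Delta$ is cone-free.

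Under the remaining hypotheses ($\dim\Delta\ge 2$, $n\le 5$, $\Delta$ cone-free), a short inspection shows that up to relabeling $\Delta$ belongs to a small finite list: the complete $2$-skeleton of the $3$-simplex (the only cone-free case when $n\le 4$, $\dim\ge 2$), the complete $3$-skeleton of the $4$-simplex (the only cone-free case when $n=5$, $\dim=3$), and a finite list of $2$-dimensional complexes on $5$ vertices (such as the complete $2$-skeleton of the $4$-simplex, chains of triangles like $\langle\{1,2,3\},\{2,3,4\},\{3,4,5\}\rangle$, bipyramid-like configurations, and a few intermediate cases). The first two are matroid complexes, so by \cite[Theorem 3.5]{mt1} every $S/I_{\Delta}^{(\ell)}$ is Cohen--Macaulay and both depth and $S_{2}$-depth are constant in $\ell$. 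For each complex in the remaining $n=5$, $\dim\Delta=2$ list, I would verify non-increasingness by taking a vector $\mathbf{a}$ witnessing the smallest $i$ with $H^{i}_{\mathfrak{m}}(S/I_{\Delta}^{(\ell)})\ne 0$ and, using the explicit combinatorial description of $\Delta_{\mathbf{a}}(I_{\Delta}^{(\ell)})$ from Lemma \ref{symbolic power cpx}, constructing a vector $\mathbf{a}'$ that realizes the same non-vanishing for $I_{\Delta}^{(\ell+1)}$, in the spirit of Case 1 of the proof of Theorem \ref{non-increasing for dim1}. The Serre $S_{2}$-depth statement then follows in parallel via Proposition \ref{S_2-depth alg} and Theorem \ref{1st local coho}.

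The main obstacle is this last sub-case ($n=5$, $\dim\Delta=2$, cone-free): no single uniform structural property covers every such complex, so the verification is ultimately a case-by-case enumeration rather than a uniform argument. However, the smallness of $n$ bounds both the list of isomorphism types and the set of relevant vectors $\mathbf{a}$, keeping the check finite and tractable.
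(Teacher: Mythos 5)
Your overall strategy (strip cone vertices, reduce to Theorem \ref{non-increasing for dim1} when $\dim\Delta\le 1$, then enumerate the remaining complexes on few vertices) diverges from the paper's argument, and the divergence leaves a genuine gap precisely in the case you yourself flag as the obstacle. The sub-case $n=5$, $\dim\Delta=2$ is not actually proved: you propose to take, for each complex in a list you do not fully write down, a witnessing vector $\mathbf{a}$ for $H^{i}_{\mathfrak{m}}(S/I_{\Delta}^{(\ell)})\neq 0$ and to build $\mathbf{a}'$ witnessing the same for $\ell+1$ ``in the spirit of Case 1'' of Theorem \ref{non-increasing for dim1}. But there is no uniform such construction --- Example \ref{CE2} and Theorem \ref{CE of non-increasing} show it must fail once $n\ge 6$ --- so each case needs its own argument, and the check is not finite in the way you claim: both $\ell$ and the witnesses $\mathbf{a}\in\mathbb{N}^{5}$ are unbounded, so ``smallness of $n$'' does not by itself make the verification terminate. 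The enumeration is also incomplete as stated: the depth assertion has no purity hypothesis, so non-pure two-dimensional complexes on $\le 5$ vertices (e.g.\ $\langle\{1,2,3\},\{4,5\}\rangle$ or $\langle\{1,2,3\},\{3,4\},\{4,5\}\rangle$) must be included, and already for $n\le 4$ the boundary of the $3$-simplex is not the only cone-free complex of dimension $\ge 2$ once non-pure complexes are allowed.

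The paper avoids all of this with one observation: since $\dim S/I_{\Delta}+\operatorname{ht}I_{\Delta}=n\le 5$, either $\dim S/I_{\Delta}\le 2$ or $\operatorname{ht}I_{\Delta}\le 2$. The first alternative is Theorem \ref{non-increasing for dim1} (your easy case). The second alternative --- which contains exactly your hard sub-case $n=5$, $\dim\Delta=2$ --- is the height-two situation, where non-increasingness of the depth is the theorem of Hoa--Kimura--Terai--Trung \cite[Theorem 3.2]{hktt} and non-increasingness of the $S_{2}$-depth is Proposition \ref{non-increasing of cover ideals} (height-two unmixed squarefree monomial ideals are cover ideals of graphs). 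If you want to salvage your approach, replacing the proposed enumeration by this height dichotomy is the missing idea; as written, the proposal does not constitute a proof.
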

\begin{proof}
In this case, we have $\dim S/I_{\Delta}^{(\ell)}\leq2$ or ${\rm ht}(I_{\Delta}^{(\ell)})\leq2$, and hence, by Theorem \ref{non-increasing for dim1}, Proposition \ref{non-increasing of cover ideals} and \cite[Theorem 3.2]{hktt}, the assertion follows. 
\end{proof}

If $n\geq6$ in the situation of Corollary \ref{cor of dim1 case}, a non-increasing property of the Serre depth for $(S_{2})$ and the depth does not necessarily hold. The following example illustrates this.

\begin{ex}\label{CE2}
Let $\Delta$ be a simplicial complex on the vertex set $\{x_{1},\ldots, x_{6}\}$ whose facets are $\{x_{1}, x_{2}, x_{3}\},$ $\{x_{1}, x_{2}, x_{4}\},$ $\{x_{1}, x_{3}, x_{4}\},$ $\{x_{2}, x_{3}, x_{4}\},$ $\{x_{1}, x_{4}, x_{5}\},$ $\{x_{2}, x_{4}, x_{5}\},$ $\{x_{3}, x_{4}, x_{5}\},$ $\{x_{4}, x_{5}, x_{6}\}.$ By using computer calculating, we see that 
\begin{enumerate}
\item ${\rm depth}(S/I_{\Delta}^{(\ell)})=2$ and $S_{2}\mbox{-}{\rm depth}(S/I_{\Delta}^{(\ell)})\geq2$ for $1\leq \ell\leq 6$
\item ${\rm depth}(S/I_{\Delta}^{(7)})=1$ and $S_{2}\mbox{-}{\rm depth}(S/I_{\Delta}^{(7)})=1$
\item ${\rm depth}(S/I_{\Delta}^{(8)})=2$ and $S_{2}\mbox{-}{\rm depth}(S/I_{\Delta}^{(8)})\geq2$
\end{enumerate}
\end{ex}

We prove that once the depth becomes 1 for some symbolic power, it remains 1 periodically for higher symbolic powers. Moreover, the following proposition cannot, in general, be obtained by applying \cite[Theorem 2.7]{nt}.
\begin{prop}
Let $\Delta$ be a pure simplicial complex with $\dim \Delta=d-1$ and $\ell\geq1$. If ${\rm depth}(S/I_{\Delta}^{(\ell)})=1$, then we have ${\rm depth}(S/I_{\Delta}^{(\ell+d)})=1$. 
\end{prop}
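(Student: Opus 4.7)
Since $\Delta$ is pure, the quotient $S/I_\Delta^{(\ell)}$ is unmixed of positive Krull dimension, so ${\rm depth}(S/I_\Delta^{(\ell)}) \ge 1$, and the equality ${\rm depth}(S/I_\Delta^{(\ell)}) = 1$ is equivalent to $H^1_{\mathfrak{m}}(S/I_\Delta^{(\ell)}) \ne 0$. By Theorem \ref{Takayama}, there exists a vector ${\bf a} \in \mathbb{N}^n$ for which $\Delta_{\bf a}(I_\Delta^{(\ell)})$ is disconnected. By Lemma \ref{symbolic power cpx}, $\Delta_{\bf a}(I_\Delta^{(\ell)})$ is pure of dimension $d-1$, so we can select two of its facets $F_0, G_0$ lying in distinct connected components; necessarily $F_0 \cap G_0 = \emptyset$.

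Our strategy is to produce an explicit vector ${\bf a}' \in \mathbb{N}^n$ for which $\Delta_{{\bf a}'}(I_\Delta^{(\ell+d)})$ is again disconnected. Define ${\bf a}' = (a_1', \ldots, a_n')$ by $a_i' := a_i + 1$ if $i \in F_0 \cup G_0$ and $a_i' := a_i$ otherwise. Since $F_0 \cap G_0 = \emptyset$ we have $|F_0 \cup G_0| = 2d$, and since every facet $F$ of $\Delta$ satisfies $|F| = d$, we obtain the uniform bound $|(F_0 \cup G_0) \setminus F| \ge 2d - d = d$, with equality precisely when $F \subseteq F_0 \cup G_0$ (in particular for $F = F_0$ and $F = G_0$). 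A direct computation yields $\sum_{i \notin F_0} a'_i = \sum_{i \notin F_0} a_i + d \le (\ell-1) + d$, and analogously for $G_0$, so both $F_0$ and $G_0$ are facets of $\Delta_{{\bf a}'}(I_\Delta^{(\ell+d)})$ by Lemma \ref{symbolic power cpx}.

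Conversely, for any facet $F$ of $\Delta$ the inequality $\sum_{i \notin F} a'_i \le \ell + d - 1$ rewrites as $\sum_{i \notin F} a_i \le \ell - 1 - (|(F_0 \cup G_0) \setminus F| - d)$, and the bound $|(F_0 \cup G_0) \setminus F| \ge d$ forces $\sum_{i \notin F} a_i \le \ell - 1$. Thus every facet of $\Delta_{{\bf a}'}(I_\Delta^{(\ell+d)})$ is already a facet of $\Delta_{\bf a}(I_\Delta^{(\ell)})$. Since $\Delta_{{\bf a}'}(I_\Delta^{(\ell+d)})$ is pure, any path of pairwise-intersecting facets in it connecting $F_0$ to $G_0$ would descend to such a path in $\Delta_{\bf a}(I_\Delta^{(\ell)})$, contradicting the assumed disconnectedness. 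Hence $\Delta_{{\bf a}'}(I_\Delta^{(\ell+d)})$ is disconnected, so $H^1_{\mathfrak{m}}(S/I_\Delta^{(\ell+d)}) \ne 0$ by Theorem \ref{Takayama}, and unmixedness yields ${\rm depth}(S/I_\Delta^{(\ell+d)}) = 1$. The only non-routine step is the choice of shift vector supported on $F_0 \cup G_0$, engineered so that the shift induced on every quantity $\sum_{i \notin F} a_i$ is at least $d$; we foresee no substantial obstacle beyond verifying this bookkeeping.
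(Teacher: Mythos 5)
Your proposal is correct and follows essentially the same route as the paper: both pass to a disconnected $\Delta_{\bf a}(I_{\Delta}^{(\ell)})$ via Takayama's formula, increment ${\bf a}$ by $1$ on the $2d$ vertices of $F_{0}\cup G_{0}$, and use purity to show the resulting $\Delta_{{\bf a}'}(I_{\Delta}^{(\ell+d)})$ is a subcomplex of $\Delta_{\bf a}(I_{\Delta}^{(\ell)})$ still containing $F_{0}$ and $G_{0}$, hence disconnected. The bookkeeping bound $|(F_{0}\cup G_{0})\setminus F|\geq d$ you isolate is exactly the inequality the paper uses.
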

\begin{proof}
Suppose that ${\rm depth}(S/I_{\Delta}^{(\ell)})=1$. Then, since $H_{\mathfrak{m}}^{1}(S/I_{\Delta}^{(\ell)})\neq0$, there exists a vector ${\bf a}\in\mathbb{N}^{n}$ such that $\Delta_{\bf a}(I_{\Delta}^{(\ell)})$ is disconnected by the facets $F_{0}$ and $G_{0}$. We write $F_{0}=\{x_{1},\ldots, x_{d}\}, G_{0}=\{y_{1},\ldots, y_{d}\}$ and $V(\Delta)\setminus\{F_{0}\cup G_{0}\}=\{z_{1},\ldots, z_{n-2d}\}$, where $V(\Delta)$ is the vertex set of $\Delta$. Also, by relabeling, we may write $${\bf a}=(a_{x_{1}},\ldots, a_{x_{d}}, a_{y_{1}},\ldots, a_{y_{d}}, a_{z_{1}},\ldots, a_{y_{n-2d}}),$$where $a_{x_{i}}$ (resp., $a_{y_{j}}$, $a_{z_{k}}$) is a non-negative integer corresponding to a vertex $x_{i}$ (resp., $y_{j}$,  $z_{k}$). Moreover, we set $${\bf a}^{\prime}=(a_{x_{1}}+1,\ldots, a_{x_{d}}+1, a_{y_{1}}+1,\ldots, a_{y_{d}}+1, a_{z_{1}},\ldots, a_{y_{n-2d}}).$$We now prove that $\Delta_{{\bf a}^{\prime}}(I_{\Delta}^{(\ell+d)})$ is disconnected. Since $F_{0}$ is a facet of $\Delta_{\bf a}(I_{\Delta}^{(\ell)})$, we have $\sum_{1\leq j\leq d}a_{y_{j}}+\sum_{1\leq k\leq n-2d}a_{z_{k}}\leq \ell-1$ and hence we obtain that$$\displaystyle\sum_{i\notin F_{0}}a_{i}^{\prime}=\displaystyle\sum_{1\leq j\leq d}(a_{y_{j}}+1)+\displaystyle\sum_{1\leq k\leq n-2d}a_{z_{k}}\leq \ell+d-1.$$ It follows that $F_{0}$ is a facet of $\Delta_{{\bf a}^{\prime}}(I_{\Delta}^{(\ell+d)})$. The same argument holds for $G_{0}$ as well. Fix a facet $H$ of $\Delta_{{\bf a}^{\prime}}(I_{\Delta}^{(\ell+d)})$. Then, by Lemma \ref{symbolic power cpx}, we have $\sum_{i\notin H}a_{i}^{\prime}\leq\ell+d-1$ and hence $\sum_{i\notin H}a_{i}\leq\left(\sum_{i\notin H}a_{i}^{\prime}\right)-d\leq(\ell+d-1)=\ell-1.$ It follows that $H$ is a facet of $\Delta_{\bf a}(I_{\Delta}^{(\ell)})$. Therefore, since $\Delta_{{\bf a}^{\prime}}(I_{\Delta}^{(\ell+d)})$ is a subcomplex of 
$\Delta_{{\bf a}^{\prime}}(I_{\Delta}^{(\ell)})$ and $\Delta_{\bf a}(I_{\Delta}^{(\ell)})$ is disconnected by $F_{0}$ and $G_{0}$, we see that $\Delta_{{\bf a}^{\prime}}(I_{\Delta}^{(\ell+d)})$ is disconnected, as required. 
\end{proof}

Also, we prove that for certain special simplicial complex, once the depth of a symbolic power reaches 1, it remains 1 for all higher powers.

\begin{prop}\label{special simplicial complex}
Let $\Delta$ be a pure simplicial complex with $\dim \Delta = d-1$. Suppose that $f_{1}<2\binom{d}{2}+d^2$, where $f_{1}$ is the number of 1-dimensional faces of $\Delta$. If ${\rm depth}(S/I_{\Delta}^{(\ell)})=1$ for some $\ell\geq1$, then we have ${\rm depth}(S/I_{\Delta}^{(\ell+1)})=1$. 
\end{prop}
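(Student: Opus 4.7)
The plan is to mimic the perturbation argument of the immediately preceding proposition, but use the hypothesis $f_{1} < 2\binom{d}{2} + d^{2}$ to save exactly one unit in the sum, which is precisely what is needed to transition from $I_{\Delta}^{(\ell)}$ to $I_{\Delta}^{(\ell+1)}$ rather than from $I_{\Delta}^{(\ell)}$ to $I_{\Delta}^{(\ell+d)}$. By Theorem \ref{Takayama}, the assumption ${\rm depth}(S/I_{\Delta}^{(\ell)})=1$ produces a vector ${\bf a} \in \mathbb{N}^{n}$ such that $\Delta_{\bf a}(I_{\Delta}^{(\ell)})$ is disconnected. Pick facets $F_{0}$ and $G_{0}$ of $\Delta$ that serve as facets of this subcomplex and lie in distinct components, and write $F_{0}=\{x_{1},\ldots,x_{d}\}$ and $G_{0}=\{y_{1},\ldots,y_{d}\}$.

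The first key step is a counting observation. Since $F_{0},G_{0} \in \Delta$, all $\binom{d}{2}$ edges inside $F_{0}$ and all $\binom{d}{2}$ edges inside $G_{0}$ belong to $\Delta$. If every cross edge $\{x_{i},y_{j}\}$ were also in $\Delta$, then $f_{1} \geq 2\binom{d}{2} + d^{2}$, contradicting the hypothesis. Hence, after relabelling, we may assume $\{x_{1},y_{1}\} \notin \Delta$.

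Now define ${\bf a}' = {\bf a} + {\bf e}_{x_{1}} + {\bf e}_{y_{1}}$. Using Lemma \ref{symbolic power cpx}, one verifies directly that $F_{0}$ and $G_{0}$ remain facets of $\Delta_{{\bf a}'}(I_{\Delta}^{(\ell+1)})$: each omits exactly one of $x_{1},y_{1}$, so $\sum_{i \notin F_{0}} a'_{i} \leq (\ell-1)+1 = \ell$, and likewise for $G_{0}$. Conversely, I will argue that every facet $H$ of $\Delta_{{\bf a}'}(I_{\Delta}^{(\ell+1)})$ is also a facet of $\Delta_{\bf a}(I_{\Delta}^{(\ell)})$. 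The crucial case $\{x_{1},y_{1}\} \subseteq H$ is excluded because $\{x_{1},y_{1}\} \notin \Delta$ whereas $H \in \Delta$. In the two remaining cases ($H$ contains zero or one of $x_{1},y_{1}$), a short arithmetic check yields $\sum_{i \notin H} a_{i} \leq \ell - 1$, so $H$ is indeed a facet of $\Delta_{\bf a}(I_{\Delta}^{(\ell)})$.

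Consequently $\Delta_{{\bf a}'}(I_{\Delta}^{(\ell+1)})$ is a subcomplex of $\Delta_{\bf a}(I_{\Delta}^{(\ell)})$ whose facets all belong to facets of the larger complex, and it still contains $F_{0}$ and $G_{0}$ sitting in vertex-disjoint connected components of $\Delta_{\bf a}(I_{\Delta}^{(\ell)})$. Therefore $\Delta_{{\bf a}'}(I_{\Delta}^{(\ell+1)})$ is disconnected, and Theorem \ref{Takayama} yields $H^{1}_{\mathfrak{m}}(S/I_{\Delta}^{(\ell+1)}) \neq 0$, hence ${\rm depth}(S/I_{\Delta}^{(\ell+1)}) \leq 1$; equality follows from unmixedness of $I_{\Delta}^{(\ell+1)}$. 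The only step that I expect to require genuine thought is the missing-edge pigeonhole; once that is in hand the rest is essentially the perturbation technique already used in the preceding proposition, only applied with the more economical choice ${\bf e}_{x_{1}} + {\bf e}_{y_{1}}$ in place of $\sum_{v \in F_{0}\cup G_{0}} {\bf e}_{v}$.
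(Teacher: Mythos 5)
Your proof is correct and follows essentially the same route as the paper's: both use Theorem \ref{Takayama} to get a disconnecting vector ${\bf a}$, use the bound $f_{1}<2\binom{d}{2}+d^{2}$ as a pigeonhole to find a non-edge between the two separated facets, and then perturb ${\bf a}$ by adding $1$ at the two endpoints of that non-edge so that $\Delta_{{\bf a}'}(I_{\Delta}^{(\ell+1)})$ becomes a disconnected subcomplex of $\Delta_{\bf a}(I_{\Delta}^{(\ell)})$ still containing both facets. No substantive differences.
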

\begin{proof}
Let $\{x_{1},\ldots, x_{n}\}$ be the vertices of $\Delta$. Suppose that ${\rm depth}(S/I_{\Delta}^{(\ell)})=1$ for some $\ell\geq1$. Since $H_{\mathfrak{m}}^{1}(S/I_{\Delta}^{(\ell)})\neq0$, by Theorem \ref{Takayama}, there exists a vector ${\bf a}=(a_{1},\ldots, a_{n})\in\mathbb{N}^{n}$ such that $\Delta_{\bf a}(I_{\Delta}^{(\ell)})$ is disconnected. Hence, there exist facets $F$ and $G$ of $\Delta_{\bf a}(I_{\Delta}^{(\ell)})$ are disconnected. Since $f_{1}<2\binom{d}{2}+d^2$, there exist $x_{i}\in F$ and $x_{j}\in G$ such that $\{x_{i}, x_{j}\}\notin\Delta$. We now prove that $\Delta_{{\bf a}^{\prime}}(I_{\Delta}^{(\ell+1)})$ is disconnected by $F$ and $G$, where ${\bf a}^{\prime}=(a_{1}^{\prime},\ldots, a_{n}^{\prime})$ is defined by $a_{i}^{\prime}=a_{i}+1, a_{j}^{\prime}=a_{j}+1$ and $a_{k}^{\prime}=a_{k}$ for any $k\neq i, j$. By Lemma \ref{symbolic power cpx}, $\Delta_{{\bf a}^{\prime}}(I_{\Delta}^{(\ell+1)})$ is the subcomplex of $\Delta_{\bf a}(I_{\Delta}^{(\ell)})$ whose facets do not contain both $x_{i}$ and $x_{j}$. In particular, $F$ and $G$ are facets of $\Delta_{{\bf a}^{\prime}}(I_{\Delta}^{(\ell+1)})$, and by the assumption that $F$ and $G$ are disconnected in $\Delta_{\bf a}(I_{\Delta}^{(\ell+1)})$, the desired result follows.  
\end{proof}

Moreover, based on \cite[Theorem 3.6]{sf3}, we discuss the stability of the Serre depth for $(S_{2})$ on the symbolic powers of Stanley–Reisner ideals. To this end, we prepare the following lemmas: 

\begin{lemma}\label{lemma1}
Let $\Delta$ be a pure simplicial complex. Suppose that $m$ and $k$ are positive integers. Then for every integer $j$ with $m-k\leq j\leq m$, we have 
$$S_{2}\mbox{-}{\rm depth}(S/I_{\Delta}^{(m)})\geq S_{2}\mbox{-}{\rm depth}(S/I_{\Delta}^{(km+j)})$$
\end{lemma}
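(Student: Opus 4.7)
The plan is to reduce the inequality to a question about disconnectedness of certain Takayama-type subcomplexes using the combinatorial characterization of the Serre depth for $(S_2)$ given in Proposition \ref{S_2-depth comb}, and then transport a disconnection witness at level $m$ to one at level $km+j$ by a simple rescaling of the weight vector.

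More concretely, setting $s = S_{2}\mbox{-}{\rm depth}(S/I_{\Delta}^{(m)})$, I would first use Proposition \ref{S_2-depth comb} to fix a face $F \in \Delta$ with $|F| = s-1$ and a vector ${\bf a} \in \mathbb{N}^{V({\rm link}_{\Delta}F)}$ such that $\Delta_{{\bf a}}(I_{{\rm link}_{\Delta}F}^{(m)})$ is disconnected. The proposed witness at level $km+j$ is the same face $F$ together with the rescaled vector ${\bf a}' = (k+1){\bf a}$. By Lemma \ref{symbolic power cpx},
\[
\Delta_{{\bf a}'}(I_{{\rm link}_{\Delta}F}^{(km+j)}) = \langle\, G \in \mathcal{F}({\rm link}_{\Delta}F) : \textstyle\sum_{i \notin G}(k+1) a_i \leq km+j-1 \,\rangle,
\]
which, since the $a_i$ are non-negative integers, equals $\langle G : \sum_{i \notin G} a_i \leq \lfloor (km+j-1)/(k+1) \rfloor \rangle$.

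The technical heart of the argument is the verification that $\lfloor (km+j-1)/(k+1) \rfloor = m-1$ exactly throughout the range $m-k \leq j \leq m$. The bound $j \geq m-k$ gives $(km+j-1)/(k+1) \geq (k+1)(m-1)/(k+1) = m-1$, while $j \leq m$ gives $(km+j-1)/(k+1) \leq m - 1/(k+1) < m$, so the floor is forced to be $m-1$ throughout the range. Consequently $\Delta_{{\bf a}'}(I_{{\rm link}_{\Delta}F}^{(km+j)}) = \Delta_{{\bf a}}(I_{{\rm link}_{\Delta}F}^{(m)})$, which is disconnected, and another application of Proposition \ref{S_2-depth comb} yields $S_{2}\mbox{-}{\rm depth}(S/I_{\Delta}^{(km+j)}) \leq |F|+1 = s$, as desired.

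The main obstacle is identifying the correct scaling factor. The naive attempt ${\bf a}' = k{\bf a}$ forces the floor at level $km+j$ to equal $m-1$ only for $1 \leq j \leq k$, which does not match the hypothesized range. Rescaling by $k+1$ instead precisely aligns the interval of valid $j$ with the hypothesis $m-k \leq j \leq m$; the hypothesis on $j$ in the lemma is exactly the range in which this transport argument succeeds, and once the right scaling is spotted the rest is a routine arithmetic check.
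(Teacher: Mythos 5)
Your proof is correct, but it takes a genuinely different route from the paper's. The paper argues through Proposition \ref{S_2-depth alg}: it picks a face $F$ with $|F|=s-1$ and $H_{\mathfrak m}^{1}(\Bbbk[V(\mathrm{link}_{\Delta}F)]/I_{\mathrm{link}_{\Delta}F}^{(m)})\neq 0$, i.e.\ $\mathrm{depth}=1$ for the link at level $m$, and then invokes the external depth-stability theorem of Monta\~no and N\'u\~nez-Betancourt \cite[Theorem 3.4]{mn} to conclude that the depth of the link's quotient remains $1$ at level $km+j$, whence the first local cohomology is again nonzero and Proposition \ref{S_2-depth alg} closes the argument. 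You instead work with the equivalent disconnectedness characterization (Proposition \ref{S_2-depth comb}) and transport the witness yourself: the rescaling ${\bf a}\mapsto(k+1){\bf a}$ together with Lemma \ref{symbolic power cpx} and the floor computation $\lfloor (km+j-1)/(k+1)\rfloor=m-1$ (valid precisely for $m-k\le j\le m$, since the lower bound is equivalent to $(k+1)(m-1)\le km+j-1$ and the upper bound to $km+j-1<(k+1)m$) shows that $\Delta_{(k+1){\bf a}}(I_{\mathrm{link}_{\Delta}F}^{(km+j)})$ is literally the same complex as $\Delta_{{\bf a}}(I_{\mathrm{link}_{\Delta}F}^{(m)})$, hence still disconnected. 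The arithmetic checks out, and the equivalence $\sum_{i\notin G}(k+1)a_i\le km+j-1\iff\sum_{i\notin G}a_i\le m-1$ is correct because the $a_i$ are integers. What your approach buys is self-containedness and transparency: it makes visible exactly why the range of $j$ is what it is, and it avoids the citation to \cite{mn} entirely (your rescaling is essentially an explicit combinatorial shadow of their splitting argument in this special case). What the paper's approach buys is brevity and generality: by routing through the known depth result it handles the first local cohomology in one line and would extend verbatim to any analogous statement covered by that theorem.
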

\begin{proof}
By Proposition \ref{S_2-depth alg}, we may assume that $S_{2}\mbox{-}{\rm depth}(S/I_{\Delta}^{(m)})=|F|+1$ for some $F\in\Delta$ such that $H_{\mathfrak{m}}^{1}(\Bbbk[V({\rm link}_{\Delta}F)]/I_{{\rm link}_{\Delta}F}^{(m)})\neq0$, that is, we have $${\rm depth}(\Bbbk[V({\rm link}_{\Delta}F)]/I_{{\rm link}_{\Delta}F}^{(m)})=1.$$ From \cite[Theorem 3.4]{mn}, we see that ${\rm depth}(\Bbbk[V({\rm link}_{\Delta}F)]/I_{{\rm link}_{\Delta}F}^{(km+j)})=1$. Hence, we obtain that $H_{\mathfrak{m}}^{1}(\Bbbk[V({\rm link}_{\Delta}F)]/I_{{\rm link}_{\Delta}F}^{(km+j)})\neq0$. Therefore, again by \cite[Proposition 4.10]{mt}, we have $S_{2}\mbox{-}{\rm depth}(S/I_{\Delta}^{(m)})=|F|+1\geq S_{2}\mbox{-}{\rm depth}(S/I_{\Delta}^{(km+j)})$.
\end{proof}

\begin{lemma}\label{lemma2}
For a  pure simplicial complex $\Delta$ and $m,k\geq1$, we have 
$$S_{2}\mbox{-}{\rm depth}(S/I_{\Delta}^{(m)})\geq S_{2}\mbox{-}{\rm depth}(S/I_{\Delta}^{(mk)}).$$
\end{lemma}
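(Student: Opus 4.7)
The plan is to reduce Lemma~\ref{lemma2} directly to Lemma~\ref{lemma1} by specializing the parameters. If $k = 1$, then $mk = m$ and the asserted inequality is an equality, so there is nothing to prove. For $k \geq 2$, the integer $k-1$ is still positive, so I apply Lemma~\ref{lemma1} with its parameter $k$ replaced by $k-1$ and with the choice $j := m$.

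The admissibility condition in Lemma~\ref{lemma1}, namely $m - (k-1) \leq j \leq m$, then reads $m - k + 1 \leq m \leq m$, which is trivially satisfied. The conclusion of Lemma~\ref{lemma1} with these parameters becomes
$$
S_{2}\mbox{-}{\rm depth}(S/I_{\Delta}^{(m)}) \;\geq\; S_{2}\mbox{-}{\rm depth}(S/I_{\Delta}^{((k-1)m + m)}) \;=\; S_{2}\mbox{-}{\rm depth}(S/I_{\Delta}^{(mk)}),
$$
which is precisely the desired inequality.

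The genuine content is therefore already carried by Lemma~\ref{lemma1}, which in turn rests on the combinatorial description in Proposition~\ref{S_2-depth alg} together with the Minh--Nhi periodicity result \cite[Theorem 3.4]{mn} governing the stabilization of depth equal to one on the symbolic powers of a squarefree monomial ideal. Once that machinery is granted, Lemma~\ref{lemma2} requires no additional work beyond the correct choice of parameters, and I do not expect any substantive obstacle. If one prefers to avoid the intermediate formulation, essentially the same argument can be given in one step: pick a face $F \in \Delta$ attaining the minimum in Proposition~\ref{S_2-depth alg} at level $m$, so that ${\rm depth}(\Bbbk[V({\rm link}_{\Delta}F)]/I_{{\rm link}_{\Delta}F}^{(m)}) = 1$; transfer depth one to the $mk$-th symbolic power of $I_{{\rm link}_{\Delta}F}$ via \cite[Theorem 3.4]{mn}; and then apply Proposition~\ref{S_2-depth alg} at level $mk$ to conclude $S_{2}\mbox{-}{\rm depth}(S/I_{\Delta}^{(mk)}) \leq |F| + 1 = S_{2}\mbox{-}{\rm depth}(S/I_{\Delta}^{(m)})$.
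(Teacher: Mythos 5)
Your proposal is correct and is essentially the paper's own proof: the authors likewise deduce the lemma from Lemma \ref{lemma1} via $S_{2}\mbox{-}{\rm depth}(S/I_{\Delta}^{(m)})\geq S_{2}\mbox{-}{\rm depth}(S/I_{\Delta}^{((k-1)m+m)})=S_{2}\mbox{-}{\rm depth}(S/I_{\Delta}^{(mk)})$, and your explicit treatment of the trivial case $k=1$ (where $k-1$ would fail to be a positive integer) is a small but welcome refinement.
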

\begin{proof}
By Lemma \ref{lemma1}, we see that 
$$S_{2}\mbox{-}{\rm depth}(S/I_{\Delta}^{(m)})\geq S_{2}\mbox{-}{\rm depth}(S/I_{\Delta}^{((k-1)m+m)})=S_{2}\mbox{-}{\rm depth}(S/I_{\Delta}^{(mk)}).$$
\end{proof}

\begin{thm}\label{convergent}
For a pure simplicial complex $\Delta$, the sequence $\{S_{2}\mbox{-}{\rm depth}(S/I_{\Delta}^{(\ell)})\}_{\ell\geq1}^{\infty}$ is convergent. Moreover, we have $$\min_{\ell}\{S_{2}\mbox{-}{\rm depth}(S/I_{\Delta}^{(\ell)})\}=\lim_{\ell\rightarrow\infty}S_{2}\mbox{-}{\rm depth}(S/I_{\Delta}^{(\ell)}).$$
\end{thm}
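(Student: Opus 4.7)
The approach is to show that the sequence $a_\ell := S_{2}\mbox{-}{\rm depth}(S/I_\Delta^{(\ell)})$ is eventually constant, and equal to $m^* := \min_{\ell\geq 1} a_\ell$. This minimum exists because $\sqrt{I_\Delta^{(\ell)}} = I_\Delta$ makes $\dim(S/I_\Delta^{(\ell)})$ independent of $\ell$, so $a_\ell$ lies in the finite set $\{0,1,\ldots,\dim(S/I_\Delta)\}$; choose an index $m_0$ with $a_{m_0} = m^*$. By definition one already has $a_\ell \geq m^*$ for every $\ell$, so the only content to establish is the reverse inequality $a_\ell \leq m^*$ for all sufficiently large $\ell$, which forces the sequence to be eventually constant equal to $m^*$ and delivers both conclusions simultaneously.

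The essential tool will be Lemma \ref{lemma1} applied with $m = m_0$. For every positive integer $k$ and every integer $j$ with $m_0 - k \leq j \leq m_0$, this yields $a_{k m_0 + j} \leq a_{m_0} = m^*$. Therefore, for each $k \geq 1$, the inequality $a_\ell \leq m^*$ holds on the block of $k+1$ consecutive integers
\[
I_k \;:=\; \{(k+1)m_0 - k,\; (k+1)m_0 - k + 1,\; \ldots,\; (k+1)m_0\}.
\]
The remaining step is a routine gap-closing check, which is the main (though elementary) bookkeeping point. The left endpoint of $I_{k+1}$ equals $(k+2)m_0 - (k+1)$ while the right endpoint of $I_k$ equals $(k+1)m_0$, so consecutive blocks are separated by $m_0 - k - 2$ integers; in particular $I_k$ and $I_{k+1}$ become adjacent or overlap as soon as $k \geq m_0 - 2$, and hence
\[
\bigcup_{k \geq m_0 - 2} I_k \;=\; \{\ell \in \mathbb{Z} : \ell \geq m_0^2 - 2 m_0 + 2\}.
\]

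Combining this with the universal lower bound $a_\ell \geq m^*$, one obtains $a_\ell = m^*$ for every $\ell \geq m_0^2 - 2m_0 + 2$, so the sequence is eventually constant equal to $m^*$; this simultaneously gives convergence and equality of the limit with the minimum. No substantial obstacle is anticipated beyond Lemma \ref{lemma1}, which already absorbs all the algebraic and combinatorial content (via Proposition \ref{S_2-depth alg} together with the Minh--Nam stability result on ${\rm depth}(R/I^{(m)})=1$ invoked in its proof); the proof above is then a one-index application of that lemma followed by the elementary interval-covering computation just described.
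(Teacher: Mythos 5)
Your argument is correct and follows essentially the same route as the paper, whose proof simply invokes the proof of \cite[Theorem 3.6]{sf3} with the depth replaced by the $S_{2}$-depth; that argument is precisely the application of Lemma \ref{lemma1} at a minimizing index $m_{0}$ followed by the interval-covering bookkeeping you carry out. You have in effect written out the details that the paper delegates to the citation (your threshold $m_{0}^{2}-2m_{0}+2$ needs the trivial adjustment that $k$ must be a positive integer when $m_{0}\leq 2$, but this does not affect the conclusion).
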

\begin{proof}
By considering, the Serre depth for $(S_{2})$ as the depth in the proof of \cite[Theorem 3.6]{sf3}, we obtain that the desired equality. 
\end{proof}


\section{The Serre depth on edge and cover ideals}
In this section, we treat edge ideals and cover ideals. First, we reformulate a result given in \cite[Corollary 2.2]{th}. To this end, let us recall the following result by Pournaki et al.

\begin{cor}\cite[Corollary 3.15]{ppty2}\label{very}
Let $G$ be a graph. If ${\rm pd}(S/J(G))={\rm im}(G)+1$ and set $d_{j}=\dim K_{S/J(G)}^{j}$ for all $j$. Then, 
$$d_{j}=
\begin{cases}
-\infty & \mbox{ if }\hspace{0.5cm}0\leq j<{\rm depth}(S/J(G)), \\
2j-(n-2) & \mbox{ if }\hspace{0.5cm}{\rm depth}(S/J(G))\leq j\leq n-2
\end{cases}
$$
\end{cor}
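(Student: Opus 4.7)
My plan is to apply Theorem~\ref{eq dual} to $\Delta:=\Delta(G)^{\vee}$ (so that $\Bbbk[\Delta]=S/J(G)$ and $\Bbbk[\Delta^{\vee}]=S/I(G)$), compute $S_r\mbox{-}{\rm depth}(S/J(G))$ for every $r\ge 2$, and then read off the $d_j$ by inverting the defining relation $S_r\mbox{-}{\rm depth}(M)=\min\{j:d_j\ge j-r+1\}$.

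The key combinatorial step is the identification
$$\max\{j\mid \beta_{i,i+j}(S/I(G))\neq 0\text{ for some }i\le r\}=\min(r,{\rm im}(G))\quad\text{for all }r\ge 1.$$
For the upper bound, Terai's formula converts the hypothesis ${\rm pd}(S/J(G))={\rm im}(G)+1$ into ${\rm reg}(S/I(G))={\rm im}(G)$, and the Taylor-complex shift bound for any squarefree quadratic ideal gives $\beta_{i,i+k}(S/I(G))\neq 0\Rightarrow k\le i$. The matching lower bound is Katzman's theorem: for every $i\le {\rm im}(G)$, restricting a maximum induced matching of $G$ yields an induced matching of size $i$, so $\beta_{i,2i}(S/I(G))\neq 0$. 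Plugging into Theorem~\ref{eq dual} yields
$$S_r\mbox{-}{\rm depth}(S/J(G))=n-1-\min(r,{\rm im}(G)),$$
which stabilizes at ${\rm depth}(S/J(G))=n-1-{\rm im}(G)$ for $r>{\rm im}(G)$, matching Auslander--Buchsbaum.

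The vanishing $d_j=-\infty$ for $j<{\rm depth}(S/J(G))$ is the usual vanishing of local cohomology below depth. For $j$ in the interior range $[{\rm depth}(S/J(G)),\,n-4]$, the choice $r=n-1-j\in[3,{\rm im}(G)]$ yields $S_r\mbox{-}{\rm depth}(S/J(G))=j$, forcing $d_j\ge j-r+1=2j-(n-2)$, while the choice $r-1=n-2-j\in[2,{\rm im}(G)]$ yields $S_{r-1}\mbox{-}{\rm depth}(S/J(G))=j+1>j$, forcing the matching upper bound $d_j\le j-(r-1)=2j-(n-2)$. Hence $d_j=2j-(n-2)$ on the interior.

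The main obstacle will be the two boundary indices $j=n-2$ and $j=n-3$, where the required auxiliary $r$ falls outside the admissible range of the Serre depth. Here I intend to use the combinatorial fact that the facets of $\Delta(G)^{\vee}$ are precisely the complements of edges of $G$, so $\dim\Delta(G)^{\vee}=n-3$ and $\dim(S/J(G))=n-2$; this gives $d_{n-2}=\dim(S/J(G))=n-2=2(n-2)-(n-2)$ directly. For $j=n-3$, the Hochster-type formula for $F(K^{n-3}_{S/J(G)},t)$ recorded in the Preliminaries shows that no $F\in\Delta(G)^{\vee}$ with $|F|=n-3$ contributes (since $\widetilde{H}_{-1}({\rm link}_{\Delta(G)^{\vee}}F)=0$ whenever the link contains the empty face, which it does as $F\in\Delta(G)^{\vee}$), whence $d_{n-3}\le n-4$; combined with the Serre-depth lower bound $d_{n-3}\ge n-4$ coming from $S_2\mbox{-}{\rm depth}(S/J(G))=n-3$, this delivers $d_{n-3}=n-4$, completing the formula.
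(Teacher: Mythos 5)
The paper does not prove this statement at all---it is imported verbatim from \cite[Corollary 3.15]{ppty2}---so your argument is necessarily an independent derivation rather than a reconstruction. It is, as far as I can check, correct, and it is a nice route: you reduce everything to the Betti table of $I(G)$ via Theorem \ref{eq dual} applied to the pure complex $\Delta(G)^{\vee}$ (facets are complements of edges, so purity and $\dim \Bbbk[\Delta(G)^{\vee}]=n-2$ are automatic), using Terai's formula to translate ${\rm pd}(S/J(G))={\rm im}(G)+1$ into ${\rm reg}(S/I(G))={\rm im}(G)$, the Taylor-complex bound $\beta_{i,i+k}(S/I(G))\neq 0\Rightarrow k\le i$ for the upper bound, and Katzman's $\beta_{i,2i}(S/I(G))\neq 0$ for $i\le{\rm im}(G)$ for the lower bound. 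This gives $S_{r}\mbox{-}{\rm depth}(S/J(G))=n-1-\min(r,{\rm im}(G))$ for all $r\ge 2$, and your inversion of $S_{r}\mbox{-}{\rm depth}=\min\{j: d_{j}\ge j-r+1\}$ over the sliding window of $r$'s, together with the two boundary cases, does pin down every $d_{j}$; I verified the index bookkeeping in the interior range $t\le j\le n-4$ and on small examples. Note that your intermediate identity $\max\{j:\beta_{i,i+j}(S/I(G))\neq 0\mbox{ for some }i\le r\}=\min(r,{\rm im}(G))$ is essentially the content of the paper's Proposition \ref{betti table}, which the authors instead \emph{deduce from} Corollary \ref{very}; your argument runs that implication in the opposite direction, so it buys a proof of the corollary that stays entirely inside the Alexander-duality/Betti-number framework of Section \ref{sqr} (whereas \cite{ppty2} works directly with local cohomology of height-two unmixed Stanley--Reisner ideals), and it introduces no circularity since you never invoke Proposition \ref{betti table} itself.

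One small repair is needed in the $j=n-3$ boundary case. You claim $\widetilde{H}_{-1}({\rm link}_{\Delta(G)^{\vee}}F)=0$ ``whenever the link contains the empty face''; that is not the right criterion, since the link of a facet is exactly $\{\emptyset\}$ and has $\widetilde{H}_{-1}\neq 0$. The correct justification is purity: every facet of $\Delta(G)^{\vee}$ has cardinality $n-2$, so a face $F$ with $|F|=n-3$ is properly contained in a facet, its link contains a vertex, and hence $\widetilde{H}_{-1}({\rm link}_{\Delta(G)^{\vee}}F)=0$; faces of size $n-2$ contribute only $\widetilde{H}_{-2}=0$. With that fix the bound $d_{n-3}\le n-4$ stands, and (when ${\rm im}(G)\ge 2$, which is exactly when $n-3$ lies in the relevant range) the matching lower bound from $S_{2}\mbox{-}{\rm depth}(S/J(G))=n-3$ completes the argument.
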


By using this, we reformulate a result given in \cite[Corollary 2.2]{th} and give the Serre depth for $(S_{r})$ on cover ideals of graphs belong to special class. 

\begin{prop}\label{betti table}
Let $G$ be a graph on the vertex set $[n]$. If ${\rm reg}(S/I(G))={\rm im}(G)$, then we have 
$$\max\{j\,\,: \beta_{i,i+j}(S/I(G))\neq 0\}=i\mbox{ for all }i\leq{\rm im}(G)$$
In particular, $S_{r}\mbox{-}{\rm depth}(S/J(G))=n-r-1$. 
\end{prop}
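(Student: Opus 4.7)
The plan is to reduce both claims to Corollary \ref{very} via Hochster's formulas on the two sides of the Alexander duality $\Delta(G) \leftrightarrow \Delta(G)^{\vee}$. By Terai's identity, ${\rm pd}(S/J(G))={\rm reg}(I(G))={\rm reg}(S/I(G))+1={\rm im}(G)+1$, so Corollary \ref{very} applies; the Auslander--Buchsbaum formula then gives ${\rm depth}(S/J(G))=n-{\rm im}(G)-1$, and hence $d_{j}:=\dim K^{j}_{S/J(G)}=2j-(n-2)$ for $n-{\rm im}(G)-1 \le j \le n-2$.

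The key step is a bookkeeping dictionary between the two Hochster-type formulas. For each face $F \in \Delta(G)^{\vee}$ with $\widetilde{H}_{k}({\rm link}_{\Delta(G)^{\vee}}F;\Bbbk) \neq 0$, the Hilbert-series formula for $K^{\bullet}_{S/J(G)}$ records $|F|$ as a contribution to $d_{|F|+k+1}$, while the dual Hochster formula for Betti numbers records a contribution of the same face to $\beta_{k+2,\,n-|F|}(S/I(G))$. Re-indexing with $i=k+2$ and $j$ defined by $|F|=n-i-j$, a nonzero $\beta_{i,i+j}(S/I(G))$ corresponds to a face $F$ of size $n-i-j$ contributing to $d_{n-j-1}$. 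By Corollary \ref{very}, for $1 \le j \le {\rm im}(G)$ we have $d_{n-j-1}=n-2j$, which is both the maximum size of a contributing face and is actually attained. This forces $n-i-j \le n-2j$, i.e., $j \le i$, so column $i$ of the Betti diagram of $S/I(G)$ has no nonzero entry above row $i$; the attainment gives $\beta_{j,2j}(S/I(G)) \neq 0$ for every such $j$. Combined with $\beta_{i,i+j}(S/I(G))=0$ for $j>{\rm im}(G)$ from the regularity hypothesis, this yields the first assertion.

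For the Serre depth formula, I would apply the definition $S_{r}\mbox{-}{\rm depth}(S/J(G))=\min\{j:d_{j}\ge j-r+1\}$ directly. On the range of Corollary \ref{very}, $2j-(n-2) \ge j-r+1$ simplifies to $j \ge n-r-1$; for $j<{\rm depth}(S/J(G))$ the condition fails since $d_{j}=-\infty$. Under the implicit assumption $r \le {\rm im}(G)$, we have $n-r-1 \ge n-{\rm im}(G)-1={\rm depth}(S/J(G))$, so the minimum is exactly $n-r-1$. Alternatively, one may invoke Theorem \ref{eq dual} applied to $\Delta(G)^{\vee}$ together with the first assertion, but the direct route bypasses the purity requirement.

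The main delicate step is the dictionary itself: verifying that the same face $F$ records the nonvanishing of $\beta_{i,i+j}(S/I(G))$ and of $d_{n-j-1}$ with the correct size $|F|=n-i-j$, so that the explicit value $d_{n-j-1}=n-2j$ supplied by Corollary \ref{very} translates cleanly into the column-wise bound. Once that correspondence is pinned down, both conclusions fall out of Corollary \ref{very} with no further combinatorics.
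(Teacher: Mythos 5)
Your proof is correct, and its two halves relate to the paper's argument differently. The Serre-depth computation is essentially identical to the paper's: both apply the definition $S_{r}\mbox{-}{\rm depth}(S/J(G))=\min\{j : d_{j}\geq j-r+1\}$ to the explicit values $d_{j}=2j-(n-2)$ supplied by Corollary \ref{very} (and both, like the stated proposition, silently need $r\leq{\rm im}(G)$, which you at least flag; you also make explicit the step ${\rm pd}(S/J(G))={\rm reg}(I(G))={\rm im}(G)+1$ needed to invoke Corollary \ref{very}, which the paper leaves implicit). For the Betti-number claim, however, the routes genuinely diverge. The paper proves the Serre depth first, feeds it into Corollary \ref{gene} to get ${\rm reg}_{\leq r-1}(I(G))=r+1$, and then needs a descending induction over $r$ to locate the unique homological degree where that regularity is attained. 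You instead run the Hochster dictionary directly: a face $F\in\Delta(G)^{\vee}$ with $\widetilde{H}_{i-2}({\rm link}F)\neq 0$ and $|F|=n-i-j$ witnesses both $\beta_{i,i+j}(S/I(G))\neq 0$ and a pole of order $|F|$ in the Hilbert series of $K^{n-j-1}_{S/J(G)}$, so the exact equality $d_{n-j-1}=n-2j$ from Corollary \ref{very} yields the bound $j\leq i$ and the attainment $\beta_{j,2j}\neq 0$ in a single pass (this is in effect an inlined, sharpened instance of the proof of Theorem \ref{eq dual}). Your version buys the full column-by-column description of the Betti table at once, with no induction and no appeal to the purity hypotheses of Corollary \ref{gene}; the paper's version buys brevity by reusing its general duality machinery. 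Both are sound.
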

\begin{proof}
From Corollary \ref{very}, we have $S_{r}\mbox{-}{\rm depth}(S/J(G))=n-r-1$ if $r\leq{\rm im}(G)$. Indeed, for $r\leq{\rm im}(G)$, since $n-r-1\geq n-{\rm pd}(S/J(G))={\rm depth}(S/J(G))$, by Corollary \ref{very}, we have 
$$\dim K_{S/J(G)}^{n-r-1}=2(n-r-1)-(n-2)=n-2r=(n-r-1)-r+1,$$
$S_{r}\mbox{-}{\rm depth}(S/J(G))\leq n-r-1$. Also, we suppose that $S_{r}\mbox{-}{\rm depth}(S/J(G))<n-r-1$, then we set $k=S_{r}\mbox{-}{\rm depth}(S/J(G))$. 
By Corollary \ref{very}, we may assume that ${\rm depth}(S/J(G))\leq k$. 
Then, $2k-(n-2)=\dim K_{S/J(G)}^{k}\geq k-r+1$ and hence, $n-r-1>k\geq n-r-1$, which is a contradiction. Hence, from Corollary \ref{gene}, we have 
$${\rm reg}_{\leq r-1}(I(G))-2=(n-2)-(n-r-1).$$
Thus, we obtain that 
$$\max\{j\,\,:\beta_{i,i+j}(I(G))\neq0\mbox{ for some }i\leq r-1\}={\rm reg}_{\leq r-1}(I(G))=r+1$$
Hence, there exists $i\leq r$ such that $\beta_{i,i+r+1}(I(G))\neq 0$. If $i\neq r$, then by continuing above discussion, we have 
$$\max\{j\,\,:\beta_{\ell,\ell+j}(I(G))\neq0\mbox{ for some }\ell\leq i-1\}={\rm reg}_{\leq i-1}(I(G))=i+1<r+1.$$
This contradicts to $\beta_{i,i+r+1}(I(G))\neq 0$, which leads to the conclusion. 
Moreover, from Corollary \ref{gene}, $$r+1={\rm reg}_{\leq r-1}(I(G))=n-S_{r}\mbox{-}{\rm depth}(S/J(G)),$$which completes the proof. 
\end{proof}

In particular, for the Serre depth for $(S_{2})$ on cover ideals, we have the following statement:

\begin{prop}
For a graph $G$, if there exist edges $e, e^{\prime}$ that are 3-disjoint, then $S_{2}\mbox{-}{\rm depth}(S/J(G))=n-3$ and otherwise $S_{2}\mbox{-}{\rm depth}(S/J(G))=n-2$. In particular, the cover ideal $J(G)$ satisfies Serre's condition $(S_{2})$ if and only if there are no edges $e, e^{\prime}$ that are 3-disjoint. 
\end{prop}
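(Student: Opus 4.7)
The plan is to invoke Theorem~\ref{eq dual} applied to the complex $\Delta(G)^{\vee}$. First I would verify its hypothesis: the minimal non-faces of the independence complex $\Delta(G)$ are precisely the edges of $G$, so the facets of $\Delta(G)^{\vee}$ are the complements $[n]\setminus e$ for $e\in E(G)$, and $\Delta(G)^{\vee}$ is pure of dimension $n-3$. Using $(\Delta(G)^{\vee})^{\vee}=\Delta(G)$ and $\Bbbk[\Delta(G)]=S/I(G)$, Theorem~\ref{eq dual} gives
$$n-1-S_{2}\mbox{-}{\rm depth}(S/J(G))\;=\;\max\{\,j:\beta_{i,i+j}(S/I(G))\neq 0\text{ for some }i\leq 2\,\}.$$

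Next I would determine the right-hand side explicitly. The contributions for $i=0,1$ are immediate: $\beta_{0,j}(S/I(G))\neq 0$ only for $j=0$, and $\beta_{1,1+j}(S/I(G))$ counts degree-$(1+j)$ minimal generators of $I(G)$, hence is nonzero only for $j=1$. For $i=2$, every minimal first syzygy of $I(G)$ arises from a Koszul-type relation between two degree-$2$ generators via the Taylor resolution, and thus has degree at most $\deg\,\mathrm{lcm}\leq 4$, giving $\beta_{2,2+j}(S/I(G))=0$ for $j\geq 3$. So the maximum lies in $\{0,1,2\}$ and equals $2$ iff $\beta_{2,4}(S/I(G))\neq 0$. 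By Hochster's formula,
$$\beta_{2,4}(S/I(G))=\sum_{|W|=4}\dim_{\Bbbk}\widetilde{H}_{1}(\Delta(G)_{W};\Bbbk),$$
so I would run a finite case analysis on the four-vertex induced subgraphs $G[W]$, checking that $\widetilde{H}_{1}(\Delta(G)_{W};\Bbbk)\neq 0$ precisely when $G[W]$ consists of exactly two vertex-disjoint edges and no other edges: in that case $\Delta(G)_{W}$ is the $4$-cycle formed by the non-edges, while in every other case $\Delta(G)_{W}$ is a cone and hence contractible. Since such a $G[W]$ is exactly a pair of $3$-disjoint edges, the maximum equals $2$ iff $G$ admits a $3$-disjoint pair, and equals $1$ otherwise (assuming $G$ has at least one edge, which we may since otherwise $J(G)=S$).

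Substituting these two cases into the displayed equality yields $S_{2}\mbox{-}{\rm depth}(S/J(G))=n-3$ and $n-2$, respectively. For the final equivalence I would use that $J(G)=\bigcap_{\{i,j\}\in E(G)}(x_{i},x_{j})$, so $\mathrm{ht}(J(G))=2$ and $\dim S/J(G)=n-2$; hence $S/J(G)$ satisfies $(S_{2})$ iff $S_{2}\mbox{-}{\rm depth}(S/J(G))=\dim S/J(G)=n-2$, which by the above is equivalent to the absence of a $3$-disjoint pair in $G$. The main obstacle is the Hochster-based case analysis for $\beta_{2,4}$; the Taylor-resolution bound on first-syzygy degrees is routine, but the homology computation on the handful of four-vertex flag complexes must be carried out carefully to ensure that only the induced-matching case contributes.
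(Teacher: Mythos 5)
Your proposal follows essentially the same route as the paper: the paper's entire proof is the one-line observation that the claim follows from Theorem~\ref{eq dual} together with the known fact that $G$ has a pair of $3$-disjoint edges if and only if $\beta_{2,4}(S/I(G))\neq 0$; you simply supply the details (purity of $\Delta(G)^{\vee}$, the bound on the max for $i\le 2$ via the Taylor resolution, and a Hochster-formula proof of the $\beta_{2,4}$ fact), all of which are correct in substance.

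One inaccuracy in the case analysis you flag as the main obstacle: it is not true that $\Delta(G)_{W}$ is a cone in every case other than $G[W]=2K_{2}$. For instance, if $G[W]$ is a $4$-cycle its independence complex is two disjoint edges, and if $G[W]=K_{4}$ it is four isolated points; neither is a cone (and neither is contractible), yet $\widetilde{H}_{1}$ still vanishes. The cone argument only covers the subgraphs with an isolated vertex. A clean fix: $\Delta(G)_{W}$ is a flag complex whose $1$-skeleton is the complement graph of $G[W]$, and since every triangle of a flag complex is filled, a flag complex on four vertices has $\widetilde{H}_{1}\neq 0$ exactly when its $1$-skeleton is an induced $4$-cycle, i.e.\ exactly when $G[W]=2K_{2}$. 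With that repair the argument is complete.
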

\begin{proof}
From Theorem \ref{eq dual} and the fact that there exist edges $e, e^{\prime}$ that are 3-disjoint if and only if $\beta_{2, 4}(S/I(G))\neq 0$, the assertion follows.  
\end{proof}

We prove that the Serre depth for $(S_{2})$ on the symbolic powers of edge ideals and cover ideals satisfy a non-increasing property. To this end, we provide the following lemma, which is a similar result of \cite[Theorem 3.5]{hlt}: 

\begin{lemma}\label{lem of edge ideals}
Let $G$ be a well-covered graph. If $H_{\mathfrak{m}}^{1}(S/I(G)^{(\ell)})\neq 0$, then we have $H_{\mathfrak{m}}^{1}(S/I(G)^{(\ell+1)})\neq 0$ for any $\ell\geq1$. 
\end{lemma}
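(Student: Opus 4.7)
The plan is to adapt Case~1 of the argument in Theorem~\ref{non-increasing for dim1} to arbitrary dimension, using well-coveredness to supply the combinatorial input that lets us pass from $\ell$ to $\ell+1$. By Theorem~\ref{Takayama} the hypothesis $H_{\mathfrak{m}}^{1}(S/I(G)^{(\ell)}) \neq 0$ is equivalent to the existence of some $\mathbf{a} \in \mathbb{N}^{n}$ for which $\Delta_{\mathbf{a}}(I(G)^{(\ell)})$ is disconnected, and Lemma~\ref{symbolic power cpx} gives this complex the explicit form $\langle F \in \mathcal{F}(\Delta(G)) : \sum_{i \notin F} a_{i} \leq \ell - 1 \rangle$. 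Fixing such an $\mathbf{a}$, I would then select two facets $F_{0}$ and $G_{0}$ lying in distinct connected components of $\Delta_{\mathbf{a}}(I(G)^{(\ell)})$. Being facets of $\Delta(G)$, they are maximal independent sets of $G$ (of equal size, by well-coveredness), and since they are in different components one has $F_{0} \cap G_{0} = \emptyset$.

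The crucial combinatorial observation is that there exists an edge of $G$ joining $F_{0}$ and $G_{0}$. Indeed, pick any $y \in G_{0}$; because $y \notin F_{0}$ and $F_{0}$ is a maximal independent set, $F_{0} \cup \{y\}$ fails to be independent, so $y$ is adjacent to some $x \in F_{0}$. Fix such a pair with $\{x, y\} \in E(G)$.

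Next I define $\mathbf{a}' \in \mathbb{N}^{n}$ by $a_{x}' = a_{x}+1$, $a_{y}' = a_{y}+1$, and $a_{i}' = a_{i}$ otherwise. The aim is to show that $\Delta_{\mathbf{a}'}(I(G)^{(\ell+1)})$ is a subcomplex of $\Delta_{\mathbf{a}}(I(G)^{(\ell)})$ still containing $F_{0}$ and $G_{0}$ as facets. For $F_{0}$: since $y \notin F_{0}$ and $x \in F_{0}$, we get $\sum_{i \notin F_{0}} a_{i}' = \sum_{i \notin F_{0}} a_{i} + 1 \leq \ell$, so $F_{0}$ is a facet of the new complex by Lemma~\ref{symbolic power cpx}; the case of $G_{0}$ is symmetric. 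For the containment, let $H$ be any facet of $\Delta_{\mathbf{a}'}(I(G)^{(\ell+1)})$. Because $\{x, y\} \in E(G)$ the set $H$ cannot contain both $x$ and $y$, and a short case check on which of $x, y$ lies in $H$ shows that $\sum_{i \notin H} a_{i}' \leq \ell$ forces $\sum_{i \notin H} a_{i} \leq \ell - 1$, so $H$ is a facet of $\Delta_{\mathbf{a}}(I(G)^{(\ell)})$.

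Finally, since any path between $F_{0}$ and $G_{0}$ in the subcomplex would also be a path in the larger complex, $F_{0}$ and $G_{0}$ remain in different connected components of $\Delta_{\mathbf{a}'}(I(G)^{(\ell+1)})$. Thus this complex is disconnected, and Theorem~\ref{Takayama} yields $H_{\mathfrak{m}}^{1}(S/I(G)^{(\ell+1)}) \neq 0$, as desired. The principal obstacle is locating the edge $\{x, y\}$, which is where well-coveredness (through the maximality of $F_{0}$ and $G_{0}$ as independent sets) enters in an essential way; once this edge is in hand the analysis of $\Delta_{\mathbf{a}'}(I(G)^{(\ell+1)})$ reduces to the same bookkeeping already carried out in the dimension-one case of Theorem~\ref{non-increasing for dim1}.
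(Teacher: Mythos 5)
Your proof is correct and follows essentially the same route as the paper: locate an edge of $G$ joining the two separated facets (the paper does this by contradiction from the maximality of $F$ as an independent set, you do it directly, but it is the same observation), then increment $\mathbf{a}$ at its two endpoints and check that $F_{0}$ and $G_{0}$ survive as facets of the resulting subcomplex $\Delta_{\mathbf{a}'}(I(G)^{(\ell+1)})$ — exactly the step the paper delegates to the latter part of Proposition~\ref{special simplicial complex}. One small remark: what you are really using is that facets of $\Delta(G)$ are maximal independent sets, which holds for any graph, so well-coveredness is not where the edge comes from; it is a standing hypothesis needed elsewhere (unmixedness for the Serre depth).
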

\begin{proof}
We set $V(G)=\{x_{1},\ldots, x_{n}\}$. Suppose that $H_{\mathfrak{m}}^{1}(S/I(G)^{(\ell)})\neq 0$. Then there exist facets $F, F^{\prime}$ of $\Delta(G)$ and a vector ${\bf a}\in\mathbb{N}^{|V(G)|}$ such that $F$ and $F^{\prime}$ are disconnected in $\Delta_{\bf a}(I(G)^{(\ell)})$. Suppose that $e\in\Delta(G)$ for any $e\in V(G)\times V(G)$ with $e\cap F\neq\emptyset$ and $e\cap F^{\prime}\neq\emptyset$. Fix $u\in F^{\prime}$. Then, for any $v\in F$, we have $\{u, v\}\cap F\neq\emptyset$ and $\{u, v\}\cap F^{\prime}\neq\emptyset$, and hence, we obtain that $\{u, v\}\in\Delta(G)$. This implies that $F\cup\{u\}\in\Delta(G)$, which contradicts to the face that $F$ is a facet of $\Delta(G)$. Therefore, there exists an edge $e=\{x_{i}, x_{j}\}\in E(G)$ such that $e\cap F\neq\emptyset$ and $e\cap F^{\prime}\neq\emptyset$. Therefore, it follows in the same way as in the latter part of the proof of Proposition \ref{special simplicial complex}. 
\end{proof}

We now prove that a non-increasing property holds for the Serre depth for $(S_{2})$ on the symbolic powers of the edge and cover ideals of arbitrary graphs.

\begin{thm}\label{non-increasing of edge ideals}
Let $G$ be a well-covered graph. Then, we have $$S_{2}\mbox{-}{\rm depth}(S/I(G)^{(\ell)})\geq S_{2}\mbox{-}{\rm depth}(S/I(G)^{(\ell+1)})\mbox{ for all }\ell.$$
\end{thm}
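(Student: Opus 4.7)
The plan is to reduce the desired inequality to a statement about the vanishing of the first local cohomology module on symbolic powers of edge ideals of well-covered graphs, where we can apply Lemma \ref{lem of edge ideals} directly. Since $G$ is well-covered, the independence complex $\Delta=\Delta(G)$ is pure, so Proposition \ref{S_2-depth alg} applies and gives
$$S_{2}\mbox{-}{\rm depth}(S/I(G)^{(\ell)}) = 1 + \min\bigl\{\,|F| : F \in \Delta(G),\ H^{1}_{\mathfrak{m}}(\Bbbk[V({\rm link}_{\Delta}F)]/I_{{\rm link}_{\Delta}F}^{(\ell)}) \neq 0\,\bigr\}.$$
Thus, to prove the non-increasing property of the Serre depth in $\ell$, it suffices to show that the set of faces $F \in \Delta(G)$ for which the above local cohomology module is nonzero can only grow (weakly) when $\ell$ is replaced by $\ell+1$.

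Next, I would identify, for each face $F \in \Delta(G)$, the link ${\rm link}_{\Delta(G)}F$ with the independence complex $\Delta(G_F)$ of the subgraph $G_F := G[V(G) \setminus N[F]]$. This identification is immediate: a vertex set $A$ lies in ${\rm link}_{\Delta(G)}F$ if and only if $A \cap F = \emptyset$ and $A \cup F$ is independent in $G$, which is equivalent to $A$ being an independent set of $G_F$. Under this identification, $I_{{\rm link}_{\Delta}F}$ becomes the edge ideal $I(G_F)$ in the polynomial ring $\Bbbk[V(G_F)]$.

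The key observation needed is that well-coveredness is inherited by $G_F$: every maximal independent set of $G$ containing $F$ decomposes as $F \cup M$, where $M$ is a maximal independent set of $G_F$. Since all maximal independent sets of $G$ have the same cardinality, so do those of $G_F$, which shows $G_F$ is well-covered. Lemma \ref{lem of edge ideals} therefore applies to $G_F$, and we conclude that
$$H^{1}_{\mathfrak{m}}\bigl(\Bbbk[V(G_F)]/I(G_F)^{(\ell)}\bigr) \neq 0 \quad \Longrightarrow \quad H^{1}_{\mathfrak{m}}\bigl(\Bbbk[V(G_F)]/I(G_F)^{(\ell+1)}\bigr) \neq 0.$$
Combining with the identification of links, this containment of face sets yields the inequality of minima in Proposition \ref{S_2-depth alg}, proving the theorem.

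The main ingredients—Proposition \ref{S_2-depth alg}, the link-subgraph identification, and Lemma \ref{lem of edge ideals}—do all the heavy lifting, so there is no real obstacle; the only point requiring care is verifying that $G_F$ is well-covered for every independent set $F$, which is the reason the hypothesis of well-coveredness on $G$ is essential for the argument (it would fail if one merely assumed $\Delta(G)$ is pure in the original ambient simplicial complex sense without passing to links).
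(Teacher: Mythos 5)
Your proof is correct and follows essentially the same route as the paper's: identify $\mathrm{link}_{\Delta(G)}F$ with the independence complex of $G-N[F]$, apply Lemma \ref{lem of edge ideals} to propagate non-vanishing of $H^{1}_{\mathfrak{m}}$ from $\ell$ to $\ell+1$, and conclude via Proposition \ref{S_2-depth alg}. Your explicit verification that $G-N[F]$ remains well-covered is a detail the paper leaves implicit, and it is a worthwhile addition since Lemma \ref{lem of edge ideals} requires that hypothesis.
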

\begin{proof}
Fix $\ell\geq1$ and take a face $F$ of $\Delta(G)$ such that $H_{\mathfrak{m}}^{1}(\Bbbk[{\rm link}_{\Delta(G)}F]/I_{{\rm link}_{\Delta(G)}F}^{(\ell)})\neq0$. Notice that ${\rm link}_{\Delta(G)}F$ is the independence complex of $G-N[F]$. By Lemma \ref{lem of edge ideals}, $H_{\mathfrak{m}}^{1}(\Bbbk[{\rm link}_{\Delta(G)}F]/I_{{\rm link}_{\Delta(G)}F}^{(\ell+1)})\neq0$, and by Proposition \ref{S_2-depth alg}, as required. 
\end{proof}

Moreover, we prove that a non-increasing property of the Serre depth for $(S_{2})$ holds for cover ideals of any simple graph as an analogue of \cite[Theorem 3.2]{hktt}. 

\begin{prop}\label{non-increasing of cover ideals}
Let $G$ be a graph. Then, we have $$S_{2}\mbox{-}{\rm depth}(S/J(G)^{(\ell)})\geq S_{2}\mbox{-}{\rm depth}(S/J(G)^{(\ell+1)})\mbox{ for all }\ell.$$
\end{prop}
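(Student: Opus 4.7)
The plan is to follow the strategy of the proof of Theorem \ref{non-increasing of edge ideals}, now with $\Delta(G)$ replaced by its Alexander dual $\Delta(G)^{\vee}$. First observe that $J(G)=I_{\Delta(G)^{\vee}}$ and that $\Delta(G)^{\vee}$ is pure, since all of its facets have the form $V(G)\setminus e$ with $e\in E(G)$, hence cardinality $|V(G)|-2$. Proposition \ref{S_2-depth alg} therefore applies and yields
\[
S_{2}\mbox{-}{\rm depth}(S/J(G)^{(\ell)})=\min\{|F| : F\in\Delta(G)^{\vee},\ H_{\mathfrak{m}}^{1}(\Bbbk[V({\rm link}_{\Delta(G)^{\vee}}F)]/I_{{\rm link}_{\Delta(G)^{\vee}}F}^{(\ell)})\neq 0\}+1.
\]
It is thus enough to show that for every fixed $F\in\Delta(G)^{\vee}$, non-vanishing of this $H_{\mathfrak{m}}^{1}$ for the $\ell$-th symbolic power is preserved for the $(\ell+1)$-th; this will force the set of faces competing in the minimum to weakly grow with $\ell$, so the minimum can only weakly decrease.

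The key identification is that ${\rm link}_{\Delta(G)^{\vee}}F=\Delta(G-F)^{\vee}$, where $G-F$ is the induced subgraph on $V(G)\setminus F$; this follows from a direct computation using the definition of Alexander duality, since $H\cup F\in\Delta(G)^{\vee}$ iff $V(G)\setminus(H\cup F)$ contains an edge of $G$ with both endpoints in $V(G)\setminus F$. Whenever $G-F$ contains at least two edges and no vertex of $G-F$ lies in every edge, one has $V({\rm link}_{\Delta(G)^{\vee}}F)=V(G-F)$ and $I_{{\rm link}_{\Delta(G)^{\vee}}F}=J(G-F)$ as ideals of $\Bbbk[V(G-F)]$. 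In that generic situation I would invoke \cite[Theorem 3.2]{hktt}, which asserts that ${\rm depth}(\Bbbk[V(H)]/J(H)^{(\ell)})$ is non-increasing in $\ell$ for every graph $H$. Combining this with the irredundant primary decomposition $J(H)^{(\ell)}=\bigcap_{e\in E(H)}(x_{e})^{\ell}$, whose associated primes are all of height $2$, one sees that $\mathfrak{m}\notin{\rm Ass}(J(H)^{(\ell)})$ and hence the depth is always at least $1$; therefore non-vanishing of $H_{\mathfrak{m}}^{1}$ is equivalent to the depth being equal to $1$, and non-increasingness of depth delivers exactly the required transfer from $\ell$ to $\ell+1$.

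The remaining---and main---obstacle is the handling of the exceptional shapes of $G-F$, namely when $G-F$ is a single edge or a star. In these cases some vertex of $G-F$ lies in every edge and is therefore excluded from $V({\rm link}_{\Delta(G)^{\vee}}F)$, so $I_{{\rm link}_{\Delta(G)^{\vee}}F}$ fails to be a cover ideal on its own vertex set. However, in both exceptional shapes the Stanley--Reisner ideal of the link is either zero (single edge) or a principal monomial of the form $(x_{u_{1}}\cdots x_{u_{k}})$ (star), so that symbolic and ordinary powers coincide, the quotient is a hypersurface of depth $|V({\rm link}_{\Delta(G)^{\vee}}F)|-1$ independent of $\ell$, and consequently the non-vanishing of $H_{\mathfrak{m}}^{1}$ is automatically constant in $\ell$. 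Once these degenerate shapes are disposed of by a direct check, the generic argument above completes the proof of non-increasingness of $S_{2}\mbox{-}{\rm depth}(S/J(G)^{(\ell)})$.
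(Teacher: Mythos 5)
Your proposal is correct and follows essentially the same route as the paper: reduce via Proposition \ref{S_2-depth alg} to the persistence of $H_{\mathfrak{m}}^{1}\neq 0$ on each link, identify the link's Stanley--Reisner ideal as a cover ideal, and combine the depth non-increasing result of \cite[Theorem 3.2]{hktt} with the fact that $\mathfrak{m}$ is not an associated prime, so that depth $1$ is preserved. If anything you are more careful than the paper, which asserts without qualification that every link ideal is a height-two unmixed cover ideal, whereas you correctly isolate and dispose of the degenerate shapes (single edge or star, where the link ideal is zero or principal and the depth is constant in $\ell$); your only slip is cosmetic, namely that in the single-edge case the quotient is the full polynomial ring of depth $|V({\rm link}_{\Delta(G)^{\vee}}F)|$ rather than a hypersurface of depth one less.
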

\begin{proof}
Fix $\ell\geq 1$. By \cite[Theorem 3.2]{hktt}, it is known that $${\rm depth}(S/J(G)^{(\ell)})\geq{\rm depth}(S/J(G)^{(\ell+1)}),$$which implies that if $H_{\mathfrak{m}}^{1}(S/J(G)^{(\ell)})\neq0$, then $H_{\mathfrak{m}}^{1}(S/J(G)^{(\ell+1)})\neq0$. Also, for a face $F\in\Delta(J(G))$, $I_{{\rm link}_{\Delta(J(G))}F}$ is also height two unmixed squarefree monomial ideal, where $\Delta(J(G))$ is the Stanley--Reisner complex of $J(G)$. Hence there exists a graph $G^{\prime}$ such that $I_{{\rm link}_{\Delta(J(G))}F}=J(G^{\prime})$ for any $F\in\Delta(J(G))$ and this  ensure that if $H_{\mathfrak{m}}^{1}(\Bbbk[{\rm link}_{\Delta(J(G))}F]/I_{{\rm link}_{\Delta(J(G))}F}^{(\ell)})\neq0$, then $H_{\mathfrak{m}}^{1}(\Bbbk[{\rm link}_{\Delta(J(G))}F]/I_{{\rm link}_{\Delta(J(G))}F}^{(\ell+1)})\neq0$. Therefore, by Proposition \ref{S_2-depth alg}, the assertion follows. 
\end{proof}

We investigate whether the Serre depth for $(S_{r})$ on the symbolic powers of cover ideals satisfies a non-increasing property for any $r \ge 2$. For this purpose, we prepare a useful tool to analyze the symbolic powers of the cover ideals introduced by Seyed Fakhari.

\begin{const}[\cite{s1}, Construction in Section 3]\label{const}
Let $G$ be a graph with the vertex set $V(G)=\{x_{1},\ldots, x_{n}\}$ and let $\ell\geq1$ be an integer. Then the graph $G_{\ell}$ is defined on new vertices
$$V(G_{\ell})=\{x_{i, p}\,\,: 1\leq i\leq n\mbox{ and }1\leq p\leq\ell\}$$
with the edge set $$E(G_{\ell})=\{\{x_{i,p}, x_{j,q}\}\,\,: \{x_{i}, x_{j}\}\in E(G)\mbox{ and }p+q\leq\ell+1\}.$$
\end{const}

\begin{lemma}[\cite{s1}, Lemma 3.4]\label{polarization of cover}
Let $G$ be a graph. For every integer $\ell\geq1$, the ideal $(J(G)^{(\ell)})^{\rm pol}$ is the cover ideal of $G_{\ell}$. 
\end{lemma}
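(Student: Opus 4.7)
The plan is to match the minimal monomial generators of the two ideals. Since $J(G) = \bigcap_{\{x_i, x_j\} \in E(G)} (x_i, x_j)$ and the symbolic powers of a squarefree monomial ideal are obtained by intersecting the ordinary powers of its minimal primes, the minimal generators of $J(G)^{(\ell)}$ are precisely the monomials $x^{\mathbf a} = x_1^{a_1} \cdots x_n^{a_n}$ where the exponent vector $\mathbf a \in \mathbb{N}^n$ satisfies $a_i + a_j \geq \ell$ for every edge $\{x_i, x_j\} \in E(G)$ and is componentwise minimal with respect to this property. In particular $0 \leq a_i \leq \ell$ for every $i$, and whenever $a_i > 0$ there must exist a neighbor $x_j$ achieving $a_i + a_j = \ell$.

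Next I would describe $(x^{\mathbf a})^{\rm pol}$ explicitly. Each factor $x_i^{a_i}$ polarizes to a squarefree monomial whose support is a specific set $S_i \subseteq \{1, \ldots, \ell\}$ of size $a_i$ (using the identification $x_{i,1} = x_i$), and the overall polarization is the squarefree characteristic monomial of $C_{\mathbf a} := \{x_{i,p} : p \in S_i\} \subseteq V(G_\ell)$. The first substep is to show that $C_{\mathbf a}$ is a vertex cover of $G_\ell$: for any edge $\{x_{i,p}, x_{j,q}\}$ of $G_\ell$ we have $\{x_i, x_j\} \in E(G)$ and $p + q \leq \ell + 1$, so if both $p \notin S_i$ and $q \notin S_j$, then the prescribed shape of $S_i$ and $S_j$ forces the contradictory inequality $p + q \geq a_i + a_j + 2 \geq \ell + 2$. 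The second substep is to verify that $C_{\mathbf a}$ is a \emph{minimal} cover: for each $x_{i,p} \in C_{\mathbf a}$, a neighbor $x_j$ with $a_i + a_j = \ell$ (supplied by the minimality of $\mathbf a$) pinpoints an explicit edge of $G_\ell$ through $x_{i,p}$ whose other endpoint does not lie in $C_{\mathbf a}$, witnessing that $x_{i,p}$ cannot be removed.

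For the reverse inclusion, I would take an arbitrary minimal vertex cover $C$ of $G_\ell$ and reconstruct $\mathbf a$ from it by setting $S_i = \{p : x_{i,p} \in C\}$ and $a_i = |S_i|$. The cover condition on $C$ yields $a_i + a_j \geq \ell$ for every $\{x_i, x_j\} \in E(G)$, and the minimality of $C$ yields the minimality of $\mathbf a$. The decisive step, and the main obstacle, is to show that each $S_i$ has precisely the shape produced by the polarization rule: if $C$ contained some $x_{i,p}$ outside the prescribed pattern, then every edge of $G_\ell$ through $x_{i,p}$ would also be adjacent to an earlier, \emph{unselected} position $x_{i,p'}$ whose other endpoint must therefore belong to $C$, making $x_{i,p}$ redundant and contradicting minimality of $C$. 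Once this ``no holes'' structural lemma is established, the assignment $C \mapsto \mathbf a \mapsto (x^{\mathbf a})^{\rm pol}$ is inverse to the forward construction, and the equality of ideals follows by matching minimal generators.
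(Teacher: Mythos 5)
The paper offers no proof of this lemma: it is imported from the literature (the construction of $G_{\ell}$ and this statement are Seyed Fakhari's, so the citation should presumably be to \cite{sf1} rather than \cite{s1}), so there is no in-paper argument to compare against. Judged on its own, your proof is correct and complete in all essentials. The description of the minimal generators of $J(G)^{(\ell)}=\bigcap_{\{x_i,x_j\}\in E(G)}(x_i,x_j)^{\ell}$ as the componentwise-minimal $\mathbf a$ with $a_i+a_j\ge \ell$ on every edge is right; the forward covering check via $p+q\ge a_i+a_j+2\ge \ell+2$ and the minimality check via a tight neighbour with $a_i+a_j=\ell$ (take the edge $\{x_{i,p},x_{j,\ell+1-p}\}$) both work; and the ``no holes'' step in the reverse direction is indeed the only non-obvious point, and your argument for it is the right one: if $p'<p$, $x_{i,p'}\notin C$ and $x_{i,p}\in C$, then every edge $\{x_{i,p},x_{j,q}\}$ of $G_{\ell}$ satisfies $p'+q<p+q\le \ell+1$, so $x_{j,q}\in C$ and $x_{i,p}$ is redundant.

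One detail needs attention. Your cover check uses the implication $p\notin S_i\Rightarrow p\ge a_i+1$, which forces $S_i=\{1,\dots,a_i\}$; together with your identification $x_{i,1}=x_i$ this is the standard polarization $x_i^{c}\mapsto x_{i,1}\cdots x_{i,c}$. The polarization actually defined in Section 4 of this paper is shifted: $x_i^{c}$ polarizes to $x_{i,2}\cdots x_{i,c+1}$ for $c<\ell$, with $x_i$ itself appearing only when $c=\ell$. Under that convention and your identification, $S_i$ is not an initial segment and the edge $\{x_{i,1},x_{j,1}\}$ of $G_{\ell}$ can be left uncovered (already for $G=K_2$, $\ell=2$), so the statement holds only after the harmless relabeling that sends the paper's $x_{i,p}$ to the vertex $x_{i,p-1}$ of $G_{\ell}$ and $x_i$ to $x_{i,\ell}$. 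Since every invariant used later is insensitive to renaming variables this costs nothing, but you should either state that identification explicitly or work with the standard polarization of the cited source, after which your argument goes through verbatim.
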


We prove a non-increasing property of the $S_{r}\mbox{-}{\rm depth}(S/J(G)^{(\ell)})$ for a graph G belongs to a special class. Let $S_{(i)}=\Bbbk[V(G_{i})]$ be a polynomial ring, where $G_{i}$ is the graph defined in Construction \ref{const}. 

\begin{thm}\label{non-increasing for S_r}
Let $G$ be a graph such that ${\rm reg}(S_{(i)}/I(G_{i}))={\rm im}(G_{i})$ for all $i\geq1$ and let $r\geq 2$. 
Then we have $$S_{r}\mbox{-}{\rm depth}(S/J(G)^{(\ell)})\geq S_{r}\mbox{-}{\rm depth}(S/J(G)^{(\ell+1)})\mbox{ for all }\ell\geq 1.$$
\end{thm}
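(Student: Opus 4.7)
The plan is to show, under the stated hypothesis, that $S_{r}\mbox{-}{\rm depth}(S/J(G)^{(\ell)})$ is in fact \emph{constant} in $\ell$, equal to $n-r-1$ where $n=|V(G)|$; this is stronger than, and immediately implies, the desired non-increasing property. The idea is to polarize each symbolic power using Lemma~\ref{polarization of cover}, transfer the Serre depth across the polarization via Proposition~\ref{polarization}, and then apply the explicit formula of Proposition~\ref{betti table} on the polarized side.

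First, I would apply Proposition~\ref{polarization} to the monomial ideal $I=J(G)^{(\ell)}$, which gives
$$S_{r}\mbox{-}{\rm depth}\bigl(S_{(\ell)}/(J(G)^{(\ell)})^{\rm pol}\bigr) \;=\; S_{r}\mbox{-}{\rm depth}(S/J(G)^{(\ell)}) \;+\; |X_{J(G)^{(\ell)}}|.$$
By Lemma~\ref{polarization of cover} the ideal on the left equals $J(G_{\ell})$ in $S_{(\ell)}=\Bbbk[V(G_{\ell})]$, and by Construction~\ref{const} we have $|V(G_{\ell})|=n\ell$. Since the hypothesis ${\rm reg}(S_{(\ell)}/I(G_{\ell}))={\rm im}(G_{\ell})$ is precisely the one required by Proposition~\ref{betti table} applied to the graph $G_{\ell}$, we obtain
$$S_{r}\mbox{-}{\rm depth}(S_{(\ell)}/J(G_{\ell})) \;=\; |V(G_{\ell})| - r - 1 \;=\; n\ell - r - 1.$$
At the same time $|X_{J(G)^{(\ell)}}| = n\ell - n$, because the polarized polynomial ring $S_{(\ell)}$ consists of the original $n$ variables of $S$ together with the new polarization variables and has $n\ell$ variables in all. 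Combining the three displays:
$$S_{r}\mbox{-}{\rm depth}(S/J(G)^{(\ell)}) \;=\; (n\ell - r - 1) - (n\ell - n) \;=\; n - r - 1,$$
which is independent of $\ell$, so the sequence is constant and the asserted inequality holds trivially for every $\ell$.

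The main place where I would need to be careful is the equality $|X_{J(G)^{(\ell)}}|=n(\ell-1)$, which implicitly asserts that the polarization exponent $\gamma_{i}$ equals $\ell$ for every non-isolated vertex $x_{i}$ of $G$. This can be verified by exhibiting, for each such $x_{i}$, a minimal generator of $J(G)^{(\ell)}$ of the form $x_{i}^{\ell}\cdot m$; noting that $J(G)^{(\ell)} = \bigcap_{e}P_{e}^{\ell}$ and that $x_{i}^{\ell}\in P_{e}^{\ell}$ whenever $i\in e$, such an $m$ may be chosen to be any minimal generator of $J(G\setminus\{i\})^{(\ell)}$, which exists as soon as $G$ has more than one vertex. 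Isolated vertices of $G$ can be suppressed throughout, as they contribute nothing to either ideal or its polarization. A subsidiary point is that Proposition~\ref{betti table} is stated in the range $r\leq{\rm im}(G_{\ell})$; for $r>{\rm im}(G_{\ell})$ the Serre depth collapses to the ordinary depth, and the same constant value $n-r-1$ is recovered via Terai's duality and the Auslander--Buchsbaum formula, so the conclusion holds for all $r\geq 2$ as stated.
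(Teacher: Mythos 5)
Your strategy --- polarize $J(G)^{(\ell)}$ into $J(G_{\ell})$ via Lemma~\ref{polarization of cover}, transfer the Serre depth across the polarization with Proposition~\ref{polarization}, and evaluate on the polarized side with Proposition~\ref{betti table} --- is exactly the route the paper takes, and in the range $r\le{\rm im}(G_{\ell})$ your computation $S_{r}\mbox{-}{\rm depth}(S/J(G)^{(\ell)})=n-r-1$ is correct. (The bookkeeping $|X_{J(G)^{(\ell)}}|=|V(G_{\ell})|-n$ is all that is needed, and it is immediate from Lemma~\ref{polarization of cover}, since the polarized ring is $\Bbbk[V(G_{\ell})]$; your separate verification that $\gamma_{i}=\ell$ is not necessary, and in fact the exact value $|V(G_{\ell})|=n\ell$ cancels out anyway.)

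The gap is in your final ``subsidiary point.'' When $r>{\rm im}(G_{\ell})$ the value is \emph{not} $n-r-1$. By Corollary~\ref{very} one has $\dim K^{j}_{S_{(\ell)}/J(G_{\ell})}=2j-(|V(G_{\ell})|-2)$ for $j\ge{\rm depth}$, so the defining condition $\dim K^{j}\ge j-r+1$ first holds at $j=\max\{|V(G_{\ell})|-r-1,\ {\rm depth}(S_{(\ell)}/J(G_{\ell}))\}$, and Terai's duality together with Auslander--Buchsbaum gives ${\rm depth}(S_{(\ell)}/J(G_{\ell}))=|V(G_{\ell})|-{\rm im}(G_{\ell})-1$. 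Hence $S_{r}\mbox{-}{\rm depth}(S/J(G)^{(\ell)})=n-\min\{r,{\rm im}(G_{\ell})\}-1$, which for $r>{\rm im}(G_{\ell})$ equals $n-{\rm im}(G_{\ell})-1>n-r-1$ and depends on $\ell$. So the sequence is not constant in general, and your claim that Auslander--Buchsbaum ``recovers the same constant value $n-r-1$'' is false. In that regime the assertion of the theorem reduces to $\min\{r,{\rm im}(G_{\ell})\}\le\min\{r,{\rm im}(G_{\ell+1})\}$, i.e.\ essentially to monotonicity of ${\rm im}(G_{\ell})$ in $\ell$; this is not automatic, because $G_{\ell}$ is not an induced subgraph of $G_{\ell+1}$ under the obvious inclusion of vertex sets, and your argument does not address it. (The paper's proof reduces to the same comparison, phrased as the inequality between the truncated regularities ${\rm reg}_{\le r}$ of $I(G_{\ell})$ and $I(G_{\ell+1})$ obtained from Theorem~\ref{eq dual}.) To repair your write-up you must either supply that comparison or restrict to $r\le{\rm im}(G)$, where the argument is complete because $G$ sits inside $G_{\ell}$ as the induced subgraph on the vertices $x_{i,1}$, so ${\rm im}(G_{\ell})\ge{\rm im}(G)\ge r$ for every $\ell$.
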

\begin{proof}
Let $[n]$ be the vertex set of $G$. Notice that $S_{(i)}$ is the polynomial ring of $J(G)^{(i)}$ by Lemma \ref{polarization of cover}, and hence $|X_{J(G)^{(i)}}|=|V(G_{i})|-n$ for all $i$. Also, by Lemma \ref{polarization of cover}, $S_{r}\mbox{-}{\rm depth}(S_{(i)}/(J(G)^{(i)})^{\rm pol})=S_{r}\mbox{-}{\rm depth}(S_{(i)}/(J(G_{i})))$ for all $i$. Fix an integer $\ell$ with $\ell\geq1$. Then, by the assumption that ${\rm reg}(S_{(i)}/I(G_{i}))={\rm im}(G_{i})$ for all $i$ and Proposition \ref{betti table}, Theorem \ref{eq dual}, we have $$|V(G_{\ell})|-S_{r}\mbox{-}{\rm depth}(S_{(\ell)}/J(G_{\ell}))-1\leq|V(G_{\ell+1})|-S_{r}\mbox{-}{\rm depth}(S_{(\ell+1)}/J(G_{\ell+1}))-1.$$Also, by Proposition \ref{polarization}, we have $$S_{r}\mbox{-}{\rm depth}(S_{(i)}/(J(G)^{(i)})^{\rm pol})=S_{r}\mbox{-}{\rm depth}(S/J(G)^{(i)}))+|X_{{J(G)}^{(i)}}|\mbox{ for all }i.$$Therefore, we obtain that 
\begin{align*}
-n+S_{r}\mbox{-}{\rm depth}(S/J(G)^{(\ell)})&=-|V(G_{\ell})|+|X_{J(G)^{(\ell)}}|+S_{r}\mbox{-}{\rm depth}(S/(J(G)^{(\ell)})))\\
&=-|V(G_{\ell})|+S_{r}\mbox{-}{\rm depth}(S_{(\ell)}/(J(G)^{(\ell)})^{\rm pol}) \\
&\geq-|V(G_{\ell+1})|+S_{r}\mbox{-}{\rm depth}(S_{(\ell+1)}/(J(G)^{(\ell+1)})^{\rm pol}) \\
&=-|V(G_{\ell+1})|+|X_{J(G)^{(\ell+1)}}|+S_{r}\mbox{-}{\rm depth}(S/(J(G)^{(\ell+1)})) \\
&=-n+S_{r}\mbox{-}{\rm depth}(S/J(G)^{(\ell+1)}),
\end{align*}
which completes the proof. 
\end{proof}

We provide an example of graphs satisfy the conditions of Theorem \ref{non-increasing for S_r}. To this end, let us recall the definition of very well-covered graphs. A graph $G$ without isolated vertices is called {\it very well-covered}, if every minimal vertex cover has the same cardinality which is equal to half the number of vertices. For a very well-covered graph, it is known that the following statement: 
\begin{prop}[\cite{s1}, Proposition 3.1]\label{very well lemma}
Let $G$ be a graph without isolated vertices and $\ell\geq1$ be an integer. 
If $G$ is very well-covered graph, then $G_{\ell}$ is very well-covered too. 
\end{prop}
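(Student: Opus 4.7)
The plan is to verify that $G_\ell$ satisfies the Favaron-type structural characterization of very well-covered graphs (as in \cite[Theorem~3.5]{kpty}) by transferring it from $G$. Concretely, $G$ being very well-covered means $G$ has a perfect matching $M=\{x_iy_i : 1 \le i \le n/2\}$ such that (F1) $N_G(x_i) \cap N_G(y_i) = \emptyset$ for every $i$, and (F2) for every $z \in N_G(x_i) \setminus \{y_i\}$ and $w \in N_G(y_i) \setminus \{x_i\}$ one has $\{z,w\} \in E(G)$. I would show that $G_\ell$ satisfies the analogous conditions with respect to the inherited perfect matching
$$M_\ell = \bigl\{\{x_{i,p},\, y_{i,\ell+1-p}\} : 1 \le i \le n/2,\ 1 \le p \le \ell\bigr\},$$
which is a perfect matching of $G_\ell$ because each pair is an edge ($\{x_i,y_i\} \in E(G)$ and $p+(\ell+1-p)=\ell+1$) and the pairs partition $V(G_\ell)$.

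First I would note that $G_\ell$ has no isolated vertices: any $v_{i,p}$ is adjacent to $v_{j,1}$ for each $v_j \in N_G(v_i)$, since $p+1 \le \ell+1$. Next, (F1) for $G_\ell$ follows immediately, since a common neighbor $v_{m,q}$ of $x_{i,p}$ and $y_{i,\ell+1-p}$ in $G_\ell$ would force $v_m \in N_G(x_i) \cap N_G(y_i)$, contradicting (F1) for $G$.

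The key step is verifying (F2) for $G_\ell$. Given $z = v_{m,q} \in N_{G_\ell}(x_{i,p}) \setminus \{y_{i,\ell+1-p}\}$ and $w = v_{m',q'} \in N_{G_\ell}(y_{i,\ell+1-p}) \setminus \{x_{i,p}\}$, the conditions $q \le \ell+1-p$, $q' \le p$, $\{x_i,v_m\} \in E(G)$, and $\{y_i,v_{m'}\} \in E(G)$ all hold. In the generic case $v_m \ne y_i$ and $v_{m'} \ne x_i$, I would apply (F2) for $G$ to get $\{v_m,v_{m'}\} \in E(G)$; combined with $q+q' \le (\ell+1-p)+p = \ell+1$, this yields $\{z,w\} \in E(G_\ell)$. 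In the remaining subcases (when $v_m=y_i$ or $v_{m'}=x_i$), exclusion of the matching partner tightens one of the bounds to $q \le \ell-p$ or $q' \le p-1$, and the required edge $\{v_m,v_{m'}\}$ in $G$ is either the matching edge $\{x_i,y_i\}$ itself or one of the two hypothesized edges; in every subcase the bound $q+q' \le \ell+1$ is preserved.

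The main obstacle will be the bookkeeping in the case analysis of (F2): in each subcase one must verify that the inequality $q+q' \le \ell+1$ survives. The crucial mechanism is that excluding the matching partner of $x_{i,p}$ (respectively $y_{i,\ell+1-p}$) forces strict inequality on one of $q, q'$, which is precisely enough to salvage the bound when the generic (F2) for $G$ no longer directly applies. Once (F1) and (F2) are checked for $(G_\ell, M_\ell)$, the structure theorem \cite[Theorem~3.5]{kpty} concludes that $G_\ell$ is very well-covered.
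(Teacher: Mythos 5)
The paper states this proposition without proof, quoting it from the cited source, so there is no in-paper argument to compare against; your proposal is a correct self-contained proof, and it follows what is essentially the standard route (and, as far as I can tell, the route of the original source). The matching $M_\ell=\{\{x_{i,p},y_{i,\ell+1-p}\}\}$ is indeed a perfect matching of $G_\ell$ consisting of edges, condition (F1) transfers immediately, and your case analysis for (F2) is sound: the key mechanism you identify --- that excluding the matching partner of $x_{i,p}$ (resp.\ $y_{i,\ell+1-p}$) sharpens $q\le \ell+1-p$ to $q\le \ell-p$ (resp.\ $q'\le p$ to $q'\le p-1$) --- does exactly compensate in every degenerate subcase, and the required edge of $G$ is always available (either from (F2) for $G$, from the hypothesized adjacency, or as the matching edge $\{x_i,y_i\}$ itself). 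Two minor points: the structural characterization you use is Favaron's classical characterization of very well-covered graphs (a perfect matching all of whose edges satisfy the two properties), which underlies condition $(\ast)$ and \cite{crt}, rather than \cite[Theorem 3.5]{kpty}, which is the decomposition $G\cong H(n_1,\ldots,n_{d_0})$; you should cite Favaron (or \cite{crt}) for it. Also, in the generic case of (F2) it is worth one line to note that $v_m=v_{m'}$ is impossible (it would violate $N_G(x_i)\cap N_G(y_i)=\emptyset$), so $\{v_m,v_{m'}\}$ is a genuine edge and $z\ne w$.
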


\begin{cor}
Let $G$ be a very well-covered graph and let $r\geq 2$. 
Then we have $$S_{r}\mbox{-}{\rm depth}(S/J(G)^{(\ell)})\geq S_{r}\mbox{-}{\rm depth}(S/J(G)^{(\ell+1)})\mbox{ for all }\ell\geq 1.$$
\end{cor}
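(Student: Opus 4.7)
The plan is to reduce this corollary to Theorem \ref{non-increasing for S_r} by verifying its hypothesis for the graphs $G_\ell$ arising from Construction \ref{const}. Concretely, I need to check that
$${\rm reg}(S_{(i)}/I(G_{i}))={\rm im}(G_{i})\qquad\mbox{for all }i\geq 1,$$
and then invoke Theorem \ref{non-increasing for S_r} directly.

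First, since $G$ is very well-covered, the definition forces $G$ to have no isolated vertices. Thus Proposition \ref{very well lemma} applies, giving that $G_\ell$ is a very well-covered graph for every $\ell\geq 1$. This step is essentially automatic, and it is the reason Construction \ref{const} behaves well for this class.

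Next, I would invoke the known equality ${\rm reg}(\Bbbk[V(H)]/I(H))={\rm im}(H)$, which holds for every very well-covered graph $H$ (due to Mahmoudi--Mousivand--Crupi--Rinaldo--Terai--Yazdan Pour, with later alternative proofs by other authors). Applied to each $H=G_i$, this yields the required regularity identity ${\rm reg}(S_{(i)}/I(G_i))={\rm im}(G_i)$ for every $i\geq 1$. The combination of this regularity formula with Proposition \ref{very well lemma} is the entire content of this step.

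With both ingredients in place, Theorem \ref{non-increasing for S_r} applies verbatim and delivers the conclusion
$$S_{r}\mbox{-}{\rm depth}(S/J(G)^{(\ell)})\geq S_{r}\mbox{-}{\rm depth}(S/J(G)^{(\ell+1)})$$
for all $\ell\geq 1$ and all $r\geq 2$. The only nontrivial point in the argument is the appeal to the regularity $=$ induced matching number result for very well-covered graphs; everything else is a direct application of previously established propositions. So the main (minor) obstacle is simply to cite this regularity formula cleanly and note that it transfers to every $G_\ell$ via Proposition \ref{very well lemma}.
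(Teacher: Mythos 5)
Your proposal is correct and follows exactly the paper's own argument: apply Proposition \ref{very well lemma} to see that each $G_i$ is very well-covered, cite the regularity--induced-matching-number equality for very well-covered graphs (the paper uses \cite[Theorem 1.3]{mmcrty}) to verify the hypothesis ${\rm reg}(S_{(i)}/I(G_i))={\rm im}(G_i)$, and then invoke Theorem \ref{non-increasing for S_r}. No gaps.
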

\begin{proof}
By Proposition \ref{very well lemma}, $G_{i}$ is very well-covered and hence, by \cite[Theorem 1.3]{mmcrty}, we obtain that ${\rm reg}(S_{(i)}/I(G_{i}))={\rm im}(G_{i})$ for all $i\geq1$. Therefore, by Theorem \ref{non-increasing for S_r}, the assertion follows. 
\end{proof}

The following question arises naturally from Proposition \ref{non-increasing of cover ideals}, Theorem \ref{non-increasing for S_r} and \cite[Theorem 3.2]{hktt}.

\begin{q}
For a graph $G$ and $r\geq2$, is it true that
$$S_{r}\mbox{-}{\rm depth}(S/J(G)^{(\ell)})\geq S_{r}\mbox{-}{\rm depth}(S/J(G)^{(\ell+1)})\mbox{ for every }\ell\geq 1?$$
\end{q}

Finally, we determine the Serre depth on the edge ideals of very well-covered graphs. To this end, we recall a theorem from \cite[Theorem 3.5]{kpty} that is useful for analyzing very well-covered graphs that are not Cohen--Macaulay. We follow the notation in \cite{kpty}. According to \cite{crt}, we may assume the following condition:

\vspace{0.1cm}
\begin{itemize}
\item[$(\ast)$] $V(H)=X_{[d_0]}\cup Y_{[d_0]}$, where $X_{[d_0]}=\{x_1,\ldots,x_{d_0}\}$ is a minimal vertex cover of $H$ and $Y_{[d_0]}=\{y_1,\ldots,y_{d_0}\}$ is a maximal independent set of $H$ such that $\{x_1y_1,\ldots,x_{d_0}y_{d_0}\} \subseteq E(H)$. Moreover, $x_iy_j \in E(H)$ implies that $i\le j$.
\end{itemize}
To study the structure of non-Cohen--Macaulay very well-covered graphs, a particularly useful result is provided in~\cite[Theorem~3.5]{kpty}. Let $G$ be a graph with $xy\in E(G)$, then, the graph $G^{\prime}$, obtained by replacing the edge $xy$ in $G$ with a complete bipartite graph $K_{i, i}$ is defined as $$V(G')=(V(G)\setminus \{x,y\} )\cup \{x_1,\ldots,x_i\}\cup\{y_1 ,\ldots,y_i\}$$ and  
\[
\begin{array}{lll}
E(G')	& = & E(G_{V(G)\setminus \{x,y\}}) \cup\{x_jy_k\,\,: 1\le j,k\le i\} \\[0.15cm]
& & \hspace{2.65cm}\cup\ \{x_jz\,\,: 1\le j\le i,\ z \in V(G) \setminus \{x,y \},\ xz\in E(G)\} \\[0.15cm]
& & \hspace{2.65cm}\cup\ \{y_jz\,\,: 1\le j\le i,\ z \in V(G) \setminus \{x,y \},\ yz\in E(G)\}.
\end{array}
\]
Let $H$ be a Cohen--Macaulay very well-covered graph with $2d_0$ vertices and assume the condition $(\ast)$. Let $H^{\prime}$ be a graph with the vertex set $$V(H^{\prime})=\bigcup_{i=1}^{d_0}\Big(\big\{ x_{i1},\ldots,x_{in_i}\big\}\cup\big\{y_{i1},\ldots,y_{in_i}\big\}\Big)$$ which is obtained by replacing the edges $x_1y_1,\ldots,x_{d_0}y_{d_0}$ in $H$ with the complete bipartite graphs $K_{n_1,n_1},\ldots,K_{n_{d_0},n_{d_0}}$, respectively. We write $H(n_1,\ldots,n_{d_0})$ for $H^{\prime}$. The following theorem is the result mentioned above.

\begin{thm}[\cite{kpty}, Theorem 3.5]\label{structure}
Let $G$ be a very well-covered graph on the vertex set $X_{[d]}\cup Y_{[d]}$. Then there exist positive integers $n_1,\ldots,n_{d_0}$ with $\sum_{i\in [d_0] }n_i=d$ and a Cohen--Macaulay very well-covered graph $H$ on the vertex set $X_ {[d_0]}\cup Y_{[d_0]}$ such that $G\cong H(n_1,\ldots,n_{d_0})$.
\end{thm}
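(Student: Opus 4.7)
The plan is to construct the Cohen--Macaulay very well-covered graph $H$ as a ``twin quotient'' of $G$. Since $G$ is very well-covered on $2d$ vertices with no isolated vertices, $G$ has a perfect matching $\{u_iv_i\}_{i=1}^d$ such that $X=\{u_1,\ldots,u_d\}$ is a minimum vertex cover and $Y=\{v_1,\ldots,v_d\}$ is a maximum independent set. I define the twin equivalence $u\sim w$ on $V(G)$ by $N_G(u)=N_G(w)$ and aim to show that $H$ is obtained by collapsing each $\sim$-class to a single representative.

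The crucial structural claim is that if $C\subseteq X$ is a $\sim$-class, say $C=\{u_{i_1},\ldots,u_{i_k}\}$, then the matched partners $\{v_{i_1},\ldots,v_{i_k}\}$ form a $\sim$-class of the same size, and the induced subgraph $G[C\cup\{v_{i_1},\ldots,v_{i_k}\}]$ is the complete bipartite graph $K_{k,k}$. The first half is immediate: $u_{i_a}\sim u_{i_b}$ forces $v_{i_a}\in N_G(u_{i_a})=N_G(u_{i_b})$, so every $u_{i_b}v_{i_a}$ is an edge, and no $u_{i_a}u_{i_b}$ edge exists because twins are non-adjacent (as $u_{i_b}\notin N_G(u_{i_b})$), yielding $K_{k,k}$. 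The transfer of twinness to the $Y$-side, $v_{i_a}\sim v_{i_b}$, is the subtle step: given $w\in N_G(v_{i_a})\setminus\{u_{i_a},u_{i_b}\}$, one must show $w\in N_G(v_{i_b})$. I would argue by contradiction: otherwise, replacing $u_{i_a}$ by $v_{i_a}$ in $X$ produces an independent set that, after a cover-completion argument, yields a minimal vertex cover whose cardinality differs from $d$, violating very well-coveredness.

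Granting this claim, let $d_0$ be the number of twin-class pairs and $n_1,\ldots,n_{d_0}$ their sizes, so $\sum n_i=d$. Let $H$ be the quotient graph on one representative $\bar{u}_i,\bar{v}_i$ per class. By the claim, $G$ is recovered from $H$ by replacing each matching edge $\bar{u}_i\bar{v}_i$ with $K_{n_i,n_i}$, so $G\cong H(n_1,\ldots,n_{d_0})$. The graph $H$ is very well-covered because minimum vertex covers of $G$ are exactly unions of ``all of $C$'' or ``all of the matched $Y$-class'' across the twin-pairs, which correspond bijectively to minimum vertex covers of $H$ of uniform size $d_0$. For Cohen--Macaulayness, I would invoke the characterization due to Crupi, Rinaldo, and Terai \cite{crt}: after relabeling so $(\ast)$ holds, $H$ is Cohen--Macaulay if and only if $H$ satisfies the transitivity condition $\bar{x}_i\bar{y}_j,\bar{x}_j\bar{y}_k\in E(H)\Rightarrow \bar{x}_i\bar{y}_k\in E(H)$ and a certain forbidden-configuration condition. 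Both can be verified because $H$ has no nontrivial twins by construction, so any failure of the criterion would lift back to $G$ and produce, via the same cover-exchange argument, a non-uniform minimal vertex cover.

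The main obstacle is the transfer claim $u_{i_a}\sim u_{i_b}\Rightarrow v_{i_a}\sim v_{i_b}$ described above; this is the step where very well-coveredness of $G$ (as opposed to mere existence of a perfect matching) is essential, and I expect the cleanest route is via the vertex-cover exchange argument. Once this is in hand, construction of $H$, exhibition of the isomorphism $G\cong H(n_1,\ldots,n_{d_0})$, and verification of the Cohen--Macaulay very well-covered structure on $H$ are essentially formal consequences of the twin structure and \cite{crt}.
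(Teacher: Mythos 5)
The paper does not prove this statement at all: it is quoted verbatim as \cite[Theorem 3.5]{kpty} and used as a black box, so there is no internal proof to compare your argument against. Your strategy --- collapse the classes of the twin relation $N_G(u)=N_G(w)$ and recover $G$ from the quotient $H$ by blowing each matching edge up to a complete bipartite graph --- is essentially the strategy of the cited proof, where the classes are cut out by the relation ``$x_iy_j\in E$ and $x_jy_i\in E$''; for very well-covered graphs that relation coincides with your twin relation, so the two routes agree in substance.

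As written, though, your sketch has a genuine gap at the step you flag as subtle, and it is wider than you indicate. Two transfers are needed, not one: (i) $u_{i_a}\sim u_{i_b}$ implies $v_{i_a}\sim v_{i_b}$, and (ii) conversely $v_{i_a}\sim v_{i_b}$ implies $u_{i_a}\sim u_{i_b}$; without (ii) the $Y$-side class could a priori be strictly larger than the $X$-side class, and the description of $G$ as a blow-up of $H$ fails. Transfer (i) follows from the transitivity property of very well-covered graphs ($x_iy_j,x_jy_k\in E\Rightarrow x_iy_k\in E$) because $N(v)\subseteq X$ for every $v\in Y$. Transfer (ii), and likewise the Cohen--Macaulayness of $H$ (which by \cite{crt} amounts to showing that $x_iy_j,x_jy_i\in E(G)$ with $i\neq j$ forces $N(x_i)=N(x_j)$ in $G$), are genuinely harder: $X$ is only a minimal vertex cover, not an independent set, so $N(x_i)$ may contain vertices of $X$, and controlling those requires the stronger matching property of very well-covered graphs (every vertex of $N(y_i)\setminus\{x_i\}$ is adjacent to every vertex of $N(x_i)\setminus\{y_i\}$, as in Favaron's characterization), which does not follow formally from transitivity and which your ``cover-exchange argument'' never actually establishes. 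Once that property and the transitivity are quoted or proved, all of your claims do go through, so the defect is a pair of missing lemmas rather than a wrong approach; but in its current form the proof is not complete.
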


To give the Serre depth on edge ideals of an arbitrary very well-covered graph, we explain notations introduced in \cite[Section 4]{kpty}. Let $H$ be a Cohen--Macaulay very well-covered graph on the vertex set $X_{[d_{0}]}\cup Y_{[d_{0}]}$ and assume the condition $(\ast)$. Let $P(H)$ be the set of independent set $F$ of $H$ such that $H-N[F]$ consists of some disjoint edges (including an empty graph and a single edges) without isolated vertices. If $F\in P(H)$, then there exists a unique subset $T\subset[d_{0}]$ with $E(H_{T})=E_{T}$ such that ${\rm link}_{\Delta(H)}F=\Delta(H_{T})$, where $E_{T}=\{\{x_{i}, y_{i}\}\,\,: i\in T\}$ and $H_{T}$ is the induced subgraph on $\cup_{i\in T}\{x_{i}, y_{i}\}$. We denote this subset by $T_{F}$. Moreover, let $G=H(n_{1},\ldots, n_{d_{0}})$ be a very well-covered graph. If $F_{0}\in\Delta(H)$, then we set $N_{F_{0}}=\sum_{j}n_{j}$, where the sum runs over all $j$ such that $x_{j}\in F_{0}$ or $y_{j}\in F_{0}$. 

By using Theorem \ref{structure}, we provide the Serre depth on edge ideals of an arbitrary very well-covered graph. 

\begin{thm}\label{S_r very well}
Let $G=H(n_{1},\ldots, n_{d_{0}})$ be a very well-covered graph and $r\geq2$. Then we have
\[
S_{r}\mbox{-}{\rm depth}(S/I(G))
= d - \max \Bigl\{\, N_T - |T|\,\,:
  \substack{
    \displaystyle T \subset [d_0] \text{ such that }  E_T \text{ is an} \\ 
    \displaystyle \text{ induced matching and } |T| \le r-1
  } \,\Bigr\}.
\]
In particular, we have $S_{2}\mbox{-}{\rm depth}(S/I(G))=d-\max\{n_{1},\ldots, n_{d_{0}}\}+1.$
\end{thm}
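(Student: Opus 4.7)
The plan is to combine the link-homology characterization of the Serre depth from Lemma \ref{depth} with the blow-up description $G=H(n_{1},\ldots,n_{d_{0}})$ from Theorem \ref{structure}. Let $\pi\colon V(G)\to V(H)$ send each copy to its parent. For an independent set $F$ of $G$, the projection $\bar F:=\pi(F)$ is independent in $H$, and I first show that $\widetilde{H}_{i}({\rm link}_{\Delta(G)}F;\Bbbk)\neq 0$ for some $0\le i\le r-2$ forces $F$ to be the \emph{full lift} $\widetilde{F_{0}}$ of some $F_{0}\in P(H)$. Indeed, if $F$ omits a copy of some $v\in\bar F$, that unused copy is isolated in $G-N_{G}[F]$ (its $G$-neighbors all project into $N_{H}[\bar F]$ and are adjacent to at least one copy in $F$), so the link is a cone and contractible; and if $\bar F\notin P(H)$ because $H-N_{H}[\bar F]$ has an isolated vertex, a parallel argument again produces a cone apex in the lift. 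The remaining case $\bar F\notin P(H)$ with $H-N_{H}[\bar F]$ having no isolated vertices but not a disjoint union of edges is handled by a folding argument in the blown-up complex using that condition $(\ast)$ makes $H$ bipartite with perfect matching. For a full lift $F=\widetilde{F_{0}}$ with $F_{0}\in P(H)$ and $T:=T_{F_{0}}$, a direct projection computation gives $G-N_{G}[F]=\bigsqcup_{i\in T}K_{n_{i},n_{i}}$, whence
\[
{\rm link}_{\Delta(G)}F \;=\; \underset{i\in T}{\ast}\,\Delta(K_{n_{i},n_{i}}) \;\simeq\; \underset{i\in T}{\ast}\,S^{0} \;\simeq\; S^{|T|-1},
\]
so reduced homology is concentrated in degree $|T|-1$ and $|F|+i+1=N_{F_{0}}+|T_{F_{0}}|$.

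Combining this with the constraint $|T_{F_{0}}|-1\le r-2$, Lemma \ref{depth} reduces the computation to
\[
S_{r}\mbox{-}{\rm depth}(S/I(G))\;=\;\min\bigl\{\,N_{F_{0}}+|T_{F_{0}}|\,:\,F_{0}\in P(H),\;1\le|T_{F_{0}}|\le r-1\,\bigr\},
\]
with the convention that the minimum is $d$ if the set is empty. To translate this into the claimed formula, I invoke the very well-coveredness of $H$: for any $F_{0}\in P(H)$, the complement $V(H)\setminus N_{H}[F_{0}]=\bigcup_{i\in T_{F_{0}}}\{x_{i},y_{i}\}$ admits a transversal of $E_{T_{F_{0}}}$, which extends $F_{0}$ to a maximal independent set; by very well-coveredness this extension has cardinality $d_{0}$, forcing $|F_{0}|=d_{0}-|T_{F_{0}}|$. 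Hence the support $\{j\colon x_{j}\in F_{0}\text{ or }y_{j}\in F_{0}\}$ is precisely $[d_{0}]\setminus T_{F_{0}}$, giving $N_{F_{0}}=\sum_{j\notin T_{F_{0}}}n_{j}=d-N_{T_{F_{0}}}$. Each $T_{F_{0}}$ makes $E_{T_{F_{0}}}$ an induced matching automatically; conversely, for every $T\subset[d_{0}]$ with $E_{T}$ an induced matching, the Cohen--Macaulay structure of $H$ (bipartite under $(\ast)$ with perfect matching $\{x_{i}y_{i}\}$) allows one to construct $F_{0}\in P(H)$ with $T_{F_{0}}=T$ by making a consistent choice of $x_{j}$ or $y_{j}$ at each $j\notin T$, exchanging along the matching when necessary to avoid $H$-neighbors in $\bigcup_{i\in T}\{x_{i},y_{i}\}$. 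Substituting yields $\min_{T}(d-N_{T}+|T|)=d-\max_{T}(N_{T}-|T|)$, where both extrema run over $T\subset[d_{0}]$ with $E_{T}$ an induced matching and $|T|\le r-1$; the case $T=\emptyset$ contributes the harmless value $0$ on the right, absorbing the ``no bad link'' case $S_{r}\mbox{-}{\rm depth}=d$. For the ``in particular'' statement, setting $r=2$ restricts $T$ to $\emptyset$ and singletons $\{i\}$, so the maximum is $\max(0,\max_{i}(n_{i}-1))=\max_{i}n_{i}-1$ (using $n_{i}\ge 1$), giving $S_{2}\mbox{-}{\rm depth}(S/I(G))=d-\max_{i}n_{i}+1$.

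The link-vanishing for non-full-lifts and the join/sphere computation for full lifts are the cleanest parts, reducing to the standard independence-complex formula for disjoint unions. I expect the main obstacle to be the surjectivity of $F_{0}\mapsto T_{F_{0}}$ onto induced-matching indices, together with the contractibility in the ``bad'' $\bar F\notin P(H)$ case: both require genuinely exploiting the bipartite perfect-matching structure of $H$ enforced by condition $(\ast)$ and the Cohen--Macaulayness underlying \cite[Theorem 3.5]{kpty}, rather than following formally from the induced-matching hypothesis alone.
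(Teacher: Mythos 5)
Your route is genuinely different from the paper's: the paper obtains the graded pieces of $H_{\mathfrak m}^{i}(S/I(G))$ directly from \cite[Theorem 4.4]{kpty}, reads off the formula $\min\{N_{F_{0}}-|F_{0}|+d_{0}\}$ over $F_{0}\in P(H)$ with $|F_{0}|\ge d_{0}-r+1$, and then invokes \cite[Lemma 4.1]{kpty} to pass to induced matchings, whereas you attempt to recompute the relevant link homology from scratch out of the blow-up structure via Lemma \ref{depth}. The parts you actually carry out are correct and match the paper's bookkeeping: two copies of a vertex have identical $G$-neighbourhoods, so an omitted copy is an isolated vertex of $G-N_{G}[F]$ and the link is a cone (likewise when $H-N_{H}[\bar F]$ has an isolated vertex); for a full lift of $F_{0}\in P(H)$ one indeed gets $G-N_{G}[\widetilde{F_{0}}]=\bigsqcup_{i\in T}K_{n_{i},n_{i}}$, hence ${\rm link}_{\Delta(G)}\widetilde{F_{0}}\simeq S^{|T|-1}$ and the contribution $N_{F_{0}}+|T_{F_{0}}|$; and the identities $N_{F_{0}}=d-N_{T_{F_{0}}}$, $|F_{0}|=d_{0}-|T_{F_{0}}|$ and the $r=2$ specialization are right.

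However, two steps are asserted but not proved, and they are exactly the technical heart of the theorem. First, when $F$ is a full lift, $H-N_{H}[\bar F]$ has no isolated vertices, but $H-N_{H}[\bar F]$ is \emph{not} a disjoint union of edges, you must show that $\widetilde{H}_{i}({\rm link}_{\Delta(G)}F;\Bbbk)$ vanishes for all $i\le r-2$ with $|F|+i+1$ below your claimed minimum; your ``folding argument'' is only named, not given, and this vanishing genuinely requires the Cohen--Macaulayness (in fact vertex decomposability) of $H$ rather than condition $(\ast)$ alone --- it is precisely the content of \cite[Theorem 4.4]{kpty}. Without it your join computation only yields the upper bound $S_{r}\mbox{-}{\rm depth}(S/I(G))\le d-\max\{N_{T}-|T|\}$. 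Second, to convert the minimum over $P(H)$ into a maximum over all $T$ with $E_{T}$ an induced matching you need surjectivity of $F_{0}\mapsto T_{F_{0}}$; your ``exchange along the matching'' construction is again only sketched, and this is \cite[Lemma 4.1]{kpty}. Either supply both arguments in full, or cite those two results --- in which case the link-by-link analysis reduces to the paper's short proof.
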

\begin{proof}
From Hochster's formula for local cohomology modules and \cite[Theorem 4.4]{kpty}, we obtain that 
\begin{align*}
S_{2}\mbox{-}{\rm depth}(S/I(G))&=\min\{i\,\,: H_{\mathfrak{m}}^{i}(S/I(G))_{-j}\neq0\mbox{ for some }j\geq i-r+1\} \\
&=\min\{N_{F_{0}}-|F_{0}|+d_{0}\,\,: F_{0}\in P(H)\mbox{ such that }|F_{0}|\geq d_{0}-r+1\}. 
\end{align*}
Now, since $N_{F_{0}}+N_{T_{F_{0}}}=d$ and $|F_{0}|+|T_{F_{0}}|=d_{0}$, the latter one is equal to the value of $\min\{d-N_{T_{F_{0}}}-|T_{F_{0}}|\,\,: F_{0}\in P(H)\mbox{ such that }|F_{0}|\geq d_{0}-r+1\}$. Therefore, by \cite[Lemma 4.1]{kpty}, the assertion follows. 
\end{proof}

\begin{cor}\label{S_2 very well}
For a very well-covered graph $G$, $G$ satisfies Serre's condition $(S_{2})$ if and only if $G$ is Cohen--Macaulay. 
\end{cor}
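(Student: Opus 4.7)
The plan is to deduce this as a direct corollary of the formula in Theorem \ref{S_r very well}. The Cohen--Macaulay direction is automatic: Cohen--Macaulayness always implies Serre's condition $(S_2)$ for any graded module. So the content lies entirely in the reverse implication.

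For the converse, I would first invoke the ``in particular'' statement of Theorem \ref{S_r very well} with $r=2$, which gives
$$S_{2}\mbox{-}{\rm depth}(S/I(G)) = d - \max\{n_1,\ldots,n_{d_0}\} + 1.$$
(To double-check this reduction one observes that for $r=2$ the admissible subsets $T \subset [d_0]$ have $|T|\le 1$; the empty set contributes $0$, while each singleton $T=\{i\}$ yields an induced matching $E_T=\{\{x_i,y_i\}\}$ with $N_T - |T| = n_i - 1$, so the maximum is $\max_i n_i - 1$.) Since $G$ is very well-covered on $2d$ vertices, $\dim S/I(G) = d$, and by the characterization recalled in the introduction $G$ satisfies $(S_2)$ if and only if $S_2\mbox{-}{\rm depth}(S/I(G)) = d$.

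Combining these, $(S_2)$ is equivalent to $\max\{n_1,\ldots,n_{d_0}\} = 1$, i.e.\ $n_1 = \cdots = n_{d_0} = 1$. In this case $G = H(1,\ldots,1) = H$, which is Cohen--Macaulay by the construction recalled from Theorem \ref{structure} (the graph $H$ in the decomposition $G \cong H(n_1,\ldots,n_{d_0})$ is Cohen--Macaulay very well-covered by definition). This completes the equivalence.

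There is no real obstacle here, since Theorem \ref{S_r very well} does all the heavy lifting; the only point to be careful about is the bookkeeping in specializing to $r=2$, namely that the only nontrivial admissible subsets are singletons (because a single edge is automatically an induced matching), so the maximum reduces cleanly to $\max_i n_i - 1$.
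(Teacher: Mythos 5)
Your proposal is correct and follows exactly the route the paper intends: the corollary is stated as an immediate consequence of Theorem \ref{S_r very well} (the paper gives no further argument), and your specialization to $r=2$ — only $T=\emptyset$ and singletons are admissible, giving $S_{2}\mbox{-}{\rm depth}(S/I(G))=d-\max_i n_i+1$, hence $(S_2)$ holds iff all $n_i=1$ iff $G\cong H$ is Cohen--Macaulay — is precisely the intended bookkeeping. The paper additionally remarks that the statement can be recovered from older results of Crupi--Rinaldo--Terai via strong connectedness, but that is an alternative reference, not its proof.
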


\begin{rem}
The result in Corollary \ref{S_2 very well} is actually a known fact, as it follows from \cite[Theorem 0.3]{crt} and the fact that the independence complex $\Delta(G)$ of $G$ satisfies Serre's condition $(S_2)$ implies $\Delta$ is strongly connected by \cite[Corollary 2.4]{h}.
\end{rem}

\section*{Acknowledgement}
The authors would like to express their gratitude to Tatsuya Kataoka for his helpful and valuable comments. 
 The research of Yuji Muta was partially supported by ohmoto-ikueikai and JST SPRING Japan Grant Number JPMJSP2126. 
 The research of Naoki Terai was partially supported by JSPS Grant-in Aid for Scientific Research (C) 24K06670.



\end{document}